\documentclass[a4paper,11pt]{article}

\usepackage[T1]{fontenc}
\usepackage{lmodern}

\usepackage{dsfont}

\usepackage[latin1]{inputenc}
\usepackage{amsmath}
\usepackage{amsthm}
\usepackage{amssymb}
\usepackage{mathrsfs}
\usepackage{graphicx}
\usepackage[all]{xy}
\usepackage{hyperref}

\usepackage{makeidx}

\usepackage{stmaryrd}
\usepackage{caption}

\usepackage{abstract} 

\newtheorem{thm}{Theorem}[section]
\newtheorem{cor}[thm]{Corollary}
\newtheorem{claim}[thm]{Claim}
\newtheorem{fact}[thm]{Fact}

\newtheorem{lemma}[thm]{Lemma}
\newtheorem{prop}[thm]{Proposition}

\theoremstyle{definition}
\newtheorem{definition}[thm]{Definition}
\newtheorem{ex}[thm]{Example}
\newtheorem{remark}[thm]{Remark}
\newtheorem{question}[thm]{Question}

\def\rquotient#1#2{%
	\makeatletter
	\raise.3ex\hbox{$#1$}/\lower.3ex\hbox{$#2$}%
	\makeatother
}	

\makeatletter
\newcommand{\subjclass}[2][2010]{%
	\let\@oldtitle\@title%
	\gdef\@title{\@oldtitle\footnotetext{#1 \emph{Mathematics subject classification.} #2}}%
}
\newcommand{\keywords}[1]{%
	\let\@@oldtitle\@title%
	\gdef\@title{\@@oldtitle\footnotetext{\emph{Key words and phrases.} #1.}}%
}
\makeatother

\newcommand{\Address}{{
		\bigskip
		\small
		
\noindent\textsc{Universit\'e de Montpellier\\ 
Institut Math\'ematiques Alexander Grothendieck\\
Place Eug\`ene Bataillon\\
34090 Montpellier (France)}\par\nopagebreak
\noindent \textit{E-mail address}: \texttt{anthony.genevois@umontpellier.fr}
		
}}

\makeindex

\title{Polynomial hyperbolicity and products of free groups}
\date{\today}
\author{Anthony Genevois}

\subjclass{Primary 20F65. Secondary 20F69, 20F67.}
\keywords{Hyperbolic spaces, quasi-isometries, lamplighters, cocompact special groups, graph products}

\begin{document}

\maketitle

\begin{abstract}
In this article, we define a locally finite graph $X$ as \emph{$\eta$-polynomially hyperbolic} if there exists a Lipschitz map $\varphi : X \to Z$ to some hyperbolic space $Z$ satisfying the following condition: there exists $C \geq 0$ such that
$$|B(p,R_1) \cap \varphi^{-1} (B(q,R_2))| \leq (C R_1)^{\eta(C R_2)} \text{ for all } p,q \in X, R_1,R_2 \geq 0.$$
The picture to keep in mind is that coarse fibres of $\varphi$ have polynomial growth with a degree coarsely controlled by $\eta$ as the thickness of the fibres grows. The map $\eta$ quantifies how brutal we have to be in order to turn $X$ into a hyperbolic space. Our main result is that, among cocompact special groups, being $\mathrm{lin}$-polynomially hyperbolic amounts not to contain $\mathbb{F}_2 \times \mathbb{F}_2$ as a subgroup. Consequently, containing $\mathbb{F}_2 \times \mathbb{F}_2$ as a subgroup turns out to be quasi-isometric invariant for cocompact special groups. 
\end{abstract}

\tableofcontents

\section{Introduction}

\noindent
In geometric group theory, given a geodesic metric space $X$, it is often desirable to construct a hyperbolic model that extracts the negative curvature of $X$. Typically, the strategy is to identify the obstruction of $X$ from being hyperbolic and to cone-off $X$ in order to collapse this obstruction, producing a(n equivariant) coarsely surjective Lipschitz map from $X$ to a hyperbolic space. Of course, the goal is to cone-off subspaces of $X$ that are as small as possible, in order to keep as much information about the geometry of $X$ as possible. Loosely speaking, the less we need to collapse, the more hyperbolic our group is. In this article, we propose to quantify this idea through the following definition:

\begin{definition}
Let $X$ be a locally finite graph and $Y$ a metric space. Given a function $F : [0,+ \infty)^2 \to [0,+ \infty)$, a map $\varphi : X \to Y$ is \emph{$F$-gentle} if there exists a constant $C\geq 0$ such that
$$| B(p,R_1) \cap \varphi^{-1}(B(q,R_2)) | \leq C F(CR_1,CR_2)$$
for all $R_1,R_2 \geq 0$, $p \in X$, and $q \in Y$. 
\end{definition}

\noindent
The idea to keep in mind is that the first coordinate of the map $F(\cdot,\cdot)$ controls the growth of coarse fibres and the second coordinate of $F(\cdot,\cdot)$ controls how this growth evolves as the thickness of the fibres increases. Roughly speaking, $F$ quantifies how brutal we have to be in order to turn our metric space into a hyperbolic space. See Section~\ref{section:gentle} for examples. In this article, we mainly focus on functions $F(x,y)=x^{\eta(y)}$. Then, coarse fibres have polynomial growth and $\eta$ quantifies how the degree coarsely evolves as the thickness of the fibres increases.

\begin{definition}
A locally finite graph $X$ is \emph{$\eta$-polynomially hyperbolic} if there exists a Lipschitz map $X \to Y$ to a hyperbolic space that is $x^{\eta(y)}$-gentle. 
\end{definition}

\noindent
For instance, every graph $X$ of polynomial growth is $\mathrm{bnd}$-polynomially hyperbolic since $X \twoheadrightarrow \mathrm{pt}$ is $x^n$-gentle for some $n \geq 1$. Similarly, given a graph $X$ of polynomial growth and a hyperbolic graph $Y$ of bounded degree, $X \times Y$ is $\mathrm{lin}$-polynomially hyperbolic since the projection map $X \times Y \twoheadrightarrow Y$ is $x^y$-gentle. It is worth noticing that, because every graph embeds in a hyperbolic horoball (see Example~\ref{ex:ExpPolynomial}), it turns out that every locally finite graph is $\mathrm{exp}$-polynomially hyperbolic. 

\medskip \noindent
Thus, every locally finite graph is $\eta$-polynomially hyperbolic for some $\eta$ between bounded and exponential. The main result of our article is that the lamplighter graph $\mathrm{Lam}(\mathbb{Z})$, or equivalently the lamplighter group $\mathbb{Z}_2 \wr \mathbb{Z}$, is not $\eta$-polynomially hyperbolic for some polynomial $\eta$, which motivates the idea that $\mathrm{Lamp}(\mathbb{Z})$ is very far from being hyperbolic. Since $\mathrm{Lamp}(\mathbb{Z})$ quasi-isometrically embeds into the product of free groups $\mathbb{F}_2 \times \mathbb{F}_2$, the same conclusion holds for $\mathbb{F}_2 \times \mathbb{F}_2$. 

\begin{thm}
The lamplighter $\mathrm{Lamp}(\mathbb{Z})$, and consequently the product of free groups $\mathbb{F}_2 \times \mathbb{F}_2$, is not $\mathrm{pol}$-polynomially hyperbolic.
\end{thm}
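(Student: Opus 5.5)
Suppose, for contradiction, that $\varphi : \mathrm{Lamp}(\mathbb{Z}) \to Z$ is an $L$-Lipschitz map to a $\delta$-hyperbolic space $Z$ which is $x^{\eta(y)}$-gentle for some polynomial $\eta$, with constant $C$. The plan is to find, for every $n$, a subset of $\mathrm{Lamp}(\mathbb{Z})$ that lies in a ball of radius about $n$ and has about $2^n$ points, and whose $\varphi$-image lies in a ball of radius $O(\log n)$ in $Z$. Gentleness would then give
$$2^n \lesssim (Cn)^{\eta(C\log n)} = \exp\big(\eta(C\log n)\log(Cn)\big),$$
and the right-hand side is $\exp(\mathrm{polylog}(n))$. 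This contradicts the left-hand side for large $n$. The statement for $\mathbb{F}_2\times\mathbb{F}_2$ then follows because $\mathrm{Lamp}(\mathbb{Z})$ quasi-isometrically embeds into it. Precomposing a gentle map by a quasi-isometric embedding $\iota$ keeps it Lipschitz (coarsely, which one can fix by passing to a graph map). It also keeps it gentle with modified constants: $\iota^{-1}(B(\iota p, R))$ lies in a ball of radius comparable to $R$, and $\iota$ has uniformly bounded fibres.

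The key geometric input is that $\mathrm{Lamp}(\mathbb{Z})$ is a horocyclic product of two trees, i.e. the Diestel--Leader graph $DL(2,2)$. Inside it I will use "bi-trees", or combinatorial cubes. Fix the lamplighter at $0$ and consider the configurations supported in $[0,n]$, which form the vertex set $V_n$ of size $2^{n+1}$. Every point of $V_n$ is within distance about $3n$ of the identity. Moreover, $V_n$ is the vertex set of a family of "diamonds", which are the quasi-flats of $DL(2,2)$. Two configurations that agree outside an interval $I$ of length $k$ are joined, through configurations with the same property, by paths of length $O(k)$ that wander only near $I$.

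The main step, and the main obstacle, is to prove that $\varphi(V_n)$ has diameter $O(\log n)$ after removing a controlled part. I would try a divide-and-conquer argument based on thin quadrilaterals. Consider four configurations that differ only in the lamps of the left half and/or the right half of an interval. The four geodesic-like paths between them form a quadrilateral in $X$ in which "opposite sides are far apart": a path changing the left half stays far from the one changing the right half. I would like the $\varphi$-images of such squares to be forced to have small Gromov product in $Z$, so that the image of a square of side $k$ is within $O(\delta)$ of a tripod.

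Iterating over the $\log n$ dyadic scales would then place $\varphi(V_n)$ in a ball of radius $O(\delta\log n + L)$ around one point. This step is really a coarse statement that a Lipschitz map from the "flat-like" pieces of the lamplighter into a hyperbolic space must collapse them. If the direct quadrilateral argument does not give this, I would first try a counting version. By gentleness, balls of radius $R_2$ in $Z$ contain few points of $B(p,R_1)$. So if the image of a square were spread out, the square's $4^k$ points would be distributed along the quasi-tree image. I would then derive a contradiction from the fact that quasi-trees have only linearly many "branches" at each scale, compared with the exponentially many points.
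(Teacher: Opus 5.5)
Your outer frame is sound. If about $2^n$ points of a ball of radius $O(n)$ were sent into a ball of polylogarithmic radius in $Z$, then $x^{\eta(y)}$-gentleness with polynomial $\eta$ would be contradicted. The passage to $\mathbb{F}_2\times\mathbb{F}_2$ is exactly Lemma~\ref{lem:GentleStableQI}.

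The gap is the collapse step, which you want to derive from the Lipschitz property and $\delta$-thin quadrilaterals alone. That statement is false. Consider the map $\pi:(S,p)\mapsto(S\cap[p,+\infty),p)$ from $\mathrm{Lamp}$ to the $3$-regular tree whose level-$p$ vertices are the finite subsets of $[p,+\infty)$, the parent of $(S',p)$ being $(S'\cap[p+1,+\infty),p+1)$. A move of the lamplighter goes to a neighbour and a switch goes to a sibling, so $\pi$ is $2$-Lipschitz into a $0$-hyperbolic space. On your set $V_n$ it is injective, and two configurations whose symmetric difference has maximum $k$ are sent to points at distance $2(k+1)$. Hence $\pi(V_n)$ has diameter $2(n+1)$, and a ball of radius $r$ contains the images of at most $2^{r}$ points of $V_n$, so no ``controlled removal'' rescues a polylogarithmic bound. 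In a tree every quadrilateral is already as thin as possible, so thinness imposes nothing. Your fallback fails for the same reason: hyperbolic spaces need not have few branches, and $\pi$ spreads $V_n$ injectively along a tree, which is exactly the picture you hoped to exclude. Of course $\pi$ is not $x^{\eta(y)}$-gentle, which shows that gentleness must drive the collapse mechanism itself, and your proposal never says how.

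The missing idea is the interaction between \emph{exponential divergence} in hyperbolic spaces and \emph{many almost disjoint paths}; this is how the paper proves Theorem~\ref{thm:NoGentle}. Reduce to $x^{y^s}$-gentleness and take $\varphi$ $1$-Lipschitz. Counting on the sphere of the isometrically embedded binary tree $x_1\cdots x_n\mapsto(\{i\mid x_i=1\},n)$ gives $x_n$ with $d(o,x_n)=n$ and $d(\varphi(o),\varphi(x_n))\geq\log(n)^{2s+3}$ (Lemma~\ref{lem:GentleTreeBis}). Toggling lamps just outside $[0,n]$ produces at least $b^{R_n}$ paths of length at most $6n$ from $o$ to $x_n$, where $R_n=\log(n)^{2s+2}$. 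These paths are pairwise disjoint outside the $R_n$-balls around the endpoints (Lemma~\ref{lem:ExpConnectedLamp}). Let $m_n$ be a midpoint of $\varphi(o),\varphi(x_n)$. There are two cases:
\begin{itemize}
	\item If some path avoids $\varphi^{-1}(B(m_n,\log(n)^2))$, Lemma~\ref{lem:HypDiv} forces $6n\geq(1+\epsilon)^{\log(n)^2}$, impossible for large $n$.
	\item Otherwise every path meets it. The intersection points lie away from the endpoint balls, since $d(\varphi(o),m_n)$ is much larger than $R_n+\log(n)^2$, so they are pairwise distinct. Gentleness bounds their number by $\exp(O(\log(n)^{2s+1}))$, which is less than $b^{R_n}$.
\end{itemize}
Read contrapositively, this is precisely the polylogarithmic collapse of the tree sphere that your frame needs. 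It is driven by gentleness together with divergence, not by thin quadrilaterals.
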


\noindent
It turns out that, in several cases of interest, there is a gap in the behaviour of polynomial hyperbolicity for finitely generated groups: either our group is $\mathrm{lin}$-polynomially hyperbolic or it cannot be $\mathrm{pol}$-polynomially hyperbolic since it contains a quasi-isometrically embedded copy of $\mathbb{F}_2 \times \mathbb{F}_2$. As a consequence, polynomial hyperbolicity allows us to characterise geometrically when some finitely generated groups contain $\mathbb{F}_2 \times \mathbb{F}_2$ as a subgroup. 

\medskip \noindent
We illustrate this phenomenon with two families of groups. The first groups we consider are \emph{cocompact special groups}, as introduced in \cite{MR2377497}. Our main result in this direction~is:

\begin{thm}\label{thm:IntroSpecial}
Let $G$ be a cocompact special group. The following are equivalent:
\begin{itemize}
	\item $G$ contains $\mathbb{F}_2 \times \mathbb{F}_2$ as a subgroup;
	\item $G$ contains $\mathbb{F}_2 \times \mathbb{F}_2$ as an undistorted subgroup;
	\item $\mathbb{F}_2 \times \mathbb{F}_2$ quasi-isometrically embeds into $G$;
	\item $\mathrm{Lamp}(\mathbb{Z})$ quasi-isometrically embeds into $G$;
	\item $G$ is not $\mathrm{pol}$-polynomially hyperbolic;
	\item $G$ is not $\mathrm{lin}$-polynomially hyperbolic.
\end{itemize}
\end{thm}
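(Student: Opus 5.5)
Write $(1)$–$(6)$ for the six items, in the order listed. I would prove the cycle
$$(1)\Rightarrow(2)\Rightarrow(3)\Rightarrow(4)\Rightarrow(5)\Rightarrow(6)\Rightarrow(1).$$

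\medskip \noindent
\textbf{The easy implications.} For $(1)\Rightarrow(2)$ I would use known facts about cocompact special groups. Such a $G$ embeds as a convex-cocompact subgroup of a right-angled Artin group $A(\Gamma)$. A subgroup $\mathbb{F}_2\times\mathbb{F}_2\leq G$ then contains two commuting non-abelian free subgroups. By the structure of centralisers in (compact) special cube complexes, one can upgrade this to a copy of $\mathbb{F}_2\times\mathbb{F}_2$ acting convex-cocompactly on a product subcomplex of the universal cover. This copy is therefore undistorted. Concretely, I would find a join subgraph $\Lambda_1 * \Lambda_2$ of the relevant graph, both sides non-complete, giving $\mathbb{F}_2\times\mathbb{F}_2$ as a convex subgroup. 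The implications $(2)\Rightarrow(3)$ and $(5)\Rightarrow(6)$ are immediate, since $\mathrm{lin}\leq\mathrm{pol}$. For $(3)\Rightarrow(4)$, I compose with the known quasi-isometric embedding $\mathrm{Lamp}(\mathbb{Z})\hookrightarrow\mathbb{F}_2\times\mathbb{F}_2$ recalled in the introduction. For $(4)\Rightarrow(5)$, I use the previous theorem together with monotonicity of gentleness under quasi-isometric embeddings of locally finite graphs. If $\iota:\mathrm{Lamp}(\mathbb{Z})\to G$ is a quasi-isometric embedding and $\varphi:G\to Z$ is Lipschitz and $x^{\eta(y)}$-gentle, then $\varphi\circ\iota$ is Lipschitz. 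Moreover, $\iota$ has uniformly bounded fibres and maps $R_1$-balls into $(KR_1+K)$-balls. Hence $\varphi\circ\iota$ is again $x^{\eta'(y)}$-gentle with $\eta'$ polynomial, which contradicts the theorem.

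\medskip \noindent
\textbf{The implication $(6)\Rightarrow(1)$.} This is the core step, argued by contraposition: assume $G$ has no $\mathbb{F}_2\times\mathbb{F}_2$ subgroup and build a $\mathrm{lin}$-gentle Lipschitz map to a hyperbolic space. Let $X$ be the universal cover, a CAT(0) cube complex with a cocompact action of $G$. The absence of $\mathbb{F}_2\times\mathbb{F}_2$ should force every maximal product region (join of hyperplane families) to be of the form (virtually abelian or free) $\times$ (something). More precisely, in any product subcomplex $P_1\times P_2$, one factor must have polynomial (indeed virtually $\mathbb{Z}^k$) growth. For the target I take the contact graph, or better a curtain-type/extension-graph coning-off of the product regions in which a factor is unbounded but has polynomial growth. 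This should be hyperbolic by the standard argument (Hagen's contact graph, or a factor system in the HHS structure of special groups). I then use the HHS distance formula to control the preimage of a ball.

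\medskip \noindent
The key estimate is that a point in $\varphi^{-1}(B(q,R_2))\cap B(p,R_1)$ is determined by its projections to the coned-off pieces. Only about $R_2$ such pieces are involved along a hierarchy path, each contributing polynomial growth of fixed degree $d$, with at most $CR_1$ choices in each. This gives a count of $(CR_1)^{dCR_2}$, that is, linear $\eta$. I expect this counting step to be the main obstacle. Making it precise requires showing that the number of relevant polynomial-growth domains met within a thickness-$R_2$ fibre is linear in $R_2$, and that the non-coned domains contribute only hyperbolic (and hence bounded-in-fibre) information. My first attempt would be to argue inductively along a hierarchy path, using bounded geodesic image to bound the number of domains with large projection.
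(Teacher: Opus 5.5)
Your implications $(2)\Rightarrow(3)\Rightarrow(4)\Rightarrow(5)\Rightarrow(6)$ match the paper's. There is a genuine gap in $(6)\Rightarrow(1)$: you leave the counting step open. Your estimate of ``at most $CR_1$ choices in each piece'' has no justification as stated, because a geodesic of the cone-off between two points of $B(p,R_1)$ may make arbitrarily long $X$-jumps inside coned pieces.

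The missing idea is \emph{syllabicity}. One must choose $\mathcal{P}$ so that any two vertices are joined by a cone-off geodesic that extends to a geodesic of $X$. The count is then elementary (Proposition~\ref{prop:QuasiSyl}). For $x \in B(p',2R_1)\cap\varphi^{-1}(B(q,R_2))$:
\begin{itemize}
\item such a geodesic $x=p_1,\ldots,p_n=p'$ has at most $2R_2$ edges;
\item consecutive vertices lie in a common member of $\mathcal{P}$, and $d_X(p_i,p_{i+1})\leq d_X(x,p')\leq 2R_1$;
\item local finiteness of $\mathcal{P}$ gives at most $N$ members through a vertex (a compact special complex has finitely many hyperplanes, hence finitely many $\Lambda$);
\item uniform polynomial growth $\gamma_{\mathcal{P}}$ then gives at most $(2R_2+1)(N\gamma_{\mathcal{P}}(2R_1))^{2R_2}$ choices for $x$.
\end{itemize}

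Syllabicity is the substantive part of the paper's proof, and it is where the choice of the coned-off family matters. The paper takes $\mathcal{P}=\{S(w,\Lambda)\mid P(w,\Lambda)\text{ abelian}\}$ and reduces syllabicity, via Proposition~\ref{prop:SyllabicMedian}, to closure of $\mathcal{P}$ under parallel pairs. That closure is checked with Lemmas~\ref{lem:ParallelPairsTrans}, \ref{lem:ParallelPairsHyp} and~\ref{lem:TransverseLegal}: specialness lets you transport loops across a transverse family of hyperplanes. So if $\Xi$ is the common support of $x^{-1}y$ and $a^{-1}b$, then $P(a,\Xi)\cong P(x,\Xi)\leq P(x,\Lambda)$ stays abelian. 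Nothing in your proposal plays this role. Your ``number of domains linear in $R_2$'' is simply the cone-off length bound together with local finiteness, with no bounded geodesic image needed.

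Your choice of target is also not right. Take $\mathbb{F}_2\times\mathbb{Z}$ with its standard special structure, whose universal cover is $T_4\times\mathbb{R}$. There every hyperplane of one factor crosses every hyperplane of the other, so the contact graph is bounded, and likewise the top-level space of the factor-system HHS structure. A single coarse fibre is then the whole group, which has exponential growth. Coning off entire product regions fails in the same way. You must cone off only the polynomial-growth slices. The hyperbolicity of that specific cone-off is what the paper imports from the proof of \cite[Theorem~1.1]{SpecialRH}; a flat-rectangle criterion in the spirit of Theorem~\ref{thm:WhenHyp} is the relevant tool.

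For $(1)\Rightarrow(2)$, your concrete join $\Lambda_1*\Lambda_2$ produces $\langle\Lambda_1\rangle\times\langle\Lambda_2\rangle$ inside the ambient right-angled Artin group, not inside $G$, and the centraliser upgrade is only asserted. The paper instead uses Carr's theorem that rank-two free subgroups of right-angled Artin groups are undistorted, together with the undistortedness of $G$ in the Artin group.
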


\noindent
As an immediate consequence, it follows that:

\begin{cor}\label{cor:IntroQIspecial}
Given two quasi-isometric cocompact special groups $G$ and $H$, $\mathbb{F}_2 \times \mathbb{F}_2$ is a subgroup of $G$ if and only if $\mathbb{F}_2 \times \mathbb{F}_2 $ is a subgroup of $H$. 
\end{cor}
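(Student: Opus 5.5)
The corollary should follow from Theorem~\ref{thm:IntroSpecial} once we know that one of its equivalent conditions is invariant under quasi-isometry. The most convenient choice is the third: "$\mathbb{F}_2 \times \mathbb{F}_2$ quasi-isometrically embeds into $G$".

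The argument runs as follows. Suppose $G$ and $H$ are quasi-isometric cocompact special groups, and $\mathbb{F}_2 \times \mathbb{F}_2$ is a subgroup of $G$.

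\begin{enumerate}
\item By the implication from the first to the third item of Theorem~\ref{thm:IntroSpecial}, there is a quasi-isometric embedding $\iota : \mathbb{F}_2 \times \mathbb{F}_2 \to G$. Here we use word metrics with respect to finite generating sets. Both groups are finitely generated, so these metrics are well defined up to quasi-isometry.
\item Fix a quasi-isometry $f : G \to H$.
\item The composition $f \circ \iota$ is a composition of quasi-isometric embeddings, hence a quasi-isometric embedding $\mathbb{F}_2 \times \mathbb{F}_2 \to H$. This is routine: the multiplicative and additive constants simply compose.
\item Since $H$ is also cocompact special, the implication from the third to the first item of Theorem~\ref{thm:IntroSpecial}, applied to $H$, shows that $H$ contains $\mathbb{F}_2 \times \mathbb{F}_2$ as a subgroup.
\end{enumerate}

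The relation "quasi-isometric" is symmetric: a quasi-isometry $f$ admits a quasi-inverse $H \to G$. Swapping the roles of $G$ and $H$ therefore gives the converse.

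An alternative route would use the last item of Theorem~\ref{thm:IntroSpecial}. One would check directly that being $\mathrm{lin}$-polynomially hyperbolic is preserved under quasi-isometries between locally finite graphs. This requires composing a quasi-inverse with the Lipschitz map to a hyperbolic space and controlling the cardinalities of preimages of balls, using bounded fibres of the quasi-inverse and uniform local finiteness. That is less immediate, so I would use the third item instead.

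The only real content is Theorem~\ref{thm:IntroSpecial} itself, which we are allowed to assume. No step here should be an obstacle; at most, one has to note that quasi-isometry invariance of "admits a quasi-isometric embedding of a fixed space" is immediate from the definitions.
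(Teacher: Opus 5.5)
Your proof is correct and matches the paper, which states the corollary as an immediate consequence of Theorem~\ref{thm:IntroSpecial} via a quasi-isometry-invariant equivalent condition; the third item works directly because a composition of quasi-isometric embeddings is again one. The alternative route you set aside is already handled by Lemma~\ref{lem:GentleStableQI}, so the last item of the theorem would have served equally well.
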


\noindent
Corollary~\ref{cor:IntroQIspecial} completes the facts that, among cocompact special groups, containing $\mathbb{Z}^2$ or $\mathbb{F}_2 \times \mathbb{Z}$ is also invariant under quasi-isometries (characterising hyperbolicity and toral relative  hyperbolicity respectively; see\cite{MR2541383,SpecialRH}). (See Question~\ref{question:IntroPower}.) However, it is worth mentioning that our corollary is no longer true among cubulable groups, since there exist uniform lattices in products of two trees that are commutative transitive \cite{MR2174099}, proving that $\mathbb{F}_2 \times \mathbb{F}_2$ is quasi-isometric to a cubulable group that does not contain $\mathbb{F}_2 \times \mathbb{F}_2$ (or even $\mathbb{F}_2 \times \mathbb{Z}$) as a subgroup.  

\medskip \noindent
Theorem~\ref{thm:IntroSpecial} can be applied to explicit examples of groups. For instance, it provides the following characterisation for \emph{right-angled Artin groups}. Recall that, given a graph $\Gamma$, the right-angled Artin group $A(\Gamma)$ is defined by the presentation
$$\langle V(\Gamma) \mid [u,v]=1 \text{ for every edge } \{u,v\} \in E(\Gamma) \rangle.$$
Part of our corollary can also be found in \cite[Theorem~E]{Oussama} for two-dimensional right-angled Artin groups, with a proof based on a completely different method.

\begin{cor}\label{cor:RAAG}
Let $\Gamma$ be a finite graph. The following statements are equivalent:
\begin{itemize}
	\item $\Gamma$ contains an induced $4$-cycle;
	\item $A(\Gamma)$ contains $\mathbb{F}_2 \times \mathbb{F}_2$ as a subgroup;
	\item $\mathbb{F}_2 \times \mathbb{F}_2$ quasi-isometrically embeds into $A(\Gamma)$;
	\item $\mathrm{Lamp}(\mathbb{Z})$ quasi-isometrically embeds into $A(\Gamma)$.
\end{itemize}
\end{cor}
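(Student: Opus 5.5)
Right-angled Artin groups $A(\Gamma)$ with $\Gamma$ finite are cocompact special: the Salvetti complex is a compact special cube complex. So Theorem~\ref{thm:IntroSpecial} applies to $A(\Gamma)$. It already gives the equivalence of the second, third and fourth items of the corollary, since these are among the equivalent conditions listed there. It remains to show that the first item is equivalent to the second, i.e.\ that $A(\Gamma)$ contains $\mathbb{F}_2 \times \mathbb{F}_2$ if and only if $\Gamma$ contains an induced $4$-cycle.

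\textbf{The induced $4$-cycle gives $\mathbb{F}_2 \times \mathbb{F}_2$.} Suppose $a,b,c,d$ span an induced $4$-cycle, with edges $ab, bc, cd, da$ and no diagonals $ac, bd$. The full subgraph on these vertices is the join $\{a,c\} * \{b,d\}$. Hence the subgroup it generates is $A(\{a,c\}) \times A(\{b,d\}) \cong \mathbb{F}_2 \times \mathbb{F}_2$. This uses the standard fact that induced subgraphs generate right-angled Artin subgroups; these are retracts of $A(\Gamma)$, so they are moreover undistorted.

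\textbf{$\mathbb{F}_2 \times \mathbb{F}_2$ forces an induced $4$-cycle.} Suppose $\Gamma$ has no induced $4$-cycle. I would show that $A(\Gamma)$ contains no $\mathbb{F}_2 \times \mathbb{F}_2$, using centralisers. Let $H = F \times F'$ be a copy of $\mathbb{F}_2 \times \mathbb{F}_2$ in $A(\Gamma)$.
\begin{enumerate}
\item Pick $g \in F$ nontrivial and cyclically reduced after conjugation. By Servatius' centraliser theorem, $C(g)$ is, up to conjugation, a product $\langle p_1 \rangle \times \cdots \times \langle p_k \rangle \times A(\mathrm{lk}(\mathrm{supp}(g)))$. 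Here the $p_i$ are the pure factors of $g$, corresponding to the join decomposition of the induced subgraph on $\mathrm{supp}(g)$, and $\mathrm{lk}$ denotes the common link of $\mathrm{supp}(g)$.
\item The factor $F'$ lies in $C(g)$ and is free non-abelian. Projecting away the abelian part, $F'$ must meet $A(\mathrm{lk}(\mathrm{supp}(g)))$ non-abelianly. Hence $\mathrm{lk}(\mathrm{supp}(g))$ contains two non-adjacent vertices $b,d$; otherwise it is a clique and the link group is abelian.
\item Symmetrically, choose $g$ inside $F$ so that $\mathrm{supp}(g)$ contains two non-adjacent vertices $a,c$. This is possible because $F$ is non-abelian: if every element of $F$ had clique support, one can argue that $F$ would be conjugate into an abelian group.
\item Then $a,c \in \mathrm{supp}(g)$ and $b,d \in \mathrm{lk}(\mathrm{supp}(g))$ span an induced $4$-cycle, a contradiction.
\end{enumerate}

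\textbf{Main obstacle.} The delicate step is making the choice of $g$ and the passage to supports rigorous. Conjugations and the splitting of $F'$ across the product decomposition of $C(g)$ need care. If this becomes too messy, I would instead cite the known result of Kambites (and Kim--Koberda) that $\mathbb{F}_2 \times \mathbb{F}_2 \le A(\Gamma)$ if and only if $\Gamma$ has an induced square, and reduce the corollary to Theorem~\ref{thm:IntroSpecial} plus that citation.
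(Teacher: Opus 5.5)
With the Kambites fallback, your proof is exactly the paper's. The paper gets the equivalence of the first two items from \cite[Theorem~3.7]{MR2475886} and the remaining ones from Theorem~\ref{thm:IntroSpecial}, using implicitly what you state explicitly, namely that $A(\Gamma)$ is cocompact special via its Salvetti complex. Your proof of the easy direction (an induced square $\{a,c\}\ast\{b,d\}$ generates $A(\{a,c\})\times A(\{b,d\})\cong\mathbb{F}_2\times\mathbb{F}_2$) is also correct.

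Your self-contained centraliser argument for the hard direction has a genuine gap at Step~3. That step carries the whole difficulty and is only asserted. Steps~1, 2 and~4 are fine: in Step~2, the kernel of $C(g)\to A(\mathrm{lk}(\mathrm{supp}(g)))$ is central, so it meets $F'$ trivially and $F'$ embeds. The problems with Step~3 are these.

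\begin{itemize}
\item The support that matters is that of a \emph{cyclically reduced conjugate} of $g$, since Step~1 conjugates $g$ before applying Servatius' theorem. For non-adjacent $a,c$, the element $cac^{-1}$ has support $\{a,c\}$, but its centraliser only sees $\{a\}$.
\item What you need is that every non-abelian free subgroup $F\le A(\Gamma)$ contains an element whose cyclically reduced support is not a clique. This cannot be read off from generators. Take the path with vertices $a,b,c$ and edges $ab,bc$: in its RAAG, $\langle ab,cb\rangle$ is free of rank two, yet both generators have clique support. So you must control supports of all products.
\end{itemize}

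Here are two ways to close the gap.

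\textbf{Via a splitting.} Pick $v$ with $\mathrm{st}(v)\neq\Gamma$ and use $A(\Gamma)=A(\Gamma\setminus v)\ast_{A(\mathrm{lk}(v))}A(\mathrm{st}(v))$. An element whose cyclically reduced support is a clique $K$ is conjugate into $A(\mathrm{st}(v))$ if $v\in K$, and into $A(\Gamma\setminus v)$ otherwise. Either way it is elliptic on the Bass--Serre tree. If this held for every element of $F$, Serre's lemma would conjugate the finitely generated group $F$ into a RAAG on a proper induced subgraph. Cyclically reduced supports are conjugacy invariant, so induction on $|V(\Gamma)|$ gives a contradiction; the base case is $\Gamma$ complete, where $A(\Gamma)$ is abelian.

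\textbf{Via the paper's graph-product argument.} Drop centralisers and specialise the proof of Proposition~\ref{prop:ProdInGP} to $\mathbb{Z}$ vertex groups. By Lemma~\ref{lem:JoinSub}, $H$ lies in a conjugate of $A(\Phi)\times A(\Psi)$ for an induced join $\Phi\ast\Psi\subseteq\Gamma$ (the isolated-vertex case is impossible). If neither factor is a clique, you read off an induced square directly. If $\Phi$ is a clique, the kernel of $H\to A(\Psi)$ is an abelian normal subgroup, hence trivial by Lemma~\ref{lem:NormalSubProdFree}, and you induct on $\Psi$.
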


\begin{proof}
The equivalence between the first two items is given by \cite[Theorem~3.7]{MR2475886}. The rest of the corollary is given by Theorem~\ref{thm:IntroSpecial}. 
\end{proof}

 \noindent
As another application of Theorem~\ref{thm:IntroSpecial}, let us consider \emph{graph braid groups}. Recall that, given a connected topological graph $\Gamma$ and an integer $n \geq 1$, the graph braid group $B_n(\Gamma)$ is the fundamental group of the configuration space
$$\{ (x_1, \ldots, x_n) \in \Gamma^n \mid \forall i \neq j, x_i \neq x_j \} / \mathrm{Sym}(n)$$
where the symmetric group $\mathrm{Sym}(n)$ permutes the coordinates. 

\begin{cor}
Let $\Gamma$ be connected topological graph. The following statements are equivalent:
\begin{itemize}
	\item for all disjoint connected subgraphs $\Lambda_1,\Lambda_2 \subset \Gamma$ and for all integers $n_1,n_2 \geq 1$ satisfying $n_1+n_2 \leq n$, there exists $i \in \{1,2\}$ such that one of the following conditions holds:
	\begin{itemize}
		\item $n_i=1$ and $\chi(\Lambda_i) \geq 0$;
		\item $n_i=2$ and $\Lambda_i$ is either a segment, or a circle, or a star with three arms;
		\item $n_i \geq 3$ and $\Lambda_i$ is either a segment or a circle;
	\end{itemize}
	\item $B_n(\Gamma)$ does not contain $\mathbb{F}_2 \times \mathbb{F}_2$ as a subgroup;
	\item $\mathbb{F}_2 \times \mathbb{F}_2$ does not quasi-isometrically embeds into $B_n(\Gamma)$;
	\item $\mathrm{Lamp}(\mathbb{Z})$ does not quasi-isometrically embeds into $B_n(\Gamma)$.
\end{itemize}
\end{cor}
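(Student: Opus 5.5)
The plan is to deduce this corollary from Theorem~\ref{thm:IntroSpecial}, exactly as Corollary~\ref{cor:RAAG} was obtained. Theorem~\ref{thm:IntroSpecial} applies once we know $B_n(\Gamma)$ is cocompact special, so that is the first step. The second step is a purely combinatorial characterisation of when $B_n(\Gamma)$ contains $\mathbb{F}_2 \times \mathbb{F}_2$, matching the first item of the statement.

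\textbf{Step 1: cocompact specialness.} We may assume $\Gamma$ is a finite graph, as is implicit for graph braid groups. After subdividing $\Gamma$ enough, which does not change the configuration space up to homotopy, Abrams' discrete configuration space $UC_n(\Gamma)$ is a compact nonpositively curved cube complex with fundamental group $B_n(\Gamma)$. Crisp--Wiest, and Genevois in his work on graph braid groups, showed that this cube complex is special. So $B_n(\Gamma)$ is cocompact special, and Theorem~\ref{thm:IntroSpecial} gives that the last three items are equivalent to one another.

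\textbf{Step 2: sufficiency of the condition for $\mathbb{F}_2\times\mathbb{F}_2$.} Suppose the condition fails. Then there are disjoint connected subgraphs $\Lambda_1,\Lambda_2$ and integers $n_1,n_2 \geq 1$ with $n_1+n_2 \leq n$ such that, for each $i$, the group $B_{n_i}(\Lambda_i)$ is not one of the listed exceptions. Place the remaining $n-n_1-n_2$ particles at fixed vertices away from $\Lambda_1 \cup \Lambda_2$, after subdividing if needed. Moving particles independently in the two disjoint subgraphs gives an embedding
$$B_{n_1}(\Lambda_1)\times B_{n_2}(\Lambda_2)\hookrightarrow B_n(\Gamma),$$
which is injective because it is a product of locally convex subcomplexes in a special cube complex. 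It remains to check that each factor contains $\mathbb{F}_2$:
\begin{itemize}
\item for $n_i=1$, $\pi_1(\Lambda_i)$ is free of rank $1-\chi(\Lambda_i)\geq 2$;
\item for $n_i \geq 2$, the known classification of graph braid groups says the only cases without a free subgroup of rank $2$ are those in which $B_{n_i}(\Lambda_i)$ is trivial or $\mathbb{Z}$: segments and circles, plus the tripod when $n_i=2$, since the $2$-particle configuration space of the tripod is a circle.
\end{itemize}

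\textbf{Step 3: necessity of the condition (the main obstacle).} Now suppose $\mathbb{F}_2\times\mathbb{F}_2\le B_n(\Gamma)$; we must produce $\Lambda_1,\Lambda_2,n_1,n_2$ violating the condition. The approach is to work in the universal cover, a CAT(0) cube complex. By standard results on special groups, or via Theorem~\ref{thm:IntroSpecial} and its proof, a subgroup $\mathbb{F}_2\times\mathbb{F}_2$ gives a $\pi_1$-injective combinatorial product of two locally convex subcomplexes. Each factor should have non-abelian free fundamental group.

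In $UC_n(\Gamma)$, each hyperplane corresponds to an edge of $\Gamma$ together with a configuration of the other particles. Transverse hyperplanes correspond to moves along disjoint edges that commute. So a product subcomplex decomposes the moving particles into two groups, travelling in disjoint regions of $\Gamma$, while the other particles stay fixed. Taking $\Lambda_i$ to be the connected region swept by group $i$, with $n_i$ its number of particles, each $B_{n_i}(\Lambda_i)$ contains $\mathbb{F}_2$, so neither factor satisfies the listed exceptions.

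The delicate part is the reduction to connected subgraphs. Particles of one factor might move in several components, which would force $\mathbb{F}_2$ to live in a smaller piece, possibly after refining the choice of $n_i$ and $\Lambda_i$. This is where I expect most of the work.
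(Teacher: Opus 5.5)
\textbf{The gap is Step 3.} Steps 1 and 2 are sound. For finite $\Gamma$, $B_n(\Gamma)$ is cocompact special via the discretised configuration space, so Theorem~\ref{thm:IntroSpecial} handles the last three items. The embedding $B_{n_1}(\Lambda_1)\times B_{n_2}(\Lambda_2)\hookrightarrow B_n(\Gamma)$, combined with the classification of cyclic graph braid groups, shows that failure of the graphical condition forces $\mathbb{F}_2\times\mathbb{F}_2$.

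Step 3, the implication ``$\mathbb{F}_2\times\mathbb{F}_2\le B_n(\Gamma)\Rightarrow$ the graphical condition fails'', is not actually proved. Its starting point is unjustified. Nothing in Theorem~\ref{thm:IntroSpecial} or its proof produces a $\pi_1$-injective product of locally convex subcomplexes carrying a given $\mathbb{F}_2\times\mathbb{F}_2$. That proof only shows ``no $\mathbb{F}_2\times\mathbb{F}_2\Rightarrow\mathrm{lin}$-polynomially hyperbolic'', by importing hyperbolicity of the cone-off from \cite{SpecialRH}. It says nothing about where product subgroups sit.

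The fact that product subgroups lie in conjugates of join subgroups (as in Lemma~\ref{lem:JoinSub}) holds in the ambient right-angled Artin group $A(\Delta Q)$. Converting it into ``the particles split into two groups moving in disjoint subgraphs while the rest stay frozen'' is exactly the nontrivial content. Your hyperplane heuristic does not do this. Particles are unlabelled, so ``the particles of factor $i$'' and ``the region factor $i$ sweeps'' need an argument. You would also have to identify the resulting subgroup with a full braid group $B_k(\cdot)$ rather than something smaller cut out by a particular family of hyperplanes. The paper does not reprove this but cites it: the equivalence of the first two items comes from \cite[Theorem~1.1 and Proposition~6.7]{MR4922688}, translated into the graphical list via \cite[Lemmas~4.3 and~4.14]{MR4227231}.

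\textbf{The reduction you flag is routine, given that input.} Suppose every $\mathbb{F}_2\times\mathbb{F}_2\le B_n(\Gamma)$ is conjugate into some $B_{k_1}(\Gamma_1)\times B_{k_2}(\Gamma_2)$ with $\Gamma_1,\Gamma_2$ disjoint and $k_1+k_2\le n$. Then you can finish along the lines of Proposition~\ref{prop:ProdInGP}:
\begin{itemize}
\item Subgroups of $B_n(\Gamma)$ embed in a right-angled Artin group, so each is abelian or contains $\mathbb{F}_2$.
\item If one factor is abelian, Lemma~\ref{lem:NormalSubProdFree} shows $\mathbb{F}_2\times\mathbb{F}_2$ injects into the other factor, and you induct on the number of edges.
\item Once both factors contain $\mathbb{F}_2$, split each $B_{k_i}(\Gamma_i)$ as the product of the braid groups of the components of $\Gamma_i$, with the particles distributed among them. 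Choose a component $\Lambda_i$, carrying $n_i\ge 1$ particles, whose braid group is non-abelian.
\item Every listed exception yields a trivial or cyclic braid group, so no exception holds for either $i$.
\end{itemize}
Without that structure result, Step 3 is a statement of intent, and the equivalence between the first two items is not established.
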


\begin{proof}
We know from \cite[Theorem~1.1 and Proposition~6.7]{MR4922688} exactly when a graph braid group does not contain $\mathbb{F}_2 \times \mathbb{F}_2$ as a subgroup, which corresponds to the graphical criterion given above according to \cite[Lemmas~4.3 and~4.14]{MR4227231}. The rest of the theorem is given by Theorem~\ref{thm:IntroSpecial}. 
\end{proof}

\noindent
After cocompact special groups, the second family of groups we use in order to illustrate this gap in polynomial hyperbolicity is given by \emph{graph products of groups}. Recall that, given a graph $\Gamma$ and a collection of groups $\mathcal{G}= \{G_u \mid u \in V(\Gamma)\}$ indexed by $V(\Gamma)$, the graph product $\Gamma \mathcal{G}$ is given by the relative presentation
$$\langle G_u \ (u \in V(\Gamma)) \mid [G_u,G_v]=1 \text{ for all edges } \{u,v\} \in E(\Gamma) \rangle,$$
where $[G_u,G_v]=1$ is a shorthand for: $[g,h]=1$ for all $g \in G_u$ and $h \in G_v$. Notice that graph products of infinite cyclic groups (resp.\ cyclic groups of order two) coincide with right-angled Artin (resp.\ Coxeter) groups. Our main result in the study of graph products is:

\begin{thm}\label{thm:IntroGP}
Let $\Gamma$ be a finite graph and $\mathcal{G}$ a collection of non-trivial groups of polynomial growth. The following statements are equivalent:
\begin{itemize}
	\item $\Gamma$ contains one of the graphs from Figure~\ref{GraphsIntro} as an induced subgraph;
	\item $\Gamma \mathcal{G}$ contains $\mathbb{F}_2 \times \mathbb{F}_2$ as a subgroup;
	\item $\mathbb{F}_2 \times \mathbb{F}_2$ quasi-isometrically embeds into $\Gamma \mathcal{G}$;
	\item $\mathrm{Lamp}(\mathbb{Z})$ quasi-isometrically embeds into $\Gamma \mathcal{G}$;
	\item $\Gamma \mathcal{G}$ is not $\mathrm{pol}$-polynomially hyperbolic;
	\item $\Gamma \mathcal{G}$ is not $\mathrm{lin}$-polynomially hyperbolic.
\end{itemize}
\end{thm}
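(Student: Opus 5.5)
We would prove Theorem~\ref{thm:IntroGP} by closing a cycle of implications, modelled on the proof of Theorem~\ref{thm:IntroSpecial}. Several implications are formal. An embedded $\mathbb{F}_2 \times \mathbb{F}_2$ in a finitely generated group need not be undistorted in general, so the step ``subgroup $\Rightarrow$ quasi-isometric embedding'' has to come from the explicit construction below rather than from abstract nonsense. Once a quasi-isometric embedding $\mathbb{F}_2\times\mathbb{F}_2 \hookrightarrow \Gamma\mathcal{G}$ is available, $\mathrm{Lamp}(\mathbb{Z})$ also embeds, since it embeds quasi-isometrically in $\mathbb{F}_2\times\mathbb{F}_2$. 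Next we use the theorem that $\mathrm{Lamp}(\mathbb{Z})$ is not $\mathrm{pol}$-polynomially hyperbolic. Polynomial hyperbolicity passes to quasi-isometrically embedded subgraphs: composing an $x^{\eta(y)}$-gentle Lipschitz map with a quasi-isometric embedding keeps it Lipschitz and gentle, up to changing constants. Hence $\Gamma\mathcal{G}$ is not $\mathrm{pol}$-polynomially hyperbolic. Since $\mathrm{lin} \leq \mathrm{pol}$, it is a fortiori not $\mathrm{lin}$-polynomially hyperbolic. This yields (2)$\Rightarrow$(3)$\Rightarrow$(4)$\Rightarrow$(5)$\Rightarrow$(6), provided (2)$\Rightarrow$(3) is handled via (1).

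For (1)$\Rightarrow$(2),(3), we would go through each graph of Figure~\ref{GraphsIntro}. Such a graph $\Lambda$ is an induced subgraph, so $\Lambda\mathcal{G}_{|\Lambda}$ is a retract of $\Gamma\mathcal{G}$, hence undistorted. It therefore suffices to exhibit an undistorted $\mathbb{F}_2\times\mathbb{F}_2$ inside each small graph product. The figure presumably lists the minimal configurations $\Lambda = \Lambda_1 * \Lambda_2$ (joins) where each $\Lambda_i\mathcal{G}$ contains an undistorted $\mathbb{F}_2$. Examples are two non-adjacent vertices with one vertex group of order $\geq 3$ or infinite, or three pairwise non-adjacent vertices; pairs of order-two groups only give the virtually cyclic $D_\infty$. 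A free product $A*B$ with $|A|\geq 3$ or $|B| \geq 3$ contains a finite-index free subgroup of rank $\geq 2$, hence an undistorted $\mathbb{F}_2$. The join then gives a direct product, and an undistorted product of undistorted subgroups.

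For (2)$\Rightarrow$(1) we would argue algebraically. Suppose $\Gamma$ contains none of the listed graphs. Then every join decomposition of an induced subgraph has a factor whose graph product is virtually nilpotent or virtually free of rank $\leq 1$. Using the structure of centralisers in graph products, a subgroup $\mathbb{F}_2 \times \mathbb{F}_2$ would force two commuting non-abelian free subgroups supported on a join. This contradicts the previous sentence, since groups of polynomial growth contain no $\mathbb{F}_2$.

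The main obstacle is (6)$\Rightarrow$(1), or rather its contrapositive: if $\Gamma$ avoids the figure, then $\Gamma\mathcal{G}$ is $\mathrm{lin}$-polynomially hyperbolic. We would construct the map $\varphi$ to the hyperbolic ``contact''-type graph obtained by coning off the maximal product subgroups $\langle \Lambda \rangle$ for joins $\Lambda$. Hyperbolicity of such coned-off Cayley graphs is known for graph products via the quasi-median geometry. Gentleness must come from the hypothesis: each coned-off coset is a product in which one factor has polynomial growth, or is virtually $\mathbb{Z}$ or $D_\infty$, and the other factor maps quasi-isometrically, giving fibres like $X\times Y$ as in the introductory example. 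The delicate point is counting $|B(p,R_1)\cap\varphi^{-1}(B(q,R_2))|$. We would bound it by the number of product pieces met along paths of length $R_2$ in $Z$, at most linear-in-$R_2$ many polynomial-growth pieces stacked together, giving $(CR_1)^{CR_2}$. This parallels the argument presumably used for Theorem~\ref{thm:IntroSpecial}, which we would invoke or adapt.
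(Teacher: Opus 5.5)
Your chain of easy implications (3)$\Rightarrow$(4)$\Rightarrow$(5)$\Rightarrow$(6) is fine. The decisive direction, that absence of $\mathbb{F}_2\times\mathbb{F}_2$ forces $\mathrm{lin}$-polynomial hyperbolicity, fails with the cone-off you propose.

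\textbf{The cone-off is too coarse.} Coning off cosets of the join parabolics $\langle\Lambda\rangle=\langle\Phi\rangle\times\langle\Psi\rangle$ gives each such coset diameter $1$ in $Z$. So $B(p,R_1)\cap\varphi^{-1}(B(\varphi(p),1))$ contains the whole $R_1$-ball of that coset. The hypothesis only guarantees that \emph{one} factor has polynomial growth; the other may contain $\mathbb{F}_2$. For instance, if $\Gamma$ is a path on three vertices labelled by $\mathbb{Z}$, then $\Gamma\mathcal{G}=\mathbb{Z}\times\mathbb{F}_2$ is itself a join parabolic and your cone-off is bounded. The fibres then grow exponentially in $R_1$ for fixed $R_2$, so no $x^{\eta(y)}$ bound can hold. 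Your heuristic that ``the other factor maps quasi-isometrically'' is incompatible with coning off the whole coset.

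You must cone off only polynomial-growth pieces. The paper takes all cosets of standard parabolic subgroups of polynomial growth. For that finer cone-off, the known hyperbolicity of contact-type graphs does not apply. The paper instead proves the flat-rectangle criterion Theorem~\ref{thm:WhenHyp} on $\mathrm{QM}(\Gamma,\mathcal{G})$. It verifies it by noting that the labels $\Phi,\Psi$ of the two sides of a flat rectangle span $\langle\Phi\rangle\times\langle\Psi\rangle$. By the hypothesis and Corollary~\ref{cor:NoFreeSub}, one side therefore spans a polynomial-growth parabolic, and all its parallel copies are coned off. This is exactly where the absence of $\mathbb{F}_2\times\mathbb{F}_2$ enters.

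\textbf{The counting step lacks syllabicity.} To bound the choices at each of the $\lesssim R_2$ steps by $\gamma_{\mathcal{P}}(CR_1)$, each step of a cone-off (quasi-)geodesic from a fibre point to a base point must have $X$-length $\lesssim R_1$. That requires cone-off geodesics to extend to $X$-(quasi-)geodesics. Otherwise a single step through a coned-off piece can have unbounded $X$-length. The paper gets this syllabicity in three steps:
\begin{itemize}
\item Proposition~\ref{prop:SyllabicQM}, using parallel pairs and gatedness of parabolic cosets in the quasi-median graph;
\item transfer to the finitely generated Cayley graph via Lemmas~\ref{lem:StronglySyllGP} and~\ref{lem:SyllStrongSyll};
\item then Proposition~\ref{prop:QuasiSyl}.
\end{itemize}

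\textbf{Two smaller points.}
\begin{itemize}
\item In (1)$\Rightarrow$(3), $A*B$ need not be virtually free when $A$ or $B$ is infinite (e.g.\ $\mathbb{Z}^2*\mathbb{Z}^2$). You must first pass to undistorted finite or infinite cyclic subgroups of the vertex groups.
\item In (2)$\Rightarrow$(1), Lemma~\ref{lem:JoinSub} only places the whole $\mathbb{F}_2\times\mathbb{F}_2$ in a conjugate of $\langle\Phi\rangle\times\langle\Psi\rangle$, with no alignment of the factors. You need that every nontrivial normal subgroup of $\mathbb{F}_2\times\mathbb{F}_2$ contains $\mathbb{F}_2$. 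This makes the projection to $\langle\Psi\rangle$ injective when $\langle\Phi\rangle$ has polynomial growth, and you then induct on $|V(\Gamma)|$.
\end{itemize}
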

\begin{figure}[h!]
\begin{center}
\includegraphics[width=0.7\linewidth]{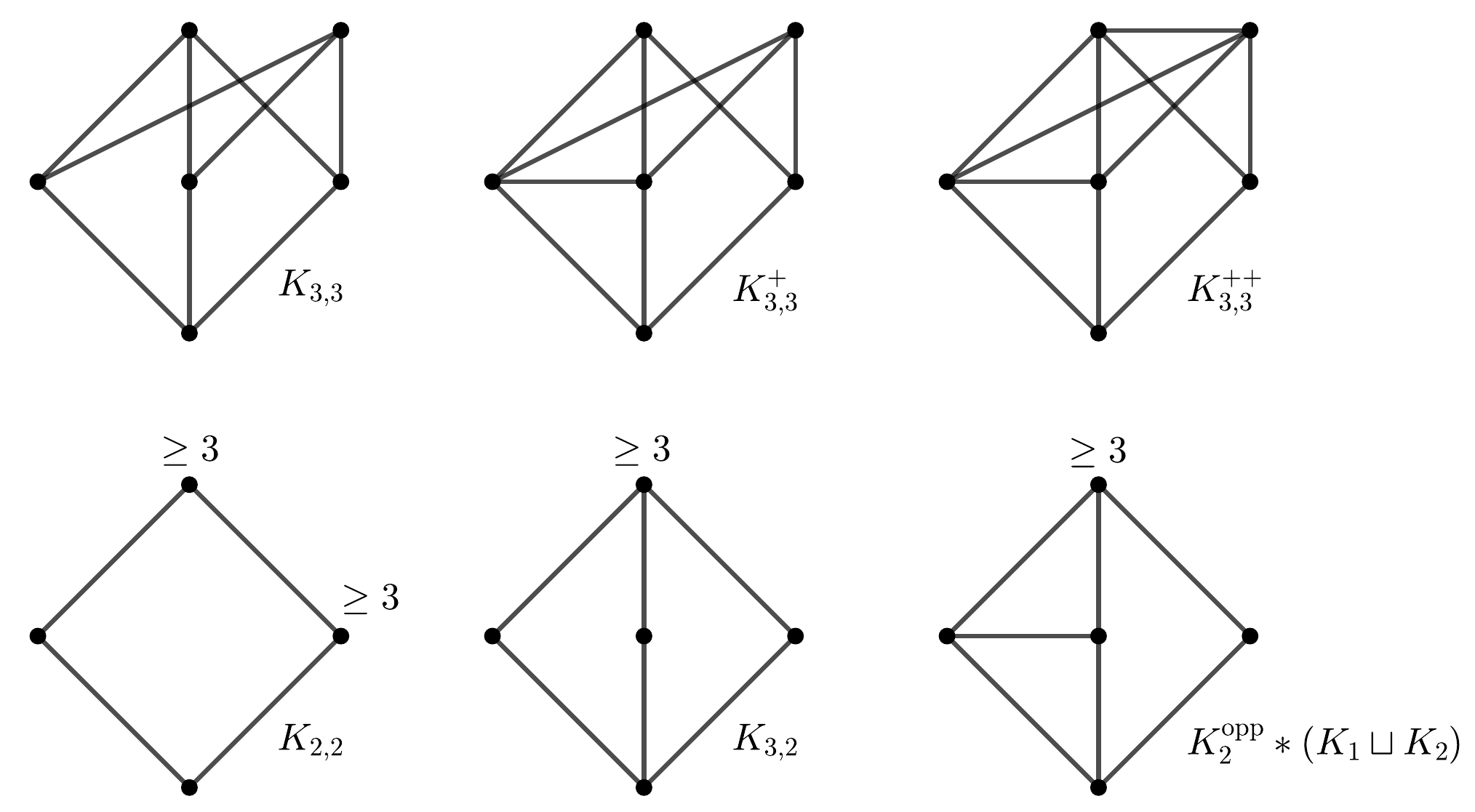}
\caption{}
\label{GraphsIntro}
\end{center}
\end{figure}

\noindent
Again, Theorem~\ref{thm:IntroGP} can be applied to explicit examples of groups. Of course, we recover Corollary~\ref{cor:RAAG} for right-angled Artin groups. For right-angled Coxeter groups, we deduce an analogous statement:

\begin{cor}\label{cor:RACGs}
Let $\Gamma$ be a finite graph. The following statements are equivalent:
\begin{itemize}
	\item $\Gamma$ contains an induced copy of $K_{3,3}$, $K_{3,3}^+$, or $K_{3,3}^{++}$;
	\item $C(\Gamma)$ contains $\mathbb{F}_2 \times \mathbb{F}_2$ as a subgroup;
	\item $\mathbb{F}_2 \times \mathbb{F}_2$ quasi-isometrically embeds into $C(\Gamma)$;
	\item $\mathrm{Lamp}(\mathbb{Z})$ quasi-isometrically embeds into $C(\Gamma)$.
\end{itemize}
\end{cor}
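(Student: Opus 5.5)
The plan is to derive Corollary~\ref{cor:RACGs} from Theorem~\ref{thm:IntroGP}, in the same way Corollary~\ref{cor:RAAG} follows from Theorem~\ref{thm:IntroSpecial}. The right-angled Coxeter group $C(\Gamma)$ is the graph product over $\Gamma$ whose vertex groups are all $\mathbb{Z}/2\mathbb{Z}$. These vertex groups are non-trivial and finite, so they have polynomial growth, and Theorem~\ref{thm:IntroGP} applies directly.

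That theorem makes the last three items of the corollary equivalent to each other. It also makes them equivalent to the condition that $\Gamma$ contains one of the graphs from Figure~\ref{GraphsIntro} as an induced subgraph. So the only remaining task is to specialise this graphical criterion to the case where every vertex group has order two, and to check that it becomes ``$\Gamma$ contains an induced $K_{3,3}$, $K_{3,3}^+$ or $K_{3,3}^{++}$''.

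I expect the family in Figure~\ref{GraphsIntro} to be indexed by the type of each vertex group. A vertex group that is finite, two-ended, or of higher growth can contribute differently to a free subgroup in a join factor. The basic mechanism is as follows:
\begin{itemize}
	\item a join $\Lambda_1 * \Lambda_2$ of two induced subgraphs, each of which generates a virtually non-abelian-free subgroup, produces $\mathbb{F}_2 \times \mathbb{F}_2$;
	\item for $\mathbb{Z}/2$ vertex groups, the smallest such $\Lambda_i$ is a set of three pairwise non-adjacent vertices (the free product $\mathbb{Z}/2 * \mathbb{Z}/2 * \mathbb{Z}/2$ is virtually free of rank at least $2$), or three vertices with one edge among them.
\end{itemize}
Taking the join of two such $3$-vertex sides with $0$, $1$ or $2$ edges added inside the sides gives exactly $K_{3,3}$, $K_{3,3}^+$ and $K_{3,3}^{++}$.

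The main obstacle is this matching step: checking that when all vertex groups are $\mathbb{Z}/2$, every graph of Figure~\ref{GraphsIntro} reduces to one of these three graphs, and conversely. I would first try to read off from the figure which entries require infinite vertex groups. Such entries, for example the induced $4$-cycle, which gives $(\mathbb{Z}/2 * \mathbb{Z}/2)^2$, virtually $\mathbb{Z}^2$, are irrelevant for Coxeter groups.

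For the remaining entries I would check directly that a side of the join generates a virtually non-abelian-free group exactly when it has three vertices and is not connected, i.e.\ when its complement contains a triangle. A side with fewer vertices, or a connected three-vertex side such as a path, generates a finite or virtually cyclic group or splits as a product, so it cannot produce the required free factor.

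If the figure is not indexed this way, I would fall back on the known classification of $\mathbb{F}_2 \times \mathbb{F}_2$ subgroups in right-angled Coxeter groups. That classification says such subgroups come from an induced join of two subgraphs whose special subgroups are not virtually abelian. Combining it with the first item of Theorem~\ref{thm:IntroGP} gives the corollary.
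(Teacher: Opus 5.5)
Your proposal is correct, and for the equivalence of the last three items it coincides with the paper: view $C(\Gamma)$ as a graph product of copies of $\mathbb{Z}_2$ and apply Theorem~\ref{thm:IntroGP}. The difference is in how you link the graphical condition to the subgroup condition.

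The paper does not specialise Figure~\ref{GraphsIntro} at all. It imports the known right-angled Coxeter characterisation \cite[Theorem~1.1 and Proposition~6.1]{MR4922688}, which is your fallback.

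Your main route, reading the condition off the paper's own criterion, also works. Proposition~\ref{prop:ProdInGP} and Lemma~\ref{lem:NoFTwoGP} show that the graphs of Figure~\ref{GraphsIntro} are joins of two configurations from Figure~\ref{NoFTwo}. These configurations are two non-adjacent vertices one of which has a vertex group of order $\geq 3$, three pairwise non-adjacent vertices, or $K_1 \sqcup K_2$. So the relevant dichotomy is order $2$ versus order $\geq 3$, not finite versus infinite. The conclusion is nevertheless what you predicted. The first configuration disappears when all vertex groups have order $2$, and joins of the other two give exactly $K_{3,3}$, $K_{3,3}^+$ and $K_{3,3}^{++}$.

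Your route makes the corollary self-contained relative to the paper. The paper's citation spares the reader from decoding the figure's labels.

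Two small points:
\begin{itemize}
\item The minimality behind your ``direct check'' really comes from Lemma~\ref{lem:NoFTwoGP} and Corollary~\ref{cor:NoFreeSub}, which treat sides of arbitrary size. Your three-vertex analysis alone only suffices once you know the figure contains nothing larger.
\item The phrase ``not connected, i.e.\ its complement contains a triangle'' is false for $K_1 \sqcup K_2$, whose complement is a path. The correct condition is simply that the three vertices span at most one edge.
\end{itemize}
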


\noindent
Here, $K_{3,3}$ refers to the complete bipartite graph, which is also the join of two triples of isolated vertices; $K_{3,3}^+$ refers to the join of a triple of isolated vertices with $K_1 \sqcup K_2$ (i.e.\ the disjoint union of an isolated vertex with a single edge); and finally, $K_{3,3}^{++}$ refers to the join of two copies of $K_1 \sqcup K_2$. See Figure~\ref{GraphsIntro}. 

\begin{proof}[Proof of Corollary~\ref{cor:RACGs}.]
The equivalence between the first two items is given by \cite[Theorem~1.1 and Proposition~6.1]{MR4922688}. The rest of the corollary is given by Theorem~\ref{thm:IntroGP}. 
\end{proof}

\noindent
We conclude this introduction with the following question, which we leave open:

\begin{question}\label{question:IntroPower}
Let $n,m \geq 0$ be two integers. Among cocompact special group, is containing $\mathbb{F}_2^n \times \mathbb{Z}^m$ as a subgroup a quasi-isometric invariant?
\end{question}

\noindent
As already said, the answer is positive for $(n,m)$ equal to $(0,2)$ according to \cite{MR2541383} (characterising hyperbolicity), to $(1,2)$ according to \cite{SpecialRH} (characterising toral relative hyperbolicity), and finally to $(2,0)$ according to Corollary~\ref{cor:IntroQIspecial} (characterising polynomial hyperbolicity). A positive answer is also obtained in \cite{Oussama} for $m=0$ and $n$ arbitrary but in the more restricted family of two-dimensional right-angled Artin groups. 

\medskip \noindent
Polynomial hyperbolicity does not allow us to distinguish $\mathbb{F}_2 \times \mathbb{F}_2$ and $\mathbb{F}_2 \times \mathbb{F}_2 \times \mathbb{F}_2$. However, the more general idea that $\mathbb{F}_2^n$ can be mapped to a hyperbolic space only with large coarse fibres could apply. We expect polynomial hyperbolicity to be a first step in a hierarchy of properties that would allow us to distinguish cocompact special groups containing $\mathbb{F}_2 \times \mathbb{Z}^m$ for various values of $n$ and $m$.

\paragraph{Acknowledgements.} I am grateful to Oussama Bensaid who shared with me some of his nice results from \cite{Oussama}. Generalising \cite[Theorem~E]{Oussama} from two-dimensional right-angled Artin groups to a more general setting motivated this work.

\section{Gentle maps}\label{section:gentle}

\noindent
Intuitively, what we want to do is to quantify how brutal we have to be in order to turn a metric space into a hyperbolic space. For this purpose, we introduce the following notion:

\begin{definition}\label{definition:Gentle}
Let $X$ be a locally finite graph and $Y$ a metric space. Given a function $F : [0,+ \infty)^2 \to [0,+ \infty)$, a map $\varphi : X \to Y$ is \emph{$F$-gentle} if there exists a constant $C\geq 0$ such that
$$| B(p,R_1) \cap \varphi^{-1}(B(q,R_2)) | \leq C F(CR_1,CR_2)$$
for all $R_1,R_2 \geq 0$, $p \in X$, and $q \in Y$. 
\end{definition}

\noindent
The idea to keep in mind is that the first coordinate of the map $F(\cdot,\cdot)$ controls the growth of coarse fibres and the second coordinate of $F(\cdot,\cdot)$ controls how this growth evolves as the thickness of the fibres grows. 

\medskip \noindent
Typically, given a locally finite graph, the goal will be to find a function $F$ as small as possible such that there exists a Lipschitz $F$-gentle map from our graph to some hyperbolic space.

\begin{remark}
Definition~\ref{definition:Gentle} naturally extends to arbitrary metric measure spaces. In view of the applications we are interested in, namely finitely generated groups, we restrict ourselves to locally finite graphs for simplicity. 
\end{remark}

\paragraph{First examples.} In order to illustrate the notion of gentle maps, we record below a few natural examples. 

\begin{ex}\label{ex:ToApoint}
Given two graphs of bounded degree $X$ and $Y$, clearly every map $X \to Y$ is $F$-gentle for $F(x,y)= \mathrm{growth}_X(x)$ where $\mathrm{growth}_X : R \mapsto \max_{p \in V(X)} |B(p,R)|$. Thus, every graph $X$ of bounded degree can be mapped $F$-gently to the hyperbolic space $\{\mathrm{pt}\}$ with $F(x,y)= \mathrm{growth}_X(x)$, and a fortiori with $F(x,y)= \exp(x)$. 
\end{ex}

\begin{ex}\label{ex:ExpPolynomial}
Recall that, given a graph $X$, the \emph{horoball} $\mathcal{H}(X)$ is the graph whose vertex-set is $V(X) \times \mathbb{N}$ and whose edges connect $(x,n)$ to $(y,m)$ if either $y=x$ and $m=n+1$ or $m=n$ and $d_X(x,y) \leq 2^{-n}$. Horoballs are always hyperbolic \cite[Theorem~3.8]{MR2448064}. As an easy consequence of \cite[Lemma~3.10]{MR2448064}, if $X$ is locally finite, then the canonical embedding $\varphi : X \to \mathcal{H}(X)$ is $F$-gentle for $F(x,y)= \mathrm{growth}_X(\min(x,2^y))$.
\end{ex}

\begin{ex}\label{ex:ProjectionProduct}
Given two locally finite graphs $X$ and $Y$, the projection map $X \times Y \to Y$ is $F$-gentle with $F(x,y)=\mathrm{growth}_X(x)\mathrm{growth}_Y(\min(x,y))$.  For instance, $\mathbb{F}_2 \times \mathbb{Z}^n \to \mathbb{F}_2$ is $2^y\min(x,y)^n$-gentle. 
\end{ex}

\begin{ex}
Let $\mathbb{F}_2= \langle a,b \mid \rangle$ be the free group of rank two. The canonical map $\mathrm{Cayl}(\mathbb{F}_2, \{a,b\}) \to \mathrm{Cayl}(\mathbb{F}_2, \langle a \rangle \cup \langle b \rangle)$ is $x^y$-gentle. This can be checked by direct computation, or as a consequence of Lemma~\ref{prop:QuasiSyl} below. 
\end{ex}

\paragraph{Polynomial hyperbolicity.} We will mainly focus on $F$-gentle maps with $F(x,y)=x^{\eta(y)}$ for some functions $\eta : [0,+ \infty) \to [0,+ \infty)$. In this case, the intuition to keep in mind is that coarse fibres have polynomial growth and that the map $\eta$ controls how the degree evolves when the thickness of the fibre increases.  

\begin{definition}
A locally finite graph $X$ is \emph{$\eta$-polynomially hyperbolic} if there exists a Lipschitz map $X \to Y$ to a hyperbolic space that is $x^{\eta(y)}$-gentle. 
\end{definition}

\noindent
As justified by our next lemma, polynomial hyperbolicity is clearly preserved by quasi-isometries. Consequently, it makes sense to refer to a finitely generated group as being ($\eta$-)polynomially hyperbolic whenever its Cayley graphs (constructed from finite generating sets) are ($\eta$-)polynomially hyperbolic. 

\begin{lemma}\label{lem:GentleStableQI}
Let $X,Y$ be two graphs of bounded degree and $\rho : X \to Y$ a quasi-isometric embedding. If $\varphi : Y \to Z$ is an $F$-gentle map to some metric space $Z$, then $\varphi \circ \rho$ is also $F$-gentle. Consequently, if $Y$ is $\eta$-polynomially hyperbolic for some $\eta$, then $X$ must be $\eta$-polynomially hyperbolic as well. 
\end{lemma}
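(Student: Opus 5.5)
We prove the first assertion directly from the definition and then deduce the second. Fix constants $A \geq 1, B \geq 0$ such that
$$\tfrac{1}{A} d(x,x') - B \leq d(\rho(x),\rho(x')) \leq A d(x,x') + B \quad \text{for all } x,x' \in X,$$
and let $C$ be the constant given by the $F$-gentleness of $\varphi$. Fix $p \in X$, $q \in Z$ and $R_1,R_2 \geq 0$, and set
$$S := B(p,R_1) \cap (\varphi \circ \rho)^{-1}(B(q,R_2)).$$
The map $\rho$ sends $S$ into $B(\rho(p), AR_1+B) \cap \varphi^{-1}(B(q,R_2))$, whose cardinality is at most $C F(C(AR_1+B), CR_2)$.

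\textbf{Step 1: the fibres of $\rho$ are uniformly bounded.} If $\rho(x)=\rho(x')$, then $d(x,x') \leq AB$. Since $X$ has bounded degree, each fibre of $\rho$ therefore has at most $N := \max_{x} |B(x,AB)| < \infty$ points. Hence
$$|S| \leq N \cdot C F\bigl(CAR_1 + CB, CR_2\bigr).$$

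\textbf{Step 2: absorb the constants.} We want a new constant $C'$ with
$$N C F(CAR_1+CB, CR_2) \leq C' F(C'R_1, C'R_2).$$
This step needs some monotonicity of $F$ and control of the additive $CB$. For $R_1 \geq 1$ we have $CAR_1 + CB \leq C(A+B)R_1$, so monotonicity of $F$ in each variable (true for all functions of interest, such as $x^{\eta(y)}$ with $\eta$ nondecreasing, or growth functions) gives the bound with $C' = \max(NC, C(A+B))$.

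For $R_1 < 1$, the ball $B(p,R_1)$ is just $\{p\}$ because $X$ is a graph, so $|S| \leq 1$. It then suffices that $C'F(C'R_1,C'R_2)$ be bounded below by $1$, or else one handles this trivial regime separately. This is the one genuinely delicate point, since the definition does not impose regularity on $F$. I would state the mild standing assumption that $F$ is nondecreasing in each variable and $F \geq 1$ (note $x^{\eta(y)}$ at $x=0$ can vanish, so one can alternatively read balls of radius $<1$ as trivial or replace $R_1$ by $\max(R_1,1)$). I expect this bookkeeping to be the main obstacle; the geometric content is only Step 1.

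\textbf{Step 3: deduce the second assertion.} Suppose $Y$ is $\eta$-polynomially hyperbolic via a Lipschitz $x^{\eta(y)}$-gentle map $\varphi : Y \to Z$ with $Z$ hyperbolic. By Steps 1 and 2, $\varphi \circ \rho$ is $x^{\eta(y)}$-gentle. It remains to make it Lipschitz. The map $\varphi \circ \rho$ is coarsely Lipschitz: adjacent vertices $x,x'$ of $X$ satisfy
$$d(\varphi\rho(x), \varphi\rho(x')) \leq L(A+B),$$
where $L$ is the Lipschitz constant of $\varphi$. Since $X$ is a graph, it is enough to check Lipschitzness on edges, so $\varphi \circ \rho$ is Lipschitz on vertices with constant $L(A+B)$. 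If $X$ is viewed as a metric graph, we extend the map affinely along the geodesics of $Z$, or simply restrict to vertex sets. Therefore $X$ is $\eta$-polynomially hyperbolic.

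Applying this to a quasi-isometry and a quasi-inverse shows that polynomial hyperbolicity is a quasi-isometry invariant.
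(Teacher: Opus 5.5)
Your argument is the paper's argument. The paper bounds the fibres of $\rho$ by $N$, uses the inclusion $\rho(B(p,R_1))\subset B(\rho(p),AR_1+B)$, bounds $|S|$ by $N$ times the gentleness bound for $\varphi$ at radii $AR_1+B$ and $R_2$, and then says ``the desired conclusion follows''. Your Steps 1 and 3 fill in what the paper leaves implicit: the bound on fibres coming from bounded degree, and the check that $\varphi\circ\rho$ is Lipschitz.

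The one thing to correct is Step 2, where you prove less than the statement. You assume $F$ is monotone in each variable and $F\geq 1$, which the lemma does not grant. For $F=x^{\eta(y)}$ this restricts you to nondecreasing $\eta$ (and first argument $\geq 1$), whereas the second assertion is for arbitrary $\eta$. No regularity of $F$ is needed: use monotonicity of balls in their radii instead of monotonicity of $F$.

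Put $C':=\max(NC,\,C(A+B))$, so $C'\geq C$ since $A\geq 1$, and set $r_i:=C'R_i/C$. Then $r_2\geq R_2$, and $\rho(S)\subset B(\rho(p),r_1)$ in both regimes:
\begin{itemize}
\item for $R_1\geq 1$, because $r_1\geq (A+B)R_1\geq AR_1+B$;
\item for $R_1<1$, because $S\subset\{p\}$.
\end{itemize}
Applying the gentleness of $\varphi$ at the radii $r_1,r_2$ gives $|\rho(S)|\le C\,F(Cr_1,Cr_2)=C\,F(C'R_1,C'R_2)$. Hence $|S|\le NC\,F(C'R_1,C'R_2)\le C'\,F(C'R_1,C'R_2)$ for arbitrary $F$. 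So the ``delicate point'' you anticipated, including the small-$R_1$ regime, is not actually an obstacle.
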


\begin{proof}
Fix $A>0$, $B \geq 0$, and $N \geq 0$ such that $\rho$ is an $(A,B)$-quasi-isometric embedding with at most $N$ points in each pre-image. Given two points $p \in V(X)$, $q \in Z$ and two radii $R_1,R_2 \geq 0$, we deduce from the inclusion $B(p,R_1) \subset \rho^{-1} (B(\rho(p),AR_1+B))$ that
$$B(p,R_1) \cap (\varphi \circ \rho)^{-1} ( B(q,R_2)) \subset \rho^{-1} \left( B(\rho(p),AR_1+B) \cap \varphi^{-1}(B(q,R_2)) \right).$$
Since $\varphi$ is $F$-gentle, we find a uniform constant $C \geq 0$ such that 
$$|B(p,R_1) \cap (\varphi \circ \rho)^{-1} ( B(q,R_2))| \leq NC F(C(AR_1+B), R_2).$$
The desired conclusion follows. 
\end{proof}

\noindent
Let us mention a few examples of polynomially hyperbolic graphs and groups.

\begin{ex}
A locally finite graph is bounded-polynomially hyperbolic if and only if it has polynomial growth. (For the converse, this follows from Example~\ref{ex:ToApoint}.)
\end{ex}

\begin{ex}
As a consequence of Example~\ref{ex:ExpPolynomial}, every graph of bounded degree is exp-polynomially hyperbolic. 
\end{ex}

\begin{ex}
As a consequence of Example~\ref{ex:ProjectionProduct}, for every hyperbolic group $G$ and for every integer $n \geq 0$, $G \times \mathbb{Z}^n$ is linear-polynomially hyperbolic. 
\end{ex}

\paragraph{Quasi-syllabic collections.} In order to prove that a given graph $X$ is polynomially hyperbolic, typically the strategy will be to map $X$ to some hyperbolic graph obtained by conning-off some subgraphs of polynomial growth. In this article, we use the following definition of cone-offs:

\begin{definition}
Let $X$ be a graph and $\mathcal{P}$ a collection of vertex-sets. The \emph{cone-off} $\mathrm{ConeOff}(X, \mathcal{P})$ is the graph obtained from $X$ by adding an edge between any two vertices that belong to a common subset in $\mathcal{P}$. 
\end{definition}

\noindent
There already exist several efficient criteria available in the literature in order to prove that specific cone-offs are hyperbolic. In addition, we need to show that the canonical projections to our cone-offs are sufficiently gentle. Proposition~\ref{prop:QuasiSyl} below provides the criterion we will need in our applications. It is based on the following definition:

\begin{definition}
Let $X$ be a graph. A collection of vertex-sets $\mathcal{P}$ is \emph{quasi-syllabic} if there exist $A>0$ and $B \geq 0$ such that any two vertices of $X$ can be connected by some $(A,B)$-quasigeodesic in $\mathrm{ConeOff}(X,\mathcal{P})$ that extends to an $(A,B)$-quasigeodesic in $X$. In other words, for all $x,y \in V(X)$, there exist vertices $p_1, \ldots, p_n \in V(X)$ defining an $(A,B)$-quasigeodesic in $\mathrm{ConeOff}(X, \mathcal{P})$ from $x$ to $y$ and satisfying
$$d_X(x,y)  \geq \frac{1}{A} \sum\limits_{i=1}^{n-1} d_X(p_i,p_{i+1}) -B  .$$
\end{definition}

\noindent
Before stating and proving our criterion, recall that $\mathcal{P}$ is said \emph{locally finite} if there exists a constant $N \geq 0$ such that every vertex belongs to at most $N$ subsets in $\mathcal{P}$. Also, the \emph{growth} of our collection $\mathcal{P}$ is defined as $\gamma_\mathcal{P} := \max_{P \in \mathcal{P}} \max_{x \in P} |B(x,R) \cap P|$. 

\begin{prop}\label{prop:QuasiSyl}
Let $X$ be a graph and $\mathcal{P}$ a locally finite quasi-syllabic collection. The canonical map $\varphi : X \to \mathrm{ConeOff}(X,\mathcal{P})$ is $\gamma_\mathcal{P}(x)^y$-gentle where $\gamma_\mathcal{P}$ denotes the growth function of $\mathcal{P}$ in $X$.
\end{prop}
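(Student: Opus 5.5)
Fix $p \in V(X)$, $q \in V(\mathrm{ConeOff}(X,\mathcal{P}))$ and radii $R_1,R_2 \geq 0$. Since the vertex sets of $X$ and of the cone-off coincide, the plan is to bound $|B(p,R_1) \cap \varphi^{-1}(B(q,R_2))|$ by counting the ways to reach its points from a fixed base point along quasi-syllabic paths. We may assume the intersection is non-empty and fix a point $x_0$ in it; then every point $y$ of the intersection satisfies $d_X(x_0,y) \leq 2R_1$ and $d_{\mathrm{ConeOff}}(x_0,y) \leq 2R_2$. Applying quasi-syllabicity to the pair $x_0,y$ gives vertices $x_0=p_1, \ldots, p_n=y$ forming an $(A,B)$-quasigeodesic in the cone-off. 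Hence consecutive points are at bounded cone-off distance, and $n \leq 2AR_2 + B'$ for a uniform constant $B'$. The extension inequality gives $\sum_i d_X(p_i,p_{i+1}) \leq A(2R_1+B)$, so each $X$-jump $d_X(p_i,p_{i+1})$ is at most $A(2R_1+B)$.

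Next I will refine the path so that each step is either a single edge of $X$ or a jump inside one $P \in \mathcal{P}$. Since consecutive $p_i$ are at cone-off distance at most some constant $K=K(A,B)$, I can interpolate between them with at most $K$ cone-off edges. The subtlety is that this interpolation must not destroy the control on $X$-distances. The cleanest fix is to assume, reparametrising if needed (quasigeodesics in a graph may be taken to be discrete with steps of length $\leq A+B$), that each consecutive pair $p_i,p_{i+1}$ is joined by a cone-off geodesic of length at most $K$ whose intermediate vertices $z$ satisfy $d_X(p_i,z) \leq$ some multiple of $d_X(p_i,p_{i+1})+1$. This is the step I expect to be the main obstacle: choosing the interpolating geodesics so that they stay $X$-close. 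The first thing I would try is simply to note that the definition lets us take $p_1,\ldots,p_n$ to be the vertices of a cone-off path itself, i.e.\ to read ``quasigeodesic in the cone-off'' as a path of cone-off edges, with $A,B$ absorbing the constants. Then each step is an $X$-edge or a cone edge inside some $P$.

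Granting this, the counting is direct. Starting from $x_0$, each step $p_i \to p_{i+1}$ is chosen as follows. First choose which $P$ containing $p_i$ is used, or choose an ordinary $X$-edge; by local finiteness and bounded degree there are at most $N+\deg$ options. Then choose $p_{i+1} \in P \cap B(p_i, A(2R_1+B))$, which gives at most $\gamma_\mathcal{P}(A(2R_1+B))$ options. So the number of endpoints $y$ is at most
$$\big((N+D)\,\gamma_\mathcal{P}(C R_1)\big)^{2AR_2+B'} .$$
Finally, I would absorb constants into the form $C\gamma_\mathcal{P}(CR_1)^{CR_2}$. This works because $\gamma_\mathcal{P} \geq 1$, so that $(N+D)^{n}$ is at most $\gamma_\mathcal{P}(CR_1)^{O(n)}$ when $\gamma_\mathcal{P}(CR_1) \geq 2$; the degenerate case is handled by enlarging $C$ and using the bounded degree of $X$, which is implicit in local finiteness of the relevant balls. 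The additive $B'$ in the exponent is absorbed into the prefactor $C$ up to the same kind of enlargement.
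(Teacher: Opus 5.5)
Your proposal is correct and follows the paper's proof. The steps match: fix a base point in the intersection, read the quasi-syllabic quasigeodesic as a genuine path of cone-off edges (the paper makes the same reading implicitly when it asserts that consecutive $p_i,p_{i+1}$ lie in a common $P_i$), get at most $2AR_2+B'$ steps each of $X$-length $O(R_1)$, and count at most $N\gamma_\mathcal{P}(O(R_1))$ choices per step. Your extra branch for plain $X$-edges goes beyond the paper, which simply takes every step inside some $P_i$ (valid in its applications, where each edge of $X$ lies in a member of $\mathcal{P}$); that branch tacitly needs extra hypotheses, since bounded degree does not follow from local finiteness, and when $\gamma_\mathcal{P}\equiv 1$ no enlargement of $C$ rescues the bound.
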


\begin{proof}
Fix two vertices $p,q \in V(X)$ and two constants $R_1,R_2 \geq 0$. Our goal is to estimate the size of
$$B(p,R_1) \cap \varphi^{-1} (B(q,R_2)).$$
If the intersection is empty, then there is nothing to do, so we assume from now on that our intersection is non-empty. We fix a basepoint $p'$ in this intersection. Notice that
$$B(p,R_1) \cap \varphi^{-1} (B(q,R_2)) \subset B(p',2R_1) \cap \varphi^{-1}(B(q,R_2)),$$
so it suffices to find an upper bound for the size of $B(p',2R_1) \cap \varphi^{-1}(B(q,R_2))$. Let $x$ be an arbitrary vertex in this intersection. Because $\mathcal{P}$ is quasi-syllabic, there exist uniform constants $A>0$, $B \geq 0$ and vertices $p_1, \ldots, p_n \in V(X)$ defining an $(A,B)$-quasigeodesic from $x$ to $p'$ in $\mathrm{ConeOff}(X,\mathcal{P})$ and satisfying
$$d_X(x,p') \geq \frac{1}{A} \sum\limits_{i=1}^{n-1} d_X(p_i,p_{i+1})-B.$$
Notice that, since $d_X(x,p') \leq R_1$, we must have $d_X(p_i,p_{i+1}) \leq A(R_1+B)$ for every $1 \leq i \leq n-1$. Also, notice that $n \leq  2AR_2 +B$. Indeed, 
$$n \leq A \dot{d}(x,p') +B \leq A ( \dot{d}(x,q) + \dot{d}(q,p')) +B \leq 2AR_2 +B$$
where $\dot{d}$ denotes the metric in $\mathrm{ConeOff}(X, \mathcal{P})$. 

\medskip \noindent
For every $1 \leq i \leq n-1$, we know that there exists some $P_i \in \mathcal{P}$ that contains both $p_i$ and $p_{i+1}$. Since $\mathcal{P}$ is locally finite, we know that there is some uniform constant $N$ such that every vertex of $X$ belongs to at most $N$ subgraphs in $\mathcal{P}$. Thus, we have $\leq N$ choices for $P_{n-1}$. Once $P_{n-1}$ is chosen, we know that $p_{n-1}$ belongs to $P_{n-1}$ and lies at distance $\leq A(R_1+B)$ from $p_n=p'$, hence $\leq \gamma_\mathcal{P}(A(R_1+B))$ choices for $p_{n-1}$. Similarly, we then have $\leq N$ choices for $P_{n-2}$ and $\leq \gamma_\mathcal{P}(A(R_1+B))$ choices for $p_{n-2}$. And so on and so forth. We iterate the process $n \leq  2AR_2 +B$ times to reach $p_1=x$. Consequently, we have
$$\leq (N \gamma_\mathcal{P} (A(R_1+B)) )^{ 2AR_2 +B}$$
choices for $x$. This estimate concludes the proof of our proposition.
\end{proof}

\noindent
As an immediate consequence of Proposition~\ref{prop:QuasiSyl}, we obtain the following criterion:

\begin{cor}
Let $X$ be a graph and $\mathcal{P}$ a locally finite quasi-syllabic collection of vertex-sets. If $\mathcal{P}$ has polynomial growth in $X$, then the canonical map $\varphi : X \to \mathrm{ConeOff}(X,\mathcal{P})$ is $x^y$-gentle.
\end{cor}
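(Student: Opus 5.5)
This should follow directly from Proposition~\ref{prop:QuasiSyl}. That proposition gives a uniform constant $C' \geq 0$ such that
$$|B(p,R_1) \cap \varphi^{-1}(B(q,R_2))| \leq C'\, \gamma_\mathcal{P}(C'R_1)^{C'R_2}.$$
Indeed, its proof yields the explicit bound $(N\gamma_\mathcal{P}(A(R_1+B)))^{2AR_2+B}$. So the only task is to turn this into a bound of the form $C F(CR_1,CR_2)$ with $F(x,y)=x^y$.

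First I will use the polynomial growth assumption: there are constants $D \geq 1$ and $d \geq 0$ with $\gamma_\mathcal{P}(R) \leq D R^d$ for all $R \geq 1$. Substituting, for $R_1 \geq 1$ the quantity $N\gamma_\mathcal{P}(A(R_1+B))$ is at most $N D (A(1+B))^d R_1^d$, which is bounded by $(KR_1)^{d'}$ for suitable constants $K \geq 1$ and $d' \geq 1$. Raising this to the power $2AR_2+B$ gives
$$(KR_1)^{d'(2AR_2+B)} = (KR_1)^{d'B}\,(KR_1)^{2Ad'R_2}.$$
The second factor is $(KR_1)^{C''R_2}$. I want this, and the first factor as well, to be dominated by $C(CR_1)^{CR_2}$.

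The additive constant $B$ in the exponent is the annoying point, since $C x^{y}$ with $y$ small cannot absorb $x^{d'B}$ as $x$ grows. I will handle it by observing that the fibre $\varphi^{-1}(B(q,R_2))$ only grows when $R_2$ grows past the integer thresholds of the cone-off metric. So I may replace $R_2$ by $\max(R_2,1)$ without changing anything meaningful for $R_2<1$, where the set is a single $\dot d$-ball of radius $0$, i.e. a point. This gives $2AR_2+B \leq (2A+B)R_2$ for $R_2 \geq 1$, and the bound becomes $(KR_1)^{C R_2}$. Taking $C \geq \max(K, d'(2A+B))$ yields $C(CR_1)^{CR_2}$.

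Small radii need a separate word. For $R_1<1$ the set $B(p,R_1)$ is the single point $\{p\}$, so the cardinality is at most $1$. For $R_2<1$ the same reasoning applies, since $\varphi^{-1}(B(q,R_2))=\{q\}$. Both cases are absorbed by choosing $C \geq 1$ appropriately, interpreting $x^y$ with the natural convention near $0$, or by increasing the constant in front. I expect no real obstacle beyond this bookkeeping of constants and degenerate radii.
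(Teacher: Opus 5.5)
Your proposal is correct and follows the paper's route. The paper states the corollary as an immediate consequence of Proposition~\ref{prop:QuasiSyl}, obtained by substituting a polynomial bound $\gamma_\mathcal{P}(R)\le DR^d$ into the gentleness estimate. Your extra bookkeeping (absorbing the additive constant $B$ in the exponent by treating $R_2<1$ separately, and flagging the degenerate small-radius cases, which stem from the definition of gentleness rather than from the argument) is more careful than the paper, which takes the proposition's conclusion $C\gamma_\mathcal{P}(CR_1)^{CR_2}$ at face value, so that the substitution is a one-liner.
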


\section{An osbtruction to polynomial hyperbolicity}

\noindent
In this section, our goal is to prove a criterion, namely Theorem~\ref{thm:NoGentle} below, that allows us to show that some graphs cannot be $\eta$-polynomially hyperbolic for small functions $\eta$. Loosely speaking, our criterion applies to graphs containing quasi-isometrically embedded trees in which any two vertices can be connected by many distinct quasi-geodesics. In order to be more precise, we need to introduce a couple of definitions.

\begin{definition}
Let $X$ be a graph. Two vertices $x,y \in V(X)$ are \emph{exponentially $(a,L)$-connected} if, for every $L \leq R<d(x,y)/2$, we can find $N \geq a^R$ paths $\alpha_1, \ldots, \alpha_N$ of lengths $\leq L d(x,y)$ connecting $x$ and $y$ such that the $\alpha_i \backslash \{x,y\}^{+R}$ are pairwise disjoint.
\end{definition}

\noindent
In our next definition, we denote by $(T_2,o)$ the $2$-regular tree rooted at $o \in V(T_2)$. 

\begin{definition}
A connected graph $X$ \emph{contains an exponentially connected subtree} if there exist $a,L>0$ and a quasi-isometric embedding $\varphi : (T_2,o) \to X$ such that $\varphi(o)$ and $\varphi(x)$ are exponentially $(a,L)$-connected for every $x \in V(T_2)$. 
\end{definition}

\noindent
We are now ready to state the main result of this section.

\begin{thm}\label{thm:NoGentle}
Let $\xi$ be a polynomial satisfying $\xi(x) \to + \infty$ as $x \to + \infty$. A graph $Y$ that contains an exponentially connected subtree cannot be $\xi$-polynomially hyperbolic.
\end{thm}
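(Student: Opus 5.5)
We argue by contradiction. Suppose $\psi : Y \to Z$ is an $L_0$-Lipschitz, $x^{\xi(y)}$-gentle map to a $\delta$-hyperbolic space $Z$, and fix $\varphi : (T_2,o) \to Y$ as in the definition of an exponentially connected subtree, with constants $a,L$. Set $f := \psi \circ \varphi$. This map is Lipschitz on $T_2$, and by Lemma~\ref{lem:GentleStableQI} it is still $x^{\xi(y)}$-gentle. The strategy is to play two facts against each other. Gentleness forces $f$ to spread the tree out in $Z$, because a ball of radius $R_1$ in $T_2$ has $\sim 2^{R_1}$ points, while at most $(CR_1)^{\xi(CR_2)}$ of them can land in a $Z$-ball of radius $R_2$. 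Exponential connectivity, combined with hyperbolicity of $Z$, forces many points of $Y$ to land near the image of a tree geodesic. The contradiction comes from a counting argument in a single coarse fibre.

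\emph{Step 1: finding a far point.} Take $n$ large. The $2^n$ vertices at distance $n$ from $o$ lie in $B(o,n)$. If all of them were mapped into $B(f(o),R_2)$, gentleness would give $2^n \leq C(Cn)^{\xi(CR_2)}$. Hence for $R_2 = c\,(n/\log n)^{1/\deg \xi}$, with $c$ a small constant, some vertex $x$ at distance $n$ satisfies $d_Z(f(o),f(x)) \geq R_2$. So the image of a segment of length $n$ has $Z$-diameter at least a power $n^{\epsilon}$ of $n$.

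\emph{Step 2: paths must pass near a common point.} Set $y_0 = \varphi(o)$, $y_1 = \varphi(x)$, and $D = d_Y(y_0,y_1) \asymp n$. Choose $R \asymp n$ with $R < D/2$, say $R = \lambda D$ for a small constant $\lambda$. Exponential connectivity provides $N \geq a^R$ paths $\alpha_i$ of length at most $LD$ whose parts outside $\{y_0,y_1\}^{+R}$ are pairwise disjoint. The image $\psi(\alpha_i)$ is a path in $Z$ from $\psi(y_0)$ to $\psi(y_1)$, and these endpoints are $\geq R_2$ apart. Pick a point $m$ on a $Z$-geodesic $[\psi(y_0),\psi(y_1)]$ far from both endpoints. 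Choosing $R$ small enough relative to the Lipschitz constant is meant to ensure that $\psi(\{y_0,y_1\}^{+R})$ stays away from $m$, but this is delicate; see Step 4. By the standard hyperbolic fact that a path of length $\ell$ passes within $\delta\log_2 \ell + O(1)$ of any geodesic between its endpoints, each $\psi(\alpha_i)$ passes within $R_2' := \delta \log_2(L_0LD)+O(1)$ of $m$. The points where this happens lie in the disjoint portions of the $\alpha_i$ and inside $B(y_0, LD)$. We thus obtain $\geq a^{R}$ distinct points of $Y$ in $B(y_0,LD)\cap \psi^{-1}(B(m,R_2'))$.

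\emph{Step 3: the counting contradiction.} Gentleness bounds the number of such points by $C(CLD)^{\xi(C\,\delta\log D)} = \exp\bigl(O(\log D)\cdot \mathrm{poly}(\log D)\bigr)$. This is subexponential in $D$, whereas $a^{R} = a^{\lambda D}$ is exponential. Letting $n \to \infty$ gives the contradiction. The hypothesis $\xi\to\infty$ is only needed to make sense of the inverse in Step 1; the key point is that $\xi$ is polynomial, so $\xi(\log D)$ is polylogarithmic.

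\emph{Step 4: the main obstacle.} The hard part is making Step 2 rigorous. The $R$-neighbourhoods of the endpoints, where the paths may overlap, could a priori map across the whole of $[\psi(y_0),\psi(y_1)]$. This is because Step 1 only gives $Z$-separation $R_2 \sim n^{\epsilon}$, which may be much smaller than $L_0 R$. To handle this, I would redo Step 1 at a larger scale. Apply the gentleness counting to the spheres around $\varphi(o)$ and around $\varphi(x)$ to find, among the many tree vertices, a pair $(o',x')$ with $d_T(o',x')=n$ such that $d_Z(f(o'),f(x')) \gg L_0 R$ even for $R$ polylogarithmic in $n$. Then run Step 2 with $R \asymp (\log n)^{k}$ for a large $k$. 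This is allowed because the definition lets $R$ range over all of $[L,d/2)$, and the definition of an exponentially connected subtree is relative to $o$, which we recentre using the uniformity of the constants along the tree. The count $a^{(\log n)^k}$ still beats $(Cn)^{\xi(C\log n)} = \exp(O((\log n)^{\deg\xi+1}))$ once $k > \deg \xi + 1$. Balancing these exponents is exactly where polynomiality of $\xi$ enters, so I expect this calibration of scales to be the crux of the proof.
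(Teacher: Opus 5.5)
Once you calibrate the scales as in Step~4 (exponential connectivity at scale $R\asymp(\log n)^k$ with $k>\deg\xi+1$, the $\delta\log_2\ell$ estimate forcing every image path $\psi(\alpha_i)$ within $O(\log n)$ of the midpoint $m$, and a count in a fibre of logarithmic thickness), your argument is essentially the paper's proof. The paper takes $R_n=\log(n)^{2(s+1)}$ and fibre thickness $r_n=\log(n)^2$, and uses the divergence estimate of Lemma~\ref{lem:HypDiv} in contrapositive form.

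Drop the recentring in Step~4. The definition only gives exponential connectivity between $\varphi(o)$ and the points $\varphi(x)$, so passing to a new pair $(o',x')$ is not justified. It is also unnecessary: Step~1 already gives $d_Z(f(o),f(x))\gtrsim(n/\log n)^{1/\deg\xi}$. This dominates $L_0(\log n)^k+O(\log n)$, which keeps the images of the $R$-neighbourhoods of the endpoints away from $m$ with the original pair $(o,x)$.
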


\noindent
Our proof of Theorem~\ref{thm:NoGentle} actually covers some superpolynomial (but subexponential) maps $\xi$ as well. However, it does not cover every subexponential map, so, for simplicity, we restrict ourselves to polynomial maps. 

\medskip \noindent
The rest of the section is dedicated to the proof of Theorem~\ref{thm:NoGentle}. Our first preliminary lemma shows that, given an $x^{\xi(y)}$-gentle map $\varphi : (T_2,o) \to Z$ and a map $\sigma$ that does not grow too quickly (depending on $\xi$), we find a sequence of vertices $x_n \in V(T_2)$ such that $d(o,x_n)$ grows linearly and $d(\varphi(o),\varphi(x_n))$ grows at least as $\sigma$. 

\begin{lemma}\label{lem:GentleTreeBis}
Let $s \geq 1$ be an integer and $\varphi : (T_2,o) \to Z$ an $x^{y^s}$-gentle map to some metric space $Z$. For every $\sigma(n) =o \left( \left( \frac{n}{\log(n)} \right)^{1/s} \right)$, we can find vertices $x_n \in V(T)$ such that $\left\{ \begin{array}{l} d(o,x_n) = n \\ d(\varphi (o) ,\varphi(x_n)) \geq \sigma(n) \end{array} \right.$ for all but finitely many $n \geq 0$. 
\end{lemma}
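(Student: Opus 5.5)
This is a counting argument: the sphere $S(o,n)$ in $T_2$ has exponentially many vertices, while gentleness caps how many of them can be mapped close to $\varphi(o)$. Recall that $T_2$ is the tree in which every vertex has degree $2$ or $3$. Whatever the convention, the sphere $S(o,n)$ has at least $2^{n}$ vertices up to a multiplicative constant; we only need exponential growth $|S(o,n)| \geq c\,2^n$ for some $c>0$.

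Fix $n$ and suppose, for contradiction, that every vertex $x \in S(o,n)$ satisfies $d(\varphi(o),\varphi(x)) < \sigma(n)$. Then
$$S(o,n) \subset B(o,n) \cap \varphi^{-1}\big(B(\varphi(o),\sigma(n))\big).$$
Gentleness with $F(x,y)=x^{y^s}$ gives a uniform constant $C$ such that the right-hand side has cardinality at most $C\,(Cn)^{(C\sigma(n))^s}$. We therefore get
$$c\,2^n \leq C\,(Cn)^{C^s\sigma(n)^s}.$$
Taking logarithms, this reads
$$n\log 2 + \log c \leq \log C + C^s \sigma(n)^s \log(Cn).$$
The right-hand side equals $C^s \sigma(n)^s (\log n + O(1)) + O(1)$.

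Now use the hypothesis $\sigma(n) = o\big((n/\log n)^{1/s}\big)$. It means $\sigma(n)^s = o(n/\log n)$, so $\sigma(n)^s \log(Cn) = o(n)$. Since $C^s$ is a fixed constant, the right-hand side is $o(n)$, whereas the left-hand side is at least $n\log 2 - O(1)$. The inequality therefore fails for all sufficiently large $n$.

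For every such large $n$ there consequently exists a vertex $x_n \in S(o,n)$, that is with $d(o,x_n)=n$, satisfying $d(\varphi(o),\varphi(x_n)) \geq \sigma(n)$. For the finitely many remaining $n$ we choose $x_n$ arbitrarily.

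The only point needing care is the asymptotic bookkeeping. The constant $C$ enters inside both the exponent and the base. Because $s$ is fixed, $C^s$ is simply a constant factor, and $\log(Cn)\sim\log n$, so the little-$o$ hypothesis absorbs everything. Nothing else in the argument is delicate.
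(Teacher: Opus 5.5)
Your proof is correct and follows essentially the same argument as the paper. You compare the exponential size of $S(o,n)$ with the gentleness bound $C(Cn)^{(C\sigma(n))^s}$ on $B(o,n)\cap\varphi^{-1}(B(\varphi(o),\sigma(n)))$, then take logarithms to contradict $\sigma(n)^s=o(n/\log n)$ for all large $n$.
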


\begin{proof}
Assume to the contrary that $d(\varphi(o),\varphi(x_n)) < \sigma(n)$ for every vertex $x_n$ in the sphere $S(o,n)$ and for infinitely many $n \geq 0$. Then there exists a constant $C \geq 0$ such that, for infinitely many $n \geq 0$, we have
$$2^n= |S(o,n)| \leq | B(o,n) \cap \varphi^{-1}(B(\varphi(o),\sigma(n)))| \leq C(Cn)^{(C\sigma(n))^s},$$
hence $(C\sigma(n))^s \geq (n \log(2)- \log(C))/\log(Cn)$, and finally 
$$\sigma(n) \geq \frac{1}{C}  \left( \frac{n \log(2)- \log(C)}{\log(Cn)} \right)^{1/s} \geq \mathrm{cst} \cdot \left( \frac{n}{\log(n)} \right)^{1/s} ,$$ 
which is a contradiction with our assumptions on $\sigma$.
\end{proof}

\noindent
Our second (and last) preliminary lemma quantifies how difficult it is to avoid a ball in a hyperbolic space. 

\begin{lemma}[{\cite[Claim~2 in the proof of Lemma~2.6]{MR3741855}}]\label{lem:HypDiv}
Let $X$ be a hyperbolic space. There exist $\epsilon,s_0>0$ such that, for every $s \geq s_0$, the following holds. Let $x,y \in V(X)$ be two vertices and $B$ a ball of radius $s$ whose centre lies on a geodesic $[x,y]$ at distance $\geq s$ from $\{x,y\}$. Every path connecting $x$ to $y$ but avoiding $B$ has length $\geq (1+ \epsilon)^s$. 
\end{lemma}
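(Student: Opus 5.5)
The plan is to derive the lemma from the standard logarithmic estimate for hyperbolic spaces (as in Bridson--Haefliger, Proposition~III.H.1.6). Let $\delta$ be a hyperbolicity constant for $X$, with thin triangles: each side lies in the $\delta$-neighbourhood of the union of the other two. The estimate we want is the following. \emph{If $\gamma$ is a path of length $\ell \geq 1$ from $x$ to $y$ and $c \in [x,y]$, then}
$$d(c, \gamma) \leq \delta \log_2 \ell + 1 .$$
The lemma then follows quickly. Take $c$ to be the centre of $B$. A path $\gamma$ avoiding $B$ satisfies $d(c,\gamma) \geq s$. Hence $s \leq \delta \log_2 \ell + 1$, so $\ell \geq 2^{(s-1)/\delta}$. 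This is at least $(1+\epsilon)^s$ for $s \geq s_0$ once we choose, say, $1+\epsilon = 2^{1/(2\delta)}$ and $s_0 = 2$. The hypothesis that $c$ lies at distance $\geq s$ from $\{x,y\}$ only ensures that $x,y \notin B$, so that avoiding paths exist at all. It also gives $\ell \geq d(x,y) \geq 2s \geq 1$, so the logarithm is nonnegative.

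To prove the estimate, parametrise $\gamma$ by arc length on $[0,\ell]$ and let $N$ be the least integer with $\ell/2^N \leq 1$, so that $N \leq \log_2 \ell + 1$. Split $\gamma$ at its midpoint $\gamma(\ell/2)$ and form the geodesic triangle with vertices $x$, $\gamma(\ell/2)$ and $y$. By $\delta$-thinness, $c$ lies within $\delta$ of a point $c_1$ on one of the geodesics $[x,\gamma(\ell/2)]$ or $[\gamma(\ell/2),y]$. Choose that side and repeat the argument with the corresponding half of $\gamma$, which has length $\ell/2$.

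After $N$ steps we have a point $c_N$ with $d(c,c_N) \leq N\delta$. Moreover, $c_N$ lies on a geodesic joining $\gamma(t_1)$ to $\gamma(t_2)$ with $|t_1-t_2| \leq 1$. Since $\gamma$ has length at most $1$ between these two times, $c_N$ is within $1/2$ of one of its endpoints, which lies on $\gamma$. This gives $d(c,\gamma) \leq N\delta + 1/2$. Adjusting the additive constant, or using $N \leq \lceil \log_2 \ell \rceil$, yields the displayed bound. At worst it comes out as $\delta(\log_2\ell + 1) + 1$, which changes nothing in the final choice of $\epsilon$ and $s_0$.

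The only point that needs care is the setting. $X$ is a graph, so we use geodesics in $X$ and treat $\gamma$ as an edge-path, whose length is its number of edges. The midpoints $\gamma(\ell/2^k)$ may fall inside edges. This is harmless: either work in the geometric realisation, which is a geodesic hyperbolic space, or round to vertices and absorb an extra $1$ per step into $\delta$. Beyond this, the argument is routine.
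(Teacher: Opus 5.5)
Your proof is correct. Note that the paper gives no proof of this lemma: it simply imports it from Hume--Sisto (Claim~2 in the proof of their Lemma~2.6). So the only comparison is citation versus a self-contained argument. Your reduction to the logarithmic estimate $d(c,\gamma)\leq \delta\log_2 \ell(\gamma)+1$ of Bridson--Haefliger (Proposition~III.H.1.6) is the standard way to establish this exponential-divergence fact. It has the advantage of producing explicit constants ($1+\epsilon=2^{1/(2\delta)}$, $s_0$ a small multiple of $\delta$) that depend only on the hyperbolicity constant, which is all the application in Theorem~\ref{thm:NoGentle} needs.

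A few bookkeeping points need fixing, none of which affects validity.

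\emph{The displayed bound.} With your $N$, the least integer with $\ell/2^N\leq 1$, you have $N=\lceil\log_2\ell\rceil$. This yields $d(c,\gamma)\leq\delta\lceil\log_2\ell\rceil+\tfrac12$, which is not bounded by $\delta\log_2\ell+1$ in general; for instance $\ell=3$ with $\delta$ large fails. To get the displayed bound exactly, stop one step earlier as Bridson--Haefliger do. Take $N$ with $\ell/2^{N+1}<1\leq \ell/2^N$; the last subpath then has length in $[1,2)$, so $c_N$ is within $1$ of $\gamma$ and $N\leq\log_2\ell$.

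\emph{The constants under the weaker bound.} If you keep $\delta(\log_2\ell+1)+1$ instead, it is not true that nothing changes. You then need $s-1-\delta\geq s/2$, so $s_0=2$ must be enlarged, e.g.\ to $s_0=2\delta+2$, with the same $\epsilon$. This is harmless, but your sentence as written is inaccurate.

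\emph{Minor points.} Take $\delta>0$ (say $\delta\geq 1$) so that $2^{1/(2\delta)}$ makes sense. Also, if ``avoiding $B$'' is imposed only on the vertices of an edge-path, then in the geometric realisation you only get $d(c,\gamma)\geq s-\tfrac12$. This loss is again absorbed by a slightly larger $s_0$.
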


\begin{proof}[Proof of Theorem~\ref{thm:NoGentle}.]
We know by assumption that there exist $b,L>0$ and a quasi-isometric embedding $\phi  : (T_2,o) \to Y$ such that $\phi(o)$ and $\phi(x)$ are exponentially $(b,L)$-connected for every $x \in V(T_2)$. For convenience, we identify $T_2$ with its image in $Y$ under $\phi$.

\medskip \noindent
Assume for contradiction that there exists an $x^{y^s}$-gentle map $\varphi : Y \to X$ to some hyperbolic space $X$, which we can assume to be $1$-Lipschitz, for some integer $s \geq 1$. Define three sequences
$$r_n:= \log(n)^2, \ R_n:= \log(n)^{2(s+1)}, \text{ and } \sigma(n):=\log(n)^{2s+3},$$
For convenience, we record the following two elementary observations:

\begin{fact}\label{fact:InequalityOne}
The inequality $R_n < \frac{n}{2}$ holds for all but finitely many $n \geq 2$. 
\end{fact}

\begin{fact}\label{fact:InequalityTwo}
We have $\sigma(n)= o \left( \left( \frac{n }{ \log(n)} \right)^{1/s} \right).$
\end{fact}

\noindent
By applying Lemma~\ref{lem:GentleTreeBis} thanks to Fact~\ref{fact:InequalityTwo}, we know that, for sufficiently large values of $n$, there exist $x_n \in T_2$ such that $d(o,x_n)= n$ and $d(\varphi(o), \varphi(x_n)) \geq \sigma(n)$. And, because our subtree is exponentially connected and thanks to Fact~\ref{fact:InequalityOne}, we can find $N \geq b^{R_n}$ paths $\alpha_1, \ldots, \alpha_N$ of lengths $\leq Ln$ connecting $o$ to $x_n$ such that $\alpha_i \backslash \{o,x_n\}^{+R_n}$ are pairwise disjoint. 

\medskip \noindent
If $m_n$ denotes a middle point of $\varphi(o)$ and $\varphi(x_n)$ in $X$, we claim that there exists some $1 \leq i \leq N$ such that $\alpha_i$ is disjoint from $B(m_n,r_n)$.

\medskip \noindent
Otherwise, for every $1 \leq i \leq N$, we can find a point $p_i \in \alpha_i$ that belongs to $\varphi^{-1}(B(m_n,r_n))$. If $p_i \in B(o,R_n)$, then
$$d(\varphi(o),m_n) \leq d(\varphi(o), \varphi(p_i))+ d(\varphi(p_i),m_n) \leq d(o,p_i) + r_n \leq R_n+r_n,$$
hence
$$\frac{\sigma(n) }{2} \leq \frac{d(\varphi(o),\varphi(x_n))}{2} = d(\varphi(o),m_n) \leq R_n+r_n,$$
which contradicts the definition of $\sigma$. Therefore, $p_i$ does not belong to $B(o,R_n)$. By symmetry, we show that $p_i$ does not belong to $B(x_n,R_n)$ either. In other words, $p_i$ belongs to $\alpha_i \backslash \{o,x_n\}^{+R_n}$. We deduce that the $p_i$ are pairwise distinct, and then that
$$C(Cn)^{(C r_n)^s}\geq |B(o,n) \cap \varphi^{-1}(B(m_n,r_n))| \geq N \geq b^{R_n}$$
hence
$$R_n \leq \mathrm{cst} \cdot r_n^s \log(n) = \mathrm{cst} \cdot \log(n)^{2s+1}$$
for some uniform constant $C>0$ and for infinitely many values of $n$. This contradicts the definition of $R_n$. 

\medskip \noindent
Thus, for each large value of $n$, we can find some $1 \leq i \leq N$ such that $\alpha_i$ is disjoint from $B(m_n,r_n)$. We deduce from Lemma~\ref{lem:HypDiv} that, for large values of $n$ and for some uniform constant $\epsilon>0$, we have
$$Ln \geq \mathrm{length}(\alpha_i) \geq \mathrm{length}(\varphi(\alpha_i)) \geq (1+\epsilon)^{r_n},$$
hence $r_n \leq \mathrm{cst} \cdot \log(n)$, which contradicts the definition of $r_n$. 
\end{proof}

\section{Application to lamplighters}

\noindent
In this section, we are interested in the large scale geometry of the lamplighter graph $\mathrm{Lamp}$
\begin{itemize}
	\item whose vertices are the pairs $(S,p)$ where $S \subset \mathbb{Z}$ is a finite subset and where $p \in \mathbb{Z}$ is an integer;
	\item and whose edges connect $(S_1,p_1)$ to $(S_2,p_2)$ whenever either $S_1=S_2$ and $|p_1-p_2|=1$ or $S_2= S_1 \triangle \{p_1=p_2\}$.
\end{itemize}
Visually, we image a bi-infinite street with lamps labelled by $\mathbb{Z}$ along which travels a lamplighter. In a pair $(S,p)$, $S$ is interpreted as the set of lamps the lamplighter switched on and $p$ as the position of the lamplighter. Then, moving in $\mathrm{Lamp}$ from vertices to adjacent vertices amounts to moving the lamplighter along the street and to switch on or off the lamp where the lamplighter is.

\medskip \noindent
More precisely, we are interested in the smallest $\eta : [0,+ \infty) \to [0,+ \infty)$ for which the lamplighter graph is $\eta$-polynomially hyperbolic. According to Example~\ref{ex:ExpPolynomial}, we know that $\eta$ is at most exponential. The main result of this section is that $\eta$ must be superpolynomial:

\begin{thm}\label{thm:GentleLamp}
Let $\xi$ be a polynomial satisfying $\xi(x) \to + \infty$ as $x \to + \infty$. The lamplighter graph cannot be $\xi$-polynomially hyperbolic.
\end{thm}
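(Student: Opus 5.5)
The plan is to deduce the statement from Theorem~\ref{thm:NoGentle}. It suffices to exhibit an exponentially connected subtree in $\mathrm{Lamp}$. For the quasi-isometric embedding of $(T_2,o)$, I take the subgraph of vertices $(S,p)$ with $p \geq 0$ and $S \subset [0,p)$, rooted at $o=(\emptyset,0)$. Here $(S,p)$ has the two children $(S,p+1)$ and $(S\cup\{p\},p+1)$, which are at distance $\leq 2$ from $(S,p)$ in $\mathrm{Lamp}$. This is a copy of $T_2$, and the map is Lipschitz. For the lower bound, let $(S,n)$ and $(S',m)$ have common ancestor at depth $k$. Then $S$ and $S'$ agree on $[0,k)$ and differ at position $k$ (or one of $n,m$ equals $k$). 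Any path between them must visit position $k$ and reach positions $n$ and $m$, so its length is at least $(n-k)+(m-k)$ up to a factor $2$. Hence the embedding is a quasi-isometric embedding.

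Next I check exponential connectivity of $o$ and $x=(S,n)$, with $n \geq 1$ and $S\subset[0,n)$. Note $n \leq d(o,x) \leq 2n$. Fix $L \leq R < d(o,x)/2$ and set $m=\lfloor R/8 \rfloor$. To each subset $T \subset [-m,0)$ I associate a path $\alpha_T$ made of five stages.
\begin{enumerate}
\item Walk from $0$ to $-m$, then walk right back to $0$, lighting the lamps of $T$ on the way.
\item Walk right to $n+m$, lighting $S$, and light the translated copy $T+n+2m$ of $T$ in a window just right of $n$; to keep it concrete, use the window $[n+m,n+2m)$ after shifting appropriately.
\item Walk back to $-m$ and turn off the copy of $T$ near $0$, doing this on a rightward sweep from $-m$ to $0$.
\item Walk to the window near $n$ and turn off the copy of $T$ there.
\item Return to $n$.
\end{enumerate}
Its length is $\leq 3n+O(m) \leq L' d(o,x)$ for a uniform $L'$.

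The heart of the argument, and the step I expect to be delicate, is disjointness of the $\alpha_T \setminus \{o,x\}^{+R}$. Stage 1 consists of vertices of the form $(T\cap[j,0),j)$ with $|j|\leq m$. These are at distance $\leq 4m \leq R$ from $o$, so they are discarded. Symmetrically, the last part of stage 4 and stage 5 only contain vertices $(S\cup \text{partial copy of } T, j)$ with $j$ within $O(m)$ of $n$. These lie within distance $R$ of $x$ once the window constants are chosen suitably; I will tune $m$ as a small fraction of $R$ for this. Every remaining vertex of $\alpha_T$ has one of the two copies of $T$ fully lit. During stages 2 and the beginning of 3 this is the copy near $0$; during stage 3 the copy near $n$ stays fully lit while the copy near $0$ is being erased. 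Meanwhile, the rest of the configuration lies in the other window or in $[0,n)$. Therefore $T$ can be read off from any such vertex, which gives pairwise disjointness. This yields $2^{m} \geq a^R$ paths with $a=2^{1/9}$, for $R$ large. Small values of $R$ are absorbed by enlarging $L$. I must also double-check the condition $R<d(o,x)/2$, which ensures that the two windows stay separated from each other and from the balls around the endpoints.

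With the exponentially connected subtree in hand, Theorem~\ref{thm:NoGentle} applies directly and shows that $\mathrm{Lamp}$ is not $\xi$-polynomially hyperbolic for any polynomial $\xi$ tending to infinity. Finally, $\mathrm{Lamp}$ quasi-isometrically embeds in $\mathbb{F}_2\times\mathbb{F}_2$, so Lemma~\ref{lem:GentleStableQI} transfers the conclusion to $\mathbb{F}_2\times\mathbb{F}_2$.
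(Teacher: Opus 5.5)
Your proposal is correct and is essentially the paper's proof. Both reduce to Theorem~\ref{thm:NoGentle} using the binary tree of lamp configurations $(S,p)$ with $S$ supported at or left of $p$. Both obtain exponential connectivity from paths tagged by a subset of lamps near $0$ together with a matching subset near the endpoint, as in Lemma~\ref{lem:ExpConnectedLamp}.

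Two of your choices are, if anything, more careful than the paper's constants. Your explicit quasi-isometric embedding check is correct, whereas the paper calls its map an isometric embedding. Your window size $R/8$ genuinely places the partially lit stages inside the $R$-balls.

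When you write it up, make the decoding explicit. Outside the balls, each window only ever holds a subset of its copy of $T$, and at least one window is full. So $T$ is recovered as the union of the two window contents.
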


\noindent
The strategy is of course to apply Theorem~\ref{thm:NoGentle}. We start by proving the following observation:

\begin{lemma}\label{lem:ExpConnectedLamp}
For every vertex $(S,p)$ of $\mathrm{Lamp}$ with $S \subset [0,p]$, the vertices $(\emptyset, 0)$ and $(S,p)$ are exponentially $(\sqrt[4]{2},6)$-connected.
\end{lemma}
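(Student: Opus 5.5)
Throughout, write $o=(\emptyset,0)$, $z=(S,p)$ and $d=d(o,z)$. Since $S\subset[0,p]$, the lamplighter can go from $0$ to $p$ toggling exactly the lamps of $S$ on the way, so $d\le p+|S|+1$. Conversely, any path has to move the lamplighter from $0$ to $p$ and toggle each lamp of $S$ at least once. Hence $d$ is comparable to $p+|S|$ up to an additive constant, and in particular $p\le d$ and $|S|\le d$.

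Fix $6\le R<d/2$ and set $m:=\lceil R/4\rceil$. To each subset $T\subset\{1,\ldots,m\}$ I associate a path $\alpha_T$ from $o$ to $z$ in four phases.
\begin{itemize}
\item[(1)] Walk left from $0$ to $-m$, switching on the lamps at $-t$ for $t\in T$. This writes a copy $T^-$ of $T$ on $[-m,-1]$.
\item[(2)] Walk right from $-m$ to $p+m$, switching on the lamps of $S$ on $[0,p]$ and the lamps at $p+t$ for $t\in T$. This writes a second copy $T^+$ on $[p+1,p+m]$.
\item[(3)] Walk back left to $-m$, then right to $-1$, erasing $T^-$ on the way.
\item[(4)] Walk right to $p+m$, then left to $p$, erasing $T^+$ on the way.
\end{itemize}
Each of these moves is a sweep of length at most $p+2m$, and there are at most about $4$ sweeps plus the toggles. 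So the length is at most roughly $4p+12m+|S|$. Since $m\le R/4+1<d/8+1$, this is at most $4d+\tfrac32 d+O(1)\le 6d$ for $d$ large; small $d$ is excluded anyway since $R\ge 6$ and $R<d/2$ force $d>12$, and the constants can be adjusted. The number of paths is $2^m\ge 2^{R/4}=(\sqrt[4]{2})^R$, as required.

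The key step is disjointness outside $\{o,z\}^{+R}$.
\begin{itemize}
\item Every vertex of phase (1) has position in $[-m,0]$ and lamp set inside $[-m,-1]$, so it lies within distance at most about $3m\le R$ of $o$.
\item Symmetrically, every vertex in the final erasing of $T^+$ (position in $[p,p+m]$, lamp set $S\cup T'$ with $T'\subset T^+$) lies within about $3m\le R$ of $z$. The same holds for the final walk from $-1$ to $p+m$ if it is counted into phase (4) only after $T^-$ is gone; there I must be careful with the bookkeeping.
\item For every remaining vertex, its lamp configuration contains a complete copy of $T$: either $T^-$ in full during phase (2) and the erasing part of (3), or $T^+$ in full during phase (3) and the initial sweep of (4). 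The other copy is partially present, but it sits in a disjoint window. So reading the lamps on $[-m,-1]$ and on $[p+1,p+m]$ recovers $T$ from the vertex, provided I can tell which window is complete.
\end{itemize}
To settle which window is complete, I will also use the position of the lamplighter and the phase structure. If a vertex were shared by $\alpha_T$ and $\alpha_{T'}$, comparing the two windows forces $T=T'$. A cleaner way is to note that in phases where both copies could be partial, the position already lies within $R$ of an endpoint.

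The main obstacle is exactly this bookkeeping. I need to check that every vertex with a partially written or partially erased copy is within distance $R$ of $o$ or $z$. Where this fails, I will reorder the sweeps: always finish writing $T^+$ before starting to erase $T^-$, and finish erasing $T^-$ before touching $T^+$. Then at every vertex far from both endpoints at least one window carries $T$ exactly, and the other window carries either $\emptyset$ or $T$. That makes $T$ determined by the vertex, so the $\alpha_T\setminus\{o,z\}^{+R}$ are pairwise disjoint.
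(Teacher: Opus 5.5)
Your construction is essentially the paper's: four sweeps that write a pattern to the left of $0$ and to the right of $p$, light $S$, and then erase both patterns. Writing the \emph{same} $T$ in both windows is a neat way to make the diagonal choice that the paper leaves implicit. The count $2^m\ge(\sqrt[4]{2})^R$ is fine. The length is also fine if you count exactly: $d=p+|S|$, and $\alpha_T$ has length $3p+8m+4|T|+|S|\le 3d+3R+12<6d$ since $d>2R\ge 12$. You may not ``adjust the constants'', because $(\sqrt[4]{2},6)$ is prescribed, but you do not need to.

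The gap is the disjointness step, which is the real content of the lemma, and the route you sketch for it rests on false claims. It is not true that every vertex carrying a partially written or partially erased copy lies within $R$ of $o$ or $z$. While $T^+$ is written in phase (2), the configuration is $T^-\cup S\cup T_0^+$ with $T_0^+\subseteq T^+$ and the lamplighter beyond $p$. That vertex is at distance $>d$ from $o$, and it is also far from $z$ because $T^-$ still has to be erased. The same happens while $T^-$ is erased in phase (3). The walk from $-1$ to $p+m$ in phase (4) is not near $z$ either: already at position $-1$ its distance to $z$ is at least $p+1>R$. Your proposed ``reordering'' is exactly the order you already use. No ordering can make the incomplete window equal to $\emptyset$ or $T$ at every far vertex, since lamps are toggled one at a time away from both endpoints. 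So the claim ``the other window carries either $\emptyset$ or $T$'' is false, and as written you have not shown that a far vertex determines $T$.

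The missing observation is that you never need to know which window is complete. Identify $[-m,-1]$ and $[p+1,p+m]$ with $\{1,\ldots,m\}$; these windows are disjoint from each other and from $[0,p]\supseteq S$. At every vertex of $\alpha_T$ in phase (2), in phase (3), and on the walk from $-1$ to $p+m$, both window contents are subsets of $T$ and at least one of them equals $T$. Hence $T$ is the union of the two window contents, which is a function of the vertex alone. The remaining vertices are those of phase (1) and of the final erasure of $T^+$. Each is joined to $o$, resp.\ $z$, by a subpath of $\alpha_T$ of length at most $m+|T|\le 2m\le R/2+2\le R$. Use this bound rather than $3m$, which exceeds $R$ for instance when $R=8.5$. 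Therefore a common vertex of $\alpha_T$ and $\alpha_{T'}$ outside $B(o,R)\cup B(z,R)$ forces $T=T'$, and the lemma follows.
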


\begin{proof}
Let $6 \leq R<d((c,p),(\emptyset, 0))/2$. For a non-empty subset $A \subset [-R/2,0)$ and a non-empty subset $B \subset (p,p+R/2]$, we define the path $\pi(A,B)$ as the following concatenation of geodesics:
\begin{center}
\includegraphics[width=0.7\linewidth]{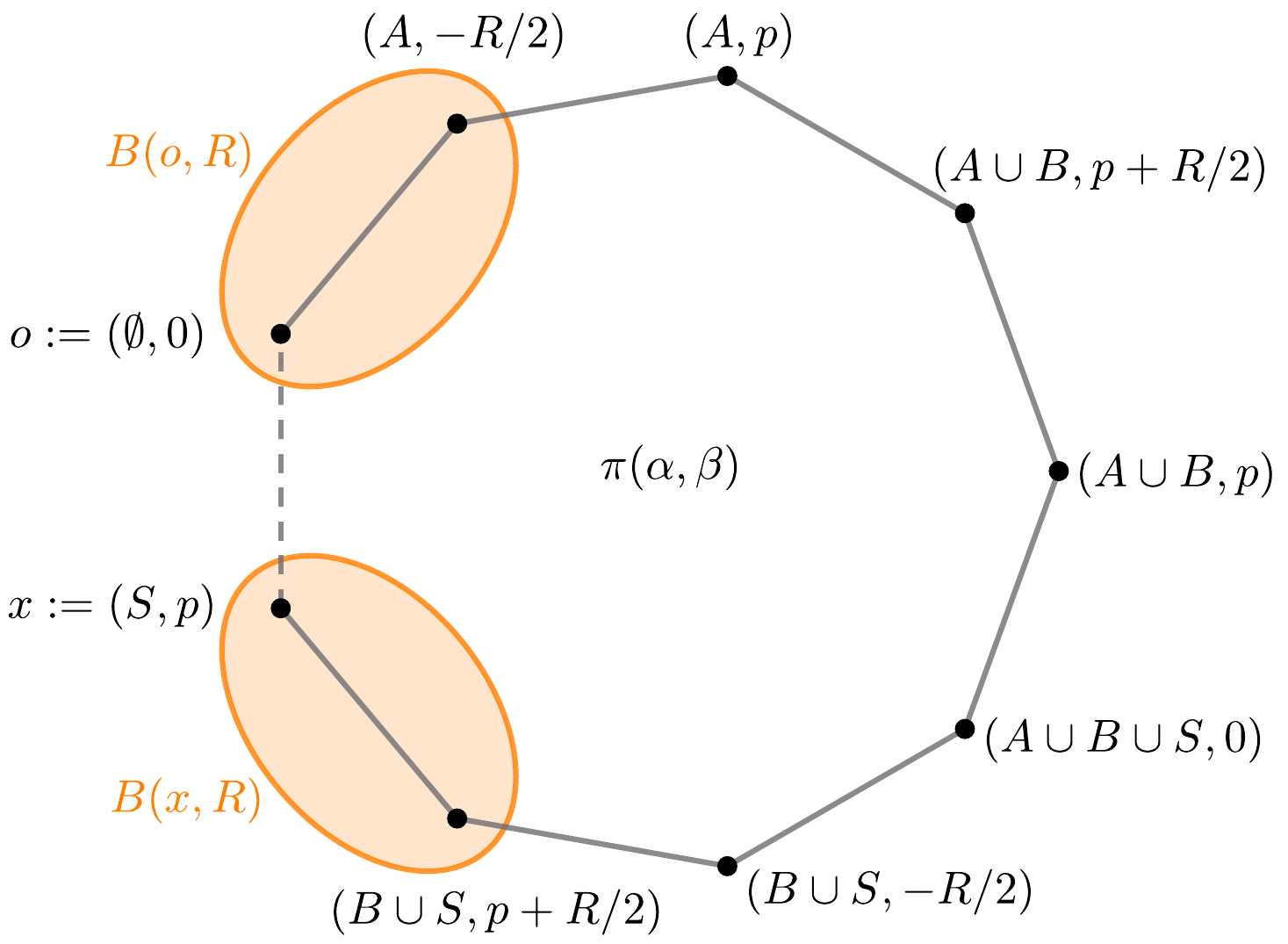}
\end{center}

\noindent
Notice that $\pi(A,B)$ has length $\leq 6 d(o,x)$; and that, given two other non-empty subsets $A' \subset [-R/2,0)$ and $B' \subset (p,p+R/2]$, $\pi(A,B)$ is disjoint from $\pi(A',B')$ outside the balls $B(o,R)$ and $B(x,R)$ whenever $A \neq A'$ and $B \neq B'$. Since we have 
$$2^{\lfloor R/2 \rfloor}-1 \geq \frac{7}{8} 2^{\lfloor R/2 \rfloor} \geq \frac{7}{16} \sqrt{2}^R \geq \frac{7}{16} \left( \frac{\sqrt{2}}{\sqrt[4]{2}} \right)^6 \sqrt[4]{2}^R \geq \sqrt[4]{2}^R$$ 
choices for $A$ and similarly $\geq \sqrt[4]{2}^R$ choices for $B$, the desired conclusion follows. 
\end{proof}

\begin{proof}[Proof of Theorem~\ref{thm:GentleLamp}.]
According to Theorem~\ref{thm:NoGentle}, it suffices to verify that $\mathrm{Lamp}$ contains an exponentially connected subtree. Consider the map 
$$\Phi : \left\{ \begin{array}{ccc} \{0,1\}^\ast & \to & \mathrm{Lamp} \\ x_1 \cdots x_n & \mapsto &  (\{i \geq 1 \mid x_i =1\}, n) \end{array} \right.$$
where $\{0,1\}^\ast$ denotes the set of finite strings of $0$s and $1$s. Notice that the rooted tree $(T_2,o)$ can be described as the graph whose vertex-set is $\{0,1\}^\ast$ and whose edges connect two strings whenever one can be obtained from the other by adding a single digit (to the right). Thus, $\Phi$ naturally induces a map $(T_2,o) \to \mathrm{Lamp}$. This map is clearly an isometric embedding, and its image is $(\sqrt[4]{2},6)$-exponentially connected as a consequence of Lemma~\ref{lem:ExpConnectedLamp}. 
\end{proof}

\noindent
Since $\mathrm{Lamp}$ quasi-isometrically embeds into a product of two $3$-regular trees $T_3 \times T_3$ (see e.g.\ \cite[Corollary~10]{MR3079268}), Theorem~\ref{thm:GentleLamp} (combined with Lemma~\ref{lem:GentleStableQI}) immediately implies that:

\begin{cor}
Let $\xi$ be a polynomial satisfying $\xi(x) \to + \infty$ as $x \to + \infty$. A product $T_3 \times T_3$ of two $3$-regular trees cannot be $\xi$-polynomially hyperbolic.
\end{cor}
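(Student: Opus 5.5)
The plan is to deduce the corollary directly from Theorem~\ref{thm:GentleLamp} by contraposition, using Lemma~\ref{lem:GentleStableQI} to transfer polynomial hyperbolicity along a quasi-isometric embedding $\mathrm{Lamp} \to T_3 \times T_3$.

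First I fix the embedding. The lamplighter graph $\mathrm{Lamp}$ is a Cayley graph of $\mathbb{Z}_2 \wr \mathbb{Z}$, and it quasi-isometrically embeds into $T_3 \times T_3$ by \cite[Corollary~10]{MR3079268}. For intuition, this comes from realising $\mathrm{Lamp}$ as a horocyclic product of two $3$-regular trees (a Diestel--Leader graph), which sits inside $T_3 \times T_3$. I will simply cite the reference and fix a quasi-isometric embedding $\rho : \mathrm{Lamp} \to T_3 \times T_3$. Both graphs have bounded degree, so the hypotheses of Lemma~\ref{lem:GentleStableQI} hold.

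Next, suppose for contradiction that $T_3 \times T_3$ is $\xi$-polynomially hyperbolic. Then there is a Lipschitz $x^{\xi(y)}$-gentle map $\varphi : T_3 \times T_3 \to Z$ to some hyperbolic space $Z$. By Lemma~\ref{lem:GentleStableQI}, the composition $\varphi \circ \rho$ is again $x^{\xi(y)}$-gentle.

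The one point to check is that $\varphi \circ \rho$ is Lipschitz, because the definition of polynomial hyperbolicity asks for a Lipschitz map. On vertices, $\rho$ is $(A,B)$-coarsely Lipschitz, so adjacent vertices of $\mathrm{Lamp}$ are sent to points at distance at most $A+B$. A map on a graph that is bounded on edges is Lipschitz on the vertex set, since distances between vertices are at least $1$. Hence $\varphi \circ \rho$ is Lipschitz. (This is in fact what the "consequently" clause of Lemma~\ref{lem:GentleStableQI} already asserts.)

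It follows that $\mathrm{Lamp}$ is $\xi$-polynomially hyperbolic, which contradicts Theorem~\ref{thm:GentleLamp}. There is no real obstacle here; the only care needed is the Lipschitz check above. The same argument applies to $\mathbb{F}_2 \times \mathbb{F}_2$, since it is quasi-isometric to $T_3 \times T_3$.
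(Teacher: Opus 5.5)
Your proposal is correct and is exactly the paper's argument. The quasi-isometric embedding $\mathrm{Lamp} \to T_3 \times T_3$ from \cite[Corollary~10]{MR3079268}, combined with Lemma~\ref{lem:GentleStableQI}, reduces the statement to Theorem~\ref{thm:GentleLamp}; your explicit check that $\varphi \circ \rho$ stays Lipschitz is a detail the paper leaves implicit in the ``consequently'' clause of that lemma.
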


\section{Application to cocompact special groups}

\noindent
In this section, our goal is to prove Theorem~\ref{thm:IntroSpecial} from the introduction, which we restate below for the reader's convenience:

\begin{thm}\label{thm:SpecialLamp}
Let $G$ be a cocompact special group. The following are equivalent:
\begin{itemize}
	\item $G$ contains $\mathbb{F}_2 \times \mathbb{F}_2$ as a subgroup;
	\item $G$ contains $\mathbb{F}_2 \times \mathbb{F}_2$ as an undistorted subgroup;
	\item $\mathbb{F}_2 \times \mathbb{F}_2$ quasi-isometrically embeds into $G$;
	\item $\mathrm{Lamp}(\mathbb{Z})$ quasi-isometrically embeds into $G$;
	\item $G$ is not $\mathrm{pol}$-polynomially hyperbolic;
	\item $G$ is not $\mathrm{lin}$-polynomially hyperbolic.
\end{itemize}
\end{thm}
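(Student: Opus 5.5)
We will prove the equivalences as a cycle of implications, with one external input on the hyperbolic side.

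The easy chain is as follows. (ii)$\Rightarrow$(i) is trivial. For (ii)$\Rightarrow$(iii), an undistorted subgroup is quasi-isometrically embedded. For (iii)$\Rightarrow$(iv), we compose the quasi-isometric embedding $\mathrm{Lamp}(\mathbb{Z}) \to T_3 \times T_3$ of \cite{MR3079268} with the quasi-isometry $T_3\times T_3 \simeq \mathbb{F}_2 \times \mathbb{F}_2$. For (iv)$\Rightarrow$(v), Theorem~\ref{thm:GentleLamp} and Lemma~\ref{lem:GentleStableQI} show that if $G$ were $\xi$-polynomially hyperbolic for some polynomial $\xi$, then so would be $\mathrm{Lamp}$, which is impossible. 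For (v)$\Rightarrow$(vi), any $\mathrm{lin}$-polynomially hyperbolic graph is a fortiori $\mathrm{pol}$-polynomially hyperbolic.

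To close the cycle, we need (i)$\Rightarrow$(ii) and (vi)$\Rightarrow$(i), the latter proved by contraposition: if $G$ does not contain $\mathbb{F}_2\times\mathbb{F}_2$, then $G$ is $\mathrm{lin}$-polynomially hyperbolic.

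For (i)$\Rightarrow$(ii), we use that $G$ embeds as a convex-cocompact subgroup of a right-angled Artin group $A(\Gamma)$ \cite{MR2377497}, so it acts geometrically on a convex subcomplex $Y$ of the universal cover $X$ of the Salvetti complex. A subgroup $\mathbb{F}_2\times\mathbb{F}_2 \le G$ should yield, via the structure of product subgroups of special groups, a product subcomplex $Y_1\times Y_2 \subset Y$ whose factors are trees with infinitely many ends, stabilised cocompactly by a finite-index subgroup of its stabiliser. We then pick free subgroups acting on each factor to obtain an undistorted $\mathbb{F}_2\times\mathbb{F}_2$. Equivalently, one can argue in $A(\Gamma)$ via the induced $4$-cycle characterisation \cite{MR2475886} combined with convexity of $Y$. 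Either way, this step is mostly citation.

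The core of the argument is (vi)$\Rightarrow$(i). Assume $G$ has no $\mathbb{F}_2\times\mathbb{F}_2$ subgroup. Let $\mathcal{P}$ be the collection of vertex-sets of maximal product subcomplexes (flats of hyperplane joins) in $Y$ whose factors are not both unbounded with infinitely many ends, together with their $G$-translates. The hypothesis should imply that every maximal product region $Y_1\times Y_2$ has a factor quasi-isometric to a point or a line. Hence each such region is a product of a tree-like factor with something of polynomial growth, so the regions are not themselves polynomial.

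We therefore refine the plan: cone off, inside each product $P_1\times P_2$ where $P_2$ has polynomial growth, the fibres $\{x\}\times P_2$. Equivalently, $\mathcal{P}$ is the family of maximal subcomplexes with polynomial growth that arise as factors in joins. We then need three things. First, $\mathcal{P}$ is locally finite and has uniformly polynomial growth, which holds because, by cocompactness, there are only finitely many orbits and they are convex subcomplexes of a virtually abelian kind (polynomial growth in a special group forces virtually abelian). Second, $\mathrm{ConeOff}(Y,\mathcal{P})$ is hyperbolic; here we use a contact-graph/hyperplane criterion, since after coning off, every remaining join of hyperplanes has both sides bounded. Third, $\mathcal{P}$ is quasi-syllabic, by taking a CAT(0) or combinatorial geodesic and splitting it into maximal subpaths contained in elements of $\mathcal{P}$. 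Proposition~\ref{prop:QuasiSyl} and its corollary then give an $x^y$-gentle Lipschitz map to a hyperbolic space, so $G$ is $\mathrm{lin}$-polynomially hyperbolic.

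The main obstacle is the hyperbolicity of the cone-off together with quasi-syllabicity. To handle it, I would try to show that geodesics of $Y$ which fellow-travel flats only within polynomial-growth factors project to uniform unparametrised quasigeodesics, via a thin-bigon argument in the cone-off that follows Genevois's treatment of relative hyperbolicity of special groups \cite{SpecialRH}.
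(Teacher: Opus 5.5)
Your cycle of implications and your plan for (vi)$\Rightarrow$(i) (cone off polynomial-growth pieces, then apply Proposition~\ref{prop:QuasiSyl}) agree with the paper. However, the step carrying the real content, (quasi-)syllabicity, is not established, and the method you suggest for it fails. Splitting an arbitrary geodesic of $Y$ into maximal subpaths lying in members of $\mathcal{P}$ makes the extension a geodesic of $Y$. It does not make the resulting path a quasigeodesic of the cone-off.

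Take $G=A(P_4)$, with $P_4$ the path $a - b - c - d$, which has no induced square. As you describe it, your collection contains the $b$-lines and the $c$-lines, but not the $\langle b,c\rangle$-flats, since no vertex is adjacent to both $b$ and $c$. The staircase geodesic $bcbc\cdots bc$ from $1$ to $b^nc^n$ then splits into $2n$ single-edge pieces. Yet $1, b^n, b^nc^n$ shows that the cone-off distance is at most $2$. A thin-bigon argument showing that geodesics of $Y$ are unparametrised quasigeodesics in the cone-off would not rescue this, because this staircase stays in the $2$-ball around $1$.

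What Proposition~\ref{prop:QuasiSyl} needs is the \emph{existence} of a well-chosen path. The paper obtains it the other way round, through Proposition~\ref{prop:SyllabicMedian}. Start from a cone-off geodesic. If its piecewise-geodesic extension crosses a hyperplane $J$ twice, the two crossings bound a ladder. Pushing the intermediate vertices across $J$ gives another cone-off geodesic with strictly shorter extension. This works only if $\mathcal{P}$ is closed under parallel pairs: whenever $(x,y)$ and $(a,b)$ are parallel and $x,y$ lie in a member, $a,b$ also lie in a member.

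Checking that closure is where specialness enters:
\begin{itemize}
\item Lemmas~\ref{lem:ParallelPairsHyp} and~\ref{lem:GeodesicsSpecial} give $\mathrm{supp}(a^{-1}b)=\mathrm{supp}(x^{-1}y)=:\Xi$.
\item Lemma~\ref{lem:ParallelPairsTrans} and Corollary~\ref{cor:LabelTransHyp} make $\mathrm{supp}(x^{-1}a)$ transverse to $\Xi$.
\item Lemma~\ref{lem:TransverseLegal}, which uses the absence of osculation, shows that $P(a,\Xi)$ is conjugate to $P(x,\Xi)\le P(x,\Lambda)$, hence abelian.
\end{itemize}

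This is also why the choice of collection matters. The paper takes \emph{all} $S(w,\Lambda)$ with $P(w,\Lambda)$ abelian, not only maximal ones or join factors, precisely so that the smaller pieces $S(a,\Xi)$ produced by parallel transport are again members. Your ``maximal polynomial-growth factors of joins'' is not precisely defined, and it is unclear that it is closed under this operation. It is also not the collection for which \cite{SpecialRH} proves hyperbolicity of the cone-off, so your hyperbolicity step, which you leave as a sketch, cannot simply be cited.

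With the paper's collection, the remaining inputs are clean:
\begin{itemize}
\item Hyperbolicity is exactly the content of the proof of \cite[Theorem~1.1]{SpecialRH}.
\item Uniform polynomial growth holds because $P(w,\Lambda)$ acts on $S(w,\Lambda)$ with at most $|X^{(0)}|$ orbits.
\item Local finiteness holds because $X$ has finitely many hyperplanes and $S(z,\Lambda)=S(x,\Lambda)$ whenever $x\in S(z,\Lambda)$. ``Finitely many orbits,'' which you invoke, does not by itself bound the number of members through a vertex.
\end{itemize}

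Finally, for (i)$\Rightarrow$(ii), your induced $4$-cycle route only produces an $\mathbb{F}_2\times\mathbb{F}_2$ inside $A(\Gamma)$, not inside $G$. The paper instead uses \cite{MR3358258} to see that $\mathbb{F}_2\times\mathbb{F}_2$-subgroups of right-angled Artin groups are undistorted. Hence the given subgroup of $G$ is undistorted in $A(\Gamma)$, and therefore in $G$.
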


\noindent
In Sections~\ref{section:BasicMedian} and~\ref{section:IntroSpecial}, we recall some basic definitions and properties related to median geometry and cocompact special groups, and we record a few preliminary lemmas. The proof of Theorem~\ref{thm:SpecialLamp} is contained in Section~\ref{section:ProofSpecial}

\subsection{Preliminaries on median geometry}\label{section:BasicMedian}

\noindent
We start by recalling basic definitions and properties related to median graphs. 

\begin{definition}
A connected graph $X$ is \emph{median} if, for all vertices $x_1,x_2,x_3 \in V(X)$, there exists a unique vertex $m \in V(X)$ satisfying
$$d(x_i,x_j)= d(x_i,m)+ d(m,x_j) \text{ for all } i \neq j.$$
\end{definition}

\noindent
Hyperplanes, which we now define, provide a fundamental tool in order to understand the geometry of median graphs.

\begin{definition}
Let $X$ be a median graph. A(n \emph{oriented}) \emph{hyperplane} is an equivalence class of (oriented) edges with respect to the reflexive-transitive closure of the relation that identifies two (oriented) edges whenever they are opposite in some induced $4$-cycle. Two hyperplanes are \emph{transverse} (resp.\ \emph{tangent}) whenever they contain two edges with a common endpoint that span (resp.\ that do not span) an induced $4$-cycle. 
\end{definition}
\begin{figure}
\begin{center}
\includegraphics[width=0.4\linewidth]{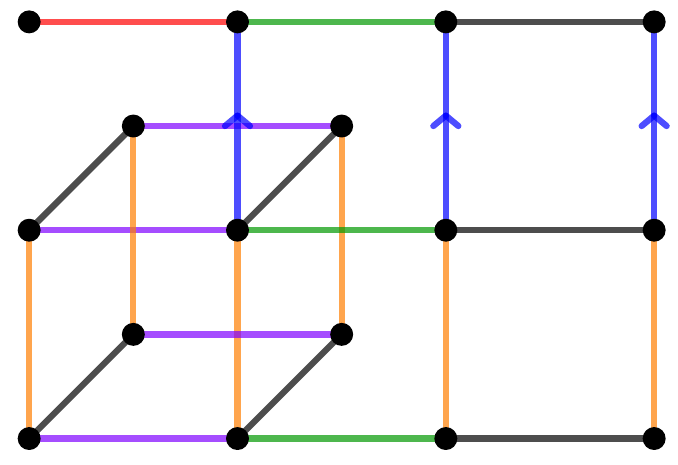}
\caption{Examples of (oriented) hyperplanes in a median graph. The green hyperplane is tangent to the red and purple hyperplanes, and transverse to the orange hyperplane.}
\label{Hyp}
\end{center}
\end{figure}

\noindent
See Figure~\ref{Hyp}. Roughly speaking, the geometry of a median graph is encoded in the combinatorics of its hyperplanes. This idea is partly justified by the following statement:

\begin{thm}\label{thm:BigMedian}
Let $X$ be a median graph. 
\begin{itemize}
	\item For every hyperplane $J$, the graph $X \backslash \backslash J$ obtained from $X$ by removing the edges in $J$ has exactly two connected components, referred to as \emph{halfspaces}. 
	\item Halfspaces are convex.
	\item A path is a geodesic if and only if it crosses every hyperplane at most once. 
	\item The distance between two vertices coincide with the number of hyperplanes separating them.
\end{itemize}
\end{thm}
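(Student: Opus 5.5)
The statement to prove is Theorem~\ref{thm:BigMedian}, the standard package of facts about hyperplanes in median graphs. My plan is to build everything from one local-to-global observation. Fix an edge $e=\{a,b\}$, let $J$ be its hyperplane, and set
$$H_a=\{x \mid d(x,a)<d(x,b)\}, \qquad H_b=\{x \mid d(x,b)<d(x,a)\}.$$
Median graphs are bipartite: a triangle or an odd cycle would violate uniqueness of medians, via the usual argument on a shortest odd cycle. Hence $V(X)=H_a \sqcup H_b$. The first key step is to show that an edge $\{u,v\}$ has one endpoint in each of $H_a,H_b$ if and only if it belongs to $J$.

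For the "if" direction, take opposite edges $\{u,v\}$ and $\{u',v'\}$ of an induced $4$-cycle with $u \in H_a$. I will check that $u' \in H_a$, $v' \in H_b$ and $v \in H_b$ using the median of the relevant triples. Since bipartiteness forces distances from $a$ and from $b$ to differ by exactly one, the membership pattern propagates across squares. Induction along the equivalence relation defining $J$ then finishes this direction.

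The converse is the step I expect to be the main obstacle. Suppose $\{u,v\}$ is crossed with $u \in H_a$ and $v \in H_b$. I will induct on $d(u,a)$. If $u \neq a$, consider the median $m$ of $u$, $a$, $b$ and of $v$, $a$, $b$. Then I produce a neighbour $u'$ of $u$ on a geodesic to $a$, and a neighbour $v'$ of $v$ on a geodesic to $b$, such that $u,v,v',u'$ forms an induced square with $\{u',v'\}$ still crossing and closer to $e$. Concretely, let $u'$ be the neighbour of $u$ toward $a$, and let $v'$ be the median of $v$, $u'$, $b$. The computation checks that $v'$ is adjacent to both $v$ and $u'$ (the quadrangle condition), and that the square is induced because the graph is bipartite.

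With this in hand, $X \backslash\backslash J$ has all its edges inside $H_a$ or inside $H_b$, so there are at most two components. Each of $H_a$ and $H_b$ is connected, since geodesics to $a$ (respectively $b$) stay inside. This gives the first item.

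For convexity, suppose a geodesic between two vertices of $H_a$ leaves $H_a$. Then it crosses $J$ twice, at edges $e_1$ and $e_2$. Choose such a configuration with the subpath between them as short as possible. Walking back along $J$ via squares, I will shorten the subpath, which contradicts minimality. An alternative route is to show that $H_a$ is locally convex and connected, which implies convexity in median graphs; I would use the square-propagation argument.

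The last two items then follow together. By convexity, a geodesic crosses each hyperplane at most once. Conversely, every hyperplane separating $x$ from $y$ must be crossed by any path between them, since the two sides lie in different components of $X \backslash\backslash J$. Hence $d(x,y)$ is at least the number of separating hyperplanes. Along a geodesic each crossed hyperplane is crossed exactly once, so it separates $x$ from $y$, which gives equality. Finally, a path crossing every hyperplane at most once has length equal to the number of hyperplanes it crosses. Each of these separates its endpoints, because crossing a hyperplane an odd number of times switches sides. So its length equals the distance, and the path is a geodesic.
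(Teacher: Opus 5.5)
The paper gives no proof of this theorem. It is recalled as standard background on median graphs, due to Sageev in the cubical setting and to Mulder and Chepoi for median graphs, so there is no internal argument to compare with. Your plan is the classical graph-theoretic one: identify the hyperplane of $\{a,b\}$ with the set of edges joining $H_a$ to $H_b$, i.e.\ the Djokovi\'c--Winkler class of $\{a,b\}$. Your key claim holds in both directions.

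In the propagation direction, bipartiteness alone does not carry the pattern across squares: in $K_{2,3}$ the square-class of an edge contains edges with both endpoints in $H_a$. Two facts rule out the bad patterns. First, a median graph contains no $K_{2,3}$, since two vertices with three common neighbours give a triple with two medians. Second, two vertices at distance $2$ that are equidistant from $a$ (or from $b$) have, via their median with $a$ (or $b$), a common neighbour one step closer. Your converse construction works as stated. We have $d(v,u')=2$ and $d(v,b)=d(u',b)=d(u,a)$, so the median of $v,u',b$ is a common neighbour $v'$ with $d(v',b)=d(v,b)-1$. One then checks that $u'\in H_a$ and $v'\in H_b$.

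The genuine gap is convexity, on which your third and fourth items depend (``a geodesic crosses each hyperplane at most once''). The sentence ``walking back along $J$ via squares, I will shorten the subpath'' does not describe an argument. Your alternative relies on the local-to-global convexity theorem for median graphs, a separate nontrivial result you would have to prove or cite.

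The missing observation is that your key claim holds with every edge of $J$ as base edge. Applied to $\{x_1,y_1\}\in J$, it shows that the edges of $J$ are exactly those joining $W(x_1,y_1):=\{z \mid d(z,x_1)<d(z,y_1)\}$ to its complement. By your first item, $W(x_1,y_1)$ is therefore the halfspace containing $x_1$.

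Now let $z_0,\ldots,z_n$ be a geodesic with $z_0,z_n\in H_a$ that meets $H_b$. Let $\{z_i,z_{i+1}\}$ be the first edge with $z_i\in H_a$ and $z_{i+1}\in H_b$. This edge lies in $J$, so $H_b=W(z_{i+1},z_i)$. Since $z_i,\ldots,z_n$ is a geodesic, $d(z_n,z_{i+1})=d(z_n,z_i)-1$, hence $z_n\in H_b$, a contradiction. With this in place, the rest of your argument goes through.

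Minor slips:
\begin{itemize}
\item An odd cycle contradicts the existence of medians, not their uniqueness.
\item In the first item, the description of the edges of $X\backslash\backslash J$ gives at least two components. It is the connectedness of $H_a$ and $H_b$ that gives at most two.
\item The medians of $u,a,b$ and of $v,a,b$ are simply $a$ and $b$, so that sentence plays no role.
\end{itemize}
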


\noindent
We now turn to some preliminary lemmas that will be useful later. First, the following definition will be needed in order to state Proposition~\ref{prop:SyllabicMedian} in Section~\ref{section:ProofSpecial}. 

\begin{definition}
Let $X$ be a median graph. Two pairs of vertices $(a,b),(x,y) \in V(X)^2$ are \emph{parallel} if every halfspace contains $a$ but not $b$ if and only if it contains $x$ but not $y$. 
\end{definition}

\noindent
Finally, we record two elementary observations related to parallel pairs of vertices. In our first lemma, we denote by $\mathcal{H}(\cdot | \cdot)$ the collection of all the hyperplanes separating two given vertices. 

\begin{lemma}\label{lem:ParallelPairsTrans}
Let $X$ be a median graph and $(a,b),(x,y) \in V(X)^2$ two parallel pairs of vertices. The collections of hyperplanes $\mathcal{H}(x|a)$ and $\mathcal{H}(x|y)$ are transverse.
\end{lemma}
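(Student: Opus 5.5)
Fix a hyperplane $J \in \mathcal{H}(x|a)$ and a hyperplane $H \in \mathcal{H}(x|y)$. We must show that $J$ and $H$ are transverse. Recall that in a median graph two distinct hyperplanes are transverse exactly when all four intersections of their halfspaces are non-empty. So the plan is to locate four vertices among $a,b,x,y$ realising the four sign patterns with respect to $J$ and $H$.

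We begin with the pair $(x,y)$. Since $H$ separates $x$ from $y$, let $H^+$ denote the halfspace containing $x$ but not $y$. By parallelism, $H^+$ contains $a$ but not $b$. Hence $x,a \in H^+$ and $y,b \in H^-$. In particular $H \neq J$, because $J$ separates $x$ from $a$ while $H$ does not.

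Next we turn to $J$. Let $J^+$ be the halfspace containing $x$, so that $a \in J^-$. We claim that $J$ does not separate $x$ from $y$. Indeed, otherwise one of the two halfspaces of $J$ contains $x$ but not $y$. By parallelism, that halfspace then contains $a$ but not $b$. Both halfspaces of $J$ contain exactly one of $x$ and $a$, so whichever halfspace contains $x$ and not $y$ cannot also contain $a$, a contradiction. Hence $y \in J^+$.

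By the symmetric argument with the roles of the pairs swapped, $J$ does not separate $a$ from $b$. Suppose it did. Then some halfspace of $J$ would contain $a$ but not $b$, and by parallelism it would contain $x$ but not $y$. But this halfspace contains $a$, so it is $J^-$, which does not contain $x$. This is a contradiction. Hence $b \in J^-$.

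Collecting the positions:
\begin{itemize}
	\item $x \in J^+ \cap H^+$;
	\item $y \in J^+ \cap H^-$;
	\item $a \in J^- \cap H^+$;
	\item $b \in J^- \cap H^-$.
\end{itemize}
All four quadrants are therefore non-empty, so $J$ and $H$ are transverse.

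The only step requiring care is the characterisation of transversality through the four halfspace intersections. In a median graph this is standard: if all four quadrants are non-empty, convexity of halfspaces (Theorem~\ref{thm:BigMedian}) lets one take a vertex in $J^+\cap H^+$ minimising the distance to $J^-\cap H^-$. A geodesic from it crosses both $J$ and $H$, and the median property produces a $4$-cycle spanned by an edge of $J$ and an edge of $H$. If this is not taken as the definition, I would cite it as a standard fact.
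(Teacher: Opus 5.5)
Your proof is correct and is essentially the paper's argument: you show that $J$ separates $\{x,y\}$ from $\{a,b\}$ and that $H$ separates $\{x,a\}$ from $\{y,b\}$, then conclude by the four-quadrant criterion for transversality, which the paper also uses without comment. One caveat: your closing sketch of that criterion never actually uses the non-emptiness of the mixed quadrants $J^+\cap H^-$ and $J^-\cap H^+$, and without them the criterion fails (e.g.\ for the hyperplanes of two consecutive edges of a path), so it is better to simply cite the standard fact as you suggest.
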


\begin{proof}
Let $J$ be a hyperplane separating $x$ and $a$. If $J$ separates $a$ and $b$, then it delimits a halfspace containing $a$ but neither $b$ nor $x$, which is impossible. Similarly, $J$ cannot separate $x$ and $y$. As a consequence, $J$ must separate $b$ and $y$. Thus, every hyperplane separating $x$ and $a$ must separate $\{a,b\}$ and $\{x,y\}$. On the other hand, it is clear that every hyperplane separating $x$ and $y$ separates $\{a,x\}$ and $\{b,y\}$. We conclude that every hyperplane separating $x$ and $a$ must be transverse to every hyperplane separating $x$ and~$y$. 
\end{proof}

\begin{lemma}\label{lem:ParallelPairsHyp}
Let $X$ be a median graph and $(a,b),(x,y) \in V(X)^2$ two parallel pairs of vertices. For every geodesic $\alpha$ connecting $a$ to $b$, there exists a geodesic $\xi$ connecting $x$ to $y$ such that $\alpha$ and $\xi$ cross the same hyperplanes in the same order. 
\end{lemma}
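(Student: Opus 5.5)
The plan is to build $\xi$ edge by edge along $\alpha$, translating each step of $\alpha$ into a step starting from $x$, and to use Lemma~\ref{lem:ParallelPairsTrans} to guarantee that every step is available. Write $\alpha = (a=a_0,a_1,\ldots,a_n=b)$ and let $J_1,\ldots,J_n$ be the hyperplanes crossed by $\alpha$, in order. By Theorem~\ref{thm:BigMedian}, these are exactly the hyperplanes of $\mathcal{H}(a|b)$. Parallelism of $(a,b)$ and $(x,y)$ means exactly that $\mathcal{H}(a|b)=\mathcal{H}(x|y)$ with the same orientations: every $J_i$ has $x$ on the side of $a$ and $y$ on the side of $b$.

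The main step is the following claim. Let $u$ be a vertex with $\mathcal{H}(u|y)=\{J_{i+1},\ldots,J_n\}$ and $\mathcal{H}(u|a_i)=\mathcal{H}(x|a)$. Then there is a neighbour $u'$ of $u$ separated from it by exactly $J_{i+1}$, and $u'$ satisfies the same two conditions with $i+1$ in place of $i$.

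To prove it, consider the edge $(a_i,a_{i+1})$, which is dual to $J_{i+1}$. The hyperplanes separating $u$ from this edge are those of $\mathcal{H}(u|a_i)=\mathcal{H}(x|a)$. Each of them separates $\{a,b\}$ from $\{x,y\}$ (as in the proof of Lemma~\ref{lem:ParallelPairsTrans}), so each is transverse to $J_{i+1}$. In a median graph, the carrier of $J_{i+1}$ is convex, so the gate of $u$ in this carrier exists. Every hyperplane separating $u$ from its gate separates $u$ from the whole carrier, and in particular from the edge $(a_i,a_{i+1})$. Such a hyperplane would then be disjoint from $J_{i+1}$, which contradicts transversality. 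Hence the gate is $u$ itself, so $u$ lies in the carrier and has a neighbour $u'$ across $J_{i+1}$. Since $J_{i+1}$ separates $u$ from $y$, crossing it brings us closer to $y$, which gives the first condition for $u'$. Moreover, $u$ and $a_i$ lie on the same side of $J_{i+1}$, and $u'$ and $a_{i+1}$ lie on the other side. This gives $\mathcal{H}(u'|a_{i+1})=\mathcal{H}(u|a_i)$, the second condition.

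Now start with $u_0=x$. The first condition holds because $\mathcal{H}(x|y)=\{J_1,\ldots,J_n\}$, and the second holds trivially. Apply the claim $n$ times to get a path $x=u_0,u_1,\ldots,u_n$ in which the $i$-th edge crosses $J_i$. After $n$ steps, $\mathcal{H}(u_n|y)=\emptyset$, so $u_n=y$. This path crosses each hyperplane exactly once, so it is a geodesic by Theorem~\ref{thm:BigMedian}. It crosses the same hyperplanes as $\alpha$, in the same order.

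I expect the main obstacle to be the carrier/gate step: justifying that being transverse to every hyperplane separating $u$ from some edge of $J_{i+1}$ forces $u$ into the carrier of $J_{i+1}$. The alternative is to argue directly with the median of $u$, $a_i$, $a_{i+1}$. This median is adjacent to $a_i$ or to $a_{i+1}$. If it is not $u$, then some hyperplane separates $u$ from both $a_i$ and $a_{i+1}$. That hyperplane must lie in $\mathcal{H}(x|a)$, and one checks that it cannot be transverse to $J_{i+1}$.
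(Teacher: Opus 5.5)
Your proof is correct. The inductive scheme is the same as the paper's: transport $\alpha$ one edge at a time, each time producing a neighbour across the same hyperplane and maintaining an invariant that lets you iterate. The key step, however, is done differently.

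\begin{itemize}
\item The paper takes $x'$ to be the median of $x$, $y$, $a'$. It checks directly, using only convexity of halfspaces, that $\mathcal{H}(x|x')=\mathcal{H}(a|a')$. It then re-verifies that $(a',b)$ and $(x',y)$ are parallel.
\item You carry the stronger ``constant offset'' invariant $\mathcal{H}(u_i|a_i)=\mathcal{H}(x|a)$. You locate the neighbour by projecting $u$ to the carrier of $J_{i+1}$, and use the transversality of Lemma~\ref{lem:ParallelPairsTrans} to rule out any hyperplane separating $u$ from its gate. You implicitly use $J_{i+1}\notin\mathcal{H}(x|a)$, which holds because $\mathcal{H}(x|a)\cap\mathcal{H}(a|b)=\emptyset$.
\end{itemize}

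The paper's route is lighter: it needs neither carriers, gates, nor the quadrant characterisation of transversality. Yours makes visible that $\xi$ is literally a translate of $\alpha$ across the fixed set $\mathcal{H}(x|a)$.

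Your closing ``alternative'' is garbled, though. Since $a_i$ and $a_{i+1}$ are adjacent, the median of $u,a_i,a_{i+1}$ is always $a_i$ or $a_{i+1}$, so it cannot locate a neighbour of $u$. The median that works is $m=m(u,y,a_{i+1})$. Your invariants give $\mathcal{H}(u|y)=\{J_{i+1},\ldots,J_n\}$ and $\mathcal{H}(u|a_{i+1})=\mathcal{H}(x|a)\cup\{J_{i+1}\}$, hence $\mathcal{H}(u|m)=\mathcal{H}(u|y)\cap\mathcal{H}(u|a_{i+1})=\{J_{i+1}\}$. This is exactly the paper's construction.
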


\begin{proof}
Let $a'$ denote the neighbour of $a$ along $\gamma$ and consider the median point $x'$ of $x,y,a'$. We claim that $x'$ is a neighbour of $x$ and that the unique hyperplane separating $x$ and $x'$ coincides with the hyperplane separating $a$ and $a'$.

\medskip \noindent
Let $J$ be a hyperplane separating $x$ and $x'$. Since $J$ also separates $x$ and $y$, necessarily $a$ and $b$ are separated by $J$. Moreover, the halfspace $J^+$ delimited by $J$ and containing $x$ must contain $a$ as well. If $a'$ belongs to $J^+$, then the convexity of $J^+$ would imply that $x' \in J^+$, contradicting the assumption that $J$ separates $x$ and $x'$. Hence $a' \notin J^+$, which implies that $J$ separates $a$ and $a'$. Conversely, assume that $J$ is the hyperplane separating $a$ and $a'$. Since $J$ separates $a$ and $b$, necessarily $x$ and $y$ are separated by $J$. Since $a'$ and $y$ belong to the same halfspace delimited by $J$, necessarily the median point $x'$ must belong to the same halfspace by convexity. This implies that $J$ separates $x$ and $x'$. This concludes the proof of our claim.

\medskip \noindent
Thus, we have found a neighbour $x'$ of $x$ that belongs to a geodesic connecting $x$ to $y$ such that the edge $\{x,x'\}$ belongs to the same hyperplane as the first edge of $\alpha$. The edge $\{x,x'\}$ will be the first edge of the geodesic $\xi$ we are looking for. In order to iterate the process, it suffices to verify that the new pairs $(a',b)$ and $(x',y)$ are parallel.

\medskip \noindent
Let $J$ be a hyperplane delimiting a halfspace $J^+$ containing $a'$ but not $b$. Since $J^+$ contains $a$ but not $b$, necessarily it contains $x$ but not $y$. Because $J \notin \mathcal{H}(a|a') = \mathcal{H}(x|x')$, it follows that $x' \in J^+$. Thus, $J^+$ contains $x'$ but not $y$. Conversely, assume that $J$ is a hyperplane delimiting a halfspace $J^+$ containing $x'$ but not $y$. Since $J$ contains $x$ but not $y$, necessarily it contains $a$ but not $b$. Again, because $J \notin \mathcal{H}(x|x')= \mathcal{H}(a|a')$, it follows that $a' \in J^+$. Thus, $J^+$ contains $a'$ but not $b$, as desired. 
\end{proof}

\subsection{Combinatorics of special cube complexes}\label{section:IntroSpecial}

\noindent
In this section, we recall basic definitions related to special cube complexes, as introduced in \cite{MR2377497}, and we record the formalism introduced in \cite{SpecialRH} that we will use in Section~\ref{section:ProofSpecial} in order to prove Theorem~\ref{thm:IntroSpecial}. 

\begin{definition}
A cube complex is \emph{nonpositively curved} if the links of its vertices are flag simplicial complexes. 
\end{definition}

\noindent
As shown in \cite{MR919829} (see also \cite{MR1744486}), a cube complex is CAT(0) if and only if is simply connected and nonpositively curved. Then, it follows from \cite{MR1663779,Roller,MR1748966} that the one-skeleta of universal covers of nonpositively curved cube complexes coincide with median graphs. We define (oriented) hyperplanes in nonpositively curved cube complexes as images of hyperplanes in the universal covers. 

\begin{definition}
Let $J$ be a hyperplane with a fixed orientation $\vec{J}$ in a nonpositively curved cube complex. We say that $J$ is:
\begin{itemize}
	\item \textit{2-sided} if $\vec{J} \neq \vec{J}^{-1}$, where $\vec{J}^{-1}$ denote the opposite orientation of $\vec{J}$;
	\item \textit{self-intersecting} if there exist two edges dual to $J$ that are non-parallel sides of some square;
	\item \textit{self-osculating} if there exist two oriented edges dual to $\vec{J}$ with the same initial points or the same terminal points, but which do not belong to a same square.
\end{itemize}
Moreover, if $H$ is another hyperplane, then $J$ and $H$ are:
\begin{itemize}
	\item \textit{transverse} if there exist two edges dual to $J$ and $H$ respectively that are non-parallel sides of some square;
	\item \textit{inter-osculating} if they are transverse, and if there exist two edges dual to $J$ and $H$ respectively with one common endpoint, but which do not belong to a same square.
\end{itemize}
\end{definition}
\noindent
Sometimes, one refers 1-sided, self-intersecting, self-osculating, and inter-osculating hyperplanes as \textit{pathological configurations of hyperplanes}. The last three configurations are illustrated on Figure \ref{figure17}.
\begin{figure}
\begin{center}
\includegraphics[width=\linewidth]{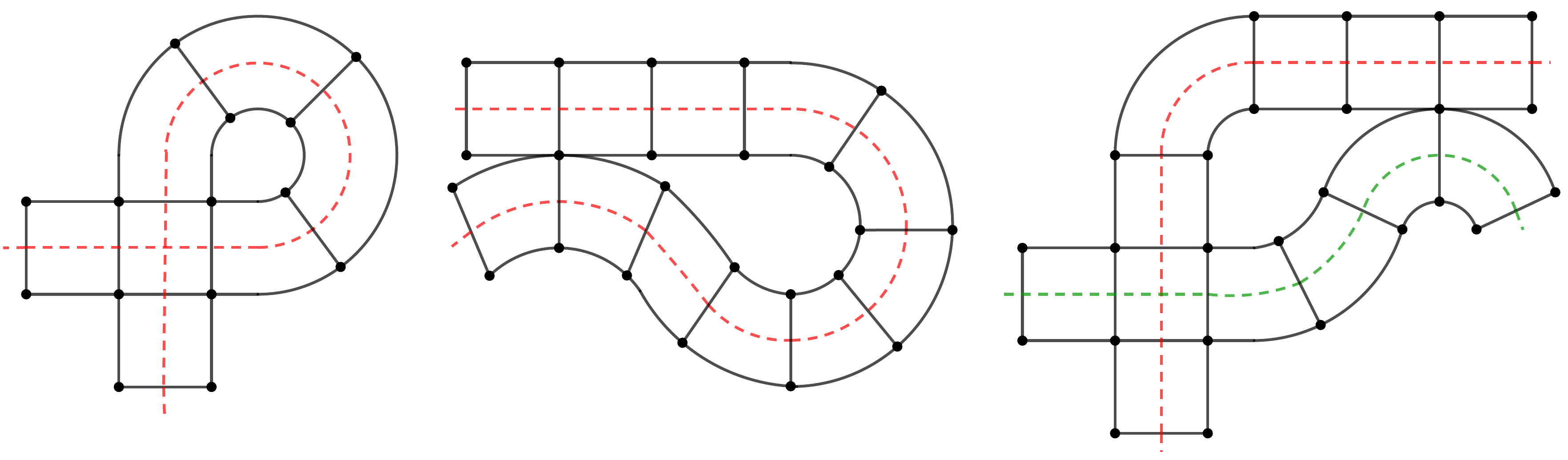}
\caption{From left to right: self-intersection, self-osculation, inter-osculation.}
\label{figure17}
\end{center}
\end{figure}

\begin{definition}
A \emph{special cube complex} is a nonpositively curved cube complex whose hyperplanes are two-sided and that does not contain self-intersecting, self-osculating, nor inter-osculating hyperplanes. A group that can be described as the fundamental group of a (compact) special cube complex is \emph{(cocompact) special}.
\end{definition}

\noindent
Let $Q$ be a cube complex (not necessarily nonpositively-curved). For us, a \emph{(combinatorial) path} in $Q$ is the data of a sequence of successively adjacent edges. What we want to understand is when two such paths are homotopic (with fixed endpoints). For this purpose, we need to introduce the following elementary transformations. One says that:  
\begin{itemize}
	\item a path $\gamma \subset Q$ contains a \emph{backtrack} if the word of oriented edges read along $\gamma$ contains a subword $ee^{-1}$ for some oriented edge $e$;
	\item a path $\gamma' \subset Q$ is obtained from another path $\gamma \subset Q$ by \emph{flipping a square} if the word of oriented edges read along $\gamma'$ can be obtained from the corresponding word of $\gamma$ by replacing a subword $e_1e_2$ with $e_2'e_1'$ where $e_1',e_2'$ are opposite oriented edges of $e_1,e_2$ respectively in some square of $Q$. 
\end{itemize}
We claim that these elementary transformations are sufficient to determine whether or not two paths are homotopic. More precisely:

\begin{prop}\label{prop:cubehomotopy}
Let $Q$ be a cube complex and $\gamma,\gamma' \subset Q$ two paths with the same endpoints. The paths $\gamma,\gamma'$ are homotopic (with fixed endpoints) if and only if $\gamma'$ can be obtained from $\gamma$ by removing or adding backtracks and flipping squares. 
\end{prop}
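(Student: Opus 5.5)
The plan is to reduce the statement to the classical combinatorial description of the fundamental groupoid of a CW complex through its $2$-skeleton. Paths with fixed endpoints in $Q$ are homotopic in $Q$ if and only if they are homotopic in the $2$-skeleton $Q^{(2)}$, by cellular approximation applied to paths and to homotopies between them. The $2$-skeleton of a cube complex is the $1$-skeleton together with one square $2$-cell attached along each $4$-cycle bounding a square of $Q$. So it suffices to show the statement for the $2$-dimensional complex $Q^{(2)}$.

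For the easy direction, removing or adding a backtrack $ee^{-1}$ is a homotopy in the graph $Q^{(1)}$. Flipping a square, i.e.\ replacing $e_1e_2$ by $e_2'e_1'$, is a homotopy obtained by pushing the path across the square: the loop $e_1e_2(e_2'e_1')^{-1}$ is exactly the boundary of the square, hence null-homotopic. Both moves therefore preserve the homotopy class.

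For the converse, I would use the standard presentation of fundamental groupoids of $2$-complexes, which follows from van Kampen's theorem. Two combinatorial paths $\gamma,\gamma'$ with the same endpoints are homotopic in $Q^{(2)}$ if and only if the word $\gamma'$ can be obtained from $\gamma$ by free reductions and insertions, i.e.\ adding and removing backtracks, together with insertions or deletions of conjugates of boundary words of $2$-cells. Equivalently, $\gamma^{-1}\gamma'$ is freely equal to a product of paths $\delta_i \partial_i^{\pm 1}\delta_i^{-1}$, where each $\partial_i$ is the boundary of a square read from one of its corners. To conclude, it then suffices to check that inserting a relator $\partial$ of a square at some point of a path can be realised by flips and backtracks. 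Write $\partial=e_1e_2e_3e_4$. I would insert the backtrack $e_1e_2(e_1e_2)^{-1}$ at that point. The second half $(e_1e_2)^{-1}=e_2^{-1}e_1^{-1}$ is then flipped across the same square into $e_3e_4$, since $e_2^{-1}e_1^{-1}$ and $e_3e_4$ are the two ways around the square between the same opposite corners. This produces $e_1e_2e_3e_4$, and the same works for every cyclic rotation and for the inverse orientation. Hence every elementary move of the groupoid presentation is a combination of backtracks and flips.

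The only step requiring care is this converse, namely the justification that homotopy classes are described by the $2$-cell relators, through van Kampen or the combinatorial homotopy theorem for $2$-complexes. One also has to handle the bookkeeping of conjugating paths $\delta_i$, which are themselves just inserted as backtracks $\delta_i\delta_i^{-1}$. Degenerate squares, such as those with identified vertices or edges in a non-NPC complex, do not affect the argument, since a flip is defined purely in terms of the attaching word of the square.
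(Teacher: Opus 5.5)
Your proposal is correct and takes the same route as the paper, which simply invokes the presentation of the fundamental groupoid of a cell complex by its $1$-cells, with relations coming from the $2$-cells \cite[Statement~9.1.6]{BrownGroupoidTopology}, and notes that the square relations are exactly the flips. You also supply the details the paper leaves implicit: the reduction to the $2$-skeleton, and the check that inserting a square relator amounts to inserting the backtrack $e_1e_2e_2^{-1}e_1^{-1}$ and then flipping $e_2^{-1}e_1^{-1}$ into $e_3e_4$, which does match the paper's definition of a flip.
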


\noindent
This statement follows from the fact that flipping squares provide the relations of the fundamental groupoid of $Q$; see \cite[Statement 9.1.6]{BrownGroupoidTopology} for more details. 

\medskip \noindent
In the rest of this subsection, we describe (a particular case of) the formalism introduced in \cite{SpecialRH}. 

\medskip \noindent
From now on, we fix a (not necessarily compact) special cube complex $Q$. The \emph{crossing graph}\footnote{The graph $\Delta Q$ will define the right-angled Artin group in which the fundamental group of $Q$ will embed. This is the same graph used in \cite{MR2377497}, where no name is assigned to it. Here, we use the suggestive terminology \emph{crossing graph}, but it should not be confused with related but different graphs introduced in various contexts, e.g.\ \cite{MR1153934, MR1379364, Roller, MR1920184, MR3217625}.} $\Delta Q$ is the graph whose vertices are the hyperplanes of $Q$ and whose edges link two transverse hyperplane. 

\medskip \noindent
Given a vertex $o \in Q^{(0)}$, a word $w=J_1 \cdots J_r$ of oriented hyperplanes $J_1, \ldots, J_r$ is \emph{$o$-legal} if there exists a path $\gamma$ in $Q$ starting from $o$ such that the oriented hyperplanes it crosses are successively $J_1, \ldots, J_r$. We say that the path $\gamma$ \emph{represents} the word $w$. 

\begin{fact}\label{fact:uniquepath}\emph{\cite[Fact 3.3]{SpecialRH}}
An $o$-legal word is represented by a unique path in $Q$. 
\end{fact}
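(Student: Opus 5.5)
The statement to prove is Fact~\ref{fact:uniquepath}: an $o$-legal word $w=J_1\cdots J_r$ is represented by a unique path in $Q$ starting at $o$. Existence holds by the definition of legality, so only uniqueness needs an argument. I will argue by induction on $r$. The key point is that a path starting at a fixed vertex is determined edge by edge. Once the prefix path representing $J_1\cdots J_{k}$ is known to be unique, it ends at a well-defined vertex $v_k$. The next edge must then be an edge of $Q$ incident to $v_k$ that is dual to the oriented hyperplane $J_{k+1}$ and oriented compatibly.

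So everything reduces to the following local claim: for any vertex $v$ and any oriented hyperplane $\vec{J}$, there is at most one oriented edge with initial point $v$ that is dual to $\vec{J}$. Given this claim, the induction closes immediately. The base case $r=0$ is the constant path at $o$.

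To prove the local claim, I suppose two distinct oriented edges $e_1,e_2$ both start at $v$ and are both dual to $\vec{J}$, and I derive a contradiction with specialness in two cases.
\begin{itemize}
	\item If $e_1$ and $e_2$ do not span a common square, this is exactly a self-osculation of $J$: two oriented edges dual to $\vec{J}$ with the same initial point that do not belong to a same square. This is excluded.
	\item If $e_1$ and $e_2$ span a square, they are two edges of that square sharing a vertex, hence non-parallel sides. Since both are dual to $J$, this makes $J$ self-intersecting, which is also excluded.
\end{itemize}
The case of edges with the same terminal point does not arise, because we only ever extend a path from its current endpoint.

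There is one subtlety to settle first. I should check that the orientation convention is consistent: "crossing $\vec{J}$" must mean traversing an edge dual to $J$ in the direction given by $\vec{J}$. I also need two-sidedness, so that $\vec{J}\neq\vec{J}^{-1}$ and an edge crossing $J^{-1}$ cannot be confused with one crossing $J$. Without two-sidedness, a single edge could serve as both orientations, although that would still give at most one edge. With this convention, the self-osculation definition applies verbatim. I expect this bookkeeping to be the only delicate step; the rest is immediate from the definition of a special cube complex.
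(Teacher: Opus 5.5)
Your proof is correct and follows the standard argument behind the cited \cite[Fact~3.3]{SpecialRH}: you build the path edge by edge from $o$, and at each step two distinct oriented edges with a common initial point, both dual to the same oriented hyperplane, would make that hyperplane self-intersect if they span a square and self-osculate if they do not. The one convention worth stating explicitly is that ``do not belong to a same square'' in the definition of self-osculation is read as ``do not span a square''; the two readings differ only for degenerate squares in which two parallel sides share a vertex, and the paper itself uses the phrase this way, e.g.\ in the proof of Lemma~\ref{lem:TransverseLegal}.
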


\noindent
The previous fact allows us to define the \emph{terminus} of an $o$-legal word $w$, denoted by $t(w)$, as the ending point of the unique path representing $w$. 

\medskip \noindent
Set $\mathcal{L}(Q) = \{ x \text{-legal words} \mid x \in Q^{(0)}\}$ the set of all legal words. (If $x_1,x_2 \in Q^{(0)}$ are two distinct points, we consider the empty $x_1$-legal and $x_2$-legal words as distinct.) We consider the equivalence relation $\sim$ on $\mathcal{L}(Q)$ generated by the following transformations:
\begin{description}
	\item[(cancellation)] if a legal word contains $JJ^{-1}$ or $J^{-1}J$, remove this subword;
	\item[(insertion)] given an oriented hyperplane $J$, insert $JJ^{-1}$ or $J^{-1}J$ as a subword of a legal word;
	\item[(commutation)] if a legal word contains $J_1J_2$ where $J_1,J_2$ are two transverse oriented hyperplanes, replace this subword with $J_2 J_1$. 
\end{description}
So two $x$-legal words $w_1,w_2$ are equivalent with respect to $\sim$ if there exists a sequence of $x$-legal words 
$$m_1=w_1, \ m_2, \ldots, m_{r-1}, \ m_r=w_2$$ 
such that $m_{i+1}$ is obtained from $m_i$ by a cancellation, an insertion, or a commutation for every $1 \leq i \leq r-1$. Define $\mathcal{D}(Q)= \mathcal{L}(Q)/ \sim$ as a set of \emph{diagrams}. The following observation allows us (in particular) to define the \emph{terminus} of a diagram as the terminus of one of the legal words representing it.

\begin{fact}\label{fact:transformations}\emph{\cite[Fact 3.4]{SpecialRH}}
Let $w'$ be an $o$-legal word obtained from another $o$-legal word $w$ by a cancellation / an insertion / a commutation. If $\gamma',\gamma$ are paths representing $w',w$ respectively, then $\gamma'$ is obtained from $\gamma$ by removing a backtrack / adding a backtrack / flipping a square. 
\end{fact}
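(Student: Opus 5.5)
The statement to prove is Fact~\ref{fact:transformations}: each of the three elementary transformations on legal words is realised on the level of representing paths by the corresponding elementary homotopy. I would treat the three transformations separately, using Fact~\ref{fact:uniquepath} throughout to identify each word with its unique representing path.

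Write $w = w_1 \, u \, w_2$ and $w' = w_1 \, u' \, w_2$, where $u \mapsto u'$ is the local modification. Let $\gamma = \gamma_1 \cdot \delta \cdot \gamma_2$ be the decomposition of the path representing $w$, with $\gamma_1$ representing the $o$-legal word $w_1$. By uniqueness, the path representing $w'$ also begins with $\gamma_1$ and arrives at $x := t(w_1)$. It then reads $u'$ from $x$.

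The whole argument reduces to checking two things:
\begin{itemize}
\item the portion representing $u'$ starting at $x$ is the predicted one (a backtrack, or the other two sides of a square);
\item it ends at the same vertex $y$ where $\delta$ ends.
\end{itemize}
Once this holds, the suffix $w_2$ read from $y$ is again represented by $\gamma_2$, by uniqueness. Hence $\gamma'$ differs from $\gamma$ exactly by the local move.

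\emph{Cancellation.} Here $\delta$ crosses $J$ and then $J^{-1}$ from $x$. Let $e$ be its first edge, dual to $\vec J$, with $e$ going from $x$ to $z$. The second edge $e'$ starts at $z$ and is dual to $\vec J^{-1}$. Then $e^{-1}$ and $e'$ are two edges dual to $\vec J^{-1}$ with the same initial point $z$. Since $Q$ has no self-osculation, they span a square or coincide. They cannot span a square, since $J$ does not self-intersect. So $e' = e^{-1}$, $\delta$ is a backtrack, and removing it yields the path representing $w'$.

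\emph{Insertion.} The word $w'$ is assumed $o$-legal, so its representing path contains, at position $x$, a subpath reading $JJ^{-1}$. By the cancellation case this subpath is a backtrack $ee^{-1}$ based at $x$. Therefore $\gamma'$ is obtained from $\gamma$ by inserting a backtrack.

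\emph{Commutation.} This is the main case. Here $\delta = e_1 e_2$ with $e_1$ dual to $J_1$, $e_2$ dual to $J_2$, and $J_1, J_2$ transverse. Because $J_1$ and $J_2$ are transverse and do not inter-osculate, the edges $e_1^{-1}$ and $e_2$, which share the endpoint $t(e_1)$, must span a square. Its other two sides give a path $e_2' e_1'$ from $x$ to $y$, crossing $J_2$ then $J_1$ with the same orientations. By uniqueness this is the path representing the subword $J_2 J_1$ read from $x$. So $\gamma'$ is obtained by flipping this square.

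The step I expect to need the most care is correctly invoking the absence of osculations. One must check orientations: for cancellation, the two edges share an initial point, or equivalently a terminal point, exactly as the definition of self-osculation requires. One must also observe that two-sidedness guarantees that $\vec J$ and $\vec J^{-1}$ are genuinely distinct orientations, so that the edge $e'$ really is dual to $\vec J^{-1}$ with the orientation used above.
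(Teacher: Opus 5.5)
Your argument is correct. The paper gives no proof of this statement and simply cites \cite[Fact~3.4]{SpecialRH}, and your route is essentially the standard argument behind that citation: cancellations and insertions reduce to the uniqueness of representing paths (Fact~\ref{fact:uniquepath}, which rests on the absence of self-osculation and self-intersection), and a commutation uses the absence of inter-osculation to produce the square to flip. Your closing remark on two-sidedness is harmless but unnecessary, since the uniqueness argument at $z$ works whichever orientation $\vec{J}^{-1}$ denotes.
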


\noindent
In the sequel, an \emph{$(x,y)$-diagram} will refer to a diagram represented by an $x$-legal word with terminus $y$, or just an \emph{$(x,\ast)$-diagram} if we do not want to specify its terminus. A diagram which is an $(x,x)$-diagram for some $x \in X$ is \emph{spherical}. 

\medskip \noindent
If $w$ is an $o$-legal word and $w'$ a $t(w)$-legal word, we define the \emph{concatenation} $w \cdot w'$ as the word $ww'$, which is $o$-legal since it is represented by the concatenation $\gamma \gamma'$ where $\gamma, \gamma'$ represent respectively $w,w'$. Because we have the natural identifications
\begin{table}[h]
	\centering
		\begin{tabular}{cccc}
			$\mathcal{L}(Q)$ & $\leftrightarrow$ & paths in $Q$ & (Fact \ref{fact:uniquepath}) \\ 
			$\sim$ & $\leftrightarrow$ & homotopy with fixed endpoints & (Fact \ref{fact:transformations}, Proposition \ref{prop:cubehomotopy}) 
		\end{tabular}
\end{table}

\noindent
it follows that the concatenation in $\mathcal{L}(Q)$ induces a well-defined operation in $\mathcal{D}(Q)$, making $\mathcal{D}(Q)$ isomorphic to the fundamental groupoid of $Q$. As a consequence, if we denote by $M(Q)$ the Cayley graph of the groupoid $\mathcal{D}(Q)$ with respect to the generating set given by the oriented hyperplanes, and, for every $x \in Q^{(0)}$, $M(Q,x)$ the connected component of $M(Q)$ containing the trivial path $\epsilon(x)$ based at $x$, and $\mathcal{D}(Q,x)$ the vertex-group of $\mathcal{D}(Q)$ based at $\epsilon(x)$, then the previous identifications induce the identifications
\begin{table}[h]
	\centering
		\begin{tabular}{ccc}
			$\mathcal{D}(Q)$ & $\leftrightarrow$ & fundamental groupoid of $Q$  \\ 
			$\mathcal{D}(Q,x)$ & $\leftrightarrow$ & $\pi_1(X,x)$  \\ 
			$M(Q,x)$ & $\leftrightarrow$ & universal cover $\widetilde{Q}^{(1)}$ with a specified basepoint  \\ 
		\end{tabular}
\end{table}

\noindent
More explicitly, $\mathcal{D}(Q,x)$ is the group of $(x,x)$-diagrams endowed with the concatenation, and $M(Q,x)$ is the graph whose vertices are the $(x,\ast)$-diagrams and whose edges link two diagrams $w_1$ and $w_2$ if there exists some oriented hyperplane $J$ such that $w_2=w_1J$. 

\medskip \noindent 
A more precise description of the identification between $M(Q,x)$ and $\widetilde{Q}^{(1)}$ is the following:
\begin{table}[h!]
	\centering
		\begin{tabular}{l}
			\hspace{5cm} $M(Q,x) \longleftrightarrow \left( \widetilde{Q}, \widetilde{x} \right)$ \\ \\ 
			\begin{tabular}{c} $(x,\ast)$-diagram represented \\ by an $x$-legal word $w$ \end{tabular}  $\mapsto$ \begin{tabular}{c} path $\gamma \subset Q$ \\ representing $w$ \end{tabular}  $\mapsto$  \begin{tabular}{c} lift $\widetilde{\gamma} \subset \widetilde{Q}$ of $\gamma$ \\ starting from $\widetilde{x}$ \end{tabular} $\mapsto$ \begin{tabular}{c} ending \\ point of $\widetilde{\gamma}$ \end{tabular} \\ \\
			\begin{tabular}{c} $(x,\ast)$-diagram represented by the \\ $x$-legal word corresponding to $\gamma$ \end{tabular} $\mapsfrom$ \begin{tabular}{c} image \\ $\gamma \subset Q$ of $\gamma$ \end{tabular} $\mapsfrom$ \begin{tabular}{c} path $\widetilde{\gamma} \subset \widetilde{Q}$ \\ from $\widetilde{x}$ to $y$ \end{tabular} $\mapsfrom$ $y$
		\end{tabular}
\end{table}

\noindent
A diagram may be represented by several legal words. Such a word is \emph{reduced} if it has minimal length, i.e.\ it cannot be shortened by applying a sequence of cancellations, insertions and commutation. It is worth noticing that, in general, a diagram is not represented by a unique reduced legal word, but two such reduced words differ only by some commutations. (For instance, consider the homotopically trivial loop defined by the paths representing two of our reduced legal words, consider a disc diagram of minimal area bounded by this loop, and follow the proof of \cite[Theorem 4.6]{MR1347406}. Alternatively, use the embedding constructed in \cite[Section 4.1]{SpecialRH} (which does not use the present discussion) and conclude by applying the analogous statement which holds in right-angled Artin groups.) As a consequence, we can define the \emph{length} of a diagram as the length of any reduced legal word representing it, and its \emph{support} as the set of hyperplanes of $Q$ given by the letters of a reduced representative. It is worth noticing that our length coincides with the length which is associated to the generating set given by the oriented hyperplanes in the groupoid $\mathcal{D}(Q)$. The next lemma follows from this observation.

\begin{lemma}\label{lem:GeodesicsSpecial}
Let $D_1,D_2 \in M(Q,x)$ be two $(x,\ast)$-diagrams. If $J_1 \cdots J_n$ is a reduced legal word representing $D_1^{-1}D_2$, then 
$$D_1, \ D_1J_1, \ D_1J_1J_2, \ldots, \ D_1J_1 \cdots J_n$$
is a geodesic from $D_1$ to $D_2$ in $M(Q,x)$. Conversely, any geodesic between $D_1$ and $D_2$ arises in this way.
\end{lemma}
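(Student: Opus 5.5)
The plan is to derive the lemma from the identification of $M(Q,x)$ with the Cayley graph of the groupoid $\mathcal{D}(Q)$ with respect to the oriented hyperplanes, together with the fact that the length of a diagram equals the length of any reduced legal word representing it. I will proceed in three steps.

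\textbf{Step 1: $d(D_1,D_2)=|D_1^{-1}D_2|$.} Edges of $M(Q,x)$ join $D$ to $DJ$ for oriented hyperplanes $J$. So a path $D_1=E_0,E_1,\ldots,E_m=D_2$ in $M(Q,x)$ is exactly a sequence of oriented hyperplanes $K_1,\ldots,K_m$ with $E_i=E_{i-1}K_i$. Concatenating, the word $K_1\cdots K_m$ is $t(D_1)$-legal and represents $D_1^{-1}D_2$: legality holds step by step, since each $E_{i-1}K_i$ is a diagram, which forces $K_i$ to be dual to an edge at the terminus of $E_{i-1}$. Conversely, any $t(D_1)$-legal word $K_1\cdots K_m$ representing $D_1^{-1}D_2$ yields such a path. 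Hence $d(D_1,D_2)$ is the minimal length of a legal word representing $D_1^{-1}D_2$. By definition this minimum is the length of a reduced representative, which is well defined by the remark preceding the lemma.

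\textbf{Step 2: reduced words give geodesics.} If $J_1\cdots J_n$ is reduced and represents $D_1^{-1}D_2$, the sequence $D_1, D_1J_1, \ldots, D_1J_1\cdots J_n$ is a path of length $n$. Every prefix of a legal word is legal, so each vertex of the sequence is a genuine diagram. The final vertex is $D_1D_1^{-1}D_2=D_2$. By Step 1 the length $n$ equals $d(D_1,D_2)$, so the path is a geodesic.

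\textbf{Step 3: every geodesic arises this way.} Given a geodesic $D_1=E_0,\ldots,E_n=D_2$, read off the labels $J_i$ with $E_i=E_{i-1}J_i$. The word $J_1\cdots J_n$ is legal, represents $D_1^{-1}D_2$, and has length $n=d(D_1,D_2)$, which by Step 1 is minimal. Minimal length means it cannot be shortened by cancellations, insertions and commutations, so it is reduced.

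The only delicate point is the labelling in Step 3: it must be well defined, i.e.\ for adjacent vertices $E_{i-1},E_i$ the hyperplane $J$ with $E_i=E_{i-1}J$ must be unique. This holds because groupoid cancellation gives $J=E_{i-1}^{-1}E_i$ as a diagram of length one, and a length-one diagram determines its letter. Beyond that, the argument is just the standard fact that geodesics in a Cayley graph correspond to minimal-length words.
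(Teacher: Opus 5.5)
Your proposal is correct and follows the paper's route. The paper derives the lemma directly from the observation that the length of a diagram coincides with its word length in the groupoid $\mathcal{D}(Q)$ with respect to oriented hyperplanes, so that geodesics in the Cayley graph $M(Q,x)$ are exactly minimal-length, i.e.\ reduced, words; your Steps 1--3 spell out this standard Cayley-graph argument. The uniqueness of edge labels that you flag as delicate is true but not needed, since Step 3 only requires that each edge carry some label.
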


\noindent
Essentially by construction, one has:

\begin{fact}\label{fact:diagvsRAAG}
\emph{\cite[Fact 4.3]{SpecialRH}}
Let $Q$ be a special cube complex and $o \in Q^{(0)}$ a basepoint. Two $o$-legal words of oriented hyperplanes are equal in $\mathcal{D}(Q,o)$ if and only if they are equal in the right-angled Artin group $A(\Delta Q)$. 
\end{fact}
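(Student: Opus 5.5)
This is Fact~\ref{fact:diagvsRAAG}, cited from \cite[Fact 4.3]{SpecialRH}. The plan is to compare the two equivalence relations generating equality. In $\mathcal{D}(Q,o)$, two $o$-legal words are equal when they are related by cancellations, insertions and commutations of transverse hyperplanes, all through $o$-legal words. In $A(\Delta Q)$, two words in the oriented hyperplanes are equal when they are related by free reductions and commutations of letters adjacent in $\Delta Q$, that is, of transverse hyperplanes. Here we read the RAAG generated by the hyperplanes, with $J^{-1}$ the opposite orientation.

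The direction ``equal in $\mathcal{D}(Q,o)$ implies equal in $A(\Delta Q)$'' is immediate. Each elementary move (cancellation, insertion, commutation of transverse letters) is also a valid move in $A(\Delta Q)$, since transverse hyperplanes span edges of $\Delta Q$.

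For the converse, I would use that two words are equal in $A(\Delta Q)$ if and only if they have the same reduced form up to commutations. Concretely, any word reduces to a reduced word by repeatedly commuting letters past each other and cancelling a pair $JJ^{-1}$ once they are adjacent. Reduced representatives of the same element differ only by commutations. So it suffices to prove the following claim.

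\emph{Claim.} If $w$ is $o$-legal, then the RAAG-reduction steps can be performed within the class of $o$-legal words, using the moves of $\sim$.

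\emph{Commutations.} If $w=uJ_1J_2v$ is $o$-legal with $J_1,J_2$ transverse, I must show that $uJ_2J_1v$ is still $o$-legal with the same terminus. Let $x=t(u)$. The path reads edges $e_1,e_2$ dual to $J_1,J_2$ issuing from $x$ consecutively. Since $J_1,J_2$ are transverse and $Q$ has no inter-osculation, $e_1$ and the edge $e_2$ moved back to $x$ must span a square. More precisely, the edge $e_2$ at the endpoint of $e_1$ together with $e_1$ spans a square: otherwise $J_1,J_2$ would be transverse and tangent, hence inter-osculating. Flipping this square gives a path reading $J_2J_1$ with the same endpoint. The suffix $v$ is then legal from the same point.

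\emph{Cancellations.} When a subword $JJ^{-1}$ appears in an $o$-legal word, the two edges are dual to $J$ and share an endpoint. Two-sidedness and the absence of self-osculation force them to be the same edge traversed back and forth. This is the uniqueness in Fact~\ref{fact:uniquepath}. So deletion is a legal cancellation, keeping the path legal and the terminus unchanged.

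Now take two $o$-legal words $w_1,w_2$ that are equal in $A(\Delta Q)$. Reduce each within legal words to legal reduced words $w_1',w_2'$. These differ by RAAG commutations, and by the commutation step each of these stays legal. Hence $w_1\sim w_2$.

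The main obstacle is the converse direction, specifically the legality of intermediate words. The RAAG reduction may need to commute a letter $J$ past a block to meet $J^{-1}$. The point is that each single swap is legal by the inter-osculation argument, so performing the swaps one at a time keeps every intermediate word legal. The care needed is in verifying that the square exists exactly at the current basepoint, using transversality together with the absence of inter-osculation. The special hypotheses (two-sidedness, no self-intersection, no self-osculation, no inter-osculation) are exactly what is used.
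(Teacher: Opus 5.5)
Your proof is correct, but it takes a different route from the paper, which gives no argument of its own. The paper quotes the statement from \cite[Fact~4.3]{SpecialRH} as holding essentially by construction of the embedding $\mathcal{D}(Q,o)\hookrightarrow A(\Delta Q)$ of \cite[Section~4.1]{SpecialRH}, a version of the Haglund--Wise embedding \cite{MR2377497}. That map sends a diagram to the element spelled by any of its legal representatives, so the fact amounts to well-definedness plus injectivity of the map.

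You prove the injectivity by hand. You show that the set of $o$-legal words is closed under the length-non-increasing moves of $A(\Delta Q)$: commuting transverse letters (absence of inter-osculation) and free cancellation (Fact~\ref{fact:uniquepath}). You never need insertions, which can destroy legality, because you pass through reduced forms. There you use the normal form theorem for right-angled Artin groups: a non-reduced word contains a subword $J^{\pm 1}uJ^{\mp 1}$ with every letter of $u$ commuting with $J$, and reduced words representing the same element differ by commutations.

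What your approach buys:
\begin{itemize}
\item It is self-contained, using only the local conditions that define specialness.
\item It works directly in the groupoid, showing in passing that two $o$-legal words equal in $A(\Delta Q)$ have the same terminus.
\item It also gives the claim, made earlier in this subsection, that two reduced legal words representing the same diagram differ only by commutations. By your argument, a legal word that is not reduced in $A(\Delta Q)$ can be shortened through legal words, so the two notions of reducedness coincide.
\end{itemize}
The citation buys brevity and places the fact within the Haglund--Wise embedding theorem. There the geometric content is that the typing map from $Q$ to the Salvetti complex of $A(\Delta Q)$ is a local isometry.

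One small correction to your cancellation step. Ruling out two distinct edges dual to $J$ in a common square also uses the absence of self-intersection, not only two-sidedness and no self-osculation. Your appeal to Fact~\ref{fact:uniquepath} already covers this.
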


\noindent
As a consequence, a reduced legal word represents a trivial element in $\mathcal{D}(Q,o)$ if and only if it is empty. 

\medskip \noindent
We conclude this section by stating and proving a few preliminary lemmas that will be useful in Section~\ref{section:ProofSpecial}. First, we describe induced $4$-cycles in our median graph $M(Q,o)$.

\begin{lemma}\label{lem:SquareM}
Every induced $4$-cycle in $M(Q,o)$ is of the form $\{w,wa,wb,wab\}$ where $w$ is an $(o,\ast)$-legal word and $a,b$ are two transverse oriented hyperplanes of $Q$. 
\end{lemma}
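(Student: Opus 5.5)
Fix an induced $4$-cycle in $M(Q,o)$ and choose a vertex $w$ of it, viewed as an $(o,\ast)$-diagram. Its two neighbours in the cycle are joined to $w$ by edges, so by the definition of $M(Q,o)$ they can be written $wa$ and $wb$ for oriented hyperplanes $a,b$ of $Q$. Since the cycle is induced, $wa\neq wb$, hence $a\neq b$. The fourth vertex $v$ is adjacent to both $wa$ and $wb$, so $v=wac=wbd$ for some oriented hyperplanes $c,d$. The whole statement then reduces to showing two things: $b$ and $a$ are transverse, and we may take $c=b$ and $d=a$, so that $v=wab=wba$.

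To prove this, pass to the spherical diagram $w^{-1}v$. The equality $ac=bd$ holds in $\mathcal{D}(Q)$, so by Fact~\ref{fact:diagvsRAAG} (applied after translating the basepoint to $t(w)$) the words $ac$ and $bd$ are equal in the right-angled Artin group $A(\Delta Q)$. The word $ac$ is reduced: if $c=a^{-1}$, then $v=w$, contradicting that the cycle has four distinct vertices. Likewise $bd$ is reduced. So both are reduced words of length $2$ in $A(\Delta Q)$ representing the same element.

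We now use the standard fact that two reduced words representing the same element differ only by commutations. Since $ac$ and $bd$ have different first letters ($a\neq b$), the only possibility is that $c=b$, $d=a$, and $a,b$ commute in $A(\Delta Q)$, which means they are adjacent or equal in $\Delta Q$. Here $a,b$ denote oriented hyperplanes. The case where they are the same hyperplane with opposite orientations must be excluded: if $b=a^{-1}$, then $wb=wa^{-1}$, so the word $ab=aa^{-1}$ is not reduced, contradicting what we showed. Hence $a$ and $b$ are distinct transverse hyperplanes, and the cycle is $\{w,wa,wb,wab\}$.

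The main obstacle is the bookkeeping in the previous paragraph. Fact~\ref{fact:diagvsRAAG} gives equality in the Artin group, and from there we need the uniqueness of reduced words up to commutation. This is available either in $A(\Delta Q)$ directly or via the remark preceding Lemma~\ref{lem:GeodesicsSpecial}. Alternatively, Lemma~\ref{lem:GeodesicsSpecial} applied to $D_1=wa$ and $D_2=wb$ gives the same conclusion: $wa$ and $wb$ are at distance $2$ (not $1$, since the cycle is induced), so every reduced word for $a^{-1}b$ has length $2$. The two geodesics through $w$ and through $v$ then correspond to the reduced words $a^{-1}b$ and $cd^{-1}$, which must differ by a commutation. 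This forces $c=b$, $d=a$, and $a^{-1}$ transverse to $b$, hence $a$ transverse to $b$.
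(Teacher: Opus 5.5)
Your argument is correct and is essentially the paper's proof: you write the fourth vertex in two ways, observe that the two resulting length-two words are reduced and represent the same element, and use uniqueness of reduced words up to commutation to force $c=b$, $d=a$ with $a,b$ transverse. The paper compares $a^{-1}b$ with $xy^{-1}$ directly in $\mathcal{D}(Q)$, exactly your alternative in the last paragraph, rather than $ac$ with $bd$ in $A(\Delta Q)$; your only slip is calling $w^{-1}v$ spherical, which it need not be, and nothing depends on that.
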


\begin{proof}
Let $S$ be an induced square in $M(Q,o)$. Fix an $(o,\ast)$-legal word $w$ representing some vertex of $S$. Its two neighbours in $S$ can be written as $wa$ and $wb$ for some oriented hyperplanes $a$ and $b$ of $Q$. The fourth vertex of $S$ can be written both as $wax$ and as $wby$ for some oriented hyperplanes $x,y$ of $Q$. From the equality $wax=wby$ in $\mathcal{D}(Q)$, we deduce that $a^{-1}b=xy^{-1}$. Because $wa$ and $wb$ represent distinct vertices of $S$, necessarily $a \neq b$, so the word $a^{-1}b$ is reduced. Since two reduced word representing the same element of $\mathcal{D}(Q)$ only differ by some commutations, only two cases may happen: either $x=a^{-1}$ and $y=b^{-1}$, which is not possible since $wax$ and $wa$ represent distinct vertices of $S$; or $a,b$ are transverse hyperplanes of $Q$ and $x=b$, $y=a$. 
\end{proof}

\noindent
An immediate consequence of Lemma~\ref{lem:SquareM} is that two (oriented) edges that are opposite in some $4$-cycle must be labelled by the same (oriented) hyperplane of $Q$. This implies that:

\begin{cor}
Two oriented edges in $M(Q,o)$ that belong to the same hyperplane are labelled by the same oriented hyperplane of $Q$. 
\end{cor}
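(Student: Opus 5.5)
The statement is the corollary right after Lemma~\ref{lem:SquareM}: two oriented edges of $M(Q,o)$ lying in the same hyperplane carry the same oriented hyperplane of $Q$ as label. The plan is to reduce to a single elementary step and then propagate along the equivalence relation defining hyperplanes.

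By definition, a hyperplane of the median graph $M(Q,o)$ is an equivalence class of oriented edges under the reflexive--transitive closure of the relation ``opposite in some induced $4$-cycle''. Labels are preserved by reflexivity, and transitivity propagates equality of labels along chains. So it suffices to treat two oriented edges that are opposite sides of one induced $4$-cycle $S$.

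Apply Lemma~\ref{lem:SquareM}: $S=\{w,wa,wb,wab\}$, where $a,b$ are transverse oriented hyperplanes of $Q$. The edges of $S$ are $w\to wa$ and $wb\to wba$, both labelled $a$. Here we use $wab=wba$ in $\mathcal{D}(Q)$, which holds by commutation. Similarly $w\to wb$ and $wa\to wab$ are both labelled $b$.

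The label of an oriented edge $D_1\to D_2$ is well defined as the unique oriented hyperplane $J$ with $D_2=D_1J$. Uniqueness holds because $D_1^{-1}D_2$ has length one, so its reduced representative is a single letter, and by the discussion before Lemma~\ref{lem:GeodesicsSpecial} that letter is unique. Hence opposite edges of $S$, with compatible orientations, have equal labels. Reversing both orientations replaces each label by its inverse, since the edge $D_2\to D_1$ is labelled $J^{-1}$, so equality is preserved.

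The only point needing a little care is the orientation convention: ``opposite'' oriented edges must be oriented consistently in the square, and the hyperplane relation on oriented edges uses exactly that. Otherwise the argument is immediate from Lemma~\ref{lem:SquareM}.
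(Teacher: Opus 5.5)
Your proof is correct and follows the paper's argument: the paper likewise notes that, by Lemma~\ref{lem:SquareM}, opposite oriented edges of an induced $4$-cycle carry the same label, and then passes to the reflexive--transitive closure that defines hyperplanes. Your remarks on why each edge has a unique label and on orienting opposite edges consistently make explicit what the paper leaves implicit.
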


\noindent
This allows us to naturally label the (oriented) hyperplanes of $M(Q,o)$ with (oriented) hyperplanes of $Q$. Then, another immediate consequence of Lemma~\ref{lem:SquareM} is that:

\begin{cor}\label{cor:LabelTransHyp}
Two transverse hyperplanes in $M(Q,o)$ are labelled by transverse hyperplanes of $Q$.
\end{cor}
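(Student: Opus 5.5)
Corollary~\ref{cor:LabelTransHyp} states that two transverse hyperplanes in $M(Q,o)$ are labelled by transverse hyperplanes of $Q$. The plan is to reduce it directly to Lemma~\ref{lem:SquareM}, together with the definition of transversality in a median graph.

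First, recall that by definition two hyperplanes $\hat{J}_1,\hat{J}_2$ of $M(Q,o)$ are transverse when they contain two edges $e_1 \in \hat{J}_1$ and $e_2 \in \hat{J}_2$ sharing a common endpoint and spanning an induced $4$-cycle $S$. By Lemma~\ref{lem:SquareM}, $S$ can be written as $\{w,wa,wb,wab\}$, where $w$ is an $(o,\ast)$-legal word and $a,b$ are transverse oriented hyperplanes of $Q$. The only step requiring care is relabelling this square so that the common endpoint of $e_1,e_2$ plays the role of $w$. This is possible because the proof of Lemma~\ref{lem:SquareM} starts from an arbitrary vertex of $S$, so we may take that vertex to be the common endpoint. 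Then $e_1=\{w,wa\}$ and $e_2=\{w,wb\}$, up to swapping the names $a$ and $b$.

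Next, by the preceding corollary, every oriented edge of $M(Q,o)$ of the form $\{u,uc\}$ is labelled by the oriented hyperplane $c$ of $Q$, and this label depends only on the hyperplane of $M(Q,o)$ containing the edge. Hence $\hat{J}_1$ is labelled by $a$ (or its underlying unoriented hyperplane) and $\hat{J}_2$ is labelled by $b$. Since $a$ and $b$ are transverse in $Q$ by Lemma~\ref{lem:SquareM}, the conclusion follows.

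There is no real obstacle. The only point to check is that the labelling of hyperplanes of $M(Q,o)$ is well defined, which is exactly the preceding corollary, and that orientations do not matter for transversality. The latter holds because $a$ transverse to $b$ implies $a^{\pm1}$ transverse to $b^{\pm1}$, as transversality is a property of the underlying hyperplanes.
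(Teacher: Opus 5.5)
Your proposal is correct and is essentially the paper's argument, which derives the corollary as an immediate consequence of Lemma~\ref{lem:SquareM}. You base the induced $4$-cycle witnessing transversality at the common endpoint of the two edges, read off the transverse labels $a,b$ from Lemma~\ref{lem:SquareM}, and use the preceding corollary to transfer these labels to the two hyperplanes of $M(Q,o)$.
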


\noindent
Finally, we record our last preliminary lemma:

\begin{lemma}\label{lem:TransverseLegal}
Let $p$ and $q$ be two vertices of $M(Q,o)$ and let $\Lambda$ be a collection of hyperplanes of $Q$ transverse to $\mathrm{supp}(p^{-1}q)$. A word of hyperplanes in $\Lambda$ is $(t(p),t(p))$-legal if and only if it is $(t(q),t(q))$-legal.
\end{lemma}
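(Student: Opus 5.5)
The natural approach is to reduce the statement to the case of a single letter, and then use the absence of inter-osculation in $Q$ to transport it along a reduced word for $p^{-1}q$. The notion of legality involved is intrinsic to the vertices $t(p),t(q)$ of $Q$. A word $J_1\cdots J_r$ is $x$-legal exactly when, for every $i$, the oriented hyperplane $J_i$ is dual to an edge starting at the terminus of $J_1\cdots J_{i-1}$; this follows from Fact~\ref{fact:uniquepath}. Being $(x,x)$-legal additionally requires the terminus to be $x$.

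Write a reduced legal word $H_1\cdots H_n$ representing $p^{-1}q$, which is a $t(p)$-legal word with terminus $t(q)$. By definition of the support, every $H_j$ lies in $\mathrm{supp}(p^{-1}q)$, so every element of $\Lambda$ is transverse to every $H_j$. Set $y_0=t(p)$ and let $y_j$ be the terminus of $H_1\cdots H_j$, so that $y_n=t(q)$. By induction on $j$, it suffices to prove the following: if $y,y'$ are vertices of $Q$ joined by an edge $e$ dual to a hyperplane $H$, and $w$ is a word in hyperplanes transverse to $H$, then $w$ is $(y,y)$-legal if and only if it is $(y',y')$-legal. By symmetry, it is enough to prove one direction.

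For the key step, let $\gamma$ be the path from $y$ representing $w$, with successive vertices $y=z_0,z_1,\ldots,z_r=y$ and edges dual to $J_1,\ldots,J_r$. I would show by induction on $k$ that there is an edge $e_k$ at $z_k$ dual to $H$, with the same orientation as $e$, and that $e_k,e_{k+1}$ together with the $k$-th edge of $\gamma$ span a square.
\begin{itemize}
\item Suppose $e_k$ exists. The edge $z_kz_{k+1}$ is dual to $J_{k+1}$, which is transverse to $H$.
\item Since $Q$ is special, $H$ and $J_{k+1}$ do not inter-osculate. Hence the two edges $e_k$ and $z_kz_{k+1}$, which share the endpoint $z_k$, must span a square.
\item The side of that square opposite to $e_k$ starts at $z_{k+1}$ and is dual to $H$ with the same orientation; this is $e_{k+1}$.
\end{itemize}
The far sides of these squares form a path $\gamma'$ starting at the endpoint $y'$ of $e_0=e$ and crossing $J_1,\ldots,J_r$ in order, so $w$ is $y'$-legal. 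Its terminus is the endpoint of $e_r$, which is an edge at $z_r=y$ dual to $H$ with the same orientation as $e$. Because $H$ does not self-osculate, there is only one such edge at $y$, so $e_r=e$ and $\gamma'$ ends at $y'$. Thus $w$ is $(y',y')$-legal.

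The main obstacle is this last closing-up step. It must use the absence of self-osculation together with two-sidedness. Concretely, two oriented edges dual to $\vec H$ with the same initial point would either span a square, which is excluded because $H$ does not self-intersect, or self-osculate, which is also excluded. This is what makes the terminus land on $y'$ and not on some other endpoint of an $H$-edge at $y$. One also has to check that orientations are tracked consistently so that the same initial-point condition applies, and two-sidedness ensures this.
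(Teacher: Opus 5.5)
Your proof is correct and is essentially the paper's argument. The paper first builds the whole $(r\times s)$-grid spanned by the loop and a path representing $p^{-1}q$, using the absence of inter-osculation, and then closes it up edge by edge along that path, using the absence of self-intersection and self-osculation; your per-edge induction does the same steps in a different order, and your explicit tracking of orientations in the closing-up step makes precise a point the paper leaves implicit.
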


\begin{proof}
Let $w$ be a $(t(p),t(p))$-legal word of hyperplanes in $\Lambda$. In other words, there exists a loop $\gamma$ in $Q$ based at $t(p)$ such that the word $w$ records the oriented hyperplanes (of $\Lambda$) successively crossed by $\gamma$. Fix a path $\pi$ in $Q$ connecting $t(p)$ to $t(q)$ that crosses only hyperplanes in $\mathrm{supp}(p^{-1}q)$. Denote by $\gamma_1, \ldots, \gamma_r$ and $\pi_1, \ldots, \pi_s$ the successive edges of $\gamma$ and $\pi$ respectively. 
\begin{center}
\includegraphics[width=0.5\linewidth]{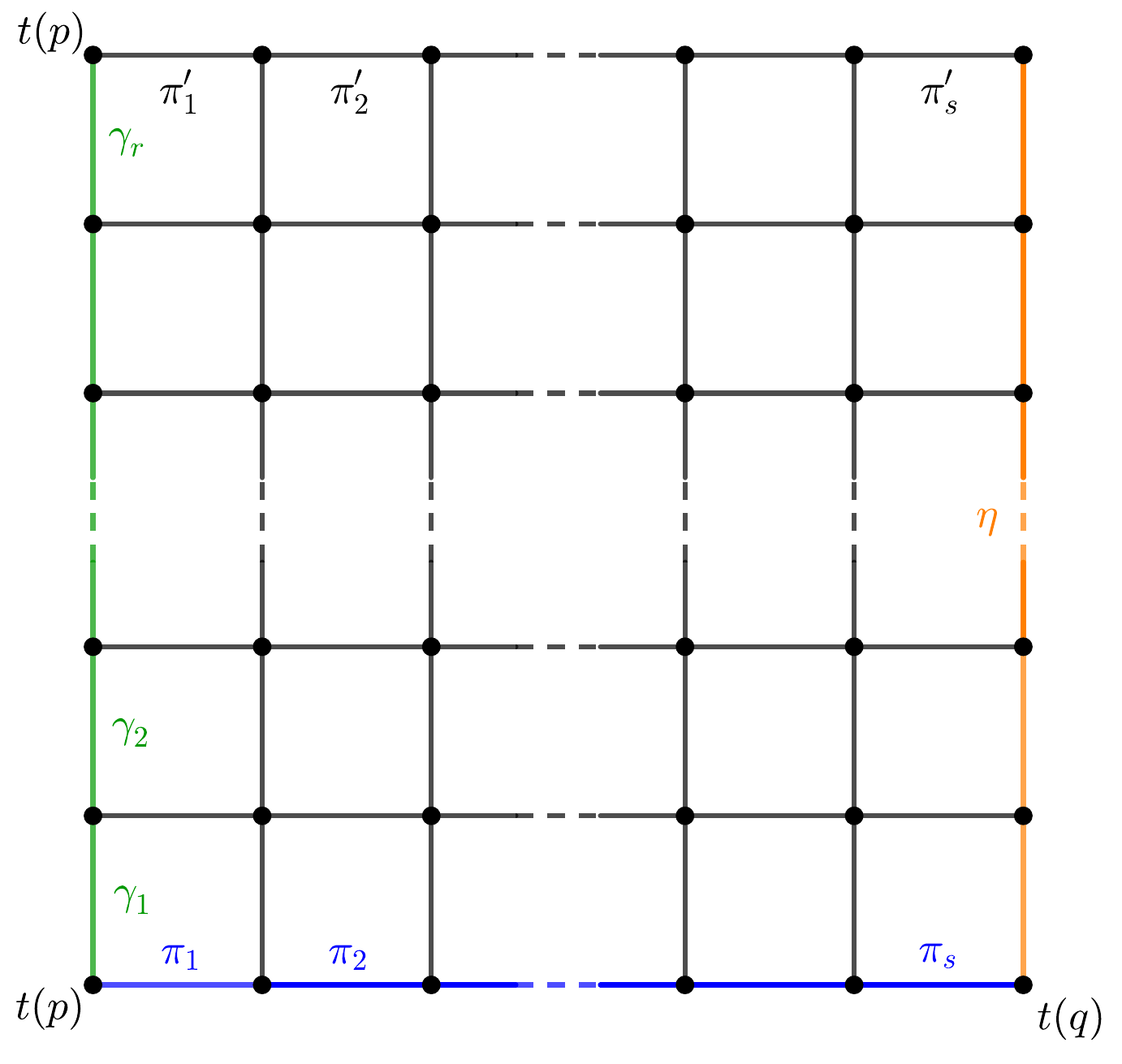}
\end{center}
Because $\gamma_1$ and $\pi_1$ belong to transverse hyperplanes, they must span a square in $Q$, since otherwise the corresponding hyperplanes of $Q$ would inter-osculate. The same argument shows that the edge of this square opposite to $\gamma_1$ must span another square with $\pi_2$. After finitely many iterations, we conclude that there is a $(1 \times s)$-grid spanned by $\pi$ and $\gamma_1$. Similarly, the path opposite to $\pi$ in this grid must span a $(1 \times s)$-grid with $\gamma_2$. After finitely many iterations, we conclude that $\pi$ and $\gamma$ span an $(r \times s)$-grid. Let $\eta$ denote the path opposite to $\gamma$ in this grid. Clearly, $\eta$ starts from $t(q)$ and crosses the same hyperplanes as $\gamma$ and in the same order. 

\medskip \noindent
In order to conclude that $w$ is $(t(q),t(q))$-legal, it remains to justify that $\eta$ is a loop. Let $\pi_1', \ldots, \pi_s'$ denote the edges opposite to $\pi_1, \ldots, \pi_s$ respectively in our $(r \times s)$-grid. Because $\pi_1$ and $\pi_1'$ belong to the same hyperplane and share an endpoint, they must coincide, since otherwise $Q$ would contain a self-intersecting or self-osculating hyperplane. The same argument shows that $\pi_2$ and $\pi_2'$ coincide. After finitely many iterations, we conclude that $\pi_i$ coincides with $\pi_i'$ for every $1 \leq i \leq s$. This implies that the terminus of $\eta$ is $t(q)$, as desired.
\end{proof}

\subsection{Proof of Theorem~\ref{thm:IntroSpecial}}\label{section:ProofSpecial}

\noindent
This section is dedicated to the proof of Theorem~\ref{thm:IntroSpecial} (i.e.\ Theorem~\ref{thm:SpecialLamp} above). Our goal, given a group $G$ acting on a median graph $X$ in some specific way, will be to produce a hyperbolic cone-off $\dot{X}$ of $X$ such that the canonical map $X \to \dot{X}$ is $x^y$-gentle. Proposition~\ref{prop:QuasiSyl} above shows how quasi-syllabic collections may help. In our median setting, our collections will satisfy an even stronger property:

\begin{definition}
Let $X$ be a graph. A collection of subgraphs $\mathcal{P}$ is \emph{syllabic} if any two vertices of $X$ are connected by some geodesic in $\mathrm{ConeOff}(X, \mathcal{P})$ that extends to a geodesic in $X$.  
\end{definition}

\noindent
Our main criterion in order to produce syllabic collections in median graphs is the following:

\begin{prop}\label{prop:SyllabicMedian}
Let $X$ be a median graph and $\mathcal{P}$ a collection of convex subgraphs. Assume that, for any two parallel pairs of vertices $(x,y)$ and $(a,b)$, if $x$ and $y$ both belong to some subgraph in $\mathcal{P}$, then $a$ and $y$ also belong to some subgraph in $\mathcal{P}$. Then, the collection $\mathcal{P}$ is syllabic.
\end{prop}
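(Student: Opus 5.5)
Fix two vertices $x,y \in V(X)$. I will build, by a greedy procedure, a sequence $x=p_1,p_2,\ldots,p_n=y$ such that consecutive points lie in a common subgraph of $\mathcal{P}$, the concatenation of geodesics $[p_i,p_{i+1}]$ in $X$ is a geodesic from $x$ to $y$, and $n-1=\dot d(x,y)$, where $\dot d$ is the metric of $\mathrm{ConeOff}(X,\mathcal{P})$. The greedy step: among all vertices $z$ on geodesics from $x$ to $y$ (the interval $I(x,y)$) such that $x$ and $z$ lie in a common $P \in \mathcal{P}$ (or $z$ is a neighbour of $x$ if no such subgraph helps), take $p_2$ with $d_X(x,p_2)$ maximal, then iterate from $p_2$ towards $y$. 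Since $p_2 \in I(x,y)$, concatenating geodesics $[x,p_2]$ and $[p_2,y]$ is a geodesic of $X$, so the resulting path extends to a geodesic in $X$. It remains to show that this greedy sequence has length $\dot d(x,y)$.

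For minimality, I will compare with an arbitrary cone-off geodesic $x=q_1,\ldots,q_m=y$ with $q_i,q_{i+1}$ in some $P_i \in \mathcal{P}$, and prove by induction the ``greedy stays ahead'' property: $\mathcal{H}(x|q_{i}) \cap \mathcal{H}(x|y) \subset \mathcal{H}(x|p_i)$ (hyperplanes separating $x$ from $y$ already crossed by the $q$-path are crossed by the $p$-path). Hence once $q_m=y$, $p_m=y$ as well, giving $n \leq m$. The key is the inductive step: given $p_i$ and $q_i,q_{i+1}\in P_i$, I need a vertex $z\in I(p_i,y)$ in a common subgraph with $p_i$ that crosses all hyperplanes of $\mathcal{H}(p_i|y)$ crossed between $q_i$ and $q_{i+1}$. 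Project: let $a,b$ be the gate projections (medians) of $q_i,q_{i+1}$ onto... more concretely, set $a'=\mu(p_i,q_i,y)$ and $b'=\mu(p_i,q_{i+1},y)$. By convexity of $P_i$ one checks the pair $(q_i, \mu(q_i,q_{i+1},\cdot))$ restricted to the relevant hyperplanes yields a pair inside $P_i$ separated exactly by $\mathcal{H}(q_i|q_{i+1}) \cap \mathcal{H}(p_i|y)$; then the pair $(p_i,z)$, where $z$ is obtained from $p_i$ by crossing exactly those hyperplanes (available via Lemma~\ref{lem:ParallelPairsHyp}, since they are all separating $p_i$ from $y$ and the set is ``convex-closed''), is parallel to a pair in $P_i$. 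The hypothesis of the proposition then gives that $p_i$ and $z$ lie in a common element of $\mathcal{P}$ — here one applies the assumption with $(x,y)$ the pair inside $P_i$ and $(a,b)=(z,p_i)$ read in the right orientation, noting parallelism is symmetric up to reversing pairs.

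The main obstacle is precisely this inductive step: producing, inside the convex subgraph $P_i$, a pair of vertices separated by exactly $\mathcal{H}(q_i|q_{i+1})\cap\mathcal{H}(p_i|y)\setminus\mathcal{H}(x|p_i)$, and checking that there is a vertex $z$ with $\mathcal{H}(p_i|z)$ equal to that set. I would handle it with medians: take $u=\mu(q_i,q_{i+1},p_i)$ and $v=\mu(q_i,q_{i+1},y)$, both in $P_i$ by convexity; a hyperplane separates $u$ and $v$ iff it separates $\{q_i,q_{i+1},p_i\}$-majority from $\{q_i,q_{i+1},y\}$-majority, i.e.\ iff it separates $q_i$ from $q_{i+1}$ and $p_i$ from $y$ (with consistent side), and then $z=\mu(p_i,y,v)$ should satisfy $\mathcal{H}(p_i|z)=\mathcal{H}(u|v)$, making $(u,v)$ and $(p_i,z)$ parallel. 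I will verify these hyperplane identities via Theorem~\ref{thm:BigMedian}, the induction hypothesis ensuring no hyperplane in $\mathcal{H}(x|p_i)$ interferes, and the monotone growth of $\mathcal{H}(x|p_i)$ then completes the argument.
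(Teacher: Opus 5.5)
\textbf{The gap: the greedy selection rule.} Your inductive step, even granting it, only produces \emph{some} vertex $z\in I(p_i,y)$ that shares a member of $\mathcal{P}$ with $p_i$ and crosses the required hyperplanes. Nothing forces the vertex of maximal $d_X(p_i,\cdot)$ to cross them too. In fact the greedy path need not be a cone-off geodesic, and the ``greedy stays ahead'' invariant fails.

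\emph{Counterexample.} Take $X=\mathbb{Z}^2$, and let $\mathcal{P}$ consist of all lines $\mathbb{Z}\times\{t\}$ and $\{s\}\times\mathbb{Z}$ together with the box $\{0,\ldots,9\}\times\{0,\ldots,9\}$. All members are convex. In $\mathbb{Z}^2$, a pair $(a,b)$ is parallel to $(u,v)$ exactly when $a,b$ agree with $u,v$ in every coordinate where $u,v$ differ, and $a,b$ agree with each other in the other coordinate. So box pairs differing in both coordinates are parallel only to themselves, every other pair is parallel only to pairs lying on a line, and the hypothesis holds.

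For $x=(0,0)$ and $y=(10,10)$ we have $\dot d(x,y)=2$, via $(0,10)$. Your rule, however, picks $p_2=(9,9)$, at distance $18$ against at most $10$ along the lines through $x$. No member contains both $(9,9)$ and $y$, and they are not adjacent, so the greedy path has length $3$. Correspondingly, for $q=(0,0),(0,10),(10,10)$ your invariant already fails at $i=2$: the hyperplane separating $\mathbb{Z}\times\{9\}$ from $\mathbb{Z}\times\{10\}$ lies in $\mathcal{H}(x|q_2)\cap\mathcal{H}(x|y)$ but not in $\mathcal{H}(x|p_2)$.

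\textbf{How to repair it.} Your median computation is exactly what is needed once the greedy rule is replaced by shadowing a fixed cone-off geodesic $x=q_1,\ldots,q_m=y$. Set $p_1=x$ and $p_{i+1}:=\mu(p_i,q_{i+1},y)$.
\begin{itemize}
\item For $p_i\in I(x,y)$, your invariant is equivalent to $p_i\in I(q_i,y)$, and this now holds by construction.
\item It implies that every hyperplane separating $p_i$ from $y$ has $q_i$ on the side of $p_i$.
\item Hence the halfspaces containing $u=\mu(q_i,q_{i+1},p_i)$ but not $v=\mu(q_i,q_{i+1},y)$ are exactly those containing $p_i,q_i$ but neither $q_{i+1}$ nor $y$. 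These are exactly the halfspaces containing $p_i$ but not $p_{i+1}$; indeed your $\mu(p_i,y,v)$ coincides with $p_{i+1}$ under the invariant.
\item Since $u,v\in I(q_i,q_{i+1})\subset P_i$, the hypothesis puts $p_i$ and $p_{i+1}$ in a common member. If $q_i,q_{i+1}$ are merely adjacent in $X$, then $d(p_i,p_{i+1})=d(u,v)\leq 1$.
\item As $p_{i+1}\in I(p_i,y)$ and $p_m\in I(y,y)=\{y\}$, deleting repetitions gives a cone-off path of length $\leq m-1$ whose piecewise-geodesic extension is a geodesic of $X$.
\end{itemize}

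\textbf{Comparison with the paper.} This repaired argument is a direct, one-pass alternative to the paper's proof, which is given for Proposition~\ref{prop:SyllabicQM}. The paper instead starts from an arbitrary cone-off geodesic. It then repeatedly pushes a subpath across a hyperplane crossed twice by the extension, via the ladder/tower configurations of Lemma~\ref{lem:NotGeod}, until the extension becomes geodesic.
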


\noindent
We postpone the proof of Proposition~\ref{prop:SyllabicMedian} to Section~\ref{section:SyllabicQM}, where we will state and prove a similar statement in the more general setting of quasi-median graphs (see Proposition~\ref{prop:SyllabicQM}). 

\begin{proof}[Proof of Theorem~\ref{thm:SpecialLamp}.]
The implication $(i) \Rightarrow (ii)$ follows from the fact that every $\mathbb{F}_2$-subgroup in a right-angled Artin group is undistorted \cite{MR3358258}, which implies that every $(\mathbb{F}_2 \times \mathbb{F}_2)$-subgroup in a right-angled Artin group, and a fortiori in a cocompact special group, is undistorted. The implication $(ii) \Rightarrow (iii)$ is clear, and $(iii) \Rightarrow (iv)$ follows from the fact that $\mathrm{Lamp}(\mathbb{Z})$ quasi-isometrically embeds into $\mathbb{F}_2  \times \mathbb{F}_2$ (see e.g.\ \cite[Corollary~10]{MR3079268}).  The implication $(iv) \Rightarrow (v)$ follows from Theorem~\ref{thm:GentleLamp}, and $(v) \Rightarrow (vi)$ is clear. It remains to show that $(vi) \Rightarrow (i)$, i.e.\ if our group $G$ does not contain $\mathbb{F}_2 \times \mathbb{F}_2$ as a subgroup, then it is $\mathrm{lin}$-polynomially hyperbolic. 

\medskip \noindent
Following the formalism introduced in Section~\ref{section:IntroSpecial}, we think of $G$ as the group $\mathcal{D}(X,o)$, for some compact special cube complex $X$ and some vertex $o \in X^{(0)}$, acting on the median graph $M(X,o)$. Given an $(o,\ast)$-legal word $w$ and a set $\Lambda$ of hyperplanes of $X$, we denote by $S(w,\Lambda) \subset M(X,o)$ the (convex) subgraph induced by the vertices of the form $w u$ where $u$ is a word of hyperplanes from $\Lambda$; and by $P(w,\Lambda) \leq \mathcal{D}(X,o)$ the subgroup given by the $(o,o)$-legal words of the form $wuw^{-1}$ where $u$ is a word of hyperplanes from $\Lambda$. Consider the following collection of convex subgraphs of $M(X,o)$:
$$\mathcal{P}:= \left\{  S(w,\Lambda) \text{ such that } P(w,\Lambda) \text{ is abelian} \right\}.$$
The content of the proof of \cite[Theorem~1.1]{SpecialRH} is precisely that $\mathrm{ConeOff}(M(X,o), \mathcal{P})$ is hyperbolic provided that $\mathcal{D}(X,o)$ does not contain $\mathbb{F}_2 \times \mathbb{F}_2$ as a subgroup. Therefore, in order to conclude that $G$ is $\mathrm{lin}$-polynomially hyperbolic, it suffices to show that the canonical map $M(X,o) \to \mathrm{ConeOff}(M(X,o),\mathcal{P})$ is $x^y$-gentle. According to Proposition~\ref{prop:QuasiSyl}, this assertion follows from our next two claims.

\begin{claim}
The collection $\mathcal{P}$ is syllabic.
\end{claim}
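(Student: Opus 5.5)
The plan is to apply Proposition~\ref{prop:SyllabicMedian} to the median graph $M(X,o)$ and the collection $\mathcal{P}$. Every $S(w,\Lambda)$ is convex, as noted when the subgraphs were defined. So it remains to check the parallel-pairs condition. Let $(x,y)$ and $(a,b)$ be parallel pairs, and suppose $x,y \in S(w,\Lambda)$ with $P(w,\Lambda)$ abelian. We must exhibit some $S(w',\Lambda') \in \mathcal{P}$ containing both $a$ and $y$. (I read the conclusion of Proposition~\ref{prop:SyllabicMedian} as asking for $a$ and $b$, or whichever pair the postponed proof actually needs; the construction below is designed to handle both.)

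The first step is to describe the hyperplanes involved. By Lemma~\ref{lem:ParallelPairsTrans}, every hyperplane of $\mathcal{H}(x|a)$ is transverse to every hyperplane of $\mathcal{H}(x|y)$. By Corollary~\ref{cor:LabelTransHyp}, the labels of $\mathrm{supp}(x^{-1}a)$ are therefore transverse in $X$ to the labels of $\mathrm{supp}(x^{-1}y)$. By Lemma~\ref{lem:ParallelPairsHyp}, $a^{-1}b$ is represented by the same reduced word $u$ as $x^{-1}y$, and this word uses only letters of $\Lambda$.

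The second step is the natural candidate. Set $\Lambda' := \mathrm{supp}(x^{-1}y)$ and take $S(a,\Lambda')$, which contains $a$ and $b=au$. It also contains $y$: we can write $y = x u$ and $a = x v$ with $v$ reduced and supported on hyperplanes transverse to $\Lambda'$, so $u$ and $v$ commute letterwise. Hence $y = a\,v^{-1} u v$, and this lies in $a\cdot\langle\Lambda'\rangle$ only if $v$ is trivial. This fails in general, so $S(a,\Lambda')$ does not contain $y$. To fix it, I would instead take
$$\Lambda'' := \Lambda' \cup \mathrm{supp}(x^{-1}a)$$
and base the subgraph at $x$. Then $S(x,\Lambda'')$ contains $x$, $y$, $a$ and $b$.

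The third step, and the main obstacle, is to show that $P(x,\Lambda'')$ or its conjugate is abelian. Since $\Lambda''$ is the union of two mutually transverse families, legal loops at $t(x)$ with letters in $\Lambda''$ should split as products: loops in $\Lambda'$ times loops in $\mathrm{supp}(x^{-1}a)$. Lemma~\ref{lem:TransverseLegal} lets us transport $\Lambda'$-loops between $t(x)$ and $t(a)$. The $\Lambda'$-part is abelian because it sits inside $P(w,\Lambda)$ after moving the basepoint from $w$ to $x$ within $S(w,\Lambda)$, again using convexity and Lemma~\ref{lem:TransverseLegal}. The $\mathrm{supp}(x^{-1}a)$-part is not obviously abelian, and a product of an abelian group with a non-abelian one is not abelian.

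To get around this, I would shrink the set: use only $\Lambda'$ together with those hyperplanes of $\mathrm{supp}(x^{-1}a)$ needed to make the relevant word legal. Alternatively, I would exploit that the subgroup generated by loops along a geodesic segment of mutually crossing hyperplanes is abelian whenever those hyperplanes pairwise commute. If neither works, I would try to prove that $\mathcal{P}$-membership only needs a choice with $\Lambda \subset \Lambda'$ and $w'$ chosen so that the extra part is trivial, i.e.\ that $a$ and $y$ can be joined inside a translate of the same abelian $S(w,\Lambda)$ by parallel transport. Concretely: by Lemma~\ref{lem:TransverseLegal}, the words in $\Lambda$ legal at $t(x)$ coincide with those legal at $t(a)$, so $P(a,\Lambda) \cong P(x,\Lambda)$ is abelian, and $S(a,\Lambda) \in \mathcal{P}$ contains $a$ and $b$. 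That settles the $(a,b)$ reading of the condition.
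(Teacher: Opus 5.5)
Your framework is the paper's: Proposition~\ref{prop:SyllabicMedian} with the $(a,b)$ reading, together with Lemmas~\ref{lem:ParallelPairsTrans}, \ref{lem:ParallelPairsHyp}, \ref{lem:TransverseLegal} and Corollary~\ref{cor:LabelTransHyp}. However, the only step where you certify membership in $\mathcal{P}$ is wrong. In your last step you keep the \emph{original} set $\Lambda$ and claim that Lemma~\ref{lem:TransverseLegal} identifies the $\Lambda$-loops at $t(x)$ with those at $t(a)$. That lemma needs every hyperplane of $\Lambda$ to be transverse to $\mathrm{supp}(x^{-1}a)$. Lemma~\ref{lem:ParallelPairsTrans} and Corollary~\ref{cor:LabelTransHyp} only give this for $\mathrm{supp}(x^{-1}y)$, which can be much smaller than $\Lambda$. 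The conclusion genuinely fails, for instance when $x=y$, since then $a=b$ is an arbitrary vertex. Concretely, let $Q$ be the graph with an edge $e$ from $o$ to a vertex $v$ and two loops $f,g$ at $v$, and take $x=y=w=\epsilon(o)$ and $\Lambda=\{f,g\}$. Then $S(x,\Lambda)=\{x\}$ and $P(x,\Lambda)$ is trivial. But for $a=b=e$ you get $P(a,\Lambda)=e\langle f,g\rangle e^{-1}\cong\mathbb{F}_2$, so $S(a,\Lambda)\notin\mathcal{P}$.

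The repair is the candidate you wrote down in your second step and then discarded: $\Xi:=\mathrm{supp}(x^{-1}y)=\mathrm{supp}(a^{-1}b)$, your $\Lambda'$. Clearly $a,b\in S(a,\Xi)$. Lemma~\ref{lem:TransverseLegal} does apply to $\Xi$, so $aua^{-1}\mapsto xux^{-1}$ gives an isomorphism $P(a,\Xi)\cong P(x,\Xi)$. Since $x^{-1}y$ is represented by a word in $\Lambda$, so are its reduced representatives. Hence $\Xi\subseteq\Lambda$, and $P(x,\Xi)\le P(x,\Lambda)=P(w,\Lambda)$ is abelian. The equality $P(x,\Lambda)=P(w,\Lambda)$ is just the re-basing $x=wu$ with $u$ a $\Lambda$-word; no transversality is involved. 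This is exactly the paper's proof.

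You dropped $S(a,\Lambda')$ only because it misses $y$, but the ``$a$ and $y$'' in Proposition~\ref{prop:SyllabicMedian} is a misprint for ``$a$ and $b$''. The proof of Proposition~\ref{prop:SyllabicQM} invokes the condition for the pair $(p_s',p_{s+1}')$. The literal version is false: with $x=y$ it would put any two vertices in a common member of $\mathcal{P}$, which fails already for $\mathbb{F}_2$. So the whole detour through $\Lambda''$ was unnecessary. Also, $y=xu$ and $a=xv$ give $y=av^{-1}u$, not $av^{-1}uv$.
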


\noindent
Our goal is to apply Proposition~\ref{prop:SyllabicMedian}. So let $(x,y)$ and $(a,b)$ be two parallel pairs of vertices in $M(X,o)$. Assume that $x,y \in S(z,\Lambda)$ for some $(o,\ast)$-legal word $z$ and some collection $\Lambda$ of hyperplanes of $X$ such that $P(z,\Lambda)$ is abelian. Notice that $S(z,\Lambda)= S(x,\Lambda)$. Indeed, since $x \in S(z,\Lambda)$, we can write $x=zu$ for some $(t(z),t(x))$-legal word $u$ of hyperplanes in $\Lambda$. Then,
$$\begin{array}{lcl} S(z, \Lambda) & = & z \{ (t(z),\ast)\text{-legal words of hyperplanes in } \Lambda \} \\ \\ & = & zu \cdot u^{-1} \{ (t(z),\ast)\text{-legal words of hyperplanes in } \Lambda \} \\ \\ & = & x  \{ (t(x),\ast)\text{-legal words of hyperplanes in } \Lambda \} = S(x,\Lambda). \end{array}$$
For the same reason, $P(z,\Lambda)= P(x,\Lambda)$. Thus, we can assume that $z=x$. As a consequence of Lemmas~\ref{lem:ParallelPairsHyp} and~\ref{lem:GeodesicsSpecial}, $x^{-1}y$ and $a^{-1}b$ have the same support, say $\Lambda$. Clearly, $a,b \in S(a,\Xi)$. Moreover, since $\mathrm{supp}(x^{-1}a)$ and $\mathrm{supp}(x^{-1}y)$ are transverse as a consequence of Lemma~\ref{lem:ParallelPairsTrans} and Corollary~\ref{cor:LabelTransHyp}, it follows from Lemma~\ref{lem:TransverseLegal} that the conjugation by $xa^{-1}$ in $\mathcal{D}(X)$ induces an isomorphism $P(a,\Xi) \to P(x,\Xi)$. Thus, $P(a,\Xi)$ is isomorphic to the subgroup $P(x,\Xi)$ of the abelian subgroup $P(x,\Lambda)$. We conclude that $P(a,\Xi)$ is abelian, hence $S(a,\Xi) \in \mathcal{P}$, concluding the proof of our claim. 

\begin{claim}
Every subgraph in $\mathcal{P}$ has polynomial growth.
\end{claim}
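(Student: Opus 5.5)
We need to show that every $S(w,\Lambda)\in\mathcal{P}$ has polynomial growth, uniformly over $\mathcal{P}$, so that Proposition~\ref{prop:QuasiSyl} applies with $\gamma_{\mathcal{P}}$ polynomial. The plan is to identify $S(w,\Lambda)$, as a subgraph of $M(X,o)$, with an orbit-type object of the abelian group $P(w,\Lambda)$, and to use the compactness of $X$ to get uniform control.

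\emph{Step 1: reduction to the base vertex $t(w)$.} Left multiplication by $w^{-1}$ in the groupoid $\mathcal{D}(X)$ maps $S(w,\Lambda)$ isometrically onto the set of $(t(w),\ast)$-diagrams represented by words in $\Lambda$. This set sits inside $M(X,t(w))\cong\widetilde{X}^{(1)}$. It is the vertex set of the convex subcomplex $\widetilde{Y}$, where $Y\subset\widetilde{X}$ is spanned by the paths from $\widetilde{t(w)}$ crossing only hyperplanes labelled in $\Lambda$. The group $P=P(w,\Lambda)$, conjugated to $t(w)$, consists of the $(t(w),t(w))$-diagrams with letters in $\Lambda$, and it acts on $\widetilde{Y}$ by left multiplication.

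\emph{Step 2: cocompactness.} The quotient $\widetilde{Y}/P$ maps to $X$, and this map is injective on vertices. Indeed, two vertices $u,u'$ of $\widetilde{Y}$ with the same terminus give a loop $u'u^{-1}$, which is a word in $\Lambda$ and hence an element of $P$. Since $X$ has finitely many vertices, $P$ has at most $|X^{(0)}|$ orbits of vertices on $\widetilde{Y}$. Because $\widetilde{Y}$ is convex in $M(X,o)$, its intrinsic metric agrees with the ambient one. So $\widetilde{Y}$ is quasi-isometric to $P$ with a finite generating set, and $P$ is finitely generated.

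\emph{Step 3: polynomial growth.} $P$ is a finitely generated abelian group, and in fact it is free abelian $\mathbb{Z}^k$ with $k\le\dim X$, since a special group is torsion-free and abelian subgroups act properly on a finite-dimensional CAT(0) cube complex. Then
\[
|B(x,R)\cap S(w,\Lambda)|\le |X^{(0)}|\cdot|B_P(R')|\le C(R+1)^{k}.
\]
Here a vertex in the ball is $g\cdot v_i$ for one of the finitely many orbit representatives $v_i$, and $d(g v_i,g v_j)$ is bounded in terms of $R$ and $\mathrm{diam}$.

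\emph{Main obstacle: uniformity.} The constant must be independent of $(w,\Lambda)$, whereas a direct quasi-isometry argument gives constants that depend on $P$. There are only finitely many choices of $\Lambda$ (as $X$ is compact) and of $t(w)$, so only finitely many pairs up to isometry. The remaining issue is to show that $S(w,\Lambda)$ depends only on $(t(w),\Lambda)$ up to isometry, which holds by Step 1. This yields a uniform polynomial bound.

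Finally, the local finiteness of $\mathcal{P}$ needed in Proposition~\ref{prop:QuasiSyl} also follows from Step 1: a vertex $x$ lies in $S(x,\Lambda)$ for at most $2^{|\text{hyperplanes of }X|}$ choices of $\Lambda$.
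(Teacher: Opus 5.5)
Your proposal is correct and takes the paper's approach. The paper's proof also rests on your key step: two vertices $wu_i, wu_j$ of $S(w,\Lambda)$ with the same terminus differ by the element $wu_iu_j^{-1}w^{-1}$ of the abelian group $P(w,\Lambda)$, so $P(w,\Lambda)$ acts with at most $|X^{(0)}|$ vertex-orbits. Your extra checks fill in points the paper leaves implicit: the uniformity over $\mathcal{P}$ (finitely many models indexed by $(t(w),\Lambda)$, each reached from $S(w,\Lambda)$ by left multiplication by $w^{-1}$) and the local finiteness of $\mathcal{P}$.
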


\noindent
Let $w$ be an $(o,\ast)$-legal word and $\Lambda$ a collection of hyperplanes of $X$. In order to show that $S(w,\Lambda)$ has polynomial growth, it suffices to verify that the abelian group $P(w,\Lambda)$ acts cocompactly on $S(w,\Lambda)$. Let $wu_1, \ldots, wu_n$ be vertices of $S(w,\Lambda)$, where $u_1, \ldots, u_n$ are $(t(w),\ast)$-legal words of hyperplanes in $\Lambda$. If $n$ is larger than the number of vertices in $X$, then there must exist $i \neq j$ such that $t(wu_i)=t(wu_j)$. Then, $wu_iu_j^{-1}w^{-1}$ defines an $(o,o)$-legal word representing an element of $P(w,\Lambda)$ that sends $wu_j$ to $wu_i$. This concludes our proof. 
\end{proof}

\section{Application to graph products}

\noindent
Recall that, given a graph $\Gamma$ and a collection of groups $\mathcal{G}= \{G_u \mid u \in V(\Gamma)\}$ indexed by $V(\Gamma)$, the graph product $\Gamma \mathcal{G}$ is given by the relative presentation
$$\langle G_u \ (u \in V(\Gamma)) \mid [G_u,G_v]=1 \text{ for all edges } \{u,v\} \in E(\Gamma) \rangle,$$
where $[G_u,G_v]=1$ is a shorthand for: $[g,h]=1$ for all $g \in G_u$ and $h \in G_v$. Usually, one says that graph products interpolate between direct sums (when $\Gamma$ is a complete graphs, so ``everything commutes'') and free products (when $\Gamma$ has no edge, so ``nothing commutes''). Examples include right-angled Artin (resp.\ Coxeter) groups, which coincide with graph products of infinite cyclic groups (resp.\ cyclic groups of order two). 

\medskip \noindent
\textbf{Convention.} \emph{From now on, vertex-groups of graph products are always assumed to be non-trivial.}

\medskip \noindent
Our goal is to prove Theorem~\ref{thm:IntroGP} from the introduction, which we restate below for the reader's convenience:

\begin{thm}\label{thm:InGP}
Let $\Gamma$ be a finite graph and $\mathcal{G}$ a collection of groups of polynomial growth. The following statements are equivalent:
\begin{itemize}
	\item[(i)] $\Gamma \mathcal{G}$ contains $\mathbb{F}_2 \times \mathbb{F}_2$ as a subgroup;
	\item[(ii)] $\Gamma$ contains one of the graphs from Figure~\ref{Graphs} as an induced subgraph;
	\item[(iii)] $\Gamma \mathcal{G}$ contains $\mathbb{F}_2 \times \mathbb{F}_2$ as an undistorted subgroup;
	\item[(iv)] $\mathbb{F}_2 \times \mathbb{F}_2$ quasi-isometrically embeds into $\Gamma \mathcal{G}$;
	\item[(v)] $\mathrm{Lamp}(\mathbb{Z})$ quasi-isometrically embeds into $\Gamma \mathcal{G}$;
	\item[(vi)] $\Gamma \mathcal{G}$ is not $\mathrm{pol}$-polynomially hyperbolic;
	\item[(vii)] $\Gamma \mathcal{G}$ is not $\mathrm{lin}$-polynomially hyperbolic.
\end{itemize}
\end{thm}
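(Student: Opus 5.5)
The plan is to follow the scheme of the proof of Theorem~\ref{thm:SpecialLamp}, replacing median graphs by quasi-median graphs. The implications $(iii)\Rightarrow(iv)\Rightarrow(v)\Rightarrow(vi)\Rightarrow(vii)$ are obtained as before: $(iii)\Rightarrow(iv)$ is clear, $(iv)\Rightarrow(v)$ uses the quasi-isometric embedding of $\mathrm{Lamp}(\mathbb{Z})$ into $\mathbb{F}_2\times\mathbb{F}_2$ from \cite{MR3079268}, $(v)\Rightarrow(vi)$ is Theorem~\ref{thm:GentleLamp} combined with Lemma~\ref{lem:GentleStableQI}, and $(vi)\Rightarrow(vii)$ is trivial. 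For $(i)\Leftrightarrow(ii)$ I will invoke the classification of graph products containing $\mathbb{F}_2\times\mathbb{F}_2$ from \cite[Theorem~1.1]{MR4922688}, as in the proof of Corollary~\ref{cor:RACGs}. When vertex-groups have polynomial growth, hence are virtually nilpotent, an $\mathbb{F}_2$ must come from a join factor that is not virtually abelian; this is exactly what the graphs of Figure~\ref{Graphs} encode, a vertex with infinite group playing the role of an isolated vertex or an edge as appropriate. For $(ii)\Rightarrow(iii)$ I will write down the explicit subgroup. If $\Gamma$ contains an induced join $\Lambda_1\ast\Lambda_2$ with each $\langle\Lambda_i\rangle$ containing $\mathbb{F}_2$, then $\langle\Lambda_1\rangle\times\langle\Lambda_2\rangle$ is a parabolic subgroup, hence a retract and undistorted. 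Inside each factor, an $\mathbb{F}_2$ generated by suitable products of vertex elements lying in a free product of parabolics is undistorted, for instance because it is quasiconvex in the Bass--Serre tree of that free product.

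The substance is $(vii)\Rightarrow(ii)$, which I will prove by contraposition. Assume $\Gamma$ contains none of the graphs in Figure~\ref{Graphs}, and let $\Gamma\mathcal{G}$ act on its quasi-median graph $X=\mathrm{QM}(\Gamma,\mathcal{G})$, the Cayley graph with respect to $\bigcup_u G_u$. Since this generating set is infinite, I first pass to a locally finite model: the Cayley graph for finite generating sets of the $G_u$, with the canonical $1$-Lipschitz map to $X$. I then take the collection $\mathcal{P}$ of cosets $g\langle\Lambda\rangle$, where $\Lambda\subset\Gamma$ is an induced subgraph whose parabolic subgroup $\langle\Lambda\rangle$ has polynomial growth. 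Given the polynomial growth of the $G_u$, these are exactly the subgraphs $\Lambda$ that are joins of complete graphs with vertices whose groups are finite and form a virtually abelian part, the obvious combinatorial condition (for instance no two non-adjacent vertices with a vertex-group of order $\geq3$). The target is $\mathrm{ConeOff}(\mathrm{Cayl}(\Gamma\mathcal{G}),\mathcal{P})$. I will argue that it is hyperbolic because $\Gamma$ has no forbidden subgraph: every join in $\Gamma$ has a factor with virtually abelian parabolic subgroup, and the relative hyperbolicity criteria for graph products (as in \cite{SpecialRH}, or the analogous contracting-hyperplane argument in quasi-median graphs) then show that coning off these ``small'' parabolics kills every flat strip not already of polynomial growth.

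Gentleness will come from Proposition~\ref{prop:QuasiSyl}. $\mathcal{P}$ is locally finite because $\Gamma$ has finitely many subgraphs. It has polynomial growth because each coset is a translate of a group of polynomial growth, undistorted since parabolics are retracts. Quasi-syllabicity will follow from a quasi-median version of Proposition~\ref{prop:SyllabicMedian}, namely Proposition~\ref{prop:SyllabicQM}, which is announced in the paper. The closure condition under parallel pairs holds because parallel pairs in $X$ have the same support in $\Gamma$, and the collection is defined by conditions on that support only. A syllabic path in $X$ whose pieces lie in polynomial-growth parabolics lifts to a quasigeodesic in the locally finite Cayley graph, since the word metric on a product of parabolic pieces is additive up to constants.

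I expect the main obstacle to be hyperbolicity of the cone-off. In particular, I must check that the absence of the specific graphs in Figure~\ref{Graphs}, rather than simply of induced squares, is exactly what is needed. Finite vertex-groups of order $2$ behave differently from those of order $\geq3$: a product of two order-$2$ groups is virtually $\mathbb{Z}$, not $\mathbb{F}_2$. My first attempt will be a case analysis on maximal joins $\Lambda_1\ast\Lambda_2$ in $\Gamma$, showing that one side always generates a virtually abelian parabolic which then lies in $\mathcal{P}$.
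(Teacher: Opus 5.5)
Your outer implications and your gentleness argument match the paper. You use the same collection $\mathcal{P}$, Proposition~\ref{prop:QuasiSyl} fed by Proposition~\ref{prop:SyllabicQM}, and your additivity argument for lifting to the locally finite Cayley graph is the content of Lemmas~\ref{lem:StronglySyllGP} and~\ref{lem:SyllStrongSyll}. Your Bass--Serre argument for $(ii)\Rightarrow(iii)$ also works, since word length in a free product dominates the number of syllables, hence the displacement in the tree. Two steps, however, are not supplied.

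\textbf{Hyperbolicity of the cone-off.} This is the heart of the hard direction, and your proposal gives no mechanism for it. The one you point to aims at the wrong target. $\Gamma\mathcal{G}$ is typically not hyperbolic relative to its polynomial-growth parabolics: take $\Gamma$ the path with consecutive vertices $a,b,c,d$ and $\mathbb{Z}$ vertex-groups; the cosets of $\langle a,b\rangle$ and $\langle b,c\rangle$ meet along $\langle b\rangle$, yet the cone-off is hyperbolic. Moreover, the median-graph cone-off arguments do not transfer verbatim to $\mathrm{QM}(\Gamma,\mathcal{G})$.

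Your planned case analysis on joins only produces the \emph{input}. For a flat rectangle in $\mathrm{QM}(\Gamma,\mathcal{G})$, the labels $\Phi,\Psi$ of its two sides span a join (Lemma~\ref{lem:LabelTransHyp}). So one of $\langle\Phi\rangle,\langle\Psi\rangle$ has polynomial growth, and the rectangle collapses in one direction in the cone-off.

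What is missing is the criterion turning this into hyperbolicity, namely Theorem~\ref{thm:WhenHyp}. One applies Bowditch's criterion (Proposition~\ref{prop:WhenHyp}) with $\eta(x,y)=I(x,y)$. Every $z\in I(x,y)$ is the corner of an isometric staircase whose broken path lies on a given geodesic (Lemma~\ref{lem:Interval}, since intervals are median). Cutting that staircase into rectangles puts $z$ within $2K$ of the geodesic. Thin triangles then follow because median triangles span prisms of diameter $\leq \mathrm{clique}(\Gamma)$.

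Note also that the relevant dichotomy is polynomial growth versus containing $\mathbb{F}_2$, not virtual abelianness. A single vertex with a Heisenberg vertex-group is a polynomial-growth factor that is not virtually abelian. The polynomial-growth $\Lambda$ are exactly the joins of single vertices and of pairs of non-adjacent $\mathbb{Z}_2$-vertices (Corollary~\ref{cor:NoFreeSub}), so your description of them is off.

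\textbf{The implication $(i)\Rightarrow(ii)$.} In your cycle, $(i)$ is reached only through $(iii)\Rightarrow(i)$, so you need $(i)\Rightarrow(ii)$. The citation \cite[Theorem~1.1]{MR4922688} does not provide it. That work concerns groups acting on CAT(0) cube complexes, and here it is only combined with specific computations for right-angled Coxeter groups and graph braid groups. Your $\Gamma\mathcal{G}$ need not even be CAT(0): a Heisenberg vertex-group already rules this out, by the solvable subgroup theorem.

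Your heuristic also skips the real point. You must explain why an arbitrary, possibly distorted, $\mathbb{F}_2\times\mathbb{F}_2$ lies in a conjugate of a join parabolic whose two sides both contain $\mathbb{F}_2$. The paper proves this in Proposition~\ref{prop:ProdInGP}, using:
\begin{itemize}
\item Minasyan--Osin (Lemma~\ref{lem:JoinSub});
\item the fact that non-trivial normal subgroups of $\mathbb{F}_2\times\mathbb{F}_2$ contain $\mathbb{F}_2$ (Lemma~\ref{lem:NormalSubProdFree}), so that it injects into the other side when one side has polynomial growth;
\item induction on $|V(\Gamma)|$.
\end{itemize}

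Alternatively, you can close your cycle more cheaply. The proof of Lemma~\ref{lem:GentleStableQI} only uses that $\rho$ is coarsely Lipschitz with uniformly bounded fibres, so it applies to the inclusion of any finitely generated subgroup. Hence $(i)\Rightarrow(vi)$ follows directly from the fact that $\mathbb{F}_2\times\mathbb{F}_2$ is not $\mathrm{pol}$-polynomially hyperbolic.
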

\begin{figure}[h!]
\begin{center}
\includegraphics[width=0.7\linewidth]{Graphs}
\caption{}
\label{Graphs}
\end{center}
\end{figure}

\noindent
In Section~\ref{section:QM}, we describe the quasi-median geometry of graph products, as introduced in \cite{QM}. In Sections~\ref{section:HypCrit} and~\ref{section:SyllabicQM}, we state and prove sufficient conditions that will allow us to show that some cone-offs of quasi-median graphs are hyperbolic and syllabic. Finally, we prove Theorem~\ref{thm:InGP} in Section~\ref{section:ThmGP}.

\subsection{Preliminaries}\label{section:QM}

\noindent
In this section, we record some basic definitions and properties related to quasi-median graphs, and we describe their connection with graph products as highlighted in \cite{QM}. 

\medskip \noindent
Quasi-median graphs admit several different but equivalent definitions, see \cite{MR1297190}. The definition we give here echoes the characterisation of median graphs as retracts of hypercubes. (Recall that, given a graph $X$, a subgraph $Y \subset X$ is a \emph{retract} if there exists a graph morphism $X \to Y$ that restricts to the identity on $Y$.) Roughly speaking, we can think of quasi-median graphs as defining the smallest reasonable class of graphs that encompasses median graphs and products of complete graphs.

\begin{definition}
A connected graph is \emph{quasi-median} if it is retract of a Hamming graph (i.e.\ a product of complete graphs). 
\end{definition}

\noindent
This definition is not the most useful in practice, but it highlights the tight connection between median and quasi-median graphs. Similarly to median graphs, the key objects to understand in order to describe the geometry of quasi-median graphs are \emph{hyperplanes}. 

\begin{definition}
In a quasi-median graph, a \emph{hyperplane} is an equivalence class of edges with respect to the reflexive-transitive closure of the relation that identifies two edges whenever they belong to a common $3$-cycle or they are opposite in some induced $4$-cycle. Two hyperplanes are \emph{transverse} if they contain two edges with a common endpoint that span an induced $4$-cycle.
\end{definition}
\begin{figure}
\begin{center}
\includegraphics[trim=1cm 15.5cm 10cm 0,clip,width=0.6\linewidth]{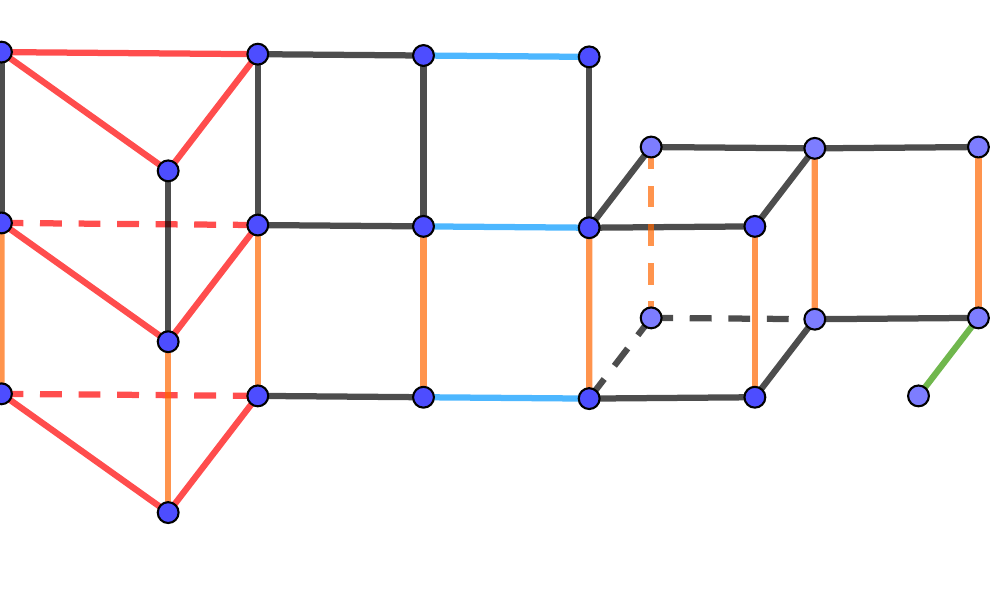}
\caption{Examples of hyperplanes in a quasi-median graph. The orange hyperplane is transverse to the blue and red hyperplanes.}
\label{HypQM}
\end{center}
\end{figure}

\noindent
See Figure~\ref{HypQM}. Roughly speaking, the geometry of a quasi-median graph is encoded by the combinatorics of its hyperplanes. Our next statement justifies this leitmotiv.

\begin{thm}[{\cite[Proposition~2.15]{QM}}]\label{thm:BigQM}
Let $X$ be a quasi-median graph.
\begin{itemize}
	\item For every hyperplane $J$, the graph $X \backslash \backslash J$ obtained from $X$ by removing the edges of $J$ is disconnected. Its connected components are referred to as \emph{sectors}. 
	\item For every hyperplane $J$, the sectors delimited by $J$, its \emph{carrier} $N(J)$ (i.e.\ the subgraph induced by $J$ in $X$), and its \emph{fibres} (i.e.\ the connected components of $N(J) \backslash \backslash J$) are gated.
	\item A path in $X$ is a geodesic if and only if it crosses each hyperplane at most once.
	\item The distance between two vertices coincides with the number of hyperplanes that separate them. 
\end{itemize}
\end{thm}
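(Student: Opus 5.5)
The plan is to work with the standard local characterisation of quasi-median graphs rather than the retract definition: a quasi-median graph is weakly modular (it satisfies the triangle and quadrangle conditions) and contains no induced $K_{2,3}$ and no induced $K_4^-$ ($K_4$ minus an edge). I would take this equivalence from \cite{MR1297190}. The retract-of-Hamming-graph definition is then used only for intuition and for sanity checks: in a Hamming graph the hyperplanes are exactly the coordinate classes, and every item of the statement holds there trivially.

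\emph{Step 1: no geodesic crosses a hyperplane twice.} First I would show that if $\gamma$ is a geodesic and $e,e'$ are edges of $\gamma$, then $e,e'$ are not in the same hyperplane. The argument is by induction on the length of a chain of elementary moves (common triangle, or opposite sides of an induced square) relating $e$ to $e'$, combined with the triangle and quadrangle conditions. Transporting an edge $\{a,b\}$ across a square or triangle changes the difference $d(x,a)-d(x,b)$ in a controlled way, for any base vertex $x$. More precisely, I would prove the invariant that for all edges $\{a,b\}$ in a fixed hyperplane $J$ and every vertex $x$, the difference $d(x,a)-d(x,b)$ lies in $\{-1,0,1\}$. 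I would also show that the partition of vertices according to ``closest endpoint'' is coherent along $J$; this is where the absence of $K_{2,3}$ and $K_4^-$ enters.

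\emph{Step 2: sectors and distance.} Step~1 lets me define, for each hyperplane $J$, the relation ``$x,y$ are joined by a path avoiding $J$''. I would show that the resulting components are exactly the classes of the relation ``$x$ and $y$ have the same gate on a fixed clique of $J$''. Together with the quasi-median analogue of the statement that every path joining distinct components crosses $J$, this gives the disconnection statement. Combined with Step~1, it gives the third and fourth items. A geodesic crosses each separating hyperplane exactly once and no other hyperplane, so its length is the number of separating hyperplanes. Conversely, a path crossing each hyperplane at most once has length equal to that number, so it is geodesic.

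\emph{Step 3: gatedness.} In a weakly modular graph, convex sets that are locally convex are gated. So for sectors, carriers and fibres it suffices to check convexity, and then produce gates as the endpoints of shortest paths and verify the gate identity $d(x,z)=d(x,g)+d(g,z)$.
\begin{itemize}
\item Convexity of sectors follows from Step~2: a geodesic between two vertices of a sector cannot cross $J$ once, since then it would end in a different sector, and cannot cross it twice by Step~1.
\item For carriers and fibres I would argue similarly. A geodesic leaving the carrier must cross a hyperplane not transverse to $J$ twice, or cross $J$ and another hyperplane in an incompatible order. This is ruled out by transporting squares along $J$.
\end{itemize}

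The main obstacle is Step~1, namely the coherence of hyperplanes under the mixed triangle/square relation. In the median case only squares occur and the parity argument is classical. Here one must show that triangles cannot ``twist'' the sides, and this is precisely where the forbidden $K_4^-$ and $K_{2,3}$ are needed. I would handle it by a minimal-counterexample argument: take a shortest closed chain of moves producing a violation, fill it with a minimal-area diagram of triangles and squares using weak modularity, and then reduce the area by a local move. I expect this filling-and-reduction to be the technical heart of the proof.
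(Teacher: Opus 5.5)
The paper does not prove this statement; it quotes it from \cite[Proposition~2.15]{QM}. Judged on its own, your outline has a genuine gap exactly where you place the technical heart. Every item of the theorem depends on the coherence of a hyperplane: there should be a partition of $V(X)$ such that the edges of $J$ are exactly those joining distinct pieces. Your proposal never establishes this.

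\begin{itemize}
\item The only concrete invariant you state, $d(x,a)-d(x,b)\in\{-1,0,1\}$ for $\{a,b\}\in J$, holds for every edge of every graph, so it gives no information.
\item The ``coherence of the closest-endpoint partition'' is asserted, not proved.
\item The fallback (a minimal-area triangle/square filling followed by an unspecified local move) is not an argument as written. It also does not carry over from the cubical case: all three sides of a triangle lie in one hyperplane, so dual curves branch, and the Sageev-type area reduction has no direct analogue.
\end{itemize}
Steps~2 and~3 are built on Step~1, so they inherit the gap.

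Separately, the lemma that opens Step~3 is false as stated. In $K_3$, which is quasi-median, an edge is convex but not gated, so convexity alone can never be enough. For sectors you must also use closure under triangles: a triangle with one edge in a sector lies in that sector, because its edges share a hyperplane. For carriers and fibres you need something like the product structure $N(J)\cong F\times C$, or an explicit gate; ``transporting squares'' does not supply either.

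The missing idea is one you mention in Step~2 but place after Step~1: gates on a clique. Used first, it makes every verification local, and no filling is needed. Here is the route.

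\begin{itemize}
\item \emph{Maximal cliques are gated.} Suppose $c\neq c'\in C$ were both nearest to some $x$. The triangle condition gives a common neighbour $w$ closer to $x$. By maximality $w$ misses some $c''\in C$, and $\{c,c',c'',w\}$ is an induced $K_4^-$.
\item \emph{Setup.} Fix $C\subset J$ and put $\pi=\mathrm{proj}_C$. If $xy$ is an edge with $\pi(x)\neq\pi(y)$, a short distance count gives $d(x,C)=d(y,C)$.
\item \emph{Invariance under triangle moves.} For a triangle $xyz$ with $\pi(z)=\pi(x)$, the triangle condition towards $\pi(x)$ yields an induced $K_4^-$.
\item \emph{Invariance under square moves.} Suppose the opposite edge of an induced square has both endpoints with the same gate. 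A short case analysis using the triangle and quadrangle conditions yields an induced $K_{2,3}$ or $K_4^-$.
\item \emph{Converse.} Suppose $\pi(x)\neq\pi(y)$ and $d(x,C)=k\geq 1$. Take $x'\sim x$ closer to $\pi(x)$. The quadrangle condition based at $\pi(y)$ gives $y'$ such that $xyy'x'$ is an induced square, with $\pi(x')\neq\pi(y')$ and $d(x',C)=k-1$. By induction, $xy\in J$.
\end{itemize}

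Hence $J$ is exactly the set of edges joining distinct sets $\pi^{-1}(c)$, $c\in C$. These sets are connected, since a geodesic from $v$ to $\pi(v)$ stays inside $\pi^{-1}(\pi(v))$. So they are the sectors, and there are $|C|\geq 2$ of them, which gives the first item.

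Now apply the same description to the clique through an arbitrary edge $ab\in J$. It shows that $d(v,a)<d(v,b)$, $d(v,a)=d(v,b)$ or $d(v,a)>d(v,b)$, according to which sector contains $v$. This immediately rules out a geodesic crossing $J$ twice, and your derivation of the last two items then goes through.
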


\noindent
Recall that, given a graph $X$, a subgraph $Y \subset X$ is \emph{gated} if, for every $x \in V(X)$, there exists a unique vertex $y \in V(Y)$ such that $x$ can be connected to any vertex in $Y$ through some geodesic passing through $y$. The vertex $y$ is referred to as the \emph{gate} or the \emph{projection} of $x$. Notice that, if the gate of $x$ exists, then it coincides with the unique vertex of $Y$ that minimises the distance to $x$. Gatedness can be thought of as a strong convexity condition. It is worth mentioning that the gatedness of sectors in quasi-median graphs given by Theorem~\ref{thm:BigQM} does not strengthen the convexity of halfspaces in median graphs given by Theorem~\ref{thm:BigMedian}. Indeed, in median graphs, a subgraph is gated if and only if it is convex.

\medskip \noindent
In the same way that median graphs can be thought of as made of cubes, quasi-median graphs can be thought of as made of \emph{prisms}.

\begin{definition}
In a quasi-median graph $X$, a clique is a maximal complete subgraph and a prism is a product of cliques. The cubical dimension of $X$, denoted by $\mathrm{dim}_\square(X)$, is the maximal number of factors in a prism of $X$.  
\end{definition}

\noindent
We emphasize that the cubical dimension may differ from the dimension of the prism complex associated to a quasi-median graph. For instance, the cubical dimension of the quasi-median graph illustrated by Figure~\ref{HypQM} is $3$, but the cubical dimension of a complete graph is $1$ regardless of its size. 

\medskip \noindent
Clearly, three vertices in a quasi-median graph may not admit a median vertex. This already happens in complete graphs. This motivates the following definition:

\begin{definition}
Let $X$ be a graph and $(x_1,x_2,x_3) \in V(X)^3$ three vertices. A triple $(y_1,y_2,y_3) \in V(X)^3$ is a \emph{median triangle} of $(x_1,x_2,x_3)$ if 
$$d(x_i,x_j)=d(x_i,y_i)+d(y_i,y_j)+d(y_j,x_j) \text{ for all } i \neq j.$$
\end{definition}

\noindent
Clearly, $(x_1,x_2,x_3)$ is always a median triangle of $(x_1,x_2,x_3)$, so median triangles always exist. Typically, we are interested in finding median triangles as small as possible. Median graphs are characterised by the property that median triangles are unique and reduced to single vertices. In quasi-median graphs, median triangles are also unique and ``not too big''.

\begin{prop}[{\cite[Proposition~2.84]{QM}}]
Let $X$ be a quasi-median graph. Every triple of vertices $(x_1,x_2,x_3)$ admits a unique median triangle $(y_1,y_2,y_3)$. Moreover, the gated hull of $\{y_1,y_2,y_3\}$ is a prism. 
\end{prop}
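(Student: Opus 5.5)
The plan is to argue entirely with hyperplanes, using Theorem~\ref{thm:BigQM}: distances are counted by separating hyperplanes, geodesics cross each hyperplane at most once, and sectors, carriers and fibres are gated. For each hyperplane $J$, the three vertices $x_1,x_2,x_3$ lie in at most three sectors of $J$. I would classify $J$ as \emph{of type 0} if all $x_i$ lie in one sector, \emph{of type $i$} if $x_i$ is alone in its sector and the other two share a sector, and \emph{of type 3} if the three vertices lie in three distinct sectors.

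First I would establish the following hyperplane description of every median triangle $(y_1,y_2,y_3)$. The equality $d(x_i,x_j)=d(x_i,y_i)+d(y_i,y_j)+d(y_j,x_j)$ says that concatenating geodesics $[x_i,y_i]\cup[y_i,y_j]\cup[y_j,x_j]$ gives a geodesic, so it crosses each hyperplane at most once. Applying this to the three pairs, I expect to read off the following.
\begin{itemize}
\item A hyperplane separating $x_i$ from $y_i$ is exactly one of type $i$.
\item A hyperplane separating $y_i$ from $y_j$ is either of type 3, or of type $i$ or $j$ but not yet crossed.
\end{itemize}

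Existence is immediate, since $(x_1,x_2,x_3)$ is itself a median triangle. Among all median triangles I would choose one minimising the perimeter $d(y_1,y_2)+d(y_2,y_3)+d(y_1,y_3)$. The key claim is that no hyperplane of type $i$ separates $y_i$ from $\{y_j,y_k\}$. Suppose some such $J$ did. Using gatedness of the sector of $J$ containing $y_j,y_k$, I would replace $y_i$ by its neighbour $y_i'$ on a geodesic from $y_i$ to the gate, crossing $J$. Then I would check that $(y_i',y_j,y_k)$ is still a median triangle: the concatenated paths still cross each hyperplane at most once, precisely because $J$ is of type $i$. This strictly decreases the perimeter, which is a contradiction.

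Consequently every hyperplane separating two of the $y$'s is of type 3, hence separates all three pairwise into distinct sectors. Uniqueness then follows. For each hyperplane $J$, the sector of $J$ containing $y_i$ is forced by the type of $J$:
\begin{itemize}
\item for type 0 or type 3, it is the sector of $x_i$;
\item for type $j$ with $j\neq i$, it is the sector of $x_i$;
\item for type $i$, it is the common sector of $x_j$ and $x_k$.
\end{itemize}
Two vertices lying in the same sector for every hyperplane are at distance $0$, so the minimal triangle is the only median triangle once minimality is shown to be automatic. To see this, I would rerun the description above for an arbitrary median triangle and show that a separating hyperplane of type $i$ would contradict the geodesicity of the path through $y_j$ and $y_k$.

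Finally, for the prism statement, let $\mathcal{J}$ be the set of hyperplanes separating the $y$'s, all of type 3. I would show that any two hyperplanes of $\mathcal{J}$ are transverse. If $J,H\in\mathcal{J}$ were not transverse, some sector of $H$ would contain $J$ entirely, and so would contain at least two of the $y$'s. This contradicts the fact that $H$ separates them pairwise. Then I would show that the gated hull of $\{y_1,y_2,y_3\}$ is a product of cliques, one clique per hyperplane in $\mathcal{J}$, using that pairwise transverse hyperplanes in a quasi-median graph span a prism. This last step is the main obstacle. I would attempt it by induction on $|\mathcal{J}|$: take the fibre of the carrier of one $J\in\mathcal{J}$, which is gated, project the $y$'s onto it, apply the induction hypothesis there, and take the product with the clique dual to $J$.
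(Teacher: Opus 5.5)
The paper gives no proof of this proposition; it is quoted from \cite{QM}. Your argument therefore has to stand on its own, and one of its steps cannot be carried out: the claim that ``minimality is automatic'', i.e.\ that every median triangle is the perimeter-minimising one. With the definition used here this is false, as the paper itself remarks just before the statement: $(x_1,x_2,x_3)$ is always a median triangle of $(x_1,x_2,x_3)$. Take a tripod with centre $m$ and leaves $x_1,x_2,x_3$. Both $(x_1,x_2,x_3)$ and $(m,m,m)$ are median triangles. The hyperplane dual to $mx_1$ is of type $1$ and separates $y_1=x_1$ from $y_2=x_2$, yet $[x_1,y_1]\cup[y_1,y_2]\cup[y_2,x_2]$ is a geodesic, so there is no contradiction to extract. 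The gated hull of $\{x_1,x_2,x_3\}$ is not a prism either. The proposition only makes sense for the median triangle of \emph{minimal size}. Accordingly, your first bullet is only an inclusion for an arbitrary median triangle, and becomes an equality only for the minimal one. The fix is to drop that step. Your exchange argument applies to \emph{every} perimeter-minimising median triangle, so each of them has only type-$3$ separating hyperplanes. Your sector table then shows that any two of them coincide, which is the correct uniqueness statement.

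Three further steps need repair. First, in the exchange, the first edge of a geodesic from $y_i$ to its gate $g$ in the sector $S$ of $J$ containing $y_j,y_k$ need not lie in $J$. Argue instead as follows. The hyperplane $K$ containing that edge does not cross $S$ \cite[Lemma~2.34]{QM}, so it separates $y_i$ from $y_j$ and $y_k$. Hence $K$ is of type $i$, because a type-$3$ hyperplane would separate $y_j$ from $y_k$ and thus cross $S$. Moreover, $y_i'$ lies in the sector of $K$ containing $S$; this matters because $K$ may have more than two sectors. Then $d(y_i',y_j)=d(y_i,y_j)-1$, $d(y_i',y_k)=d(y_i,y_k)-1$ and $d(x_i,y_i')\leq d(x_i,y_i)+1$, so the perimeter drops by $2$. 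Second, in the transversality step, what you actually need is that some sector of $H$ contains $N(J)$ together with all sectors of $J$ but one; that is why it contains two of the $y$'s. Third, the prism step is only a sketch, and two inputs are missing:
\begin{itemize}
\item the gated hull $Y$ of $\{y_1,y_2,y_3\}$ is crossed exactly by the hyperplanes of $\mathcal{J}$, since it lies in every sector containing the three $y$'s;
\item $Y\subset N(J)$ for every $J\in\mathcal{J}$. Since $J$ crosses $Y$, the projection to $N(J)$ of a vertex of $Y$ lies in $Y$ by convexity. A hyperplane separating the two would then be in $\mathcal{J}$, hence cross $N(J)$, contradicting \cite[Lemma~2.34]{QM}.
\end{itemize}
With these, $N(J)=F\times C$ from \cite[Lemma~2.28]{QM} and Lemma~\ref{lem:CliqueInGated} give $Y=(Y\cap F)\times C$. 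You should then induct on the general claim that a gated subgraph crossed by finitely many pairwise transverse hyperplanes is a prism, rather than re-projecting the triangle.
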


\noindent
The connection between quasi-median graphs and graph products is highlighted by the following statement:

\begin{thm}[{\cite[Proposition~8.2 and Corollary~8.7]{QM}}]
Let $\Gamma$ be a graph and $\mathcal{G}$ a collection of groups indexed by $V(\Gamma)$. The Cayley graph
$$\mathrm{QM}(\Gamma, \mathcal{G}):= \mathrm{Cayl} \left( \Gamma \mathcal{G}, \bigcup\limits_{G \in \mathcal{G}} G \right)$$
is quasi-median. Its cubical dimension equals the maximal size $\mathrm{clique}(\Gamma)$ of a clique in~$\Gamma$.
\end{thm}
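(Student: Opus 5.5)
The plan is to follow the normal-form description of graph products from \cite{QM} and to verify the two assertions separately. First, that $\mathrm{QM}(\Gamma,\mathcal{G})$ is quasi-median. Second, that its cubical dimension equals $\mathrm{clique}(\Gamma)$.

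\emph{Quasi-median.} I would begin with the normal form theorem for graph products. Every element is represented by a reduced word $g_1\cdots g_n$, where each $g_i$ is a non-trivial element of some vertex-group. Two reduced words represent the same element if and only if they differ by commuting adjacent syllables from adjacent vertex-groups. The word length with respect to $\bigcup_{G\in\mathcal{G}} G$ is the number of syllables of such a word. From this I would describe the cliques: each clique is a coset $gG_u$, since generators from a single vertex-group are pairwise adjacent. Rather than exhibiting a retraction from a Hamming graph directly, I would invoke one of the equivalent characterisations in \cite{MR1297190}. A graph is quasi-median if it is weakly modular, has no induced $K_4^-$ or $K_{3,3}^-$, and satisfies the triangle and quadrangle conditions.
\begin{itemize}
\item The absence of $K_4^-$ follows because two adjacent edges span a triangle only when they are labelled by the same vertex-group, so triangles lie inside cosets $gG_u$.
\item Induced $4$-cycles correspond to relations $ab=ba$ with $a\in G_u$, $b\in G_v$ and $u,v$ adjacent. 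This is proved in the same way as Lemma~\ref{lem:SquareM}, using uniqueness of reduced words up to commutation.
\item The triangle and quadrangle conditions reduce to manipulations of reduced words. If $d(1,ga)=d(1,gb)=d(1,g)-1$, then the last syllables of $g$ either belong to a common vertex-group, giving a triangle, or commute, giving a square.
\end{itemize}

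\emph{Cubical dimension.} Since distinct vertex-groups generate distinct hyperplane labels, a prism is a product of cliques $gG_{u_1},\dots,gG_{u_k}$ that pairwise span squares. By the description of induced $4$-cycles above, this happens exactly when $u_1,\dots,u_k$ span a complete subgraph of $\Gamma$. Conversely, a clique $\{u_1,\dots,u_k\}$ of $\Gamma$ gives the subgroup $G_{u_1}\times\cdots\times G_{u_k}$, whose Cayley graph is a prism with $k$ factors. This yields the equality $\mathrm{dim}_\square = \mathrm{clique}(\Gamma)$.

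I expect the main obstacle to be verifying weak modularity and the quadrangle condition. That step requires careful bookkeeping of reduced words, tracking which syllables can be shuffled to the end of a word. I would handle it by showing that the set of last possible syllables of $g$ consists of pairwise commuting syllables from pairwise adjacent vertices.
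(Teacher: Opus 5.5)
The paper gives no proof of this statement and only quotes \cite[Proposition~8.2 and Corollary~8.7]{QM}. Your route via normal forms and the Bandelt--Mulder--Wilkeit characterisation is a natural one. Your treatment of cliques, induced $4$-cycles and the cubical dimension is fine. However, the characterisation you invoke is misstated, and the condition it actually requires is never verified.

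\textbf{The forbidden subgraph.} By \cite{MR1297190}, a graph is quasi-median if and only if it is weakly modular (triangle and quadrangle conditions) and contains no induced $K_4^-$ and no induced $K_{2,3}$. It is $K_{2,3}$, not ``$K_{3,3}^-$'', that must be excluded, and with $K_{3,3}^-$ the criterion is false. Indeed, $K_{2,3}$ is weakly modular: it is bipartite, so the triangle condition is vacuous, and the quadrangle condition holds by inspection. It contains neither $K_4^-$ nor $K_{3,3}^-$. Yet it is not quasi-median, since a triangle-free quasi-median graph is median, while the three vertices of degree $2$ in $K_{2,3}$ have two medians. So checking your three bullets would not prove the theorem. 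The missing verification is short. Suppose $d(x,y)=2$ and $g_1g_2$ is a reduced word for $x^{-1}y$. A common neighbour of $x$ and $y$ has the form $xs$ with $s^{-1}x^{-1}y$ of length $1$, i.e.\ $x^{-1}y$ admits a reduced word starting with $s$. Hence the only candidates are $xg_1$ and, when $g_1$ and $g_2$ lie in adjacent vertex-groups, $xg_2$. So no two vertices have three common neighbours.

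\textbf{Weak modularity.} Your sentence on weak modularity conflates the two conditions, and the triangle condition is not actually addressed. Suppose $a\neq b$ and $d(1,ga)=d(1,gb)=d(1,g)-1$. Then $a^{-1}$ and $b^{-1}$ are possible last syllables of $g$. They cannot lie in a common vertex-group: shuffling never permutes two syllables of the same vertex-group, so each $G_u$ contributes at most one possible last syllable. Only your commuting case occurs. Then $g=ha^{-1}b^{-1}=hb^{-1}a^{-1}$, and $h$ is the common neighbour of $ga=hb^{-1}$ and $gb=ha^{-1}$ required by the quadrangle condition.

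The triangle condition concerns a different configuration: adjacent vertices $x$ and $xs$ with $s\in G_u$ and $d(1,xs)=d(1,x)$. It needs its own argument. If no reduced word for $x$ ended with a syllable of $G_u$, appending $s$ to a reduced word for $x$ would produce a reduced word, giving $d(1,xs)=d(1,x)+1$. Hence $x=ht$ with $t\in G_u$ a last syllable and $ts\neq 1$. Then $h$ is a common neighbour of $x$ and $xs$ at distance $d(1,x)-1$ from $1$.

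Both repairs are routine, but without them the argument does not establish that $\mathrm{QM}(\Gamma,\mathcal{G})$ is quasi-median.
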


\noindent
Conversely, it turns out that every quasi-median graph can be realised as a gated subgraph in the quasi-median graph of some graph product. See \cite{MR4586831} for more details. 

\medskip \noindent
Recall that, given a graph $\Gamma$ and collection of groups $\mathcal{G}$ indexed by $V(\Gamma)$, a \emph{word} in $\Gamma \mathcal{G}$ is a product $g_1 \cdots g_n$ where $n \geq 0$ and where, for every $1 \leq i \leq n$, $g_i \in G$ for some $G \in \mathcal{G}$; the $g_i$'s are the \emph{syllables} of the word, and $n$ is the \emph{length} of the word. The following elementary operations on a word do not modify the element of $\Gamma \mathcal{G}$ it represents:
\begin{description}
	\item[Cancellation:] delete the syllable $g_i$ if $g_i=1$;
	\item[Merging:] if $g_i,g_{i+1} \in G$ for some $G \in \mathcal{G}$, replace the two syllables $g_i$ and $g_{i+1}$ by the single syllable $g_ig_{i+1} \in G$;
	\item[Shuffling:] if $g_i$ and $g_{i+1}$ belong to two adjacent vertex-groups, shuffle them.
\end{description}
A word is \emph{graphically reduced} if its length cannot be shortened by applying these elementary moves. Every element of $\Gamma \mathcal{G}$ can be represented by a graphically reduced word, and this word is unique up to the shuffling operation. For more information on graphically reduced words, we refer to \cite{GreenGP} (see also \cite{HsuWise,VanKampenGP}). 

\medskip \noindent
Since graphically reduced words coincide with words of minimal length, essentially by definition of Cayley graphs we get:

\begin{lemma}[{\cite[Lemma~8.3]{QM}}]\label{lem:GeodInQM}
Let $\Gamma$ be a graph, $\mathcal{G}$ a collection of groups indexed by $V(\Gamma)$, and $x,y \in \Gamma \mathcal{G}$. If $u_1 \cdots u_n$ is a graphically reduced word representing $x^{-1}y$, then
$$x, \ xu_1, \ xu_1u_2, \ \ldots, \ xu_1u_2 \cdots u_{n-1}, \ xu_1u_2 \cdots u_{n-1}u_n=y$$
is a geodesic in $\mathrm{QM}(\Gamma, \mathcal{G})$. Conversely, every geodesic in $\mathrm{QM}(\Gamma, \mathcal{G})$ connecting $x$ to $y$ is of this form. 
\end{lemma}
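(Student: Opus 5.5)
We need to prove Lemma~\ref{lem:GeodInQM}: if $u_1\cdots u_n$ is a graphically reduced word representing $x^{-1}y$, then the sequence $x, xu_1, \ldots, xu_1\cdots u_n$ is a geodesic in $\mathrm{QM}(\Gamma,\mathcal{G})$, and every geodesic from $x$ to $y$ arises this way.

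The plan is to reduce everything to the identity: the distance in the Cayley graph equals the minimal word length. Since $\mathrm{QM}(\Gamma,\mathcal{G})$ is a Cayley graph, left multiplication by $x$ is a graph automorphism. We may therefore assume $x=1$ and $y=g:=x^{-1}y$.

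\textbf{Step 1: distance equals syllable length.} The generating set is $\bigcup_{G\in\mathcal{G}} G$. Hence a path from $1$ to $g$ of length $m$ is exactly a word $s_1\cdots s_m$ with each $s_i$ a nontrivial element of some vertex-group, representing $g$. Its vertices are the prefixes $s_1\cdots s_i$. So $d(1,g)$ is the minimal length of a word over the vertex-groups representing $g$.

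\textbf{Step 2: reduced words are minimal.} By the recalled fact, every element is represented by a graphically reduced word, unique up to shuffling. Graphically reduced words therefore all have the same length, namely the minimal one. This uses the normal form theorem of \cite{GreenGP}. To justify it: a minimal-length word admits no cancellation or merging, even after shuffling, since either would shorten it. So a minimal-length word is graphically reduced. Conversely, any graphically reduced word has the same length as a minimal one, by uniqueness up to shuffling. Hence graphically reduced words are exactly the minimal-length words.

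\textbf{Step 3: conclude both directions.} For the first assertion, a graphically reduced word $u_1\cdots u_n$ has length $n=d(1,g)$. Its prefix path has length $n$ and joins $1$ to $g$, so it is a geodesic; translating by $x$ gives the stated path. For the converse, take any geodesic $x=z_0,z_1,\ldots,z_n=y$ and set $u_i:=z_{i-1}^{-1}z_i$. Each $u_i$ is a nontrivial element of some vertex-group, because consecutive vertices are adjacent. The word $u_1\cdots u_n$ represents $x^{-1}y$ and has length $n=d(x,y)$, which is minimal. By Step 2 it is graphically reduced, and the geodesic is precisely its prefix path.

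The only nontrivial ingredient is the normal form theorem giving uniqueness up to shuffling. Everything else is bookkeeping of Cayley graph paths versus words, so no real obstacle is expected beyond citing \cite{GreenGP}.
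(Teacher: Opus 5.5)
Your proposal is correct and follows essentially the same route as the paper, which justifies the lemma in one line: graphically reduced words coincide with words of minimal length (via the normal form theorem of \cite{GreenGP}), and the statement then follows from the definition of distance in a Cayley graph. Your translation to $x=1$ and your Step~2, which derives minimality from uniqueness up to shuffling, simply spell out that argument.
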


\noindent
As a Cayley graph, the edges of $\mathrm{QM}(\Gamma, \mathcal{G})$ are naturally labelled by generators in $\bigcup_{u \in V(\Gamma)} G_u$, and consequently by vertices of $V(\Gamma)$. This $V(\Gamma)$-colouring of the edges of $\mathrm{QM}(\Gamma, \mathcal{G})$ is a useful tool. A basic but fundamental property it satisfies is:

\begin{lemma}[{\cite[Lemma~8.8]{QM}}]\label{lem:LabelHyp}
Let $\Gamma$ be a graph and $\mathcal{G}$ a collection of groups indexed by $V(\Gamma)$. Two edges that belong to the same hyperplane in $\mathrm{QM}(\Gamma, \mathcal{G})$ are indexed by the same vertex of $\Gamma$. 
\end{lemma}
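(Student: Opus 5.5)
The plan is to compare edge labels along a chain of elementary moves. First reduce to adjacent edges of the same hyperplane. A hyperplane is the equivalence class generated by two relations: two edges lie in a common $3$-cycle, or two edges are opposite sides of an induced $4$-cycle. Since ``having the same label'' is an equivalence relation on edges, it suffices to check these two generating cases.

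For a $3$-cycle, use translation by group elements, which preserves labels. Translate so that one vertex is $1$, with the other vertices $g$ and $h$. Here $g$ and $h$ are generators, so $g \in G_u$ and $h \in G_v$, and $g^{-1}h$ is also a generator, lying in some $G_w$. If $u \neq v$, then the graphically reduced word $gh'$, for suitable syllables, cannot have length one: in the graph product, an element $g^{-1}h$ with nontrivial $g^{-1} \in G_u$ and $h \in G_v$ has syllable length $2$, by uniqueness of graphically reduced words up to shuffling. Hence $u = v$, and then $g^{-1}h \in G_u$ is nontrivial, so $w = u$. All three edges carry the label $u$.

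For an induced $4$-cycle, translate so that the vertices are $1, a, b, c$, with $1$ adjacent to $a$ and $b$, and $c$ adjacent to both. Then $c = a a' = b b'$ for generators $a, a', b, b'$, where $a \in G_u$, $b \in G_v$, $a' \in G_{u'}$ and $b' \in G_{v'}$. Since $d(1,c) = 2$ (the cycle is induced, so $c$ is not adjacent to $1$), both $aa'$ and $bb'$ are graphically reduced words of length $2$ for the same element. Uniqueness up to shuffling gives two cases. Either the words agree, which is impossible because then $a = b$. Or $u$ and $v$ are adjacent, $\{u,u'\} = \{v,v'\}$ as label pairs with $u' = v$ and $v' = u$, and $a' = b$ and $b' = a$. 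In the second case the opposite edges $(1,a)$ and $(b,c)$ carry labels $u$ and $v' = u$, and similarly $(1,b)$ and $(a,c)$ both carry $v$.

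The main obstacle is justifying the uniqueness of graphically reduced words up to shuffling in these length-$2$ cases, including the merging subtlety. When $u = v$ in the square case, $aa'$ could merge, but length $2$ rules this out. I would cite \cite{GreenGP} for normal forms, as the paper does, and invoke Lemma~\ref{lem:GeodInQM} to identify distances with syllable lengths.
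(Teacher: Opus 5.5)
Your proof is correct. The paper gives no argument of its own here and simply quotes \cite[Lemma~8.8]{QM}. Your argument is the standard verification: you reduce to the two generating relations, then use the normal form theorem for graph products to show that every triangle lies in a coset $gG_u$ and every induced square is $\{g,ga,gb,gab\}$ with $a\in G_u$, $b\in G_v$, and $u,v$ adjacent.

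Two cosmetic points. First, ``$gh'$'' in the triangle case should read $g^{-1}h$. Second, both the step $w=u$ and the well-definedness of edge labels use $G_u\cap G_w=\{1\}$ for $u\neq w$, which again follows from the normal form.
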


\noindent
As a consequence, the $V(\Gamma)$-colouring of the edges of $\mathrm{QM}(\Gamma, \mathcal{G})$ induces a $V(\Gamma)$-colouring of the hyperplanes of $\mathrm{QM}(\Gamma, \mathcal{G})$ by $V(\Gamma)$. The following observation will be useful later:

\begin{lemma}[{\cite[Lemma~8.12]{QM}}]\label{lem:LabelTransHyp}
Let $\Gamma$ be a graph and $\mathcal{G}$ a collection of groups indexed by $V(\Gamma)$. Two transverse hyperplanes of $\mathrm{QM}(\Gamma, \mathcal{G})$ are labelled by adjacent vertices of $\Gamma$. 
\end{lemma}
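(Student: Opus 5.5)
This is Lemma~\ref{lem:LabelTransHyp}: two transverse hyperplanes of $\mathrm{QM}(\Gamma,\mathcal{G})$ are labelled by adjacent vertices of $\Gamma$. The plan is to reduce it to a statement about a single induced $4$-cycle and then settle that statement with normal forms.

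By definition, two transverse hyperplanes $J,H$ contain edges $e\subset J$ and $f\subset H$ with a common endpoint $x$ that span an induced $4$-cycle. By Lemma~\ref{lem:LabelHyp}, the labels of $J$ and $H$ are the labels of $e$ and $f$. So it suffices to show the following. If an induced $4$-cycle contains consecutive edges labelled by $u,v \in V(\Gamma)$, then $u \neq v$ and $u,v$ are adjacent in $\Gamma$.

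Since $\mathrm{QM}(\Gamma,\mathcal{G})$ is a Cayley graph, I translate by $x^{-1}$ and assume $x=1$. The cycle is then $1, a, ab', b$, where $a \in G_u\setminus\{1\}$, $b\in G_v\setminus\{1\}$, and $ab' = bc$ for some vertex-group elements $b', c$ lying in vertex-groups $G_{v'}, G_{u'}$.

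I first rule out $u=v$. In that case $a^{-1}b \in G_u$ is nontrivial, so $a$ and $b$ are adjacent, which contradicts the cycle being induced. The same argument rules out $c$ lying in the same vertex-group as $b$, and $b'$ lying in the same vertex-group as $a$.

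Next I look at $d(1,ab')$. Because the cycle is induced, $1$ and $ab'$ are not adjacent, so $d(1,ab')=2$. Hence the two-syllable words $ab'$ and $bc$ are both graphically reduced representatives of the same element.

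Now I use uniqueness of graphically reduced words up to shuffling (\cite{GreenGP}). The word $bc$ must be obtained from $ab'$ by shuffling, and since the syllables do not match in order, the shuffle must swap them. This forces $b=b'$, $c=a$, and requires $G_u$ and $G_v$ to be adjacent vertex-groups, i.e.\ $u$ and $v$ are adjacent in $\Gamma$.

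The main obstacle is this final normal-form step. One has to make sure that equality of two graphically reduced words of length two forces either a syllable-by-syllable equality, which is excluded since $a\neq b$ (they lie in different vertex-groups), or a genuine shuffle. This follows directly from the uniqueness-up-to-shuffling theorem for graph products.
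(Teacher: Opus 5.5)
Your proof is correct and complete. The chain is sound: reduce to one induced $4$-cycle via Lemma~\ref{lem:LabelHyp}, note that $d(1,ab')=2$ makes both $ab'$ and $bc$ graphically reduced, then use uniqueness of reduced words up to shuffling. Your separate checks on $b'$ and $c$ are redundant once $d(1,ab')=2$ is established, and identifying $v'=v$ from $b=b'\neq 1$ uses that distinct vertex-groups intersect trivially.

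The paper does not prove this lemma; it only cites \cite[Lemma~8.12]{QM}. Your argument is the graph-product counterpart of the paper's own proof of Lemma~\ref{lem:SquareM} for special cube complexes, with shuffling playing the role of commutation of reduced words.
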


\noindent
Let $\Gamma$ be a graph and $\mathcal{G}$ a collection of groups indexed by $V(\Gamma)$. Given a subgraph $\Lambda \subset \Gamma$, we denote by $\langle \Lambda \rangle$ the subgroup $\langle G_u \mid u \in V(\Lambda) \rangle$. Such a subgroup is a \emph{standard parabolic subgroup} of $\Gamma \mathcal{G}$. \emph{Parabolic subgroups} are conjugates of standard parabolic subgroups. 

\begin{lemma}[{\cite[Corollary~6.6]{Mediangle}}]\label{lem:ParabolicGated}
Let $\Gamma$ be a graph and $\mathcal{G}$ a collection of groups indexed by $V(\Gamma)$. For every $\Lambda \subset \Gamma$, the subgraph of $\mathrm{QM}(\Gamma, \mathcal{G})$ induced by $\langle \Lambda \rangle$ is gated. 
\end{lemma}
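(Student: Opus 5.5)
Since $\mathrm{QM}(\Gamma,\mathcal{G})$ is the Cayley graph of $\Gamma\mathcal{G}$ with respect to the union of the vertex-groups, the subgraph induced by $\langle \Lambda \rangle$ is just the set of vertices labelled by elements of $\langle \Lambda \rangle$. To prove that it is gated, I would produce, for every $x \in \Gamma\mathcal{G}$, an explicit candidate gate $p \in \langle \Lambda \rangle$, and then verify with Lemma~\ref{lem:GeodInQM} that every $h \in \langle \Lambda \rangle$ is joined to $x$ by a geodesic through $p$. The candidate is the usual \emph{maximal $\Lambda$-prefix}. Write $x = p\,x'$, where $p \in \langle \Lambda \rangle$ and $x'$ has a graphically reduced word with no syllable from a vertex-group of $\Lambda$ that can be shuffled to the front. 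Equivalently, $p$ is the product of all the syllables of a graphically reduced word for $x$ that lie in $\Lambda$-vertex-groups and can be shuffled to the left.

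First I would check that this decomposition is well defined. The $\Lambda$-syllables that can be moved to the front form a well-defined set, because graphically reduced words are unique up to shuffling. Moreover, $|x| = |p| + |x'|$ in syllable length.

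The key step is a length computation. For $h \in \langle \Lambda \rangle$, I want to show
\[
|h^{-1}x| = |h^{-1}p| + |x'|.
\]
To see this, write $h^{-1}p$ as a graphically reduced word $u$ using only $\Lambda$-syllables, and let $v$ be a graphically reduced word for $x'$. I would then show that the concatenation $uv$ is graphically reduced. If it were not, some reduction would be available. Following the standard description of non-reduced words (see \cite{GreenGP}), two syllables from the same vertex-group could then be brought next to each other by shuffles. One of them would come from $u$, so it lies in a $\Lambda$-vertex-group. The other would come from $v$, so it lies in the same vertex-group, which is a $\Lambda$-vertex-group. The syllable from $v$ must commute with everything in $v$ before it, so it could be shuffled to the front of $v$. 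This contradicts the maximality in the choice of $p$.

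Once this is established, Lemma~\ref{lem:GeodInQM} turns the concatenation of a geodesic from $h$ to $p$, which reads $u$, with a geodesic from $p$ to $x$, which reads $v$, into a geodesic from $h$ to $x$. So $d(x,h) = d(x,p) + d(p,h)$ for every $h \in \langle \Lambda \rangle$.

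Uniqueness then follows: any other vertex with the same property would have to satisfy both $d(x,q) = d(x,p) + d(p,q)$ and the symmetric equality, which forces $p = q$.

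The main obstacle is justifying rigorously that $uv$ is reduced. This needs the fact that a word fails to be graphically reduced exactly when two syllables from the same vertex-group are separated only by syllables from adjacent vertex-groups. It also needs care in the case where the two offending syllables multiply to something nontrivial, since merging still shortens the word. Alternatively, I would argue through hyperplanes: the hyperplanes separating $p$ from $x$ are labelled outside $\Lambda$, or else are not transverse to the $\Lambda$-hyperplanes, by Lemma~\ref{lem:LabelTransHyp}, and then one checks via Theorem~\ref{thm:BigQM} that no hyperplane separates both $h$ from $p$ and $p$ from $x$.
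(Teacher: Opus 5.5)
Your proof is correct, but it takes a different route from the paper. The paper does not prove this lemma: it quotes it from \cite[Corollary~6.6]{Mediangle}, where gatedness of standard parabolic subgroups is obtained within the general theory of mediangle graphs and periagroups. That framework treats graph products and Coxeter groups at once.

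You instead give the graph-product version of the minimal coset representative argument. Its only input is Green's normal form theorem, in the form: a word with non-trivial syllables is graphically reduced if and only if no two syllables from the same vertex-group are separated only by syllables whose vertices are adjacent to theirs. With that criterion your key step works as you describe. An offending pair in $uv$ cannot lie inside $u$ or inside $v$, so it straddles them and its vertex lies in $\Lambda$. The syllable in $v$ then commutes with everything in $v$ before it, contradicting the choice of $x'$. Whether the two syllables merge to something trivial does not matter, since the word shortens either way.

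Your approach gives a short, self-contained proof and an explicit gate: the projection of $x$ is $p=x(x')^{-1}$, where $x'$ is the unique shortest element of the coset $\langle \Lambda \rangle x$. The cited route gives more generality and fits the hyperplane and gatedness language used in the rest of the paper.

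One point to fix. Your well-definedness paragraph treats $p$ as the product of the $\Lambda$-syllables of a reduced word for $x$ that can be moved to the front, and that description is wrong. If $a,b \in V(\Lambda)$ are non-adjacent and $x=g_ag_b$, then $g_b$ cannot be moved to the front, yet the gate is $x$ itself. Your first definition is the right one: no $\Lambda$-syllable can be shuffled to the front of $x'$. Obtain it iteratively: as long as $x'$ has such a syllable, transfer it to $p$. This terminates because $|x'|$ decreases.

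Existence of this decomposition is all you need. Its uniqueness and the equality $|x|=|p|+|x'|$ both follow from your key identity. The hyperplane alternative at the end is only a sketch, and the dichotomy it relies on would need real justification, but nothing depends on it.
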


\subsection{A hyperbolicity criterion}\label{section:HypCrit}

\noindent
This section is dedicated to the proof of the following criterion, which will allow us to show that some cone-offs of quasi-median graphs are hyperbolic. 

\begin{thm}\label{thm:WhenHyp}
Let $X$ be a quasi-median graph of finite cubical dimension and $\mathcal{P}$ a collection of isometrically embedded vertex-sets. Set $Y:= \mathrm{ConeOff}(X,\mathcal{P})$. Assume that there exists a constant $K \geq 0$ such that, for every flat rectangle $[0,a] \times [0,b] \hookrightarrow X$, either the Hausdorff distance in $Y$ between $[0,a] \times \{i\}$ and $[0,a] \times \{j\}$ is $\leq K$ for all $0 \leq i,j \leq b$, or the Hausdorff distance in $Y$ between $\{i\} \times [0,b]$ and $\{j\} \times [0,b]$ is $\leq K$ for all $0 \leq i,j \leq a$. Then $Y$ is hyperbolic.
\end{thm}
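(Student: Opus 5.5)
We will prove Theorem~\ref{thm:WhenHyp} with a guessing-geodesics criterion: a graph $Y$ is hyperbolic as soon as we can assign to every pair of vertices $x,y$ a connected subset $\eta(x,y) \ni x,y$ such that (a) $\eta(x,y)$ has uniformly bounded diameter whenever $d_Y(x,y) \leq 1$; (b) for all $x,y,z$, $\eta(x,y)$ lies in a uniform neighbourhood of $\eta(x,z) \cup \eta(z,y)$. This is Bowditch's criterion (equivalently Masur--Schleimer's). We take $\eta(x,y)$ to be the image in $Y$ of the union of all $X$-geodesics between $x$ and $y$, i.e.\ the interval $I(x,y)$ in $X$.

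\textbf{Condition (b).} Given $x,y,z$, let $(y_1,y_2,y_3)$ be their median triangle; the gated hull of $\{y_1,y_2,y_3\}$ is a prism. Every vertex of $I(x,y)$ should then be close, already in $X$ and up to the prism, to $I(x,z) \cup I(z,y)$. Concretely, use the quasi-median interval structure: a point $p \in I(x,y)$ projects to $I(x,y_1)$ or $I(y_2,y)$, or to the prism. In $Y$ we must control the prism part, since prisms can have large cliques and large products. A prism has at most $\dim_\square(X)$ factors. So it decomposes into a bounded number of "rectangle" directions, and we will handle it with the rectangle hypothesis as in the next step.

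\textbf{Condition (a) and the rectangle control.} This is the heart of the argument. Suppose $d_Y(x,y) \leq 1$. Then either $x,y$ are adjacent in $X$, which is trivial, or $x,y$ lie in a common $P \in \mathcal{P}$. Since $P$ is isometrically embedded, some $X$-geodesic from $x$ to $y$ stays in $P$, and that geodesic has $Y$-diameter $\leq 1$. We need every other geodesic in $I(x,y)$ to stay $Y$-close to it. In a quasi-median graph, two geodesics with the same endpoints are related by flips across prisms. The interval $I(x,y)$ is a median-like structure (its cube-complex of hyperplane orders) of dimension $\leq \dim_\square(X)$. Hence $I(x,y)$ is covered by boundedly many flat rectangles (grids) spanned by transverse families of hyperplanes. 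For such a rectangle $[0,a]\times[0,b]$, one boundary side lies along the good geodesic. The hypothesis says that one family of parallel lines is $K$-Hausdorff close in $Y$. If the parallel sides $[0,a]\times\{i\}$ are close, then every point of the rectangle is $K$-close in $Y$ to the bottom side. If instead the sides $\{i\}\times[0,b]$ are close, the whole rectangle is $K$-close to the left side. In both cases we need some side along our reference geodesic, or an already-controlled region. I expect to induct on the dimension, or on the number of hyperplane "blocks", via a Dilworth-type decomposition of $\mathcal{H}(x|y)$ into at most $\dim_\square(X)$ chains. This produces nested rectangles with bounds growing like a function of $K$ and the dimension only.

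\textbf{Main obstacle.} The delicate point is this inductive rectangle argument. The same induction would also handle the prism in (b) by treating the prism as an interval. The difficulty is that the rectangle hypothesis gives control of parallel lines, not of the diagonal geodesic, and we must propagate closeness through up to $\dim_\square(X)$ directions without the constants depending on $a,b$. I would first treat $\dim_\square(X)=2$ carefully. There $I(x,y)$ is a "staircase" union of rectangles glued along the path in $P$, and one checks that the two alternatives of the hypothesis each give uniform $Y$-closeness to $P$. I would then iterate, at the cost of multiplying $K$ by a constant at each step.
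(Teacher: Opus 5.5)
You have the paper's framework: Bowditch's criterion (Proposition~\ref{prop:WhenHyp}) with $\eta(x,y)$ the subgraph of $Y$ induced by the interval $I(x,y)$. However, the step that makes it work is missing, and your reduction of condition (b) is wrong.

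It is not true that $I(x,y)$ lies close in $X$, up to the prism, to $I(x,z)\cup I(z,y)$. In $\mathbb{Z}^2$ take $x=(0,0)$, $y=(n,n)$, $z=(n,0)$. The median triangle is the single vertex $z$, yet $(0,n)\in I(x,y)$ is at $X$-distance $n$ from $I(x,z)\cup I(z,y)$.

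Nor is the prism the difficulty. It is a product of at most $\mathrm{dim}_\square(X)$ cliques, each of diameter $1$ however many vertices it has, so its $X$-diameter is at most $\mathrm{dim}_\square(X)$. This is the only place the paper uses finite cubical dimension.

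What (a) and (b) both need is one thinness statement, Claim~\ref{claim:ForHyp}: for \emph{every} geodesic $[x,y]$ of $X$ (not only one inside some $P$) and every $z\in I(x,y)$, one has $d_Y(z,[x,y])\le 2K$. Then (a) follows by taking the geodesic inside $P$. For (b), take the geodesic $[x,x']\cup[x',y']\cup[y',y]$ through the median triangle $(x',y',z')$ of $(x,y,z)$. Its outer pieces lie in $I(x,z)$ and $I(z,y)$, and its middle piece lies in the prism.

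Your plan for the thinness statement does not go through as written. The interval is not covered by boundedly many flat rectangles, even in dimension $2$. If $X$ is itself a staircase with $r$ steps, the interval between the endpoints of its broken path is all of $X$. No flat rectangle in it contains two outer corners, so at least $r$ rectangles are needed. The Dilworth/induction-on-dimension scheme, with $K$ multiplied at each step, is left unspecified precisely where the work lies.

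The missing idea is to fix $z$ first and build a planar object through it, anchored on the given geodesic. By Lemma~\ref{lem:Interval} there is an isometrically embedded staircase with corner $z$ and broken path contained in $[x,y]$. That lemma is the median statement applied to intervals, which are median, so it holds in any dimension. The paper writes this staircase as a union of flat rectangles $C_1,\ldots,C_r$, each with $z$ as a corner and its opposite corner on $[x,y]$. It then argues by cases on which family (rows or columns) of each $C_i$, or of $C_i\cap C_{i+1}$, is pairwise $K$-close, and concludes $d_Y(z,[x,y])\le 2K$.

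Your one-rectangle observation is the right local input. But it has to be applied to rectangles anchored at $z$ on the given geodesic, not to a tiling of $I(x,y)$, and no induction on $\mathrm{dim}_\square(X)$ enters.
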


\noindent
Our proof follows the lines of \cite[Theorem~4.1]{coningoff}, where a similar statement is proved for median graphs, and uses the following criterion:

\begin{prop}\label{prop:WhenHyp}\emph{(\cite[Proposition~3.1]{Bowditchcriterion})}
For every $D \geq 0$, there exists some $\delta \geq 0$ such that the following holds. Let $T$ be a graph. Assume that a connected subgraph $\eta(x,y)$, containing $x$ and $y$, is associated to each pair of vertices $(x,y) \in T^2$ such that:
\begin{itemize}
	\item for all vertices $x,y \in T$, $d(x,y) \leq 1$ implies $\mathrm{diam}(\eta(x,y)) \leq D$;
	\item for all vertices $x,y,z \in T$, the inclusion $\eta(x,y) \subset \left( \eta(x,z) \cup \eta(z,y) \right)^{+D}$ holds.
\end{itemize}
Then $T$ is $\delta$-hyperbolic.
\end{prop}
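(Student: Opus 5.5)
The plan is the standard ``guessing geodesics'' argument: show that each $\eta(x,y)$ is uniformly Hausdorff-close to every geodesic $[x,y]$ in $T$. Then thinness of geodesic triangles follows from the second hypothesis. Indeed, for a triangle $x,y,z$, we would have $[x,y] \subset \eta(x,y)^{+M} \subset (\eta(x,z)\cup\eta(z,y))^{+D+M} \subset ([x,z]\cup[z,y])^{+2M+D}$, so $\delta = 2M+D$ works once the uniform constant $M=M(D)$ is found.

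\emph{Step 1 (logarithmic control).} Let $\alpha$ be any path from $x$ to $y$ of length $n$. I would bisect $\alpha$ recursively: if $m$ is the midpoint vertex of $\alpha$, the second hypothesis gives $\eta(x,y)\subset(\eta(x,m)\cup\eta(m,y))^{+D}$. After about $\log_2 n$ halvings, one reaches consecutive vertices, where the first hypothesis gives $\mathrm{diam}\,\eta \le D$. This yields $\eta(x,y)\subset \alpha^{+D(\log_2 n+2)}$. Applied to a geodesic, it gives $\eta(x,y)\subset [x,y]^{+D(\log_2 d(x,y)+2)}$.

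\emph{Step 2 (geodesics lie near $\eta$).} Since $\eta(x,y)$ is connected and contains $x,y$, I would walk along it and project its vertices to $[x,y]$. Consecutive projections move by an amount controlled by Step~1, so every point of $[x,y]$ lies close to $\eta(x,y)$. This bound is again logarithmic, and it is fed back into Step~3.

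\emph{Step 3 (uniformisation, the main obstacle).} Logarithmic bounds must be upgraded to uniform ones. I would pick $p\in\eta(x,y)$ with $r=d(p,[x,y])$ maximal, and let $q$ be a closest point of $[x,y]$ to $p$. Let $a,b\in[x,y]$ be the points at distance $2r$ from $q$ on either side, or the endpoints if closer. Applying the second hypothesis twice gives $p\in(\eta(x,a)\cup\eta(a,b)\cup\eta(b,y))^{+2D}$.
\begin{itemize}
\item If $p$ is near $\eta(x,a)$ or $\eta(b,y)$, then by maximality and Step~1 those sets lie within about $r+D\log r$ of $[x,a]$ or $[b,y]$, which are at distance roughly $2r-r=r$ from $p$'s projection region. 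This forces a contradiction for large $r$.
\item If $p$ is near $\eta(a,b)$, I would build a path from $a$ to $b$ of length $O(r)$: go from $a$ to a point of $\eta(a,b)$ near $p$ and on to $b$, using Step~2. Step~1 applied to this short path gives $r\le C D\log r$, so $r$ is bounded by a constant depending only on $D$.
\end{itemize}
The delicate part is choosing $a,b$ and the detour so that every estimate is linear in $r$ while the error is only $\log r$. I expect to tune the factor $2$ to a larger constant if needed. The conclusion is $\eta(x,y)\subset[x,y]^{+M}$ and, via Step~2 rerun with this uniform bound, the reverse inclusion. This gives the Hausdorff closeness used in the first paragraph.
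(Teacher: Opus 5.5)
The paper gives no proof of this statement; it is quoted from Bowditch. Your overall architecture is the standard one, and Steps~1 and~2 and the final chain of inclusions are fine. Concretely, that architecture is: a logarithmic bound by bisection, the connectedness/projection argument, an upgrade to a uniform bound, then thin triangles. There is a genuine gap in Step~3, however.

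\textbf{What fails in Step~3.} You maximise $d(p,[x,y])$ over $p\in\eta(x,y)$ for the single pair $(x,y)$. This gives no information about $\eta(x,a)$ or $\eta(b,y)$. The second hypothesis only bounds $\eta(x,y)$ in terms of those sets, never the converse. Step~1 only gives $\eta(x,a)\subset[x,a]^{+D(\log_2 d(x,a)+2)}$, which is not controlled by $r$.

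Even if you grant your claimed bound $r+D\log r$, there is no contradiction. You know $d(p,[x,a])\geq 2r-r=r$, and being $2D$-close to a set lying within $r+D\log r$ of $[x,a]$ only yields $r\leq r+D\log r+2D$, which is vacuous.

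Your second bullet is also misdirected. Applying Step~1 to a path that passes near $p$ says nothing about $d(p,[x,y])$. The right move is to apply Step~1 to the subgeodesic $[a,b]\subset[x,y]$ itself.

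\textbf{The fix.} Maximise over all pairs at bounded distance, so that one number $r$ also controls the sub-pairs.
\begin{itemize}
\item Let $S(n)$ be the maximum of $d(p,\alpha)$ over all $x,y$ with $d(x,y)\leq n$, all geodesics $\alpha$ from $x$ to $y$, and all vertices $p\in\eta(x,y)$. It is finite by Step~1.
\item Take a configuration realising $r=S(n)$. Let $q\in\alpha$ be closest to $p$, and take $a,b\in\alpha$ at distance $3r$ from $q$ on either side (or the endpoints, if closer).
\item Case $\eta(x,a)$. Since $[x,a]\subset\alpha$ is itself a geodesic between points at distance $\leq n$, you get $\eta(x,a)\subset[x,a]^{+r}$. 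On the other hand $d(p,[x,a])\geq 3r-r=2r$. So if $p$ is $2D$-close to $\eta(x,a)$, then $2r\leq r+2D$, i.e.\ $r\leq 2D$. If $a=x$, use $\mathrm{diam}\,\eta(x,x)\leq D$ to get $r\leq 3D$. The case $\eta(b,y)$ is symmetric.
\item Case $\eta(a,b)$. Step~1 applied to $[a,b]$, of length $\leq 6r$, gives $r\leq 2D+D(\log_2(6r)+2)$.
\end{itemize}
Hence $S(n)$ is bounded by a constant depending only on $D$, uniformly in $n$. This is exactly the uniform inclusion $\eta(x,y)\subset[x,y]^{+M}$. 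From there, rerunning Step~2 and your thin-triangle chain goes through as you wrote it.

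Note that a factor strictly larger than $2$ in the choice of $a,b$ is genuinely needed. With factor $2$, the case $\eta(x,a)$ only gives $r\leq r+2D$.
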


\noindent
 Here, given a metric space $X$, a subset $Y \subset X$, and a constant $D \geq 0$, we denote by $Y^{+D}$ the $D$-neighbourhood of $Y$, i.e.\ $Y^{+D}:= \{ x \in X \mid d(x,Y) \leq D \}$.  

\medskip \noindent
In order to state our next preliminary lemma, we need the following definition:

\begin{definition}
A \emph{staircase} is a graph isomorphic to a subgraph in the grid $\mathbb{E}^2$ delimited by $[0,a] \times \{0\}$, $\{0\} \times [0,b]$, and some geodesic $\alpha$ between $(a,0)$ and $(0,b)$ where $a,b \geq 0$. Its \emph{corner} refers to the vertex corresponding to $(0,0)$, and its \emph{broken path} to the geodesic corresponding to $\alpha$.
\end{definition}

\noindent
The next observation will be fundamental in the proof of Theorem~\ref{thm:WhenHyp} below, and is of independent interest.

\begin{lemma}\label{lem:Interval}
Let $X$ be a quasi-median graph, $[x,y]$ a geodesic between two vertices $x,y \in V(X)$, and $z \in I(x,y)$ a third vertex. Then there exists an isometrically embedded staircase $E \subset X$ such that $z$ is its corner and such that its broken path is contained in~$[x,y]$.
\end{lemma}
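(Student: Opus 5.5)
We argue by induction on $d(x,z)+d(z,y)$, which equals $d(x,y)=n$ since $z \in I(x,y)$. Write $[x,y]$ as $x=v_0,v_1,\ldots,v_n=y$, and let $J_1,\ldots,J_n$ be the hyperplanes crossed successively by $[x,y]$. By Theorem~\ref{thm:BigQM}, these are exactly the hyperplanes separating $x$ and $y$, each crossed once. Since $z \in I(x,y)$, every $J_i$ separates $z$ from exactly one of $x,y$: otherwise $z$ would lie in a sector distinct from those of $x$ and $y$, and then $d(x,z)+d(z,y)$ would count $J_i$ twice. So $\{J_1,\ldots,J_n\}$ splits into $\mathcal{H}(x|z)$ and $\mathcal{H}(z|y)$, of sizes $a=d(x,z)$ and $b=d(z,y)$. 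The key observation is that every hyperplane in $\mathcal{H}(x|z)$ is transverse to every hyperplane in $\mathcal{H}(z|y)$ whenever both \emph{lie between} each other along $[x,y]$ in the wrong order. More precisely, if $J_i \in \mathcal{H}(z|y)$ and $J_j \in \mathcal{H}(x|z)$ with $i<j$, then the sector of $J_j$ containing $z$ and the sector of $J_i$ containing $z$ both meet the four "quadrants" needed. This forces the two hyperplanes to be transverse, as in the proof of Lemma~\ref{lem:ParallelPairsTrans}, since in quasi-median graphs two hyperplanes are transverse exactly when all combinations of their sectors intersect.

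The construction proceeds as follows. Encode $[x,y]$ as a monotone lattice path $\alpha$ from $(a,0)$ to $(0,b)$: a step in $\mathcal{H}(x|z)$ decreases the first coordinate, and a step in $\mathcal{H}(z|y)$ increases the second. Let $E$ be the staircase region under $\alpha$ in $[0,a]\times[0,b]$. We then define a map $E \to X$ sending $(s,t)$ to the vertex obtained from $z$ by crossing the last $s$ hyperplanes of $\mathcal{H}(x|z)$ (in the order $[x,y]$ crosses them) and the first $t$ hyperplanes of $\mathcal{H}(z|y)$. Concretely, this vertex is the unique vertex $m(s,t)$ in the intersection of the appropriate sectors: the sector of each relevant hyperplane containing $x$ or $y$, and containing $z$ otherwise. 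Equivalently, it can be obtained by taking gates or median triangles. Its existence follows from the Helly property for gated sectors, using the pairwise intersections provided by the key observation together with the vertices $v_i$ themselves. For points on $\alpha$, the vertex $m(s,t)$ coincides with the corresponding $v_i$, so the broken path maps onto $[x,y]$ and the corner maps to $z$.

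It remains to check that the map is an isometric embedding of $E$ with its graph metric. Adjacent points of $E$ map to vertices separated by exactly one hyperplane, and two points $(s,t),(s',t')$ map to vertices separated by exactly $|s-s'|+|t-t'|$ hyperplanes. By Theorem~\ref{thm:BigQM}, this gives distance preservation, and injectivity follows. The main obstacle is justifying the existence of the vertex $m(s,t)$ and the transversality claim inside $E$, i.e.\ that a hyperplane of $\mathcal{H}(z|y)$ crossed \emph{before} one of $\mathcal{H}(x|z)$ along $[x,y]$ is transverse to it. I would try to prove this by induction on $n$ instead: apply the statement to the neighbour $v_{n-1}$ and the projection of $z$ onto the sector, then glue one row or column to the staircase. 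This follows the median argument of \cite[Theorem~4.1]{coningoff}, with gates replacing medians.
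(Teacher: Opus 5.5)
Your direct construction takes a genuinely different route from the paper's, and it can be made to work. As written, however, $m(s,t)$ is not well defined. The intersection of the prescribed sectors of $J_1,\ldots,J_n$ is nonempty by Helly, but in general it is not a single vertex: already in a tree it contains every branch hanging off the corresponding vertex. Over \emph{all} hyperplanes the family is infinite, so Helly alone gives nothing.

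Instead, let $G_{s,t}$ be the finite intersection of the prescribed sectors, which is gated as a nonempty intersection of gated sets. Take $m(s,t)$ to be the gate of $z$ on $G_{s,t}$. A hyperplane separating $z$ from its gate separates $z$ from all of $G_{s,t}$, by convexity of sectors. For $L\notin\{J_1,\ldots,J_n\}$, the sector of $L$ containing $x,y,z$ meets every prescribed sector. A second application of Helly then gives $G_{s,t}\cap L^{xyz}\neq\emptyset$, so $L$ does not separate $z$ from $m(s,t)$. This is exactly what your count $|s-s'|+|t-t'|$ needs. You also need the remark that a staircase, being a down-set of the grid, carries the $\ell^1$-metric.

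Your ``key observation'' is never actually used. Every pairwise intersection needed for Helly is witnessed by $x$, $y$, $z$ or a vertex of $[x,y]$. The one nontrivial pair is the $x$-sector of one of the last $s$ hyperplanes of $\mathcal{H}(x|z)$ against the $y$-sector of one of the first $t$ of $\mathcal{H}(z|y)$, and that is precisely where $(s,t)\in E$ enters. The justification of the observation was also loose: you checked four sector intersections rather than all of them, and the correct fact is that non-transverse hyperplanes have nested sectors. So neither the induction announced at the start nor the fallback in your last paragraph is necessary.

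By contrast, the paper proves the lemma in two lines. It uses that intervals of quasi-median graphs are median graphs \cite[Proposition~2.93]{QM}, and then invokes the median version of the statement, \cite[Lemma~2.7]{MR4922688}. The median version is not \cite[Theorem~4.1]{coningoff}, which is the hyperbolicity criterion. The paper thus buys brevity by importing two results. Your argument is self-contained: it uses only gatedness of sectors, the Helly property for gated sets, and the hyperplane count of distances (Theorem~\ref{thm:BigQM}). It also describes every vertex of $E$ explicitly by its sectors.
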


\noindent
Here, given two vertices $a,b \in X$, $I(a,b)$ refers to the \emph{interval} between $a$ and $b$, i.e.\ the collection of all the vertices that belong to geodesics between $a$ and $b$.

\begin{proof}[Proof of Lemma~\ref{lem:Interval}.]
Since intervals in quasi-median graphs are median (see e.g.\ \cite[Proposition~2.93]{QM}), our lemma is direct consequence of its median version \cite[Lemma~2.7]{MR4922688}. 
\end{proof}

\begin{proof}[Proof of Theorem~\ref{thm:WhenHyp}.]
The strategy is to define $\eta(x,y)$ as the subgraph in $Y$ induced by the interval $I(x,y)$ in $X$ for all $x,y \in V(X)$ and then to apply Proposition~\ref{prop:WhenHyp}. In order to verify the assumptions, the following observation will be needed:

\begin{claim}\label{claim:ForHyp}
Let $x,y \in V(X)$ be two vertices, $[x,y]$ a geodesic in $X$, and $z \in I(x,y)$ a third vertex. Then $z$ lies in the $2K$-neighbourhood of $[x,y]$ in $Y$.
\end{claim}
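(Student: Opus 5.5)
We apply Lemma~\ref{lem:Interval} to $x,y$, the geodesic $[x,y]$, and $z \in I(x,y)$. This yields an isometrically embedded staircase $E \subset X$ with corner $z$ whose broken path $\alpha$ lies in $[x,y]$. We identify $E$ with the region of $\mathbb{E}^2$ bounded by $[0,a]\times\{0\}$, $\{0\}\times[0,b]$ and $\alpha$, with $z=(0,0)$. The goal is to find a point of $\alpha$ within $Y$-distance $2K$ of $z$, and we do this by a maximal-square argument.

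Consider the largest square $[0,c]\times[0,c]$ contained in $E$ with corner $z$. By maximality, its opposite corner $(c,c)$ lies on the broken path $\alpha$: the staircase region is the set of grid points below a monotone path, and growing the square diagonally from $z$ stops exactly when $(c,c)$ hits $\alpha$. We should double-check that $(c,c)$ itself, and not merely a neighbour of it, belongs to $\alpha$. Since $\alpha$ is a monotone lattice path from $(a,0)$ to $(0,b)$, it crosses the diagonal at a lattice point, so this holds.

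Since $E$ is isometrically embedded, the square $[0,c]\times[0,c]$ is a flat rectangle in $X$ (a sub-grid of an isometric staircase is still isometrically embedded). We apply the hypothesis of Theorem~\ref{thm:WhenHyp} to it, which gives two cases.
\begin{itemize}
	\item If the rows are pairwise at Hausdorff distance at most $K$ in $Y$, then row $0$ and row $c$ are $K$-close. So $z=(0,0)$ is within $K$ of some $(t,c)$ with $0\le t\le c$. Next we must move from $(t,c)$ to $\alpha$. Here we use the column condition? No, only one alternative holds. Instead, note that $(c,c)\in\alpha$ and $(0,c)$ belongs to the top row.
\end{itemize}
A cleaner route is to use the square's boundary more carefully. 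Let $p=(c,c)\in\alpha$. If the rows are $K$-close, then row $c$, which contains $p$, is within $K$ of row $0$, which contains $z$. That bounds the distance from the row-$0$ point $z$ to some point of row $c$, but not to $p$ specifically.

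To fix this, we use a different shape. Take the maximal rectangle instead of a square: let $c$ be the largest integer with $(c,0)$ to $(c,j)$ reaching $\alpha$; in fact the column $\{0\}\times[0,b']$ up to $\alpha$ ends at $(0,b)\in\alpha$, and the row ends at $(a,0)\in\alpha$. We then apply the hypothesis to the rectangle spanned by $z$ and the point of $\alpha$ on the diagonal, and exploit both endpoints. In the row case, $z$ is $K$-close to a point $(t,c)$ of the top row. That point lies in the subrectangle $[0,t]\times[0,c]$, and iterating with a square whose corner is $(t,c)$ brings us closer to $\alpha$. The factor $2K$ suggests exactly two applications suffice: first reach the top row (or the right column), and then use that the top row segment $[0,c]\times\{c\}$ together with the part of $\alpha$ above it forms a smaller staircase where the same alternative applies once more.

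The main obstacle is this second step, namely choosing the rectangle so that a single row or column of it lies entirely in $\alpha$, or is entirely close to $\alpha$. I would try the following choice first: take the largest rectangle $[0,a']\times[0,b']$ in $E$ whose top side $[0,a']\times\{b'\}$ lies in $\alpha$. If the row alternative holds, we get $2K$ or better directly. Otherwise the columns are $K$-close, and $z$ is $K$-close to the column $\{a'\}\times[0,b']$. We then treat that column with the complementary rectangle, whose right side lies in $\alpha$, and this gives the second $K$.
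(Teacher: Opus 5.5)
Your proposal stops short of a proof, and the plan you end with only works for staircases with at most two steps. Write $E$ as the union of its maximal rectangles with corner $z$, namely $C_k=[0,a_k]\times[0,b_k]$ with $a_1>\cdots>a_r$ and $b_1<\cdots<b_r$. Then $\alpha$ passes through $(a_1,0),(a_1,b_1),(a_2,b_1),(a_2,b_2),\ldots,(a_r,b_r),(0,b_r)$.

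You are right to drop the square: $K$-closeness of its rows only puts $z$ near \emph{some} point of its top row, not near $(c,c)$. Your final rectangle is $C_r$. If its columns are $K$-close, you reach some point $(a_r,t)$.

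For $r\le 2$ you can finish. The region to the right is the block $[a_2,a_1]\times[0,b_1]$, whose top \emph{and} right sides both lie in $\alpha$. You need both, not just the right side, because the hypothesis may hand you either alternative.

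For $r\ge 3$ that region is a staircase with $r-1$ steps, not a rectangle. When $t<b_{r-2}$, no rectangle of $E$ containing $(a_r,t)$ has both its top side and its right side in $\alpha$. So whatever rectangle you choose, the hypothesis may only push you onto another side of some $C_k$ that is off $\alpha$. Iterating costs one $K$ per stair, giving a bound $rK$, which is not uniform.

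The missing idea, and the way the paper proceeds, is to run through the whole chain $C_1,\ldots,C_r$. The side $\{a_1\}\times[0,b_1]$ of $C_1$ and the side $[0,a_r]\times\{b_r\}$ of $C_r$ lie in $\alpha$. So if $C_1$ has $K$-close columns, or $C_r$ has $K$-close rows, then $z$ is already $K$-close to $\alpha$. Otherwise there is an index $i$ such that $C_i$ has $K$-close rows and $C_{i+1}$ has $K$-close columns. One then works in $D=C_i\cap C_{i+1}=[0,a_{i+1}]\times[0,b_i]$, whose corner opposite to $z$ is the inner corner $(a_{i+1},b_i)$ of $\alpha$. Go from $z$ to $(0,b_i)$ using $C_i$, then along $[0,a_{i+1}]\times\{b_i\}$ to $(a_{i+1},b_i)$ using $C_{i+1}$, for a total of $2K$.

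Be aware that this two-move step needs the alternative in a stronger form: all columns (resp.\ all rows) of the rectangle have $Y$-diameter at most $K$. This is what the proof of Claim~\ref{claim:HypConeOff} actually provides. In that form the property passes to $D$, since a column of $D$ is a column of $C_i$ and a row of $D$ is a row of $C_{i+1}$, and you can aim at the prescribed corner.

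Under mere Hausdorff closeness, exactly as you observed for the square, neither the inheritance by $D$ nor the aiming is automatic. The paper's assertion that $C_i\cap C_{i+1}$ is of both types, and is therefore $2K$-close to its opposite corner, implicitly relies on the stronger form.
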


\noindent
According to Lemma \ref{lem:Interval}, there exists an isometrically embedded staircase $E \subset X$ such that $z$ is its corner and such that its broken path $[x',y']$ lies in $[x,y]$. Write $E$ as a union of flat rectangles $C_1, \ldots, C_r$ such that, for every $1 \leq i \leq r$, $z$ is a corner of $C_i$ and its opposite vertex lies in $[x,y]$. We index our rectangles so that the length of $C_i \cap [z,x']$ decreases. See Figure \ref{ColorStaircase}. We refer to a geodesic in $E$ parallel to $[z,x']$ (resp. to $[z,y']$) as \emph{vertical} (resp. \emph{horizontal}). By extension, we say that a flat rectangle in $E$ is \emph{vertical} (resp. \emph{horizontal}) if the Hausdorff distance in $Y$ between any two of its vertical (resp. horizontal) geodesics is $\leq K$. By assumption, each $C_i$ is vertical or horizontal. Four cases may happen:
\begin{itemize}
	\item If there exists some $1 \leq i \leq r$ such that $C_i$ is both vertical and horizontal, then the distance in $Y$ between $z$ and the vertex of $C_i$ opposite to it (which belongs to $[x,y]$) must be $\leq 2K$. 
	\item If $C_1, \ldots, C_r$ are all vertical, then in particular $C_r$ is vertical, which implies that $d_Y(z,y') \leq K$.
	\item If $C_1, \ldots, C_r$ are all horizontal, then in particular, $C_1$ is horizontal, which implies that $d_Y(z,x') \leq K$.
	\item If there exists some $1 \leq i \leq r-1$ such that $C_i$ is horizontal and $C_{i+1}$ vertical, or vice-versa, then the flat rectangle $C_i \cap C_{i+1}$ is both vertical and horizontal, so the distance in $Y$ between $z$ and the vertex of $C_i \cap C_{i+1}$ opposite to it (which belongs to $[x,y]$) must be $\leq 2K$.
\end{itemize}
This concludes the proof of our claim.
\begin{figure}
\begin{center}
\includegraphics[scale=0.4]{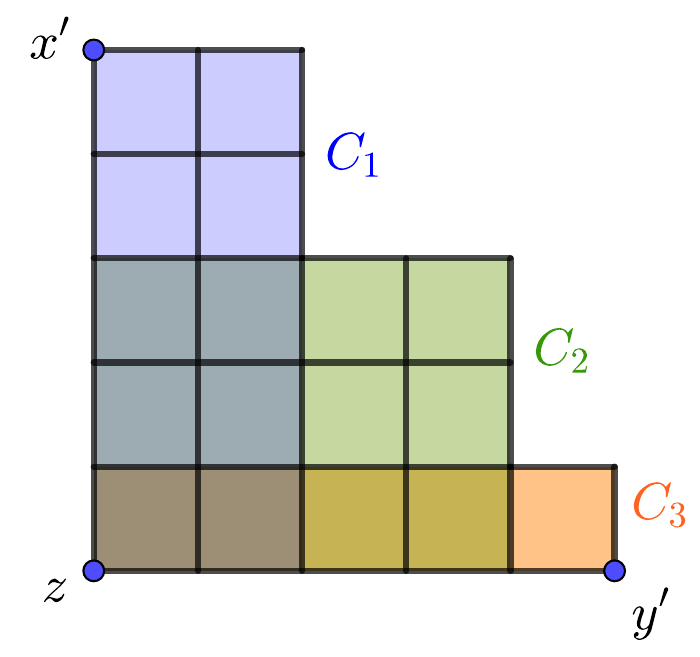}
\caption{Decomposition of the staircase $E$ as a union of flat rectangles.}
\label{ColorStaircase}
\end{center}
\end{figure}

\medskip \noindent
We are now ready to verify that Proposition~\ref{prop:WhenHyp} applies. First, let $x,y \in Y$ be two vertices satisfying $d_Y(x,y) \leq 1$. If $x=y$, then $\eta(x,y)$ is a single vertex; and if $x$ and $y$ are adjacent in $X$, then $\eta(x,y)$ is a single edge. Otherwise, if $d_X(x,y) \geq 2$, then there must exist some $P \in \mathcal{P}$ such that $x,y \in P$. Because $P$ is isometrically embedded, there exists a geodesic $\gamma \subset P$ between $x$ and $y$, and, as a consequence of Claim~\ref{claim:ForHyp}, $d_Y(z, \gamma) \leq 2K$. Therefore, 
$$\mathrm{diam}_Y(\eta(x,y)) \leq 4K+ \mathrm{diam}_Y(\gamma) =4K+1.$$
Thus, we have proved that $\mathrm{diam}_Y(\eta(x,y)) \leq 4K+1$ for all vertices $x,y \in V(Y)$ satisfying the inequality $d_Y(x,y) \leq 1$. 

\medskip \noindent
Next, fix three vertices $x,y,z \in V(Y)$ and a vertex $w \in \eta(x,y)$. Let $(x',y',z')$ denote the median triangle of $(x,y,z)$ and let $[x,x']$, $[x',y']$, $[y',y]$ be geodesics in $X$. Because $[x,x'] \cup [x',y'] \cup [y',y]$ defines a geodesic in $X$, it follows from Claim~\ref{claim:ForHyp} that there exists a vertex $w' \in [x,x'] \cup [x',y'] \cup [y',y]$ such that $d_Y(w,w') \leq 2K$. Because $[x,x'] \subset \eta(x,z)$ and $[y',y] \subset \eta(y,z)$, and because the gated hull of $(x',y',z')$ is a prism of diameter $\leq \mathrm{dim}_\square(X)$ in $X$, it follows that $z$ lies in the $(2K+ \mathrm{dim}_\square(X))$-neighbourhood of $\eta(x,z) \cup \eta(y,z)$ in $Y$, as desired. 

\medskip \noindent
Thus, Proposition~\ref{prop:WhenHyp} applies and we conclude that $Y$ is hyperbolic.
\end{proof}

\subsection{Syllabic collections in quasi-median graphs}\label{section:SyllabicQM}

\noindent
In order to prove Theorem~\ref{thm:InGP}, it will be crucial to find syllabic collections in Cayley graphs of graph products. Proposition~\ref{prop:SyllabicQM} below yields a criterion to verify that some collections are syllabic in quasi-median graphs. But Cayley graphs of graph products with respect to finite generating sets are usually not quasi-median. In order to overcome this difficulty, we introduce the following strengthening of syllabic collections:

\begin{definition}
Let $X$ be a graph. A collection of subgraphs $\mathcal{P}$ is \emph{strongly syllabic} if every geodesic in $\mathrm{ConeOff}(X,\mathcal{P})$ extends to a geodesic in $X$.
\end{definition}

\noindent
Our definition is motivated by the following observation:

\begin{lemma}\label{lem:SyllStrongSyll}
Let $X$ be a graph and $\mathcal{P},\mathcal{Q}$ two collections of subgraphs. If $\mathcal{P}$ is strongly syllabic in $X$ and if $\mathcal{Q}$ is syllabic in $\mathrm{ConeOff}(X,\mathcal{P})$ is syllabic, then $\mathcal{P} \cup \mathcal{Q}$ is syllabic in $X$.
\end{lemma}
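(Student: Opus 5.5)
Let $Y:=\mathrm{ConeOff}(X,\mathcal{P})$ and $Z:=\mathrm{ConeOff}(Y,\mathcal{Q})$. The plan rests on one identification: $Z=\mathrm{ConeOff}(X,\mathcal{P}\cup\mathcal{Q})$. This holds because both graphs have vertex-set $V(X)$, and their edges are the edges of $X$ together with pairs of vertices lying in a common subset of $\mathcal{P}$ or of $\mathcal{Q}$. A subgraph of $Y$ is viewed as a vertex-set of $X$. So, given $x,y \in V(X)$, it suffices to produce a geodesic of $Z$ from $x$ to $y$ that extends to a geodesic of $X$.

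\textbf{Step 1.} Since $\mathcal{Q}$ is syllabic in $Y$, there is a geodesic $x=q_1,\ldots,q_n=y$ in $Z$ that extends to a geodesic $\beta$ of $Y$. Here $\beta$ passes through the $q_i$ in order, and its subpath $\beta_i$ between $q_i$ and $q_{i+1}$ is a $Y$-geodesic.

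\textbf{Step 2.} The whole of $\beta$ is a geodesic in $Y$, and $\mathcal{P}$ is strongly syllabic. Hence $\beta$ extends to a geodesic $\gamma$ of $X$ from $x$ to $y$: consecutive vertices of $\beta$ are joined in $X$ by geodesic pieces, and the concatenation $\gamma$ is an $X$-geodesic. In particular $\gamma$ passes through every vertex of $\beta$, in order, and so through every $q_i$ in order.

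\textbf{Step 3.} The sequence $q_1,\ldots,q_n$ is therefore a geodesic of $Z=\mathrm{ConeOff}(X,\mathcal{P}\cup\mathcal{Q})$ whose consecutive vertices are joined by the subpaths of $\gamma$. Since $\gamma$ is an $X$-geodesic, this is precisely the statement that the $Z$-geodesic extends to a geodesic of $X$, which proves the lemma.

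The only delicate point is the meaning of ``extends to a geodesic'' when the extension is nested twice. The intended reading is that the sequence lies in order on a geodesic of the ambient graph, so that the sum of ambient distances between consecutive points equals the distance between the endpoints. With this reading the nesting is harmless, because the distance equalities compose:
$$d_X(x,y)=\sum_{\text{edges of }\beta} d_X=\sum_i d_X(q_i,q_{i+1}).$$
This works because each $\beta_i$ is contained in $\beta$. It is also essential that strong syllabicity is applied to the entire $Y$-geodesic $\beta$ rather than to each piece $\beta_i$ separately: applying it piecewise would give $X$-geodesics between each $q_i$ and $q_{i+1}$, but their concatenation need not be geodesic. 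That is why the hypothesis on $\mathcal{P}$ is the strong one.
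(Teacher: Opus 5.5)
Your proposal is correct and follows essentially the same argument as the paper. You take a geodesic in $\mathrm{ConeOff}(X,\mathcal{P}\cup\mathcal{Q})$ that extends to a geodesic of $\mathrm{ConeOff}(X,\mathcal{P})$, then apply strong syllabicity to that entire geodesic to obtain a geodesic of $X$; your identification $\mathrm{ConeOff}(\mathrm{ConeOff}(X,\mathcal{P}),\mathcal{Q})=\mathrm{ConeOff}(X,\mathcal{P}\cup\mathcal{Q})$ and your remark on not applying strong syllabicity piecewise simply make explicit what the paper leaves implicit.
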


\begin{proof}
Let $x,y \in V(X)$ be two vertices. Because $\mathcal{Q}$ is syllabic in $\mathrm{ConeOff}(X,\mathcal{P})$, there exists a geodesic $\gamma$ between $x$ and $y$ in $\mathrm{ConeOff}(X,\mathcal{P} \cup \mathcal{Q})$ that extends to a geodesic $\gamma^+$ in $\mathrm{ConeOff}(X,\mathcal{P})$. Because $\mathcal{P}$ is strongly syllabic in $X$, $\gamma^+$ extends to a geodesic in $X$. Thus, we have found a geodesic between $x$ and $y$ in $\mathrm{ConeOff}(X, \mathcal{P} \cup \mathcal{Q})$ that extends to a geodesic in $X$.
\end{proof}

\noindent
Once combined with our next statement, Lemma~\ref{lem:SyllStrongSyll} will allow us to deduce that some collections in Cayley graphs of graph products are syllabic if we are able to show that they are syllabic in the corresponding quasi-median graphs. 

\begin{lemma}\label{lem:StronglySyllGP}
Let $\Gamma$ be a graph and $\mathcal{G}$ a collection of groups indexed by $V(\Gamma)$. For every $u \in V(\Gamma)$, fix a generating set $S_u \subset G_u$. Set $S:= \bigcup_{u \in V(\Gamma)} S_u$. In $\mathrm{Cayl}(\Gamma \mathcal{G},S)$, the collection $\mathcal{F}:= \{ gG_u \mid g \in \Gamma \mathcal{G}, u \in V(\Gamma) \}$ is strongly syllabic. 
\end{lemma}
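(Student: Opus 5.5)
We need to show: every geodesic in $Y:=\mathrm{ConeOff}(\mathrm{Cayl}(\Gamma\mathcal{G},S),\mathcal{F})$ extends to a geodesic in $X:=\mathrm{Cayl}(\Gamma\mathcal{G},S)$. The plan is to compare $Y$ with the quasi-median graph $\mathrm{QM}(\Gamma,\mathcal{G})$. Since the cosets $gG_u$ are exactly the cliques of $\mathrm{QM}(\Gamma,\mathcal{G})$, two distinct vertices are adjacent in $Y$ if and only if they are adjacent in $\mathrm{QM}(\Gamma,\mathcal{G})$: every generator in $S$ lies in some $G_u$, and any two elements of a coset $gG_u$ differ by right multiplication by an element of $G_u$. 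Hence $Y$ and $\mathrm{QM}(\Gamma,\mathcal{G})$ have the same vertex set and the same edges. Consequently a geodesic in $Y$ is a geodesic $g_0,g_1,\ldots,g_n$ in $\mathrm{QM}(\Gamma,\mathcal{G})$. By Lemma~\ref{lem:GeodInQM}, the syllables $u_i:=g_{i-1}^{-1}g_i\in G_{v_i}\setminus\{1\}$ form a graphically reduced word $u_1\cdots u_n$ representing $g_0^{-1}g_n$.

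The extension is then built syllable by syllable. For each $i$, choose a geodesic in the coset $g_{i-1}G_{v_i}$ (with its metric induced from $\mathrm{Cayl}(G_{v_i},S_{v_i})$) from $g_{i-1}$ to $g_i$. This amounts to writing $u_i$ as a word $w_i$ over $S_{v_i}$ of minimal length $|u_i|_{S_{v_i}}$. Concatenating gives a path in $X$ through $g_0,\ldots,g_n$ of length $\sum_i |u_i|_{S_{v_i}}$. It remains to show that this length equals $d_X(g_0,g_n)$, i.e. the formula
$$|g|_S=\sum_i |u_i|_{S_{v_i}}$$
for any graphically reduced word $u_1\cdots u_n$ representing $g$.

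This formula is the main step. The inequality $\leq$ is clear. For $\geq$, take a geodesic word $s_1\cdots s_m$ over $S$ representing $g$ and group maximal consecutive letters from the same vertex group into syllables, giving a word $h_1\cdots h_k$ in $\Gamma\mathcal{G}$. Each syllable $h_j$ satisfies $|h_j|_{S_{v}}\leq$ (number of letters it uses), so $m\geq\sum_j|h_j|$. The claim is that applying cancellations, mergings and shufflings never increases the quantity $\sum_j |h_j|$:
\begin{itemize}
\item a merging replaces $|a|+|b|$ by $|ab|\leq |a|+|b|$ (triangle inequality);
\item a cancellation removes a term equal to $0$;
\item a shuffling keeps the sum unchanged.
\end{itemize}
Reducing to a graphically reduced word, which is unique up to shuffling, therefore yields $m\geq\sum_i|u_i|_{S_{v_i}}$. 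The one point needing care is that uniqueness up to shuffling indeed guarantees that the multiset of syllables, and hence the sum, is independent of the chosen reduced form.

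Applying the formula to $g_0^{-1}g_n$ shows that the concatenated path has length exactly $d_X(g_0,g_n)$. It is therefore a geodesic in $X$ passing through all the $g_i$, i.e. it extends the given geodesic of $Y$, which proves that $\mathcal{F}$ is strongly syllabic.
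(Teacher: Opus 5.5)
Your argument is correct. Its first step, that the cone-off is literally $\mathrm{QM}(\Gamma,\mathcal{G})$, is the same as the paper's. After that the paper gives no argument of its own: it invokes \cite[Lemma~3.8 and Claim~8.24]{QM}, i.e.\ results from the thesis that handle the word metric of $\mathrm{Cayl}(\Gamma\mathcal{G},S)$ within the quasi-median framework, using the clique structure of $\mathrm{QM}(\Gamma,\mathcal{G})$ together with the metrics $\mathrm{Cayl}(G_u,S_u)$ on the cosets.

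You instead prove the key identity $|g|_S=\sum_i|u_i|_{S_{v_i}}$, for a graphically reduced word $u_1\cdots u_n$ representing $g$, directly from the normal form theorem. The syllable-length sum does not increase under cancellation, merging or shuffling. It is also an invariant of graphically reduced representatives, because these differ only by shuffles, which preserve the multiset of labelled syllables. So the point you flag as needing care is indeed covered by the uniqueness statement recalled in the paper.

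What your route buys is self-containment: it uses only Lemma~\ref{lem:GeodInQM} and Green's normal form, and needs no finiteness of $\Gamma$ or of the $S_u$. The paper's citation is shorter and places the statement inside a framework that is not specific to the normal-form combinatorics of graph products.
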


\begin{proof}
Since $\mathrm{ConeOff}(\mathrm{Cayl}(\Gamma \mathcal{G},S), \mathcal{F})$ coincides with $\mathrm{QM}(\Gamma, \mathcal{G})$, our lemma follows immediately from \cite[Lemma~3.8 and Claim~8.24]{QM}.
\end{proof}

\noindent
We conclude this section by stating and proving a criterion that allows us to verify that some collections are syllabic in quasi-median graphs. First, we need the following definition:

\begin{definition}
Let $X$ be a quasi-median graph. Two pairs of vertices $(a,b),(x,y) \in V(X)^2$ are \emph{parallel} if every sector contains $a$ but not $b$ if and only if it contains $x$ but not $y$. 
\end{definition}

\noindent
Then, our criterion is:

\begin{prop}\label{prop:SyllabicQM}
Let $X$ be a quasi-median graph and $\mathcal{P}$ a collection of gated subgraphs. Assume that, for any two parallel pairs of vertices $(x,y)$ and $(a,b)$, if $x$ and $y$ both belong to some subgraph in $\mathcal{P}$, then $a$ and $y$ also belong to some subgraph in $\mathcal{P}$. Then, the collection $\mathcal{P}$ is syllabic.
\end{prop}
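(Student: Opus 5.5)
Given two vertices $x,y$, we build a geodesic $\gamma$ of $X$ from $x$ to $y$ greedily. Put $p_0:=x$. Having chosen $p_i \neq y$, let $p_{i+1}$ be a vertex of $I(p_i,y)$ that lies in a common subgraph of $\mathcal{P}$ with $p_i$ and maximises $d_X(p_i,p_{i+1})$. If no subgraph works, take a neighbour of $p_i$ in $I(p_i,y)$; this is harmless, since an edge of $X$ is an edge of the cone-off. Concatenating geodesics $[p_i,p_{i+1}]$ gives a geodesic of $X$, because each $p_{i+1} \in I(p_i,y)$. We will show that $p_0,\ldots,p_n$ is a geodesic in $\mathrm{ConeOff}(X,\mathcal{P})$. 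Put differently, if $x=q_0,\ldots,q_m=y$ is any path in the cone-off, then $n \leq m$.

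The natural approach is to compare the two sequences through the gated projections onto intervals. Write $r_j$ for the gate of $q_j$ on $I(x,y)$. Intervals in quasi-median graphs are gated, being intersections of gated sectors (or via \cite[Proposition~2.93]{QM}). The claim to prove is that if $q_j$ and $q_{j+1}$ lie in some $P \in \mathcal{P}$, then $r_j$ and $r_{j+1}$ also lie in a common subgraph of $\mathcal{P}$. The pair $(r_j,r_{j+1})$ should be parallel to a pair contained in $P$: the hyperplanes separating the projections are exactly those separating $q_j,q_{j+1}$ and crossing $I(x,y)$. Gatedness of $P$ lets us choose a subpair of $P$ separated by exactly those hyperplanes, for instance the gates onto $P$ of $r_j$ and $r_{j+1}$. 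The parallelism hypothesis then puts the two projections in a common element of $\mathcal{P}$. The hypothesis says ``$a$ and $y$'' where one expects ``$a$ and $b$''; I read it as $a,b$. This step needs care with sectors rather than halfspaces, since a hyperplane can have several sectors.

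Next comes a projection step along the greedy geodesic. Suppose $r_j \in I(p_i,y)$, so that $r_j$ is at least as far along as $p_i$, in the sense that the hyperplanes separating $x$ from $r_j$ contain those separating $x$ from $p_i$. If $r_{j+1}$ is in a common $P'$ with $r_j$, project $r_{j+1}$ onto $I(p_i,y)$ and call the result $s$. The parallelism hypothesis again shows that $p_i$ and the gate of $r_j$ onto $I(p_i,y)$ share an element of $\mathcal{P}$ with $s$, modulo the hyperplanes separating $x$ from $p_i$. Maximality of $p_{i+1}$ then gives that $s$ is no further than $p_{i+1}$. By induction, after $m$ steps of the path the $q$-projections have not outrun the $p$-sequence by more than $m$ indices, which yields $n \leq m$.

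The main obstacle is making ``further along'' rigorous. Maximising $d_X(p_i,p_{i+1})$ does not obviously give the monotone comparison needed, since two candidates may be incomparable. I would first replace the greedy choice with the gate of $y$ onto the union of subgraphs of $\mathcal{P}$ containing $p_i$, or argue with hyperplane sets directly. Concretely, I would show that the set of hyperplanes reachable in one step from $p_i$ inside $I(p_i,y)$ has a unique maximal element, using that $\mathcal{P}$-pairs are closed under parallel transport by the hypothesis together with gatedness. If this fails, I would fall back on a direct induction on $d_X(x,y)$.
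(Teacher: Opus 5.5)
\paragraph{The greedy path is not a cone-off geodesic.} The comparison you are trying to set up is false, not merely hard to make rigorous, so no invariant can prove it.

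Take $X=\{0,1\}^6$ with coordinates $a,b,c,d,e,f$. Let $\mathcal{P}$ be the set of all subcubes whose free coordinates are $\{a,b,c\}$, $\{d,e,f\}$ or $\{a,b,d,e\}$. These subcubes are convex, hence gated. The hypothesis holds, because parallel pairs differ in exactly the same coordinates.

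The cone-off distance from $x=000000$ to $y=111111$ is $2$, through $111000$. Your rule instead moves to $110110$, the unique candidate maximising $d_X$. From there $c$ and $f$ must still be flipped, and no member of $\mathcal{P}$ contains both directions, so the greedy path has length $3$.

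The same example defeats your proposed repairs:
\begin{itemize}
\item The hyperplane-sets reachable in one step from $x$ have three maximal elements, $\{a,b,c\}$, $\{d,e,f\}$ and $\{a,b,d,e\}$, not a unique one.
\item The union of the members of $\mathcal{P}$ through $x$ is not gated, and the point of it nearest to $y$ is again $110110$.
\item The fallback induction on $d_X(x,y)$ is left unspecified.
\end{itemize}

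\paragraph{Intervals are not gated in quasi-median graphs.} This is a second, independent problem. In a triangle $\{u,v,w\}$ one has $I(u,v)=\{u,v\}$, and $w$ has no gate on it. The intersection of the sectors containing $x$ and $y$ is the gated hull of $\{x,y\}$, not $I(x,y)$. The result you cite gives only that intervals are median, as used in Lemma~\ref{lem:Interval}. So your projection onto $I(x,y)$ exists only for median $X$.

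\paragraph{What can be salvaged.} Your transfer lemma is correct for any gated $Y$: gate projection sends a pair lying in some member of $\mathcal{P}$ to such a pair, via the gates on $P$, parallelism, and the hypothesis read as ``$a$ and $b$''.

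In the median case this lemma already gives a proof, provided you apply it to an arbitrary cone-off geodesic $x=r_0,\ldots,r_m=y$ rather than to a greedy path.
\begin{itemize}
\item Put $r_0'=x$, and let $r_{j+1}'$ be the gate of $r_{j+1}$ on $I(r_j',y)$.
\item These intervals are nested, so the gate of $r_j$ on $I(r_j',y)$ is $r_j'$.
\item Hence $r_j'$ and $r_{j+1}'$ stay at cone-off distance $\leq 1$, and $r_m'=y$.
\item Since $r_{j+1}'\in I(r_j',y)$ for every $j$, the piecewise extension is a geodesic of $X$.
\end{itemize}

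\paragraph{The paper's route.} The paper's proof uses no projections and works directly in the quasi-median setting. It starts from any cone-off geodesic and applies Lemma~\ref{lem:NotGeod} to find a hyperplane $J$ crossed twice by the piecewise extension, with a ladder or a tower in between. It then pushes the intermediate cone-off vertices across $J$. The new consecutive pairs are parallel to the old ones. The two end pairs are handled by convexity in the ladder case and by Lemma~\ref{lem:CliqueInGated} in the tower case. This keeps the cone-off length fixed while strictly shortening the extension in $X$, so iterating terminates.
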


\noindent
Before turning to the proof of our criterion, we state and prove a couple of preliminary lemmas related to quasi-median geometry.

\begin{lemma}\label{lem:NotGeod}
Let $X$ be a quasi-median graph and $\gamma$ a path. If $\gamma$ is not a geodesic, then there exist two edges $e,f \subset \gamma$ delimiting a subpath $\gamma_0$ such that $e,\gamma_0,f$ span either a ladder (Figure~\ref{LadderTower}(a)) or a tower (Figure~\ref{LadderTower}(b)). 
\end{lemma}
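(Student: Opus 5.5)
We need to prove Lemma~\ref{lem:NotGeod}: a non-geodesic path contains two edges $e,f$ delimiting a subpath $\gamma_0$ such that $e,\gamma_0,f$ span a ladder or a tower. We do not see the figure, so we interpret a \emph{ladder} as the configuration where $e$ and $f$ belong to the same hyperplane $J$, every edge of $\gamma_0$ lies in a hyperplane transverse to $J$, and $\gamma_0$ together with a parallel copy bounds a $1 \times |\gamma_0|$ grid of squares joining $e$ to $f$. We interpret a \emph{tower} as the analogous configuration where the last square is replaced by a $3$-cycle, so that the endpoints of $e$ and $f$ lie in a common clique of $J$. Equivalently, in a ladder $e$ and $f$ are "the same edge transported along $\gamma_0$"; in a tower they are two distinct edges of the same clique after transport.

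\textbf{Step 1: a repeated hyperplane.} By Theorem~\ref{thm:BigQM}, since $\gamma$ is not a geodesic, it crosses some hyperplane at least twice. Among all pairs of edges of $\gamma$ lying in a common hyperplane, choose $e,f$ whose delimited subpath $\gamma_0$ is of minimal length. By minimality, $\gamma_0$ crosses no hyperplane twice, so $\gamma_0$ is a geodesic. Moreover, $\gamma_0$ does not cross $J$, the hyperplane containing $e$ and $f$.

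\textbf{Step 2: the hyperplanes of $\gamma_0$ are transverse to $J$.} Let $H$ be a hyperplane crossed by $\gamma_0$. Then $H$ separates the endpoint $e^+$ of $e$ from the initial point $f^-$ of $f$. Both $e^+$ and $f^-$ lie in the carrier $N(J)$, which is gated by Theorem~\ref{thm:BigQM}. Consequently $H$ crosses $N(J)$. A hyperplane crossing the carrier of $J$ while being distinct from $J$ must be transverse to $J$; this uses the standard fact that a hyperplane separating two vertices of a gated subgraph meets that subgraph. So every hyperplane of $\gamma_0$ is transverse to $J$.

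\textbf{Step 3: building the squares inductively.} Write $\gamma_0=g_1\cdots g_k$ with edges $g_i$. We first argue that $e$ and $g_1$ span a square. Take the gate projection of $\gamma_0$ onto the fibre of $N(J)$ containing $e^-$. Alternatively, argue directly: consider the edge $e$ and the vertex at the far end of $g_1$, and use the quadrangle and triangle conditions of quasi-median graphs applied to the triple $e^-, e^+$, end of $g_1$. Since $H_1$ is transverse to $J$ and $g_1$ does not lie in $J$, the median triangle produces either a square on $e,g_1$, or a triangle. A triangle is impossible because $g_1 \notin J$. Iterating along $\gamma_0$ transports $e$ to an edge $e'$ of $J$ adjacent to $f$ at $f^-$. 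Then $e'$ and $f$ share the vertex $f^-$ and both lie in $J$, so either $e'=f^{-1}$, which gives a ladder, or $e'$ and $f$ span a $3$-cycle in a clique of $J$, which gives a tower.

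\textbf{Main obstacle.} The delicate point is Step 3. At each stage we must check that the transported edge still sits in $N(J)$ and that the square genuinely exists, rather than merely a $4$-cycle up to retraction. The cleanest route is to work in the gated carrier $N(J) \cong F\times C$, where $F$ is a fibre and $C$ a clique, via the product structure of carriers in \cite{QM}. With this structure, transporting $e$ along the projection of $\gamma_0$ onto a fibre is immediate, and all of Step 3 reduces to reading coordinates in that product decomposition.
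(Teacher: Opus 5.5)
Your Step 1 is fine; choosing $\gamma_0$ of minimal length is even slightly more careful than the paper's phrasing, since it guarantees that $\gamma_0$ avoids $J$. The tool you name at the end, the decomposition $N(J)\cong F\times C$ of the carrier, is exactly what the paper uses. However, the argument you actually give in Step 3 fails, and your fallback lacks the observation that makes the product structure applicable.

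\textbf{Why Step 3 fails.} Let $v_1$ be the far endpoint of $g_1$. Then $d(e^-,v_1)=2=d(e^-,e^+)+d(e^+,v_1)$, so $e^+$ lies on a geodesic between the other two points of your triple. The median triangle is therefore the degenerate triple $(e^+,e^+,e^+)$, and neither the triangle nor the quadrangle condition produces a square. Transversality of $J$ and $H_1$ (your Step 2) only says that some square with edges in $J$ and $H_1$ exists somewhere, not that $e$ and $g_1$ span one. To get that you would need a separate no-inter-osculation argument, for instance a median triangle using a vertex in the sector of $J$ containing $e^-$ and the sector of $H_1$ containing $v_1$. You would also still have to justify that two $J$-edges at $f^-$ either coincide or span a $3$-cycle.

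\textbf{The missing idea.} The path $\gamma_0$ itself lies in a fibre of $J$. It is a geodesic between $e^+$ and $f^-$, both in the carrier $N(J)$, which is gated and hence convex, so $\gamma_0\subset N(J)$. Since $\gamma_0$ contains no edge of $J$, it lies in a single component $F$ of $N(J)\backslash\backslash J$. Equivalently, one can apply convexity of fibres directly, as the paper does. You had both ingredients in Step 2 but drew only the weaker conclusion that the hyperplanes of $\gamma_0$ meet $N(J)$. Transporting $e$ along ``the projection of $\gamma_0$ onto a fibre'' does not give a ladder or tower spanned by $e,\gamma_0,f$ unless you know $\gamma_0$ is already in that fibre.

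\textbf{Finishing from the product structure.} Once $\gamma_0\subset F$, write $\gamma_0=(x_0,c),\ldots,(x_k,c)$, $e^-=(x_0,c')$ and $f^+=(x_k,c'')$ in $N(J)=F\times C$.
\begin{itemize}
\item If $c''=c'$, the copy $(x_0,c'),\ldots,(x_k,c')$ closes up a ladder.
\item If $c''\neq c'$, you get a tower, with $(x_k,c),(x_k,c'),(x_k,c'')$ forming the $3$-cycle.
\end{itemize}
Step 2 and the inductive construction of squares then become unnecessary.
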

\begin{figure}
\begin{center}
\includegraphics[width=\linewidth]{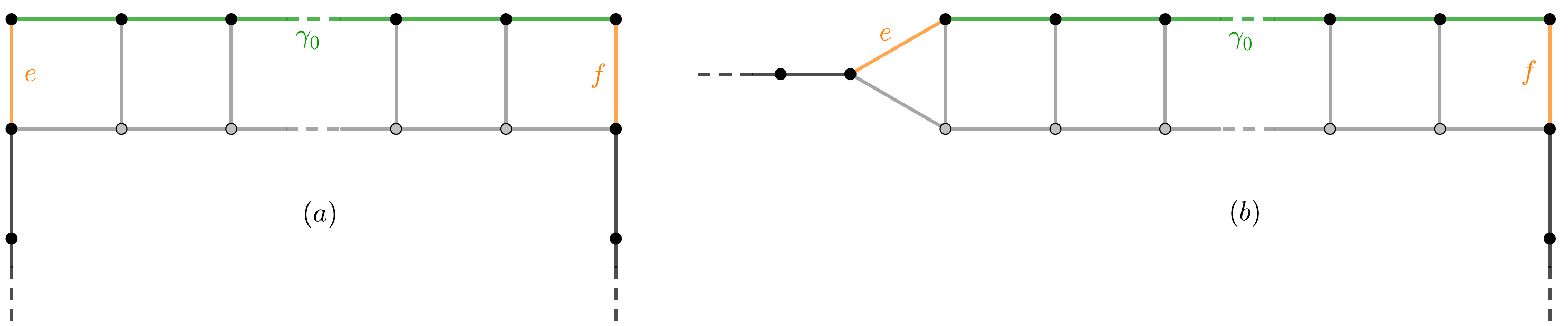}
\caption{A path with a subpath delimiting (a) a ladder and (b) a tower.}
\label{LadderTower}
\end{center}
\end{figure}

\begin{proof}
If $\gamma$ is not a geodesic, then it has to cross some hyperplane twice. Fix two edges $e,f \subset \gamma$ that belong to a common hyperplane $J$ but such that the subpath $\gamma_0 \subset \gamma$ they delimit crosses each hyperplane at most once, i.e.\ $\gamma_0$ is a geodesic. It follows from the convexity of fibres that $\gamma_0$ is contained in a fibre $F$ of $J$. But we know from \cite[Lemma~2.28]{QM} that the carrier of $J$ decomposes as a product $F \times C$, where $C$ denotes the clique of $J$ containing $f$. The desired conclusion then follows easily. 
\end{proof}

\begin{lemma}\label{lem:CliqueInGated}
Let $X$ be a quasi-median graph, $Y \subset X$ a gated subgraph, and $J$ a hyperplane crossing $Y$. For every clique $C \subset J$, if $V(C) \cap V(Y) \neq \emptyset$ then $C \subset Y$.
\end{lemma}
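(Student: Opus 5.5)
We argue directly from gatedness. Fix a clique $C \subset J$ and a vertex $c \in V(C) \cap V(Y)$. Since $J$ crosses $Y$, some edge of $J$ has both endpoints in $Y$. The goal is to show that every other vertex $c' \in V(C)$ lies in $Y$. Write $p$ for the gate map to $Y$. It suffices to prove $p(c')=c'$ for each such $c'$.

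\emph{Step 1: the projection of $c'$ is $c$ or $c'$.} Because $c \in Y$ and $p(c')$ is the gate, some geodesic from $c'$ to $c$ passes through $p(c')$. But $d(c',c)=1$, so $p(c') \in \{c,c'\}$. If $p(c')=c'$ we are done. So assume towards a contradiction that $p(c')=c$ and $c' \notin Y$.

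\emph{Step 2: get a contradiction using an edge of $J$ inside $Y$.} Let $\{a,b\} \subset Y$ be an edge of $J$. The gate property with $p(c')=c$ says $d(c',y)=1+d(c,y)$ for every $y \in Y$. So $J$, the hyperplane separating $c'$ from $c$, separates $c'$ from all of $Y$, by the fourth item of Theorem~\ref{thm:BigQM}. Hence $Y$ is contained in a single sector of $J$, namely the sector containing $c$. On the other hand, $a$ and $b$ are separated by $J$, because a geodesic edge crossing $J$ joins two distinct sectors of $J$. So $a$ and $b$ lie in distinct sectors of $J$, contradicting $Y$ lying in one sector. Therefore $p(c')=c'$, so $c' \in Y$. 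Since $c'$ was arbitrary, $V(C) \subset V(Y)$.

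\emph{Step 3: edges.} Gated subgraphs are convex, and any two vertices of $C$ are adjacent. So the edges of $C$ join vertices of $Y$ at distance one and belong to the (induced) subgraph $Y$. Thus $C \subset Y$.

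The only delicate point is Step 2: we must check that the edge of $Y$ dual to $J$ really separates two sectors, and that "$J$ separates $c'$ from every $y\in Y$" forces all of $Y$ into one sector. The latter follows because every $y \in Y$ must then lie in the sector of $c$; otherwise a geodesic from $c$ to $y$ would cross $J$, giving $d(c',y) \le d(c,y)$, since we could replace the $J$-crossing by an edge of the same clique. I would phrase this last point via the clique structure of $J$ (the carrier of $J$ is a product $F \times C$, \cite[Lemma~2.28]{QM}) to make it rigorous.
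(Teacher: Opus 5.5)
Your proof is correct and takes the same route as the paper. In both, a vertex $c' \in V(C) \setminus V(Y)$ would have $c$ as its gate, and then $J$, which separates $c'$ from its gate, could not cross $Y$. The only difference is that the paper cites \cite[Lemma~2.34]{QM} for this last step, whereas you verify this special case directly from $d(c',y)=1+d(c,y)$ and the hyperplane-counting description of distances, which is valid.
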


\begin{proof}
Fix a vertex $x \in V(C) \cap V(Y)$ and let $y \in V(C)$ be another vertex. If $y \notin V(Y)$, then $x$ has to be the projection of $y$ on $Y$ since it clearly minimises the distance to $y$ in $Y$. However, we know from \cite[Lemma~2.34]{QM} that a hyperplane separating a vertex from its projection on a given gated subgraph cannot cross this subgraph. Since $J$ crosses $Y$ by assumption, we conclude that $y$ must belong to $Y$. 
\end{proof}

\begin{proof}[Proof of Proposition~\ref{prop:SyllabicQM}.]
Let $x,y \in V(X)$ be two vertices and let $\gamma$ be an arbitrary geodesic in $\mathrm{ConeOff}(X,\mathcal{P})$ connecting $x$ to $y$. We denote by $\gamma^+$ the piecewise geodesic extension of $\gamma$ in $X$, i.e.\ we connect the successive vertices $p_1, \ldots, p_n$ of $\gamma$ with geodesics in $X$. If $\gamma^+$ is a geodesic in $X$, then we have nothing to prove. So assume that $\gamma^+$ is not a geodesic in $X$. According to Lemma~\ref{lem:NotGeod}, there exist two edges $e,f \subset \gamma$ delimiting a subpath $\gamma_0$ such that $e,\gamma_0,f$ span either a ladder or a tower. In the ladder case, we have a configuration of the form:
\begin{center}
\includegraphics[width=0.6\linewidth]{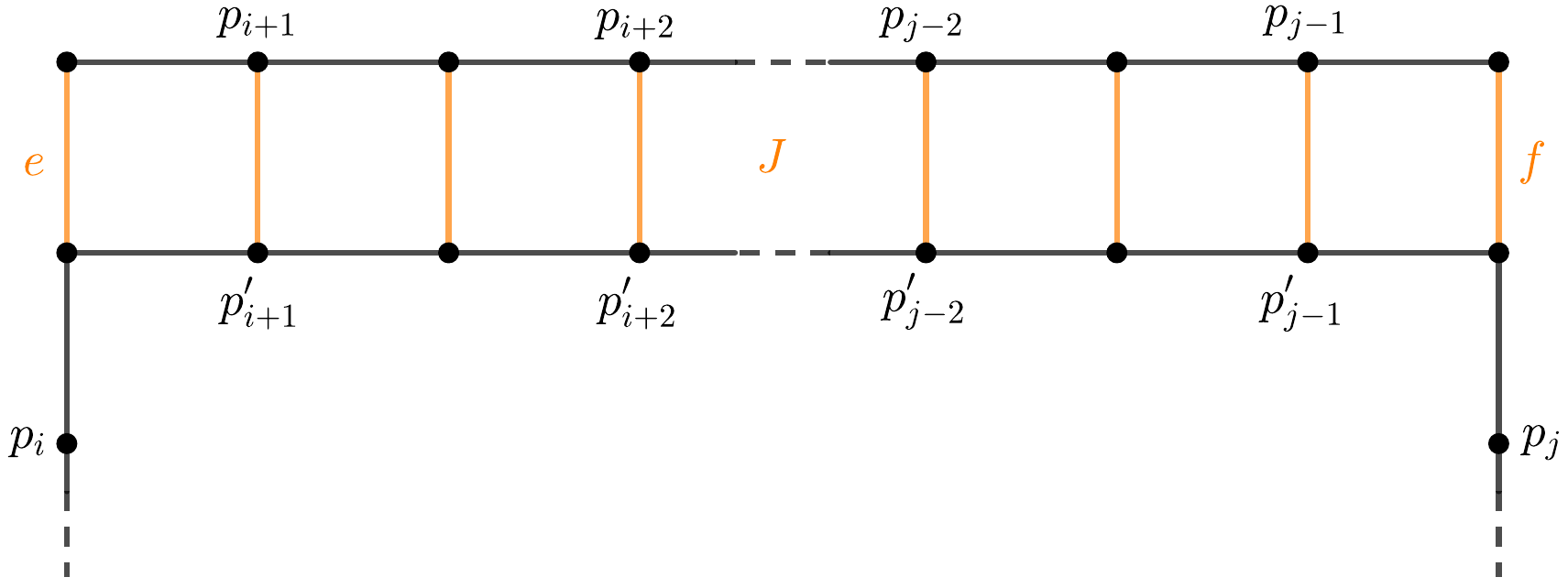}
\end{center}
For every $i+1 \leq s \leq j-1$, let $p_s'$ denote the neighbour of $p_s$ that is separated from $p_s$ by $J$. Notice that, for every $i+1 \leq s \leq j-2$, the pairs $(p_s,p_{s+1})$ and $(p_s',p_{s+1}')$ are parallel, so we know that $p_s'$ and $p_{s+1}'$ are adjacent in $\mathrm{ConeOff}(X,\mathcal{P})$. Notice also that $p_{i+1}'$ lies on a geodesic connecting $p_i$ to $p_{i+1}$, so, because $p_i$ and $p_{i+1}$ both belong to some subgraph in $\mathcal{P}$, which is convex by assumption, we also know that $p_i$ and $p_{i+1}'$ are adjacent in $\mathrm{ConeOff}(X,\mathcal{P})$. Similarly, $p_j$ and $p_{j-1}'$ are adjacent in $\mathrm{ConeOff}(X,\mathcal{P})$. Thus, we can define a new geodesic $\eta$ in $\mathrm{ConeOff}(X,\mathcal{P})$ by replacing $p_{i+1}, \ldots, p_{j-1}$ with $p_{i+1}', \ldots, p_{j-1}'$ in $\gamma$. 

\medskip \noindent
In the tower case, we have a configuration of the form
\begin{center}
\includegraphics[width=0.6\linewidth]{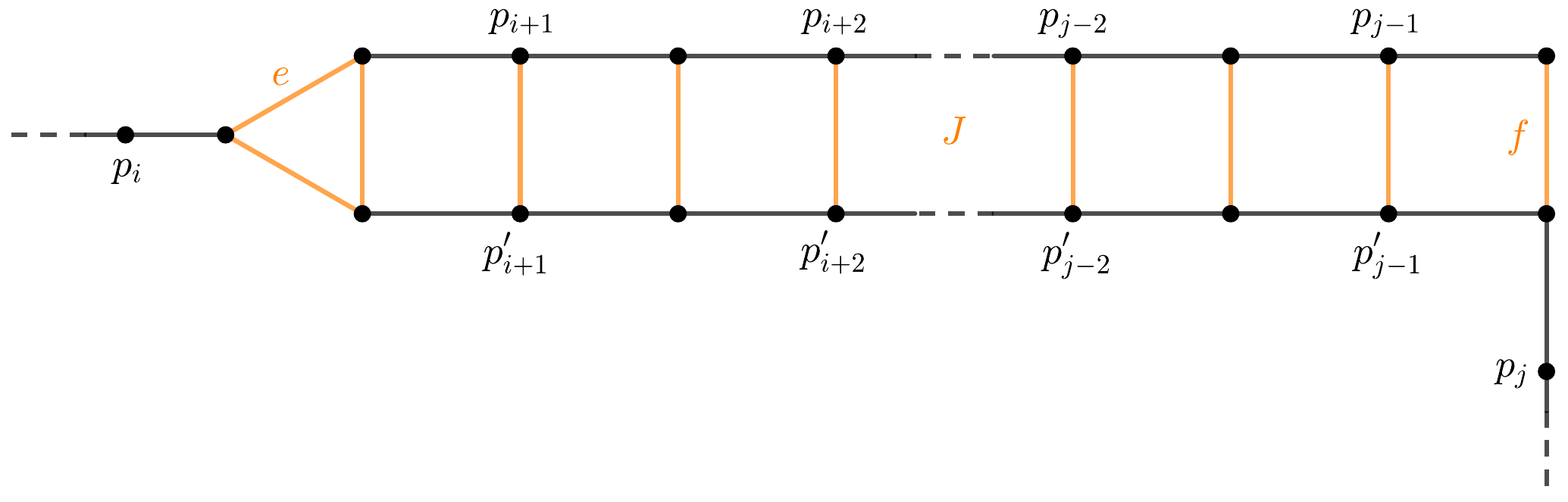}
\end{center}
As before, for every $i+1 \leq s \leq j-1$, we let $p_s'$ denote the neighbour of $p_s$ that is separated from $p_s$ by $J$. And we notice that, for every $i+1 \leq s \leq j-2$, the vertices $p_s'$ and $p_{s+1}'$ are adjacent in $\mathrm{ConeOff}(X,\mathcal{P})$; and that $p_{j-1}'$ and $p_j$ are adjacent in $\mathrm{ConeOff}(X,\mathcal{P})$ as well. The fact that $p_i$ and $p_{i+1}'$ are also connected by an edge in $\mathrm{ConeOff}(X,\mathcal{P})$ follows from Lemma~\ref{lem:CliqueInGated} and from the gatedness of the subgraphs in $\mathcal{P}$. Thus, we can define a new geodesic $\eta$ in $\mathrm{ConeOff}(X,\mathcal{P})$ by replacing $p_{i+1}, \ldots, p_{j-1}$ with $p_{i+1}', \ldots, p_{j-1}'$ in $\gamma$. 

\medskip \noindent
The key observation is that, in both cases, the piecewise geodesic extension $\eta^+$ of $\eta$ in $X$ is shorter than $\gamma^+$. Consequently, after finitely many iterations of our construction, we obtain a geodesic in $\mathrm{ConeOff}(X, \mathcal{P})$ that extends to a geodesic in $X$. 
\end{proof}

\subsection{Proof of Theorem~\ref{thm:IntroGP}}\label{section:ThmGP}

\noindent
This section is dedicated to the proof of Theorem~\ref{thm:IntroGP} from the introduction (i.e.\ Theorem~\ref{thm:InGP} above). We start by proving the implication $(i) \Rightarrow (ii)$, namely:

\begin{prop}\label{prop:ProdInGP}
Let $\Gamma$ be a graph and $\mathcal{G}$ a collection of groups of polynomial growth indexed by $V(\Gamma)$. If $\Gamma \mathcal{G}$ contains $\mathbb{F}_2 \times \mathbb{F}_2$ as a subgroup, then $\Gamma$ contains one of the graphs from Figure~\ref{Graphs} as an induced subgraph. 
\end{prop}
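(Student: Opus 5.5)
We argue via centralisers and products of parabolic subgroups, following the structure theory of subgroups of graph products. Let $H = A \times B \leq \Gamma\mathcal{G}$ with $A \simeq B \simeq \mathbb{F}_2$. Since the vertex-groups have polynomial growth, they contain no free subgroups. Hence $H$ is not contained in a conjugate of a vertex-group, nor in a conjugate of $\langle \mathrm{star}(u)\rangle$ whose ``free-producing'' part is too small. More precisely, we will use the classification of subgroups of graph products acting on the quasi-median graph $\mathrm{QM}(\Gamma,\mathcal{G})$: any subgroup either fixes a prism/clique or contains an element acting loxodromically, i.e.\ an irreducible element whose essential support is a subgraph $\Lambda\subset\Gamma$ not splitting as a join. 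The first step is to replace $H$ by a finite-index subgroup and conjugate, so that $H$ lies in a minimal parabolic subgroup $\langle \Xi\rangle$. Minimality then forces $\Xi = \Xi_1 * \Xi_2$ (a join), with $A$ projecting injectively into $\langle \Xi_1\rangle$ and $B$ into $\langle \Xi_2\rangle$. The natural decomposition comes from the fact that commuting elements of a graph product have essential supports lying in a common join, as in the centraliser theorem for graph products.

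The second step is to show that each factor $\langle \Xi_i\rangle$ containing a free group $\mathbb{F}_2$ forces $\Xi_i$ (after passing to a sub-join-factor) to contain either two non-adjacent vertices that are not both ``finite'' of order two, or an induced configuration like $K_1\sqcup K_2$ or three isolated vertices. The guiding case is $\mathbb{Z}_2 * \mathbb{Z}_2$, which is virtually $\mathbb{Z}$ and contains no $\mathbb{F}_2$. The polynomial growth assumption is used precisely here. A graph product of polynomial-growth groups over $\Xi_i$ contains $\mathbb{F}_2$ unless $\Xi_i$ is a join of complete graphs and pairs of non-adjacent vertices with both groups $\mathbb{Z}_2$. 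In that case the group is virtually a product of virtually nilpotent groups and infinite dihedral groups, so it has polynomial growth, which is impossible. Conversely, if $\Xi_i$ is not of that form, it contains an induced anticlique of size three, an induced $K_1\sqcup K_2$, or two non-adjacent vertices, at least one with a group of order $\geq 3$.

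Third, we combine the two factors: $\Gamma$ contains the join of the two ``free-producing'' configurations as an induced subgraph. This join is one of the graphs of Figure~\ref{Graphs}, namely the $4$-cycle (two anti-edges) or its variants $K_{3,3}$, $K_{3,3}^+$ and $K_{3,3}^{++}$. The $4$-cycle case accounts for vertex groups of order $\geq 3$, where a single non-adjacent pair already gives $\mathbb{F}_2$ (Figure~\ref{Graphs} presumably records this, e.g.\ via labelled vertices).

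The main obstacle is the first step: producing the join decomposition from the commuting factors rigorously. I would attempt it by picking loxodromic-type elements $a\in A$ and $b\in B$ (these exist since polynomial-growth vertex-groups cannot contain $\mathbb{F}_2$, so $A$ and $B$ are not elliptic in a single clique stabiliser). I would then invoke the fact that $b$ lies in the centraliser of $a$, which for an irreducible $a$ with support $\Lambda$ is contained in $\langle \Lambda\rangle \times \langle \mathrm{link}(\Lambda)\rangle$ up to conjugacy. From there I would argue that, after using the freeness of $A$ and $B$ to pick two non-commuting elements in each, the supports of $A$ and $B$ are contained in $\Xi_1$ and $\Xi_2 \subset \mathrm{link}(\Xi_1)$ respectively.
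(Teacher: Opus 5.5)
\textbf{Assessment: there is a genuine gap, in your first step.}

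Your second and third steps are sound and match the end of the paper's argument. Once you have a join $\Xi_1 \ast \Xi_2 \subset \Gamma$ with $\mathbb{F}_2$ in both $\langle \Xi_1 \rangle$ and $\langle \Xi_2 \rangle$, Lemma~\ref{lem:NoFTwoGP} gives an induced configuration from Figure~\ref{NoFTwo} on each side, and their join is a graph from Figure~\ref{Graphs}.

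The first step carries all the difficulty and is only asserted. Minimality of a parabolic $\langle \Xi \rangle$ containing $H$ does not by itself produce a join. You need an irreducibility input such as Lemma~\ref{lem:JoinSub}: a product of two infinite subgroups lies in a conjugate of a vertex-group or of a join parabolic. The centraliser sketch you offer instead fails as stated. Take $\Gamma=\{a,c\} \ast \{t\} \ast \{b,d\}$ with all vertex-groups infinite cyclic, vertices named after their generators. Then $\Gamma\mathcal{G}=F(a,c)\times\langle t\rangle\times F(b,d)$; set $A=\langle at,c\rangle$ and $B=\langle bt,d\rangle$.
\begin{itemize}
\item $A \simeq B \simeq \mathbb{F}_2$, they commute and intersect trivially, so $A\times B\simeq \mathbb{F}_2\times\mathbb{F}_2$.
\item Passing to the abelianisation, every parabolic $g\langle\Lambda\rangle g^{-1}$ containing $A$ or $B$ has $t\in\Lambda$. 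The only parabolic containing $A\times B$ is $\Gamma\mathcal{G}$ itself.
\item Hence the supports of $A$ and $B$ cannot lie in complementary join factors $\Xi_1$ and $\Xi_2\subset\mathrm{link}(\Xi_1)$, whatever conjugation you use.
\item The loxodromic elements you would pick need not be irreducible. For instance $atc\in A$ is loxodromic on $\mathrm{QM}(\Gamma,\mathcal{G})$, but its support $\{a,c\}\ast\{t\}$ is a join, so the centraliser theorem for irreducible elements does not apply to it.
\end{itemize}

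\textbf{The missing idea.} You need a way to discard join factors without free subgroups, such as $\langle t\rangle$ above. In the paper this is Lemma~\ref{lem:NormalSubProdFree}. If $H\leq\langle\Phi\rangle\times\langle\Psi\rangle$ and $\langle\Phi\rangle$ contains no $\mathbb{F}_2$, then $\langle \Phi \rangle$ has polynomial growth by Corollary~\ref{cor:NoFreeSub}. So $H\cap\langle\Phi\rangle$ is a normal subgroup of $\mathbb{F}_2\times\mathbb{F}_2$ without $\mathbb{F}_2$, hence trivial, and $H$ embeds into $\langle\Psi\rangle$. The paper combines this with Lemma~\ref{lem:JoinSub}, a reduction to finite $\Gamma$, and induction on $|V(\Gamma)|$. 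You also omit the reduction to finite $\Gamma$, which is needed since the statement allows infinite $\Gamma$.

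Your minimal-parabolic framing can be rescued with the same two ingredients:
\begin{itemize}
\item Write $\Xi$ as the join of its join-irreducible components.
\item By Lemma~\ref{lem:NormalSubProdFree}, $H$ injects into the product of the components whose parabolics contain $\mathbb{F}_2$.
\item If there is only one such component $\Xi_j$, apply Lemma~\ref{lem:JoinSub} inside $\langle\Xi_j\rangle$ to the image of $H$. Since $\Xi_j$ is not a join and has at least two vertices, this gives a parabolic with fewer vertices containing $H$, contradicting minimality.
\end{itemize}
As written, neither the discarding argument nor the irreducibility input is present, so your first step is not established.
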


\noindent
In order to prove our proposition, the first ingredient will be the following characterisation of product subgroups in graph products. It follows from \cite{MR3368093} (see \cite[Proposition~2.8]{MR4808711} for a formal statement). 

\begin{lemma}\label{lem:JoinSub}
Let $\Gamma$ be a finite graph and $\mathcal{G}$ a collection of groups indexed by $V(\Gamma)$. If a subgroup $H \leq \Gamma \mathcal{G}$ decomposes as a product of two infinite groups, then $H$ is contained in a conjugate of $\langle \Lambda \rangle$ where $\Lambda$ is either an isolated vertex of $\Gamma$ or a non-trivial join. 
\end{lemma}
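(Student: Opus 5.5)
The plan is to pass to the parabolic closure of $H$ and then invoke the acylindrical hyperbolicity results of \cite{MR3368093}. Write $H=H_1\times H_2$ with $H_1,H_2$ infinite.

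First, since $\Gamma$ is finite and parabolic subgroups of graph products are stable under intersection (standard, and also contained in \cite{MR3368093}), there is a smallest parabolic subgroup containing $H$. After conjugating, we may assume that $H \leq \langle \Lambda \rangle$ with $\Lambda \subset \Gamma$ minimal, so that $H$ is not contained in any conjugate of $\langle \Lambda' \rangle$ for $\Lambda' \subsetneq \Lambda$. Now $\langle \Lambda \rangle$ is itself the graph product $\Lambda \mathcal{G}_{|\Lambda}$. We split into three cases.

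\begin{itemize}
\item If $\Lambda$ is a single vertex, we are in the first alternative of the statement.
\item If $\Lambda$ splits as a non-trivial join, we are in the second alternative.
\item It remains to rule out the case where $\Lambda$ has at least two vertices and is not a join, i.e.\ its complement graph is connected.
\end{itemize}

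In the irreducible case, I would apply the main theorem of \cite{MR3368093} to the graph product $\langle \Lambda \rangle$. A subgroup of an irreducible graph product that is not contained in a proper parabolic subgroup is either virtually cyclic or acylindrically hyperbolic. Concretely, $\langle \Lambda \rangle$ acts acylindrically on a hyperbolic graph, namely a suitable cone-off of $\mathrm{QM}(\Lambda,\mathcal{G})$ along the proper parabolics. The minimality of $\Lambda$ guarantees that $H$ contains an element acting loxodromically and that $H$ is not elliptic.

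Both outcomes contradict the product structure. $H_1 \times H_2$ with both factors infinite is not virtually cyclic, since it would then contain $\mathbb{Z}^2$ or have a finite factor. It is also not acylindrically hyperbolic, because Osin's theorem says that acylindrically hyperbolic groups are not direct products of two infinite groups. Alternatively, one can argue directly: a loxodromic $g \in H$ has $E(g)$ virtually cyclic, while the centraliser of $g$ contains an infinite factor $H_i$, which forces $H_i$ to be virtually cyclic and commensurate with $\langle g\rangle$. Running the same argument on the other factor produces a contradiction.

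The main obstacle is the middle step: making sure that minimality of the parabolic closure really yields a non-elementary (or at least loxodromic) action of $H$ on the hyperbolic graph of \cite{MR3368093}. That is where the precise form of their theorem must be quoted, in the formulation of \cite[Proposition~2.8]{MR4808711}. The remaining steps are formal.
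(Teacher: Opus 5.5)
Your proposal is correct and takes essentially the paper's route. The paper simply cites \cite{MR3368093} via \cite[Proposition~2.8]{MR4808711}, and your argument unpacks that citation: reduce to a minimal parabolic subgroup, apply the ``virtually cyclic or acylindrically hyperbolic'' alternative in the irreducible case, then use Osin's result that acylindrically hyperbolic groups are not products of two infinite groups.

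There are two small points to fix. First, if $\Lambda$ is a single vertex $v$ that is not isolated in $\Gamma$, you should add that $H \leq \langle \mathrm{star}(v) \rangle$, where $\mathrm{star}(v)=\{v\}\ast\mathrm{link}(v)$ is a non-trivial join. Second, your ``direct'' alternative is incomplete as written, because a loxodromic $g=(g_1,g_2)\in H_1\times H_2$ need not be centralised by a whole factor. You would first need that each infinite normal factor $H_i$ contains a loxodromic, which holds by acylindricity since a normal subgroup with bounded orbits must be finite. It is cleaner just to quote Osin.
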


\noindent
The following easy observation will be also useful:

\begin{lemma}\label{lem:NormalSubProdFree}
Every non-trivial normal subgroup of $\mathbb{F}_2 \times \mathbb{F}_2$ contains $\mathbb{F}_2$ as a subgroup.
\end{lemma}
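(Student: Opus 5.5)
Let $N$ be a non-trivial normal subgroup of $\mathbb{F}_2 \times \mathbb{F}_2 = A \times B$, and write $p_A$ and $p_B$ for the two coordinate projections. The plan is to find two elements of $N$ inside a single factor that generate a free group of rank two. The only inputs are two standard facts about free groups. First, subgroups of free groups are free. Second, a non-trivial normal subgroup of $\mathbb{F}_2$ is not cyclic, because a non-trivial cyclic normal subgroup of a non-abelian free group cannot exist: its normaliser would have to be cyclic.

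\emph{Step 1: $N$ meets one factor.} Pick a non-trivial $n=(a,b) \in N$. We may assume $a \neq 1$; the case $b \neq 1$ is symmetric. Since $\mathbb{F}_2$ has trivial centre, there is $g \in A$ with $[g,a] \neq 1$. Normality of $N$ gives $(g,1)\, n\, (g,1)^{-1} n^{-1} = ([g,a],1) \in N$. Hence $N_A := N \cap (A \times \{1\})$ is non-trivial.

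\emph{Step 2: this intersection contains $\mathbb{F}_2$.} Identify $N_A$ with a subgroup of $A$. It is normal in $A$, because conjugating by $(h,1)$ preserves both $N$ and $A \times \{1\}$. So $N_A$ is a non-trivial normal subgroup of $\mathbb{F}_2$, hence free and non-cyclic, so of rank at least two. It therefore contains a copy of $\mathbb{F}_2$. Since $N_A \subset N$, the subgroup $N$ contains $\mathbb{F}_2$ as well.

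The only point needing care is the non-cyclicity in Step 2. Suppose $N_A = \langle c \rangle$ with $c \neq 1$. Then every $h \in A$ conjugates $c$ to $c^{\pm 1}$. In a free group, $h c h^{-1} = c^{-1}$ is impossible for $c \neq 1$, so $h$ commutes with $c$. Thus $h$ lies in the centraliser of $c$, which is cyclic. This would make $A$ cyclic, a contradiction. Everything else is routine.
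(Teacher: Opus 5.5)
Your proof is correct, but it obtains a non-trivial element of $N$ inside one factor differently from the paper.

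The paper argues by contraposition. If $N$ contains no $\mathbb{F}_2$, then $N \cap (\mathbb{F}_2 \times \{1\})$ is a normal subgroup of $\mathbb{F}_2$ without $\mathbb{F}_2$, hence trivial, since $\mathbb{F}_2$ has no infinite cyclic normal subgroup. So the projection onto the second factor is injective on $N$. Its image is again a normal subgroup of $\mathbb{F}_2$ without $\mathbb{F}_2$, hence trivial.

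You instead use the trivial centre and the commutator $(g,1)\,n\,(g,1)^{-1}n^{-1}=([g,a],1)$. This shows directly that $N\cap(A\times\{1\})$ is non-trivial whenever $p_A(N)\neq 1$, and you then need the fact about cyclic normal subgroups only once. Your route is slightly more informative: it places the free subgroup inside $N\cap A$ for whichever factor $N$ projects non-trivially to. The paper's route avoids the commutator computation by invoking the same fact twice. Both rest on the same ingredient, and you make explicit the Nielsen--Schreier step that the paper leaves implicit.

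One assertion deserves a line of justification: that $hch^{-1}=c^{-1}$ is impossible for $c\neq 1$. If it held with $h\neq 1$, then $h$ and $c$ would both lie in the centraliser of $h^2\neq 1$. That centraliser is cyclic, so $h$ and $c$ commute, forcing $c^2=1$, which contradicts torsion-freeness. Alternatively, you can skip this claim: the kernel of $A\to\mathrm{Aut}(\langle c\rangle)\cong\mathbb{Z}/2$ is the cyclic centraliser of $c$. It would then have index at most $2$ in $\mathbb{F}_2$, which is impossible.
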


\begin{proof}
Let $N \lhd \mathbb{F}_2 \times \mathbb{F}_2$ be a normal subgroup that does not contain $\mathbb{F}_2$ as a subgroup. The intersection between $N$ and $\mathbb{F}_2 \times \{1\}$ yields a normal subgroup of $\mathbb{F}_2$ that does not contain $\mathbb{F}_2$ as a subgroup. Since $\mathbb{F}_2$ does not contain an infinite cyclic normal subgroup, it follows that $N$ intersects trivially $\mathbb{F}_2 \times \{1\}$. Thus, $N$ injects into $\mathbb{F}_2$ as a normal subgroup through the projection $\mathbb{F}_2 \times \mathbb{F}_2 \twoheadrightarrow \mathbb{F}_2$ onto the second factor. Again, due to the fact that $\mathbb{F}_2$ does not contain an infinite cyclic normal subgroup, we conclude that $N$ must be trivial. 
\end{proof}

\noindent
The first step towards Proposition~\ref{prop:ProdInGP} is to understand when our graph products contain non-abelian free groups. This is achieved by our next preliminary lemma.

\begin{lemma}\label{lem:NoFTwoGP}
Let $\Gamma$ be a finite graph and $\mathcal{G}$ a collection of groups indexed by $V(\Gamma)$. If no vertex-group contains $\mathbb{F}_2$ as a subgroup, then $\Gamma \mathcal{G}$ contains $\mathbb{F}_2$ as a subgroup if and only if $\Gamma$ contains one of the graphs from Figure~\ref{NoFTwo} as an induced subgraph. 
\end{lemma}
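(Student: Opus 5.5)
The plan is to prove both directions by reducing to small subgraphs. I read Figure~\ref{NoFTwo} as listing three configurations:
\begin{itemize}
	\item two non-adjacent vertices at least one of whose vertex-groups has order $\geq 3$;
	\item three pairwise non-adjacent vertices;
	\item an induced copy of $K_1 \sqcup K_2$.
\end{itemize}
These are the configurations one expects from the right-angled Coxeter case and from the graphs $K_{3,3}^+$, $K_{3,3}^{++}$ of Figure~\ref{Graphs}. If the figure differs, the same strategy should adapt.

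\emph{Sufficiency.} First, for every induced subgraph $\Lambda \subset \Gamma$, the subgroup $\langle \Lambda \rangle$ is the graph product $\Lambda \mathcal{G}_{|\Lambda}$ and embeds into $\Gamma \mathcal{G}$. It is even a retract, obtained by killing the vertex-groups outside $\Lambda$. So it suffices to find $\mathbb{F}_2$ in each configuration.
\begin{itemize}
	\item For two non-adjacent vertices $u,v$, the subgroup is $G_u \ast G_v$. If $|G_u| \geq 3$, the kernel of $G_u \ast G_v \to G_u \times G_v$ is free, by Kurosh's theorem, of rank $(|G_u|-1)(|G_v|-1) \geq 2$ when the groups are finite. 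When one group is infinite the kernel has infinite rank. Alternatively, a ping-pong argument with $ab$ and $a'b$, where $a,a' \in G_u \backslash \{1\}$ are distinct and $b \in G_v \backslash \{1\}$, gives $\mathbb{F}_2$ directly.
	\item For three pairwise non-adjacent vertices, the subgroup $G_u \ast G_v \ast G_w$ contains $\mathbb{Z}_2 \ast \mathbb{Z}_2 \ast \mathbb{Z}_2$-type ping-pong pairs. Concretely, $ab$ and $ac$ generate $\mathbb{F}_2$ by a normal form argument.
	\item For $K_1 \sqcup K_2$, the subgroup $G_u \ast (G_v \times G_w)$ falls under the first case, since $G_v \times G_w$ has order $\geq 4$.
\end{itemize}

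\emph{Necessity.} Suppose $\Gamma$ contains none of these configurations, and pass to the complement graph $\Gamma^c$. The absence of three pairwise non-adjacent vertices means $\Gamma^c$ has no triangle. The absence of $K_1 \sqcup K_2$ means $\Gamma^c$ has no induced path of length two. A triangle-free graph without induced $P_3$ is a disjoint union of edges and isolated vertices. So $\Gamma$ is a join of single vertices and of non-adjacent pairs $\{u,v\}$. By the excluded first configuration, both $G_u$ and $G_v$ have order two for each such pair. Hence $\Gamma \mathcal{G}$ is a finite direct product of vertex-groups and infinite dihedral groups $\mathbb{Z}_2 \ast \mathbb{Z}_2$. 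None of these factors contains $\mathbb{F}_2$: the vertex-groups by hypothesis, and $\mathbb{Z}_2 \ast \mathbb{Z}_2$ because it is virtually cyclic.

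It remains to show that if neither $A$ nor $B$ contains $\mathbb{F}_2$, then neither does $A \times B$; the result then follows by induction on the number of factors. Suppose $F \leq A \times B$ with $F \cong \mathbb{F}_2$, and let $K$ be the kernel of the projection $F \to A$. Then $K$ embeds into $B$, so it contains no $\mathbb{F}_2$, so it is free of rank $\leq 1$. A nontrivial normal subgroup of $\mathbb{F}_2$ is never cyclic, by the same remark as in Lemma~\ref{lem:NormalSubProdFree}. Therefore $K = 1$, so $F$ embeds into $A$, a contradiction.

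I expect the main obstacle to be bookkeeping rather than any single hard step: matching the exact list of Figure~\ref{NoFTwo}, and handling the order-two versus larger-order distinction cleanly in the ping-pong arguments.
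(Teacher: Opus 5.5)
Your proof is correct and follows essentially the same route as the paper. Both arguments show that, when none of the configurations occurs, $\Gamma$ is a join of single vertices and non-adjacent pairs labelled by $\mathbb{Z}_2$; you do this via the complement being triangle-free with no induced path on three vertices, the paper via a case analysis inside a non-join factor. Both then use that a direct product of groups without $\mathbb{F}_2$ contains no $\mathbb{F}_2$, which you prove and the paper takes as clear.

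One side remark is false, though your argument does not need it. The elements $ab$ and $a'b$ need not generate a free group: in $\mathbb{Z}_3 \ast \mathbb{Z}_2$ with $a'=a^{-1}$ one gets $(ab)(a'b)^{-1}=a^{2}$, which has order three. So rely on your Kurosh kernel argument, or use the commutators $[a,b]$ and $[a',b]$, which belong to a free basis of that kernel.
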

\begin{figure}[h!]
\begin{center}
\includegraphics[width=0.4\linewidth]{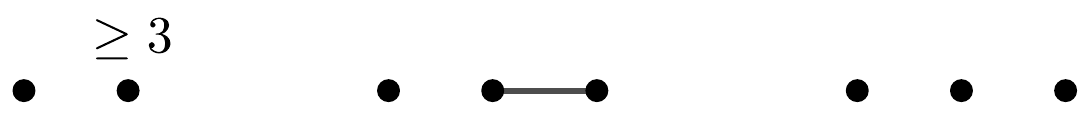}
\caption{}
\label{NoFTwo}
\end{center}
\end{figure}

\begin{proof}
Decompose $\Gamma$ as a join
$$\Gamma= \Gamma_1 \ast \cdots \ast \Gamma_r \ast \Gamma_{r+1} \ast \cdots \ast \Gamma_{r+s}$$
where $\Gamma_1, \ldots, \Gamma_r$ are reduced to single vertices and where $\Gamma_{r+1}, \Gamma_{r+s}$ are not joins but contain at least two vertices. Because $\Gamma \mathcal{G}$ then decomposes as 
$$\Gamma \mathcal{G} = \langle \Gamma_1 \rangle \times \cdots \times \langle \Gamma_r \rangle \times \langle \Gamma_{r+1} \rangle \times \cdots \times \langle \Gamma_{r+s} \rangle,$$
it is clear that $\Gamma \mathcal{G}$ contains $\mathbb{F}_2$ if and only if there exists some $1 \leq i \leq s$ such that $\langle \Gamma_{r+i}$ contains $\mathbb{F}_2$. Thus, we can assume without loss of generality that $\Gamma$ is not a join and contains at least two vertices.

\medskip \noindent
Because $\Gamma$ cannot be a complete graph, it must contain at least two non-adjacent vertices, say $a,b \in V(\Gamma)$. If one of the vertex-groups $G_a$ and $G_b$ has cardinality $\geq 3$, then $\langle G_a,G_b \rangle = G_a \ast G_b$, and a fortiori $\Gamma \mathcal{G}$, contains $\mathbb{F}_2$ as a subgroup. This is one of the configurations given by Figure~\ref{NoFTwo}. Otherwise, if $|G_a|=|G_b|=2$, then two cases may happen: either $V(\Gamma)= \{a,b\}$, in which case $\Gamma \mathcal{G}$ is an infinite dihedral group $\mathbb{D}_\infty = \mathbb{Z}_2 \ast \mathbb{Z}_2$ and does not contain $\mathbb{F}_2$; or there is at least one in $\Gamma$ distinct from both $a$ and $b$. In the latter case, because $\Gamma$ is not a join, we can find a third vertex $c \in V(\Gamma)$ that is not adjacent to both $a$ and $b$. Say that $c$ is not adjacent to $a$. Then, two cases may happen: either $c$ is not adjacent to $b$, in which case $\{a,b,c\}$ provides a configuration from Figure~\ref{NoFTwo} and $\langle G_a,G_b,G_c \rangle = G_a \ast G_b \ast G_c$ contains $\mathbb{F}_2$; or $c$ is adjacent to $b$, in which case $\{a,b,c\}$ also provides a configuration from Figure~\ref{NoFTwo} and $\langle G_a,G_b,G_c \rangle = G_a \ast ( G_b \times G_c)$ also contains $\mathbb{F}_2$. 
\end{proof}

\begin{cor}\label{cor:NoFreeSub}
Let $\Gamma$ be a finite graph and $\mathcal{G}$ a collection of groups indexed by $V(\Gamma)$. If $\Gamma \mathcal{G}$ does not contain $\mathbb{F}_2$ as a subgroup, then $\Gamma$ splits as a join $\Gamma_1 \ast \cdots \ast \Gamma_r \ast \Gamma_{r+1} \ast \cdots \ast \Gamma_{r+s}$ where $\Gamma_1, \ldots, \Gamma_r$ are reduced to single vertices and where $\Gamma_{r+1}, \ldots, \Gamma_{r+s}$ are reduced to pairs of non-adjacent vertices both indexed by $\mathbb{Z}_2$. 
\end{cor}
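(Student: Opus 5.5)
The corollary should follow from Lemma~\ref{lem:NoFTwoGP} applied to each join factor of $\Gamma$. The only extra work is checking that such a factor cannot have vertex-groups containing $\mathbb{F}_2$. First observe that no vertex-group $G_u$ can contain $\mathbb{F}_2$, since $G_u$ embeds in $\Gamma\mathcal{G}$ as the standard parabolic subgroup $\langle u\rangle$ (vertex-groups embed in graph products, e.g.\ via the retraction killing the other vertex-groups). So the hypothesis of Lemma~\ref{lem:NoFTwoGP} holds. We then use its conclusion: $\Gamma$ contains no induced subgraph from Figure~\ref{NoFTwo}.

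Next, decompose $\Gamma$ into its maximal join factors: $\Gamma = \Gamma_1 \ast \cdots \ast \Gamma_r \ast \Gamma_{r+1}\ast\cdots\ast\Gamma_{r+s}$. Here $\Gamma_1,\ldots,\Gamma_r$ are single vertices, and each $\Gamma_{r+i}$ is not a join and has at least two vertices. It remains to show that each $\Gamma_{r+i}$ is a pair of non-adjacent vertices with both vertex-groups $\mathbb{Z}_2$. Since $\langle \Gamma_{r+i}\rangle$ is a subgroup of $\Gamma\mathcal{G}$, it contains no $\mathbb{F}_2$. We now rerun the argument of the proof of Lemma~\ref{lem:NoFTwoGP} inside $\Lambda:=\Gamma_{r+i}$.

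Because $\Lambda$ has at least two vertices and is not a join, it is not complete, so it has non-adjacent vertices $a,b$. If $|G_a|\geq 3$ or $|G_b|\geq 3$, then $G_a\ast G_b$ contains $\mathbb{F}_2$, a contradiction. Hence $|G_a|=|G_b|=2$, so $G_a\cong G_b\cong\mathbb{Z}_2$. If $\Lambda$ had a third vertex $c$, non-joinness would give $c$ non-adjacent to one of $a,b$. Then $\langle G_a,G_b,G_c\rangle$ would be either $G_a\ast G_b\ast G_c$ or $G_a\ast(G_b\times G_c)$, up to relabelling. Both contain $\mathbb{F}_2$: a free product of two nontrivial groups, one of order $\geq 3$, contains $\mathbb{F}_2$. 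This is a contradiction, so $V(\Lambda)=\{a,b\}$ with $a,b$ non-adjacent and both labelled by $\mathbb{Z}_2$, as claimed.

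The only slightly delicate step is the existence of the join decomposition into maximal non-join factors; this is standard, coming from the connected components of the complement graph. Everything else is a direct reuse of the case analysis already carried out in the proof of Lemma~\ref{lem:NoFTwoGP}.
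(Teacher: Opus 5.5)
Your proof is correct and follows the paper's argument. You decompose $\Gamma$ into maximal join factors, pick non-adjacent $a,b$ in a non-singleton factor, force $G_a\cong G_b\cong\mathbb{Z}_2$, and use non-joinness to rule out a third vertex. The only difference is cosmetic: the paper cites Lemma~\ref{lem:NoFTwoGP} for these two steps, while you rerun its free-product case analysis inline and make explicit why its hypothesis on vertex-groups holds.
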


\begin{proof}
Decompose $\Gamma$ as a join
$$ \Gamma= \Gamma_1 \ast \cdots \ast \Gamma_r \ast \Gamma_{r+1} \ast \cdots \ast \Gamma_{r+s}$$ 
where $\Gamma_1, \ldots, \Gamma_r$ are reduced to single vertices and where $\Gamma_{r+1}, \ldots, \Gamma_{r+s}$ are not joins but contain at least two vertices. Fix some $1 \leq i \leq s$. Because $\Gamma_{r+i}$ cannot be a complete graph, it must contain two non-adjacent vertices, say $a$ and $b$. According to Lemma~\ref{lem:NoFTwoGP}, $G_a$ and $G_b$ must be cyclic of order two. If $\Gamma_{r+i}$ contains a vertex distinct from both $a$ and $b$, then, because $\Gamma_{r+i}$ is not a join, we can find a third vertex $c$ that is not adjacent to both $a$ and $b$. But, then, $\Gamma_{r+i}$ contains one of the graphs from Figure~\ref{NoFTwo} as an induced subgraph, which is impossible according to Lemma~\ref{lem:NoFTwoGP}. Thus, $\Gamma_{r+i}$ is reduced to a pair of non-adjacent vertices both labelled by $\mathbb{Z}_2$. 
\end{proof}

\begin{proof}[Proof of Proposition~\ref{prop:ProdInGP}.]
Because $\mathbb{F}_2 \times \mathbb{F}_2$ is finitely generated, we can assume that $\Gamma$ is finite up to replacing $\Gamma \mathcal{G}$ with one of its parabolic subgroups. Then, we can argue by induction on the number of vertices of $\Gamma$. 

\medskip \noindent
According to Lemma~\ref{lem:JoinSub}, there exists a (non-trivial) join $\Phi \ast \Psi \subset \Gamma$ such that the subgroup $\langle \Phi \ast \Psi \rangle$ contains a conjugate of our $\mathbb{F}_2 \times \mathbb{F}_2$. If $\langle \Phi \rangle$ and $\langle \Psi \rangle$ both contain $\mathbb{F}_2$ as a subgroup, then it follows from Lemma~\ref{lem:NoFTwoGP} that $\Gamma$ contains an induced copy of a join $\Omega \ast \Xi$ where $\Omega,\Xi$ belong to the graphs from Figure~\ref{NoFTwo}. Such a join provides a graph from Figure~\ref{Graphs}, as desired. Otherwise, if one of $\langle \Phi \rangle$ and $\langle \Psi \rangle$ does not contain $\mathbb{F}_2$ as a subgroup, say $\langle \Phi \rangle$, then it follows from Corollary~\ref{cor:NoFreeSub} that $\langle \Phi \rangle$ has polynomial growth and we deduce from Lemma~\ref{lem:NormalSubProdFree} that $\mathbb{F}_2 \times \mathbb{F}_2$ embeds into $\langle \Psi \rangle$ through the quotient map $\langle \Phi \ast \Psi \rangle = \langle \Phi \rangle \times \langle \Psi \rangle \twoheadrightarrow \langle \Psi \rangle$. But $\Psi$ is proper subgraph of $\Gamma$, so we can conclude by induction.
\end{proof}

\noindent
Next, we verify the implication $(ii) \Rightarrow (iii)$ from Theorem~\ref{thm:InGP}. 

\begin{lemma}\label{lem:ProdFreeSub}
Let $\Gamma$ be a finite graph and $\mathcal{G}$ a collection of groups of polynomial growth indexed by $V(\Gamma)$. If $\Gamma$ contains one of the graphs from Figure~\ref{Graphs} as an induced subgraph, then $\Gamma \mathcal{G}$ contains $\mathbb{F}_2 \times \mathbb{F}_2$ as an undistorted subgroup.
\end{lemma}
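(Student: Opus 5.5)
The graphs in Figure~\ref{Graphs} are joins $\Omega \ast \Xi$ where $\Omega$ and $\Xi$ are among the graphs of Figure~\ref{NoFTwo}. By Lemma~\ref{lem:NoFTwoGP} and its proof, these small graphs are: two non-adjacent vertices, one of them labelled by a group of order $\geq 3$; three pairwise non-adjacent vertices; or $K_1 \sqcup K_2$. The plan has three steps. First, realise an undistorted $\mathbb{F}_2$ in each $\langle \Omega \rangle$ and $\langle \Xi \rangle$. Then take the product inside $\langle \Omega \ast \Xi \rangle = \langle \Omega \rangle \times \langle \Xi \rangle$. Finally, check that this parabolic subgroup is undistorted in $\Gamma \mathcal{G}$.

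\emph{Step 1: undistorted $\mathbb{F}_2$ in each factor.} For each graph $\Omega$ of Figure~\ref{NoFTwo}, the group $\langle \Omega \rangle$ is a free product of two non-trivial groups, at least one of which has order $\geq 3$:
\begin{itemize}
\item $G_a \ast G_b$ with $|G_a| \geq 3$;
\item $G_a \ast G_b \ast G_c = G_a \ast (G_b \ast G_c)$, where the second factor is infinite;
\item $G_a \ast (G_b \times G_c)$, where the second factor has order $\geq 4$.
\end{itemize}
So write $\langle \Omega \rangle = A \ast B$ with $|B| \geq 3$, pick $a \in A \setminus \{1\}$ and $b_1 \neq b_2 \in B \setminus \{1\}$, and set $H := \langle a b_1 a, \; a b_2 a \rangle$ (or, more simply, $\langle ab_1, ab_2 \rangle$). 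By the normal form theorem for free products, reduced words in these two generators yield alternating normal forms whose syllable length is proportional to the word length. Hence $H \cong \mathbb{F}_2$. Moreover, $H$ is quasi-isometrically embedded in $\mathrm{QM}$ of $\langle \Omega \rangle$, i.e.\ in the Bass--Serre tree of the splitting. To upgrade this to undistortion with respect to a \emph{finite} generating set $S$ of $\langle \Omega \rangle$, I would use Lemma~\ref{lem:StronglySyllGP}. A geodesic in $\mathrm{Cayl}(\langle \Omega \rangle, S)$ crosses the cosets of vertex-groups along a syllable decomposition, so $|g|_S \geq$ the number of syllables of $g$. Since $|h|_H$ is bounded by a constant times the syllable length of $h \in H$, and the inequality $|h|_S \leq \mathrm{cst}\cdot|h|_H$ is automatic, $H$ is undistorted. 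Alternatively, and more cleanly, one can observe that $H$ is a free factor of a finite-index subgroup; the syllable-length argument suffices.

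\emph{Step 2: the product.} Let $H_1 \leq \langle \Omega \rangle$ and $H_2 \leq \langle \Xi \rangle$ be the undistorted free subgroups from Step~1. Since $\langle \Omega \ast \Xi \rangle = \langle \Omega \rangle \times \langle \Xi \rangle$, the subgroup $H_1 \times H_2 \cong \mathbb{F}_2 \times \mathbb{F}_2$ is undistorted there, because word metrics on direct products are coarsely the $\ell^1$-sums of the factor metrics.

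\emph{Step 3: undistortion of the parabolic subgroup.} It remains to show that $\langle \Lambda \rangle$ is undistorted in $\Gamma \mathcal{G}$ for $\Lambda = \Omega \ast \Xi$. The standard argument uses the retraction $\Gamma \mathcal{G} \to \langle \Lambda \rangle$ that kills the vertex-groups outside $\Lambda$. This is a well-defined homomorphism because the relations are compatible: a commutation relation involving a killed vertex-group becomes trivial. It is Lipschitz for finite generating sets $S = \bigcup S_u$ and restricts to the identity on $\langle \Lambda \rangle$, hence $\langle \Lambda \rangle$ is undistorted. Composing undistorted inclusions gives the result.

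The main obstacle is Step~1, namely justifying undistortion of the free subgroup when the vertex-groups are infinite but of polynomial growth and we use a finite generating set. The point to check carefully is the lower bound $|h|_S \gtrsim$ syllable length. This follows because each $S$-letter lies in a single vertex-group, so an $S$-word of length $n$ has at most $n$ syllables.
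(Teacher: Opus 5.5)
Steps~2 and~3 are fine, as is your bound $|g|_S \geq$ syllable length, and the overall architecture matches the paper's: an undistorted $\mathbb{F}_2$ in each factor, then the product, then a retraction onto the parabolic subgroup. The gap is the choice of $H$ in Step~1. Neither $\langle ab_1,ab_2\rangle$ nor $\langle ab_1a,ab_2a\rangle$ works in general.

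\begin{itemize}
\item With $x=ab_1$, $y=ab_2$ you get $x^{-1}y=b_1^{-1}b_2$.
\item With $x=ab_1a$, $y=ab_2a$ you get $x^{-1}y=a^{-1}(b_1^{-1}b_2)a$.
\end{itemize}

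In both cases the common prefix $a$ cancels, leaving a conjugate of a non-trivial element of $B$. So the reduced words $(x^{-1}y)^n$ of length $2n$ have syllable length at most $3$, and your claimed proportionality (hence the undistortion argument) fails. Moreover, whenever $B$ is finite, $H$ contains torsion and is not free at all.

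This is not a degenerate case. In the $K_1\sqcup K_2$ configuration with $\mathbb{Z}_2$-labels (the right-angled Coxeter case) one has $B=\mathbb{Z}_2\times\mathbb{Z}_2$. For $A=\langle a\rangle\cong\mathbb{Z}_2$ and $B=\langle t\rangle\cong\mathbb{Z}_3$ the only choice is $\{b_1,b_2\}=\{t,t^{-1}\}$. Then $\langle at,at^{-1}\rangle$ contains $t=(at)^{-1}(at^{-1})$ and hence $a$, so it is all of $\mathbb{Z}_2\ast\mathbb{Z}_3$. When $a^2=1$, $\langle ab_1a,ab_2a\rangle=a\langle b_1,b_2\rangle a^{-1}$ even lies in a conjugate of $B$. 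Your ``free factor of a finite-index subgroup'' remark is also unsupported.

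The repair is to use generators with no cancellable common prefix, for instance the commutators $c_i=[a,b_i]=ab_ia^{-1}b_i^{-1}$ for $i=1,2$.

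\begin{itemize}
\item In a freely reduced word in $c_1^{\pm1},c_2^{\pm1}$, interaction occurs only at junctions $c_ic_j^{-1}$ or $c_i^{-1}c_j$ with $i\neq j$.
\item At such a junction at most three syllables are lost. The merged syllable $b_i^{-1}b_j$ is non-trivial and flanked by surviving $A$-syllables.
\item Each letter keeps an untouched $A$-syllable: $a^{-1}$ in $c_i$, and $a$ in $c_i^{-1}$.
\end{itemize}

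So the result is an alternating normal form, and a word of length $n$ has syllable length at least $n$. This gives freeness and, combined with your bound $|h|_S\geq$ syllable length, undistortion in $\langle\Omega\rangle$.

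With this correction your proof is a valid alternative to the paper's. The paper uses that groups of polynomial growth are virtually nilpotent, hence finite or containing an undistorted $\mathbb{Z}$. From this it passes to an undistorted virtually free subgroup $F_1\ast F_2$, $\mathbb{Z}\ast F_3$ or $\mathbb{Z}\ast\mathbb{Z}$, implicitly using that free products of undistorted subgroups are undistorted. Your syllable-length argument avoids both ingredients and only needs the vertex-groups to be finitely generated.
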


\begin{proof}
Notice that every graph from Figure~\ref{Graphs} is a join between two graphs from Figure~\ref{NoFTwo}. Therefore, it suffices to verify that, if $\Gamma$ contains one of the graphs from Figure~\ref{NoFTwo} as an induced subgraph, then $\Gamma \mathcal{G}$ contains $\mathbb{F}_2$ as an undistorted subgroup. 

\medskip \noindent
If $\Gamma$ contains one of the graphs from Figure~\ref{NoFTwo} as an induced subgraph, then $\Gamma \mathcal{G}$ contains a parabolic subgroup of the form $A \ast B$ with vertex-groups $A,B$ satisfying $|A|\geq 2, |B| \geq 3$, or $\mathbb{Z}_2 \ast \mathbb{Z}_2 \ast \mathbb{Z}_2$, or $\mathbb{Z}_2 \ast (\mathbb{Z}_2 \times \mathbb{Z}_2)$. Since parabolic subgroups are retracts, they are undistorted, so, in order to conclude the proof of our lemma, it suffices to verify that these three types of groups contain $\mathbb{F}_2$ as an undistorted subgroup. For $\mathbb{Z}_2 \ast \mathbb{Z}_2 \ast \mathbb{Z}_2$ and $\mathbb{Z}_2 \ast (\mathbb{Z}_2 \times \mathbb{Z}_2)$, this is clear since these groups are virtually free of rank $\geq 2$. So consider $A \ast B$. Notice that $A$ and $B$, as vertex-groups, have polynomial growth, and consequently are virtually nilpotent. As such, they are either finite or they contain an undistorted infinite-order element (an infinite finitely generated virtually nilpotent group always virtually surjects onto $\mathbb{Z}$, so an element that maps non-trivially to $\mathbb{Z}$ yields an undistorted infinite-order element). As a consequence, $A \ast B$ contains an undistorted subgroup isomorphic to $F_1 \ast F_2$ where $F_1,F_2$ are two finite groups satisfying $|F_1|\geq 2, |F_2| \geq 3$; or to $\mathbb{Z}\ast F_3$ for some non-trivial finite group $F_3$; or to $\mathbb{Z} \ast \mathbb{Z}$. Since all these groups are virtually free of rank $\geq 2$, the desired conclusion follows. 
\end{proof}

\noindent
We are now ready to prove Theorem~\ref{thm:InGP}. 

\begin{proof}[Proof of Theorem~\ref{thm:InGP}.]
The implications $(i) \Rightarrow (ii) \Rightarrow (iii)$ are given by Proposition~\ref{prop:ProdInGP} and Lemma~\ref{lem:ProdFreeSub}; $(iii) \Rightarrow (iv)$ is clear; $(iv) \Rightarrow (v)$ follows from the fact that $\mathrm{Lamp}(\mathbb{Z})$ quasi-isometrically embeds into $\mathbb{F}_2 \times \mathbb{F}_2$ (see e.g.\ \cite[Corollary~10]{MR3079268}); $(v) \Rightarrow (vi)$ is given by Theorem~\ref{thm:GentleLamp}; and $(vi) \Rightarrow (vii)$ is clear. It remains to prove that $(vii) \Rightarrow (i)$, i.e.\ $\Gamma \mathcal{G}$ is $\mathrm{lin}$-polynomially hyperbolic if it does not contain $\mathbb{F}_2 \times \mathbb{F}_2$ as a subgroup. 

\medskip \noindent
For every $u \in V(\Gamma)$, fix a finite generating set $S_u \subset G_u$. Set $S:= \bigcup_{u \in V(\Gamma)} S_u$ and denote by $\mathrm{Cayl}(\Gamma \mathcal{G})$ the Cayley graph of $\Gamma \mathcal{G}$ with respect to $S$. Consider the collection
$$\mathcal{P}:= \{ \text{cosets of standard parabolic subgroups of polynomial growth} \}.$$
Notice that, because parabolic subgroups are undistorted in $\Gamma \mathcal{G}$ (as retracts), the polynomial growth in the definition of $\mathcal{P}$ may refer equivalently to the growth in $\Gamma \mathcal{G}$ or to the intrinsic growth of the parabolic subgroups (as finitely generated groups).

\medskip \noindent
Our goal is to show that the canonical map $\mathrm{Cayl}(\Gamma \mathcal{G}) \to \mathrm{ConeOff}(\mathrm{Cayl}(\Gamma \mathcal{G}),\mathcal{P})$ yields an $x^y$-gentle map to a hyperbolic graph. 

\medskip \noindent
The hyperbolicity of the cone-off is provided by Claim~\ref{claim:HypConeOff} below. According to Lemma~\ref{lem:StronglySyllGP}, $\mathcal{F}:= \{ gG_u \mid g \in \Gamma \mathcal{G}, u \in V(\Gamma)\}$ is strongly syllabic in $\mathrm{Cayl}(\Gamma \mathcal{G})$, and, according to Claim~\ref{claim:SyllabicGP} below, $\mathcal{P}$ is syllabic in $\mathrm{QM}(\Gamma, \mathcal{G})=\mathrm{ConeOff}(\mathrm{Cayl}(\Gamma \mathcal{G}), \mathcal{F})$. It follows from Lemma~\ref{lem:SyllStrongSyll} that $\mathcal{P}=\mathcal{P} \cup \mathcal{F}$ is syllabic in $\mathrm{Cayl}(\Gamma \mathcal{G})$. Thus, Proposition~\ref{prop:QuasiSyl} implies that the canonical map $\mathrm{Cayl}(\Gamma \mathcal{G}) \to \mathrm{ConeOff}(\mathrm{Cayl}(\Gamma \mathcal{G}),\mathcal{P})$ is $\gamma_\mathcal{P}(x)^y$-gentle. 

\medskip \noindent
Thus, in order to conclude the proof of our theorem, it suffices to justify that $\mathcal{P}$ has polynomial growth in $\Gamma \mathcal{G}$. But we already know, by definition of $\mathcal{P}$, that each member of $\mathcal{P}$ has polynomial growth. Since $\Gamma$ is finite, there are only finitely many cosets of standard parabolic subgroups in $\Gamma \mathcal{G}$ up to $\Gamma \mathcal{G}$-translations. Therefore, the members of $\mathcal{P}$ must have uniform polynomial growth, which amounts to saying that the collection $\mathcal{P}$ has polynomial growth. This concludes the proof of our theorem. 

\begin{claim}\label{claim:HypConeOff}
The graph $\mathrm{ConeOff}(\mathrm{Cayl}(\Gamma \mathcal{G}),\mathcal{P})$ is hyperbolic.
\end{claim}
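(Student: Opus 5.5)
We reduce to the quasi-median graph and then apply Theorem~\ref{thm:WhenHyp}. Since every vertex-group $G_u$ is a standard parabolic subgroup of polynomial growth, the collection $\mathcal{F}$ of cosets $gG_u$ is contained in $\mathcal{P}$. Hence $\mathrm{ConeOff}(\mathrm{Cayl}(\Gamma\mathcal{G}),\mathcal{P})$ coincides with $\mathrm{ConeOff}(\mathrm{QM}(\Gamma,\mathcal{G}),\mathcal{P})$, because coning off $\mathcal{F}$ first yields $\mathrm{QM}(\Gamma,\mathcal{G})$. So it suffices to show that $Y:=\mathrm{ConeOff}(\mathrm{QM}(\Gamma,\mathcal{G}),\mathcal{P})$ is hyperbolic. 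The quasi-median graph $X=\mathrm{QM}(\Gamma,\mathcal{G})$ has finite cubical dimension $\mathrm{clique}(\Gamma)$. The members of $\mathcal{P}$ are gated by Lemma~\ref{lem:ParabolicGated}, hence isometrically embedded. It therefore remains to verify the flat rectangle condition of Theorem~\ref{thm:WhenHyp} with $K=1$.

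Let $[0,a]\times[0,b]\hookrightarrow X$ be a flat rectangle with corner $g$. Let $\mathcal{V}$ be the set of hyperplanes crossing the horizontal sides, and $\mathcal{H}$ the set crossing the vertical sides. Every hyperplane of $\mathcal{V}$ is transverse to every hyperplane of $\mathcal{H}$. By Lemmas~\ref{lem:LabelHyp} and~\ref{lem:LabelTransHyp}, the set $\Phi$ of labels of $\mathcal{V}$ and the set $\Psi$ of labels of $\mathcal{H}$ are disjoint, and every vertex of $\Phi$ is adjacent to every vertex of $\Psi$, so $\Phi\ast\Psi$ is a join in $\Gamma$. By Lemma~\ref{lem:GeodInQM}, a horizontal geodesic $[0,a]\times\{i\}$ is contained in a coset $h\langle\Phi\rangle$, and a vertical one in a coset $h'\langle\Psi\rangle$. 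More precisely, the whole rectangle lies in $g\langle\Phi\ast\Psi\rangle=g(\langle\Phi\rangle\times\langle\Psi\rangle)$, and its rows and columns lie in the corresponding product cosets.

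The key dichotomy is the following. Since $\Gamma\mathcal{G}$ does not contain $\mathbb{F}_2\times\mathbb{F}_2$, at least one of $\langle\Phi\rangle$ and $\langle\Psi\rangle$ does not contain $\mathbb{F}_2$. Otherwise $\langle\Phi\rangle\times\langle\Psi\rangle$ would contain $\mathbb{F}_2\times\mathbb{F}_2$. Say $\langle\Phi\rangle$ does not contain $\mathbb{F}_2$. By Corollary~\ref{cor:NoFreeSub}, $\Phi$ is a join of single vertices, whose vertex-groups have polynomial growth, and of pairs of non-adjacent $\mathbb{Z}_2$-vertices. Hence $\langle\Phi\rangle$ is a direct product of polynomial growth groups and infinite dihedral groups, so it has polynomial growth. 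It follows that every coset $h\langle\Phi\rangle$ belongs to $\mathcal{P}$, and every row $[0,a]\times\{i\}$ has diameter $\le 1$ in $Y$.

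It remains to compare two rows $[0,a]\times\{i\}$ and $[0,a]\times\{j\}$. For each $k$, the vertices $(k,i)$ and $(k,j)$ are linked by a vertical geodesic, but vertical geodesics need not be short in $Y$. So instead I would use that each row has $Y$-diameter $\le1$, which makes the rows coarsely points. The condition of Theorem~\ref{thm:WhenHyp} asks for a Hausdorff bound between rows, which would be false in general, so the other direction must be used: the columns. The columns $\{k\}\times[0,b]$ and $\{l\}\times[0,b]$ satisfy $d_Y((k,t),(l,t))\le 1$ for every $t$, since $(k,t)$ and $(l,t)$ lie in the same row coset $g\,\phi\,\langle\Phi\rangle$ for suitable $\phi$. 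Hence the Hausdorff distance between any two columns is $\le 1$, which is exactly the second alternative in Theorem~\ref{thm:WhenHyp} with $K=1$. This yields hyperbolicity.

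The main obstacle will be justifying carefully that each row of the flat rectangle lies in a single left coset of $\langle\Phi\rangle$, and that $\langle\Phi\rangle$ (rather than merely the subgroup generated by the actual syllables) is the relevant polynomial-growth standard parabolic. This uses the labelling lemmas together with Lemma~\ref{lem:GeodInQM}: the syllables read along a row are all in $\bigcup_{u\in\Phi}G_u$.
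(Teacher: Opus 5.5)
Your proposal is correct and follows essentially the same route as the paper. You identify the cone-off with $\mathrm{ConeOff}(\mathrm{QM}(\Gamma,\mathcal{G}),\mathcal{P})$, use the labelling lemmas to obtain a join $\Phi \ast \Psi$, and pick the factor $\langle \Phi \rangle$ without $\mathbb{F}_2$, which has polynomial growth by Corollary~\ref{cor:NoFreeSub}. You then conclude that any two columns are at Hausdorff distance $\leq 1$, i.e.\ the second alternative of Theorem~\ref{thm:WhenHyp} with $K=1$, exactly as the paper does. Your explicit observation that $\mathcal{F} \subset \mathcal{P}$ justifies the identification of the two cone-offs, which the paper leaves implicit. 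The hesitation in your third paragraph about rows is unnecessary, since the hypothesis of Theorem~\ref{thm:WhenHyp} is a disjunction.
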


\noindent
Since $\mathrm{ConeOff}(\mathrm{Cayl}(\Gamma \mathcal{G}), \mathcal{P})$ coincides with $\mathrm{ConeOff}(\mathrm{QM}(\Gamma, \mathcal{G}), \mathcal{P})$, it suffices to show that the cone-off of $\mathrm{QM}(\Gamma , \mathcal{G})$ is hyperbolic thanks to Theorem~\ref{thm:WhenHyp}. First of all, notice that every member of $\mathcal{P}$ is isometrically embedded, since gated according to Lemma~\ref{lem:ParabolicGated}.

\medskip \noindent
Next, let $[0,a] \times [0,b] \hookrightarrow \mathrm{QM}(\Gamma , \mathcal{G})$ be a flat rectangle. Let $\Phi$ (resp.\ $\Psi$) denote the subgraph of $\Gamma$ induced by the vertices associated to the generators labelling the edges of $[0,a] \times \{0\}$ (resp.\ $\{0\} \times [0,b]$). As a consequence of Lemma~\ref{lem:LabelTransHyp}, every vertex of $\Phi$ is adjacent to every vertex of $\Psi$. This implies that the parabolic subgroup $\langle \Phi \cup \Psi \rangle$ splits as $\langle \Phi \rangle \times \langle \Psi \rangle$. Since $\Gamma \mathcal{G}$ does not contain $\mathbb{F}_2 \times \mathbb{F}_2$ as a subgroup, at least one of $\langle \Phi \rangle$ and $\langle \Psi \rangle$ does not contain $\mathbb{F}_2$ as a subgroup. Say that $\langle \Phi \rangle$ does not contain $\mathbb{F}_2$. As a consequence of Corollary~\ref{cor:NoFreeSub}, $\langle \Phi \rangle$ decomposes as a product of vertex-groups and infinite dihedral groups, which implies that $\langle \Phi \rangle$ has polynomial growth. Thus, since we know from Lemma~\ref{lem:LabelHyp} that every segment $[0,a] \times \{i\}$ has all its edges labelled by generators coming from $\Phi$, we deduce that $(i,c)$ and $(j,c)$ are adjacent in $\mathrm{ConeOff}(\mathrm{QM}(\Gamma, \mathcal{G}), \mathcal{P})$ for all $0 \leq i,j \leq a$ and $0 \leq c \leq b$. In particular, the Hausdorff distance between $\{i\} \times [0,b]$ and $\{j\} \times [0,b]$ is $\leq 1$ for all $0 \leq i,j \leq n$. 

\medskip \noindent
Thus, we have proved that Theorem~\ref{thm:WhenHyp} applies, which concludes the proof of our claim. 

\begin{claim}\label{claim:SyllabicGP}
The collection $\mathcal{P}$ is syllabic in $\mathrm{QM}(\Gamma, \mathcal{G})$.
\end{claim}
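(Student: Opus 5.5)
The plan is to verify the hypothesis of Proposition~\ref{prop:SyllabicQM} for $\mathcal{P}$ viewed as a collection of subgraphs of $\mathrm{QM}(\Gamma,\mathcal{G})$. Each member $g\langle \Lambda \rangle$ of $\mathcal{P}$ is gated by Lemma~\ref{lem:ParabolicGated}, so gatedness is automatic. It then remains to take two parallel pairs $(x,y)$ and $(a,b)$ with $x,y \in g\langle \Lambda \rangle$, where $\langle \Lambda \rangle$ has polynomial growth, and to produce a coset of a standard parabolic subgroup of polynomial growth containing $a$ and $b$. (The statement of Proposition~\ref{prop:SyllabicQM} reads ``$a$ and $y$'', but the intended and needed conclusion, as in the ladder/tower argument, is ``$a$ and $b$''.)

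\textbf{Step 1: the labels of $x^{-1}y$.} Since $x,y \in g\langle\Lambda\rangle$, we have $x^{-1}y \in \langle \Lambda \rangle$. A graphically reduced word for $x^{-1}y$ therefore only uses syllables from vertex-groups indexed by $\Lambda$. This follows from uniqueness of graphically reduced words up to shuffling, or alternatively from Lemma~\ref{lem:GeodInQM} together with the gatedness, hence convexity, of $x\langle\Lambda\rangle$. Let $\Xi \subset \Lambda$ be the set of vertices labelling the hyperplanes separating $x$ and $y$, i.e.\ the labels of the syllables of $x^{-1}y$. Then $\langle \Xi \rangle \leq \langle \Lambda \rangle$ is a subgroup of a group of polynomial growth, and it is finitely generated and undistorted as a retract. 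Hence $\langle \Xi \rangle$ also has polynomial growth.

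\textbf{Step 2: transfer to $(a,b)$.} Parallel pairs are separated by exactly the same hyperplanes. Indeed, a hyperplane $J$ separates $a$ from $b$ iff some sector of $J$ contains $a$ but not $b$, iff the same holds for $x,y$. Hence the hyperplanes separating $a$ and $b$ are the same as those separating $x$ and $y$, and in particular carry the same labels by Lemma~\ref{lem:LabelHyp}. Taking a geodesic from $a$ to $b$ and applying Lemma~\ref{lem:GeodInQM}, we get a graphically reduced word for $a^{-1}b$ whose syllables are labelled by the hyperplanes crossed, all of which lie in $\Xi$. Thus $a^{-1}b \in \langle \Xi \rangle$, so $a,b \in a\langle \Xi \rangle$, and $a\langle\Xi\rangle \in \mathcal{P}$ by Step 1.

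\textbf{Main obstacle.} The delicate point is the label-bookkeeping in Step 1. One must ensure that the labels of the hyperplanes separating $x,y$ genuinely lie in $\Lambda$, which requires that geodesics between points of the gated coset $x\langle\Lambda\rangle$ stay inside it. One must also ensure that the edge labels along a geodesic coincide with the syllable labels. Both points follow from Lemmas~\ref{lem:GeodInQM}, \ref{lem:LabelHyp}, \ref{lem:ParabolicGated}. The rest is routine. Unlike the special case, no transversality or conjugation argument is needed, because $\mathcal{P}$ is defined by the growth of a standard parabolic subgroup rather than by properties of conjugates.
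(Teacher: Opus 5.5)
Your proof is correct and follows the paper's argument: gatedness comes from Lemma~\ref{lem:ParabolicGated}, the hyperplanes separating $x,y$ are labelled in $\Lambda$, parallel pairs are separated by the same hyperplanes, and so $a^{-1}b$ lies in the corresponding standard parabolic subgroup. You are also right to read the ``$a$ and $y$'' in Proposition~\ref{prop:SyllabicQM} as a typo for ``$a$ and $b$''. The only difference is your passage to $\Xi \subset \Lambda$, which is harmless but unnecessary: $a^{-1}b \in \langle \Xi \rangle \leq \langle \Lambda \rangle$ already gives $a,b \in a\langle \Lambda \rangle \in \mathcal{P}$, which is what the paper uses.
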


\noindent
Our goal is to apply Proposition~\ref{prop:SyllabicQM}. First of all, notice that, according to Lemma~\ref{lem:ParabolicGated}, every member of $\mathcal{P}$ is gated. Next, let $(x,y)$ and $(a,b)$ be two parallel pairs of vertices in $\mathrm{QM}(\Gamma, \mathcal{G})$. Assume that $x$ and $y$ both belong to some member of $\mathcal{P}$, say $x,y \in g \langle \Lambda \rangle$ for some $g \in \Gamma \mathcal{G}$ and $\Lambda \subset \Gamma$ with $\langle \Lambda \rangle$ of polynomial growth. Since $x^{-1}y \in \langle \Lambda \rangle$, we deduce from Lemma~\ref{lem:GeodInQM} that the edges of a geodesic connecting $x$ to $y$ are all labelled by $V(\Lambda)$. Since the hyperplanes separating $x$ and $y$ coincide with the hyperplanes containing these edges as a consequence of Theorem~\ref{thm:BigQM}, it follows that the hyperplanes separating $x$ and $y$ are all labelled by $V(\Lambda)$. But $x,y$ and $a,b$ are separated by exactly the same hyperplanes, which implies that the edges of a geodesic connecting $a$ to $b$ also must be all labelled by $V(\Lambda)$, which implies that $a^{-1}b \in \langle \Lambda \rangle$. Thus, $a,b \in a \langle \Lambda \rangle$ with $a \langle \Lambda \rangle \in \mathcal{P}$ since $\langle \Lambda \rangle$ has polynomial growth. 

\medskip \noindent
Thus, we have proved that Proposition~\ref{prop:SyllabicQM} applies, which concludes the proof of our claim. 
\end{proof}

\addcontentsline{toc}{section}{References}

\bibliographystyle{alpha}
{\footnotesize\bibliography{PolynomiallyHyp}}

\begin{thebibliography}{BMW94}

\bibitem[BBP26]{Oussama}
S.~Bader, O.~Bensaid, and H.~Petyt.
\newblock Quasiisometric embeddings between right-angled {A}rtin groups:
  rigidity.
\newblock {\em arxiv:2605.12300}, 2026.

\bibitem[BC96]{MR1379364}
H.-J. Bandelt and V.~Chepoi.
\newblock Graphs of acyclic cubical complexes.
\newblock volume~17, pages 113--120. 1996.
\newblock Discrete metric spaces (Bielefeld, 1994).

\bibitem[BD92]{MR1153934}
H.-J. Bandelt and A.~Dress.
\newblock A canonical decomposition theory for metrics on a finite set.
\newblock {\em Adv. Math.}, 92(1):47--105, 1992.

\bibitem[BH99]{MR1744486}
M.~Bridson and A.~Haefliger.
\newblock {\em Metric spaces of non-positive curvature}, volume 319 of {\em
  Grundlehren der mathematischen Wissenschaften [Fundamental Principles of
  Mathematical Sciences]}.
\newblock Springer-Verlag, Berlin, 1999.

\bibitem[BMW94]{MR1297190}
H.-J. Bandelt, H.~Mulder, and E.~Wilkeit.
\newblock Quasi-median graphs and algebras.
\newblock {\em J. Graph Theory}, 18(7):681--703, 1994.

\bibitem[Bow14]{Bowditchcriterion}
B.~Bowditch.
\newblock Uniform hyperbolicity of the curve graphs.
\newblock {\em Pacific J. Math.}, 269:269--280, 2014.

\bibitem[Bro06]{BrownGroupoidTopology}
R.~Brown.
\newblock {\em Topology and groupoids}.
\newblock BookSurge, 2006.

\bibitem[Car15]{MR3358258}
M.~Carr.
\newblock {\em Two-generator subgroups of right-angled {A}rtin groups are
  quasi-isometrically embedded}.
\newblock ProQuest LLC, Ann Arbor, MI, 2015.
\newblock Thesis (Ph.D.)--Brandeis University.

\bibitem[CH09]{MR2541383}
P.-E. Caprace and F.~Haglund.
\newblock On geometric flats in the {CAT}(0) realization of {C}oxeter groups
  and {T}its buildings.
\newblock {\em Canad. J. Math.}, 61(4):740--761, 2009.

\bibitem[Che00]{MR1748966}
V.~Chepoi.
\newblock Graphs of some {${\rm CAT}(0)$} complexes.
\newblock {\em Adv. in Appl. Math.}, 24(2):125--179, 2000.

\bibitem[Gen17]{QM}
A.~Genevois.
\newblock Cubical-like geometry of quasi-median graphs and applications to
  geometric group theory.
\newblock {\em PhD thesis, Universit\'e Aix-Marseille, arxiv:1712.01618}, 2017.

\bibitem[Gen19]{VanKampenGP}
A.~Genevois.
\newblock On the geometry of van {K}ampen diagrams of graph products of groups.
\newblock {\em arXiv:1901.04538}, 2019.

\bibitem[Gen21a]{SpecialRH}
A.~Genevois.
\newblock Algebraic characterisation of relatively hyperbolic special groups.
\newblock {\em Israel J. Math.}, 241(1):301--341, 2021.

\bibitem[Gen21b]{coningoff}
A.~Genevois.
\newblock Coning-off {$\rm CAT(0)$} cube complexes.
\newblock {\em Ann. Inst. Fourier (Grenoble)}, 71(4):1535--1599, 2021.

\bibitem[Gen21c]{MR4227231}
A.~Genevois.
\newblock Negative curvature in graph braid groups.
\newblock {\em Internat. J. Algebra Comput.}, 31(1):81--116, 2021.

\bibitem[Gen22]{Mediangle}
A.~Genevois.
\newblock Rotation groups, mediangle graphs, and periagroups: a unified point
  of view on {C}oxeter groups and graph products of groups.
\newblock {\em arXiv:2212.06421}, 2022.

\bibitem[Gen23]{MR4586831}
A.~Genevois.
\newblock Special cube complexes revisited: a quasi-median generalization.
\newblock {\em Canad. J. Math.}, 75(3):743--777, 2023.

\bibitem[Gen24]{MR4808711}
A.~Genevois.
\newblock Automorphisms of graph products of groups and acylindrical
  hyperbolicity.
\newblock {\em Mem. Amer. Math. Soc.}, 301(1509):vi+127, 2024.

\bibitem[Gen25]{MR4922688}
A.~Genevois.
\newblock Cyclic hyperbolicity in {${\rm CAT}(0)$} cube complexes.
\newblock {\em Fund. Math.}, 269(3):201--248, 2025.

\bibitem[Ger98]{MR1663779}
V.~Gerasimov.
\newblock Fixed-point-free actions on cubings [translation of {\it {a}lgebra,
  geometry, analysis and mathematical physics ({r}ussian) ({n}ovosibirsk,
  1996)}, 91--109, 190, {I}zdat. {R}oss. {A}kad. {N}auk {S}ibirsk. {O}tdel.
  {I}nst. {M}at., {N}ovosibirsk, 1997; {MR}1624115 (99c:20049)].
\newblock {\em Siberian Adv. Math.}, 8(3):36--58, 1998.

\bibitem[GM08]{MR2448064}
D.~Groves and J.~Manning.
\newblock Dehn filling in relatively hyperbolic groups.
\newblock {\em Israel J. Math.}, 168:317--429, 2008.

\bibitem[Gre90]{GreenGP}
E.~Green.
\newblock Graph products of groups.
\newblock {\em PhD Thesis}, 1990.

\bibitem[Gro87]{MR919829}
M.~Gromov.
\newblock Hyperbolic groups.
\newblock In {\em Essays in group theory}, volume~8 of {\em Math. Sci. Res.
  Inst. Publ.}, pages 75--263. Springer, New York, 1987.

\bibitem[Hag14]{MR3217625}
M.~Hagen.
\newblock Weak hyperbolicity of cube complexes and quasi-arboreal groups.
\newblock {\em J. Topol.}, 7(2):385--418, 2014.

\bibitem[HS17]{MR3741855}
D.~Hume and A.~Sisto.
\newblock Groups with no coarse embeddings into hyperbolic groups.
\newblock {\em New York J. Math.}, 23:1657--1670, 2017.

\bibitem[HW99]{HsuWise}
T.~Hsu and D.~Wise.
\newblock On linear and residual properties of graph products.
\newblock {\em Michigan Math. J.}, 46:251--259, 1999.

\bibitem[HW08]{MR2377497}
F.~Haglund and D.~Wise.
\newblock Special cube complexes.
\newblock {\em Geom. Funct. Anal.}, 17(5):1551--1620, 2008.

\bibitem[Kam09]{MR2475886}
M.~Kambites.
\newblock On commuting elements and embeddings of graph groups and monoids.
\newblock {\em Proc. Edinb. Math. Soc. (2)}, 52(1):155--170, 2009.

\bibitem[KM02]{MR1920184}
S.~Klav\v{z}ar and H.~Mulder.
\newblock Partial cubes and crossing graphs.
\newblock {\em SIAM J. Discrete Math.}, 15(2):235--251, 2002.

\bibitem[MO15]{MR3368093}
A.~Minasyan and D.~Osin.
\newblock Acylindrical hyperbolicity of groups acting on trees.
\newblock {\em Math. Ann.}, 362(3-4):1055--1105, 2015.

\bibitem[Rat05]{MR2174099}
D.~Rattaggi.
\newblock Anti-tori in square complex groups.
\newblock {\em Geom. Dedicata}, 114:189--207, 2005.

\bibitem[Rol98]{Roller}
M.~Roller.
\newblock Pocsets, median algebras and group actions; an extended study of
  dunwoody's construction and sageev's theorem.
\newblock {\em dissertation}, 1998.

\bibitem[Sag95]{MR1347406}
M.~Sageev.
\newblock Ends of group pairs and non-positively curved cube complexes.
\newblock {\em Proc. London Math. Soc. (3)}, 71(3):585--617, 1995.

\bibitem[ST13]{MR3079268}
M.~Stein and J.~Taback.
\newblock Metric properties of {D}iestel-{L}eader groups.
\newblock {\em Michigan Math. J.}, 62(2):365--386, 2013.

\end{thebibliography}

\Address

%

\end{document}